 \newtheorem{Theorem}{Theorem}[section]
 \newtheorem{Corollary}[Theorem]{Corollary}
 \newtheorem{Lemma}[Theorem]{Lemma}
 \newtheorem{Proposition}[Theorem]{Proposition}
 \newtheorem{Definition}[Theorem]{Definition}
 \newtheorem{Remark}[Theorem]{Remark}
 \numberwithin{equation}{section}
\begin{document}

\title[Boundary points, minimal $L^2$ integrals and concavity property]
 {Boundary points, minimal $L^2$ integrals and concavity property \uppercase\expandafter{\romannumeral2}: on weakly pseudoconvex K\"ahler manifolds}

%\author{name}
%\address{address}
%\email{email}

\author{Qi'an Guan}
\address{Qi'an Guan: School of
Mathematical Sciences, Peking University, Beijing 100871, China.}
\email{guanqian@math.pku.edu.cn}

\author{Zhitong Mi}
\address{Zhitong Mi: Institute of Mathematics, Academy of Mathematics
and Systems Science, Chinese Academy of Sciences, Beijing, China
}
\email{zhitongmi@amss.ac.cn}

\author{Zheng Yuan}
\address{Zheng Yuan: School of
Mathematical Sciences, Peking University, Beijing 100871, China.}
\email{zyuan@pku.edu.cn}

\thanks{}

\subjclass[2020]{32Q15, 32F10, 32U05, 32W05}

\keywords{minimal $L^2$ integrals, plurisubharmonic
functions, boundary points, weakly pseudoconvex K\"ahler manifold}

\date{\today}

\dedicatory{}

\commby{}

%%% ----------------------------------------------------------------------

\begin{abstract}
In this article, we consider minimal $L^2$ integrals on the sublevel sets of plurisubharmonic functions on weakly pseudoconvex K\"ahler manifolds with Lebesgue measurable gain related to modules at boundary points of the sublevel sets,
and establish a concavity property of the minimal $L^2$ integrals.
As applications,
we present a necessary condition for the concavity degenerating to linearity,
a concavity property related to modules at inner points of the sublevel sets,
an optimal support function related to the modules,
a strong openness property of the modules and a twisted version,
an effectiveness result of the strong openness property of the modules.

\end{abstract}

%%% ----------------------------------------------------------------------
\maketitle
%%% ----------------------------------------------------------------------
\section{Introduction}
Let $\varphi$ be a plurisubharmonic function on a complex manifold $M$ (see \cite{Demaillybook}). Recall that the multiplier ideal sheaf $\mathcal{I}(\varphi)$ is the sheaf of germs of holomorphic functions $f$ such that $|f|^2e^{-\varphi}$ is locally integrable,
which plays an important role in several complex variables, complex algebraic geometry and complex differential geometry (see e.g. \cite{Tian,Nadel,Siu96,DEL,DK01,DemaillySoc,DP03,Lazarsfeld,Siu05,Siu09,DemaillyAG,Guenancia}).

The strong openness property of multiplier ideal sheaves \cite{GZSOC}, i.e. $\mathcal{I}(\varphi)=\mathcal{I}_+(\varphi):=\mathop{\cup} \limits_{\epsilon>0}\mathcal{I}((1+\epsilon)\varphi)$
(conjectured by Demailly \cite{DemaillySoc}) is an important feature of multiplier ideal sheaves and has opened the door to new types of approximation technique (see e.g. \cite{GZSOC,McNeal and Varolin,K16,cao17,cdM17,FoW18,DEL18,ZZ2018,GZ20,ZZ2019,ZhouZhu20siu's,FoW20,KS20,DEL21}).
Guan-Zhou \cite{GZSOC} proved the strong openness property (the 2-dimensional case was proved by Jonsson-Musta\c{t}\u{a} \cite{JonssonMustata}).
After that, Guan-Zhou \cite{GZeff} established an effectiveness result of the strong openness property by considering the minimal $L^2$ integral on the pseudoconvex domain $D$.

Considering the minimal $L^2$ integrals on the sublevel sets of the weight $\varphi$, Guan \cite{G16} obtained a sharp version of Guan-Zhou's effectiveness result, and established a concavity property of the minimal $L^2$ integrals (see also 				\cite{G2018},\cite{GM},\cite{GM_Sci},\cite{GY-concavity},\cite{GMY}).

Recall that the minimal $L^2$ integrals on the sublevel sets of $\varphi$ in \cite{G16} (see also \cite{G2018},\cite{GM},\cite{GM_Sci},\cite{GY-concavity},\cite{GMY}) are related to the modules (ideals) at the inner points of the sublevel sets.
In \cite{BGY-boundary}, Bao-Guan-Yuan considered the minimal $L^2$ integrals related to modules at boundary points of the sublevel sets of plurisubharmonic functions on pseudoconvex domains,
and gave a concavity property of the minimal $L^2$ integrals,
which deduced a sharp effectiveness result related to a conjecture posed by Jonsson-Musta\c{t}\u{a} \cite{JonssonMustata},
and completed the approach from the conjecture to the strong openness property.

In the present article, we consider minimal $L^2$ integrals on sublevel sets of plurisubharmonic functions on weakly pseudoconvex K\"{a}hler manifolds with Lebesgue measurable gain related to  modules at boundary points of the sublevel sets,
and establish a concavity property of the minimal $L^2$ integrals.

\subsection{Main result}\label{sec:Main result}
Let $M$ be a complex manifold. Let $X$ and $Z$ be closed subsets of $M$. We call that a triple $(M,X,Z)$ satisfies condition $(A)$, if the following two statements hold:

$\uppercase\expandafter{\romannumeral1}.$ $X$ is a closed subset of $M$ and $X$ is locally negligible with respect to $L^2$ holomorphic functions; i.e., for any local coordinated neighborhood $U\subset M$ and for any $L^2$ holomorphic function $f$ on $U\backslash X$, there exists an $L^2$ holomorphic function $\tilde{f}$ on $U$ such that $\tilde{f}|_{U\backslash X}=f$ with the same $L^2$ norm;

$\uppercase\expandafter{\romannumeral2}.$ $Z$ is an analytic subset of $M$ and $M\backslash (X\cup Z)$ is a weakly pseudoconvex K\"ahler manifold.

Let $M$ be an $n-$dimensional complex manifold, and let $(M,X,Z)$  satisfy condition $(A)$. Let $K_M$ be the canonical line bundle on $M$. Let $F$ be a holomorphic function on $M$. We assume that $F$ is not identically zero. Let $\psi$ be a plurisubharmonic function on $M$. Let $\varphi_{\alpha}$ be a Lebesgue measurable function on $M$ such that $\varphi_{\alpha}+\psi$ is a plurisubharmonic function on $M$.

Let $T\in [-\infty,+\infty)$.
Denote that
$$\Psi:=\min\{\psi-2\log|F|,-T\}.$$
For any $z \in M$ satisfying $F(z)=0$,
we set $\Psi(z)=-T$. Note that for any $t\ge T$,
the holomorphic function $F$ has no zero points on the set $\{\Psi<-t\}$.
Hence $\Psi=\psi-2\log|F|=\psi+2\log|\frac{1}{F}|$ is a plurisubharmonic function on $\{\Psi<-t\}$.

\begin{Definition}
We call that a positive measurable function $c$ (so-called ``\textbf{gain}") on $(T,+\infty)$ is in class $P_{T,M,\Psi}$ if the following two statements hold:
\par
$(1)$ $c(t)e^{-t}$ is decreasing with respect to $t$;
\par
$(2)$ There exist $T_1> T$ and a closed subset $E$ of $M$ such that $E\subset Z\cap \{\Psi(z)=-\infty\}$ and for any compact subset $K\subset M\backslash E$, $e^{-\varphi_\alpha}c(-\Psi_1)$ has a positive lower bound on $K$, where $\Psi_1:=\min\{\psi-2\log|F|,-T_1\}$.
\end{Definition}

Let $z_0$ be a point in $M$. Denote that $\tilde{J}(\Psi)_{z_0}:=\{f\in\mathcal{O}(\{\Psi<-t\}\cap V): t\in \mathbb{R}$ and $V$ is a neighborhood of $z_0\}$. We define an equivalence relation $\backsim$ on $\tilde{J}(\Psi)_{z_0}$ as follows: for any $f,g\in \tilde{J}(\Psi)_{z_0}$,
we call $f \backsim g$ if $f=g$ holds on $\{\Psi<-t\}\cap V$ for some $t\gg T$ and open neighborhood $V\ni o$.
Denote $\tilde{J}(\Psi)_{z_0}/\backsim$ by $J(\Psi)_{z_0}$, and denote the equivalence class including $f\in \tilde{J}(\Psi)_{z_0}$ by $f_{z_0}$.

If $z_0\in \cap_{t>T} \{\Psi<-t\}$, then $J(\Psi)_{z_0}=\mathcal{O}_{M,z_0}$ (the stalk of the sheaf $\mathcal{O}_{M}$ at $z_0$), and $f_{z_0}$ is the germ $(f,z_0)$ of holomorphic function $f$.

Let $f_{z_0},g_{z_0}\in J(\Psi)_{z_0}$ and $(h,z_0)\in \mathcal{O}_{M,z_0}$. We define $f_{z_0}+g_{z_0}:=(f+g)_{z_0}$ and $(h,z_0)\cdot f_{z_0}:=(hf)_{z_0}$.
Note that $(f+g)_{z_0}$ and $(hf)_{z_0}$ ($\in J(\Psi)_{z_0}$) are independent of the choices of the representatives of $f,g$ and $h$. Hence $J(\Psi)_{z_0}$ is an $\mathcal{O}_{M,z_0}$-module.

For $f_{z_0}\in J(\Psi)_{z_0}$ and $a,b\ge 0$, we call $f_{z_0}\in I\big(a\Psi+b\varphi_\alpha\big)_{z_0}$ if there exist $t\gg T$ and a neighborhood $V$ of $z_0$,
such that $\int_{\{\Psi<-t\}\cap V}|f|^2e^{-a\Psi-b\varphi_\alpha}<+\infty$.
Note that $I\big(a\Psi+b\varphi_\alpha\big)_{z_0}$ is an $\mathcal{O}_{M,z_0}$-submodule of $J(\Psi)_{z_0}$.
If $z_0\in \cap_{t>T} \{\Psi<-t\}$, then $I_{z_0}=\mathcal{O}_{M,z_0}$, where $I_{z_0}:=I\big(0\Psi+0\varphi_\alpha\big)_{z_0}$.

Let $Z_0$ be a subset of $M$. Let $f$ be a holomorphic $(n,0)$ form on $\{\Psi<-t_0\}\cap V$, where $V\supset Z_0$ is an open subset of $M$ and $t_0>T$
is a real number.
Let $J_{z_0}$ be an $\mathcal{O}_{M,z_0}$-submodule of $J(\Psi)_{z_0}$ such that $I\big(\Psi+\varphi_\alpha\big)_{z_0}\subset J_{z_0}$,
where $z_0\in Z_0$.
Denote
\begin{equation}
\label{def of g(t) for boundary pt}
\begin{split}
\inf\Bigg\{ \int_{ \{ \Psi<-t\}}|\tilde{f}|^2e^{-\varphi_\alpha}c(-\Psi): \tilde{f}\in
H^0(\{\Psi<-t\},\mathcal{O} (K_M)  ) \\
\&\, (\tilde{f}-f)_{z_0}\in
\mathcal{O} (K_M)_{z_0} \otimes J_{z_0},\text{for any }  z_0\in Z_0 \Bigg\}
\end{split}
\end{equation}
by $G(t;c,\Psi,\varphi_\alpha,J,f)$, where $t\in[T,+\infty)$, $c$ is a nonnegative function on $(T,+\infty)$ and $|f|^2:=\sqrt{-1}^{n^2}f\wedge \bar{f}$ for any $(n,0)$ form $f$.
Without misunderstanding, we simply denote $G(t;c,\Psi,\varphi_\alpha,J,f)$ by $G(t)$. For various $c$, $\psi$ and $\varphi_\alpha$, we simply denote $G(t;c,\Psi,\varphi_\alpha,J,f)$ by $G(t;c)$, $G(t;\Psi)$ and $G(t;\varphi_\alpha)$ respectively.

In this article, we obtain the following concavity property of $G(t)$.

\begin{Theorem}
\label{main theorem}
Let $c\in\mathcal{P}_{T,M,\Psi}$. If there exists $t \in [T,+\infty)$ satisfying that $G(t)<+\infty$, then $G(h^{-1}(r))$ is concave with respect to  $r\in (\int_{T_1}^{T}c(t)e^{-t}dt,\int_{T_1}^{+\infty}c(t)e^{-t}dt)$, $\lim\limits_{t\to T+0}G(t)=G(T)$ and $\lim\limits_{t \to +\infty}G(t)=0$, where $h(t)=\int_{T_1}^{t}c(t_1)e^{-t_1}dt_1$ and $T_1 \in (T,+\infty)$.
\end{Theorem}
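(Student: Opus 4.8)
The plan is to establish the concavity of $G(h^{-1}(r))$ by the now-standard route from the theory of minimal $L^2$ integrals: prove a differentiated inequality for $G(t)$ on small intervals via a single application of a twisted Ohsawa--Takegoshi-type $L^2$ extension, then upgrade it to global concavity by integration. Before any of this, one should verify basic structural properties of $G$: that $G(t)$ is decreasing in $t$ (clear, since shrinking $\{\Psi<-t'\}\subset\{\Psi<-t\}$ for $t'>t$ restricts the competition set), that $G(t)<+\infty$ for all $t\ge$ some $t_0$ once it is finite somewhere (by restriction), and — the first genuine technical point — that the infimum defining $G(t)$ is attained and the minimizer is unique when $G(t)<+\infty$. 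Uniqueness follows from the parallelogram identity in the Hilbert space $L^2(\{\Psi<-t\},e^{-\varphi_\alpha}c(-\Psi))$ together with the fact that the constraint $(\tilde f-f)_{z_0}\in \mathcal{O}(K_M)_{z_0}\otimes J_{z_0}$ is affine; existence requires a normal-families/weak-compactness argument using the condition $c\in\mathcal{P}_{T,M,\Psi}$ to control the weight away from the exceptional set $E$, plus the negligibility of $X$ and the analyticity of $Z$ from condition $(A)$ to extend candidate limits across $X\cup Z$.

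**The core step** is the following local inequality: for $T\le t_1<t_2$ with $t_2-t_1$ small, and for suitable cutoff and auxiliary functions, one has
\begin{equation*}
G(t_1)\le \frac{\int_{t_1}^{t_2}c(s)e^{-s}\,ds + \text{(correction)}}{\int_{t_1}^{t_2}c(s)e^{-s}\,ds}\,G(t_2) + \text{(error terms)},
\end{equation*}
or, in its clean limiting form, a concavity-type inequality for the function $t\mapsto G(t)$ after the change of variables $r=h(t)$. To get it, take the minimizer $\tilde f_{t_2}$ on $\{\Psi<-t_2\}$ and extend it to $\{\Psi<-t_1\}$ by solving a $\bar\partial$-equation: one writes a candidate $\tilde F = (1-b_\epsilon(\Psi))\tilde f_{t_2} - v$, where $b_\epsilon$ is a smooth approximation of the characteristic function of $(-\infty,-t_2)$ built from $\Psi$, and $v$ solves $\bar\partial v = \bar\partial\big((1-b_\epsilon(\Psi))\tilde f_{t_2}\big) = -b_\epsilon'(\Psi)\bar\partial\Psi\wedge \tilde f_{t_2}$ with an $L^2$ estimate against a modified weight. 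The weight must be chosen as $e^{-\varphi_\alpha}c(-\Psi)$ twisted by factors like $e^{-\Psi}$ and a function of $\Psi$ engineered so that (i) the curvature term in the Bochner--Kodaira--Nakano / Donnelly--Fefferman estimate is nonnegative — here one uses that $\Psi$ is plurisubharmonic on $\{\Psi<-t\}$ (as noted in the excerpt, $F$ has no zeros there, so $\psi-2\log|F|$ is plurisubharmonic) and that $\varphi_\alpha+\psi$ is plurisubharmonic — and (ii) the resulting estimate, after letting $\epsilon\to0$ and optimizing the auxiliary function, produces exactly the coefficients $h(t_1),h(t_2)$. The monotonicity $c(t)e^{-t}$ decreasing is precisely what makes the relevant ODE for the optimal weight solvable with the right sign. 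Since $M\backslash(X\cup Z)$ is weakly pseudoconvex Kähler, the $L^2$ existence theorem for $\bar\partial$ applies there; one then checks the solution extends across $X\cup Z$ using condition $(A)$ and that $E\subset Z\cap\{\Psi=-\infty\}$ is harmless because the weight's lower bound degenerates only there.

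**The main obstacle** I expect is twofold: first, handling the constraint at the (possibly bad) boundary points $z_0\in Z_0$ — one must ensure that $\tilde F - f$ still lies in $\mathcal{O}(K_M)_{z_0}\otimes J_{z_0}$ after the $\bar\partial$-correction, which works because near $z_0$ the cutoff $1-b_\epsilon(\Psi)$ equals $1$ for $t_1$ large enough relative to the local behavior of $\Psi$, so $\tilde F - f \equiv \tilde f_{t_2}-f - v$ and one shows $v_{z_0}\in \mathcal{O}(K_M)_{z_0}\otimes I(\Psi+\varphi_\alpha)_{z_0}\subset \mathcal{O}(K_M)_{z_0}\otimes J_{z_0}$ by an integrability estimate for $v$ against $e^{-\Psi-\varphi_\alpha}$; the hypothesis $I(\Psi+\varphi_\alpha)_{z_0}\subset J_{z_0}$ is exactly designed for this. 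Second, the passage from the one-step local inequality to global concavity of $G\circ h^{-1}$ and the boundary limits $\lim_{t\to T+0}G(t)=G(T)$, $\lim_{t\to+\infty}G(t)=0$: the first limit follows from the local inequality plus lower semicontinuity of $G$ and the finiteness hypothesis, while $G(t)\to0$ as $t\to\infty$ should follow from dominated convergence, since $\{\Psi<-t\}$ shrinks to a set of measure zero (or into $E$) as $t\to\infty$, and one feeds in a fixed admissible competitor on a large sublevel set whose tail $L^2$-mass vanishes. Throughout, the technical bookkeeping of the auxiliary functions and the limit $\epsilon\to0$ (with a diagonal/normal-families argument to produce the actual extension realizing the bound) is routine in this circle of ideas but must be done carefully to match the exact statement with $h(t)=\int_{T_1}^t c(t_1)e^{-t_1}dt_1$.
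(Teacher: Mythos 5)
Your outline follows essentially the same route as the paper: existence/uniqueness of the minimizer, the one-step $\bar\partial$-extension estimate with the twisted weight solving the ODE system (the paper's Lemma \ref{L2 method in JM concavity} and Lemma \ref{derivatives of G}), the treatment of the constraint via $I(\Psi+\varphi_\alpha)_{z_0}\subset J_{z_0}$, and the upgrade to concavity through lower semicontinuity and the one-sided derivative criterion (Lemma \ref{characterization of concave function}), with the limits at $T$ and $+\infty$ obtained exactly as you describe.

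There is, however, one step you treat as routine that is in fact the main technical point of this paper and is not covered by your ``normal-families/weak-compactness argument.'' Every compactness step (existence of the minimizer, lower semicontinuity of $G$, and the limit passage $B_j\to 0$ in the derivative inequality) requires showing that a locally uniform limit $f_0$ of admissible competitors still satisfies $(f_0-f)_{z_0}\in\mathcal{O}(K_M)_{z_0}\otimes J_{z_0}$. For $z_0$ with $\Psi(z_0)=-\infty$ this is the classical closedness of submodules of $\mathcal{O}_{\mathbb{C}^n,o}^{q}$ under uniform convergence (Lemma \ref{closedness}); but for a genuine boundary point $z_0\in Z_0$ the module $J_{z_0}$ is a submodule of $J(\Psi)_{z_0}$, whose elements are only defined on $\{\Psi<-t\}\cap V$, so the classical closedness does not apply directly and convergence on $\{\Psi<-t\}\cap V$ alone says nothing about membership in $J_{z_0}$. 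The paper resolves this by attaching to each $f_o$ a genuine holomorphic germ $\tilde F$ at $o$ (solving an auxiliary $\bar\partial$-equation so that $\int|\tilde F-fF^2|^2e^{-\varphi_\alpha-\varphi_1-\Psi}<+\infty$), proving that this induces an $\mathcal{O}_{\mathbb{C}^n,o}$-module isomorphism $P:H_o/I(\varphi_\alpha+\Psi)_o\to\mathcal{H}_o/\mathcal{I}(\varphi_\alpha+\varphi_1+\Psi)_o$ onto an ideal quotient (Proposition \ref{module isomorphism}), and transferring the closedness through $P$ (Lemma \ref{closedness of module}, then Lemma \ref{global convergence property of module}). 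Without this device, or some substitute for it, the compactness arguments in your plan do not close at boundary points, so you should either supply this module-isomorphism machinery or an alternative proof that $J_{z_0}$ is closed under the relevant limits.
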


When $M$ is a pseudoconvex domain $D$ in $\mathbb{C}^n$, $\varphi_\alpha\equiv0$, $c(t)\equiv 1$ and $T=0$, Theorem \ref{main theorem} degenerates to the concavity property in \cite{BGY-boundary}.

\begin{Remark}
	\label{infty2}Let $c\in\mathcal{P}_{T,M,\Psi}$.	If  $\int_{T_1}^{+\infty}c(t)e^{-t}dt=+\infty$ and $f_{z_0}\notin
\mathcal{O} (K_M)_{z_0} \otimes J_{z_0}$ for some  $ z_0\in Z_0$, then $G(t)=+\infty$ for any $t\geq T$. Thus, when there exists $t \in [T,+\infty)$ satisfying that $G(t)\in(0,+\infty)$, we have $\int_{T_1}^{+\infty}c(t)e^{-t}dt<+\infty$ and $G(\hat{h}^{-1}(r))$ is concave with respect to  $r\in (0,\int_{T}^{+\infty}c(t)e^{-t}dt)$, where $\hat{h}(t)=\int_{t}^{+\infty}c(l)e^{-l}dl$.
\end{Remark}

Let $c(t)$ be a nonnegative measurable function on $(T,+\infty)$. Set
\begin{equation}\nonumber
\begin{split}
\mathcal{H}^2(c,t)=\Bigg\{\tilde{f}:\int_{ \{ \Psi<-t\}}|\tilde{f}|^2e^{-\varphi_\alpha}c(-\Psi)<+\infty,\  \tilde{f}\in
H^0(\{\Psi<-t\},\mathcal{O} (K_M)  ) \\
\& (\tilde{f}-f)_{z_0}\in
\mathcal{O} (K_M)_{z_0} \otimes J_{z_0},\text{for any }  z_0\in Z_0  )\Bigg\},
\end{split}
\end{equation}
where $t\in[T,+\infty)$.

As a corollary of Theorem \ref{main theorem}, we give a necessary condition for the concavity property degenerating to linearity.
\begin{Corollary}
\label{necessary condition for linear of G}
Let $c\in\mathcal{P}_{T,M,\Psi}$.
Assume that $G(t)\in(0,+\infty)$ for some $t\ge T$, and $G(\hat{h}^{-1}(r))$ is linear with respect to $r\in[0,\int_T^{+\infty}c(s)e^{-s}ds)$, where $\hat{h}(t)=\int_{t}^{+\infty}c(l)e^{-l}dl$.

Then there exists a unique holomorphic $(n,0)$ form $\tilde{F}$ on $\{\Psi<-T\}$
such that $(\tilde{F}-f)_{z_0}\in\mathcal{O} (K_M)_{z_0} \otimes J_{z_0}$ holds for any  $z_0\in Z_0$,
and $G(t)=\int_{\{\Psi<-t\}}|\tilde{F}|^2e^{-\varphi_{\alpha}}c(-\Psi)$ holds for any $t\ge T$.

Furthermore
\begin{equation}
\begin{split}
  \int_{\{-t_1\le\Psi<-t_2\}}|\tilde{F}|^2e^{-\varphi_\alpha}a(-\Psi)=\frac{G(T_1;c)}{\int_{T_1}^{+\infty}c(t)e^{-t}dt}
  \int_{t_2}^{t_1}a(t)e^{-t}dt
  \label{other a also linear}
\end{split}
\end{equation}
holds for any nonnegative measurable function $a$ on $(T,+\infty)$, where $T\le t_2<t_1\le+\infty$ and $T_1 \in (T,+\infty)$.
\end{Corollary}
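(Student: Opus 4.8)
The plan is to derive Corollary~\ref{necessary condition for linear of G} from the concavity statement in Theorem~\ref{main theorem} together with the characterization of equality in a concavity inequality. First I would record that, by Remark~\ref{infty2}, the hypothesis $G(t)\in(0,+\infty)$ forces $\int_{T_1}^{+\infty}c(t)e^{-t}\,dt<+\infty$, so $\hat h(t)=\int_t^{+\infty}c(l)e^{-l}\,dl$ is a well-defined, strictly decreasing, absolutely continuous bijection from $[T,+\infty)$ onto $[0,\int_T^{+\infty}c(l)e^{-l}\,dl)$, and the reparametrization $r=\hat h(t)$ is legitimate. The linearity assumption then says $G(\hat h^{-1}(r)) = \lambda r$ for $r\in[0,\hat h(T))$, with $\lambda = G(T_1;c)/\int_{T_1}^{+\infty}c(t)e^{-t}\,dt$ (the slope is pinned down by evaluating at $r=\hat h(T_1)$ and using $G(T_1)=\lambda\hat h(T_1)$, which is consistent once one checks $G(T)=\lambda\hat h(T)$ via the limit statement $\lim_{t\to T+0}G(t)=G(T)$ from Theorem~\ref{main theorem}).

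The heart of the argument is the existence and uniqueness of the holomorphic $(n,0)$ form $\tilde F$ on $\{\Psi<-T\}$ achieving $G(t)$ for every $t$. I would proceed as follows. For each $t>T$, pick a minimizing sequence for $G(t)$; since $c\in\mathcal{P}_{T,M,\Psi}$ gives the requisite positive lower bounds, a normal-families / weak-$L^2$ compactness argument (using the locally negligible set $X$ and the $L^2$-extension property built into condition $(A)$ to ignore $X\cup Z$) produces a minimizer $\tilde F_t\in\mathcal{H}^2(c,t)$, which is unique because $\mathcal{H}^2(c,t)$ is an affine subspace of a Hilbert space and the norm is strictly convex. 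The linearity of $G$ in $r$ — i.e.\ the degeneration of the concavity inequality to an identity on the whole interval — should force these minimizers to be compatible: $\tilde F_{t_1}|_{\{\Psi<-t_2\}}=\tilde F_{t_2}$ for $T<t_2<t_1$. Concretely, one shows that if the slopes of $G(\hat h^{-1}(\cdot))$ on two adjacent subintervals agree, then the ``error term'' in the proof of Theorem~\ref{main theorem} (which is what makes $G$ strictly concave unless a certain extension of $\tilde F_{t_1}$ is already optimal on $\{\Psi<-t_2\}$) must vanish, and by uniqueness the optimal extension is $\tilde F_{t_2}$ itself. Gluing the $\tilde F_t$ over $t\downarrow T$ yields a single holomorphic $(n,0)$ form $\tilde F$ on $\{\Psi<-T\}=\bigcup_{t>T}\{\Psi<-t\}$ with $(\tilde F-f)_{z_0}\in\mathcal{O}(K_M)_{z_0}\otimes J_{z_0}$ for all $z_0\in Z_0$ and $G(t)=\int_{\{\Psi<-t\}}|\tilde F|^2e^{-\varphi_\alpha}c(-\Psi)$; uniqueness of $\tilde F$ follows from uniqueness of each $\tilde F_t$ and the limit $\lim_{t\to T+0}G(t)=G(T)$.

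For the final formula \eqref{other a also linear}, I would first observe that with the minimizer $\tilde F$ in hand, $G(t)=\int_{\{\Psi<-t\}}|\tilde F|^2e^{-\varphi_\alpha}c(-\Psi)$ is, as a function of $t$, an integral of the fixed positive measure $\mu:=|\tilde F|^2e^{-\varphi_\alpha}\,c(-\Psi)\cdot\mathbf{1}$ over the decreasing family $\{\Psi<-t\}$; differentiating $G(\hat h^{-1}(r))=\lambda r$ and using the coarea-type identity $\tfrac{d}{dt}\int_{\{\Psi<-t\}}g\,(\text{stuff}) = -\tfrac{d}{dt}\int_{\{-t\le\Psi\}}\cdots$ together with $\hat h'(t)=-c(t)e^{-t}$, one gets that the push-forward of $|\tilde F|^2e^{-\varphi_\alpha}\,dV$ under $-\Psi$ has density $\lambda\,e^{-t}\,dt$ on $(T,+\infty)$ relative to $c(t)e^{-t}\,dt$; equivalently, for the measure $\nu$ on $(T,+\infty)$ defined by $\nu(B)=\int_{\{-\Psi\in B\}}|\tilde F|^2e^{-\varphi_\alpha}$, one has $c(t)\,d\nu = \lambda\, e^{-t}\,dt$ (this is exactly the $a=c$ instance rewritten). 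Since this determines $\nu$ completely, integrating against an arbitrary nonnegative measurable $a$ gives
\begin{equation}\nonumber
\int_{\{-t_1\le\Psi<-t_2\}}|\tilde F|^2e^{-\varphi_\alpha}a(-\Psi)=\int_{t_2}^{t_1}a(t)\,d\nu(t)=\lambda\int_{t_2}^{t_1}a(t)e^{-t}\,dt,
\end{equation}
which is \eqref{other a also linear} after substituting $\lambda=G(T_1;c)/\int_{T_1}^{+\infty}c(t)e^{-t}\,dt$; the endpoint cases $t_2=T$ and $t_1=+\infty$ follow by monotone convergence.

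The main obstacle I anticipate is the rigidity step: showing that global linearity of $G(\hat h^{-1}(r))$ forces the family of minimizers $\{\tilde F_t\}$ to be mutually compatible and hence to glue. This requires unpacking the proof of Theorem~\ref{main theorem} — in particular identifying precisely the term whose vanishing is equivalent to equality in the concavity inequality on each subinterval — and then invoking strict convexity of the $L^2$ norm to pass from ``this term vanishes'' to ``the optimal extension of $\tilde F_{t_1}$ to $\{\Psi<-t_2\}$ coincides with $\tilde F_{t_2}$''. A secondary technical point is ensuring the differentiation/coarea manipulation in the last paragraph is valid for the possibly singular weight $\Psi$ and across the sets $X\cup Z$, which is handled by the locally-negligibility in condition $(A)$ and the defining properties of the class $\mathcal{P}_{T,M,\Psi}$.
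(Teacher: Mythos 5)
Your overall architecture coincides with the paper's: identify the slope $\lambda=G(T_1;c)/\int_{T_1}^{+\infty}c(t)e^{-t}dt$, use the unique minimizers $F_t$ from Lemma \ref{existence of F}, force their compatibility from the equality case of the concavity, glue over $t\downarrow T$, and then read off \eqref{other a also linear} from the resulting identity for the push-forward of $|\tilde F|^2e^{-\varphi_\alpha}$ under $-\Psi$. Your treatment of the last step is in fact cleaner than the paper's: the paper verifies $d\nu=\lambda e^{-s}ds$ by hand with Riemann sums over the (at most countable) discontinuity set of $c$, whereas observing that $c\,d\nu$ and $\lambda c(s)e^{-s}ds$ agree on all intervals, hence on all Borel sets, and that $c>0$, gives the same conclusion directly; the endpoint cases then follow by monotone convergence as you say.

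The genuine gap is the rigidity step, which you correctly flag as the main obstacle but do not carry out, and the mechanism you propose for it is not the one that works. Strict convexity of the $L^2$ norm only gives uniqueness of the minimizer on a \emph{fixed} sublevel set; it does not by itself show that the extension $\tilde F_{t_1}$ produced by Lemma \ref{L2 method in JM concavity} agrees with $F_{t_0}$ on the smaller set $\{\Psi<-t_0\}$. What the paper actually uses is that equality throughout the chain \eqref{derivative of G 5} yields
\begin{equation*}
\int_{\{\Psi<-t_0\}}|\tilde F_{t_1}-F_{t_0}|^2e^{-\varphi_\alpha}c(-\Psi)
=\int_{\{\Psi<-t_0\}}|\tilde F_{t_1}-F_{t_0}|^2e^{-\varphi_\alpha}e^{-\Psi-t_0}c(t_0),
\end{equation*}
since $v_{t_0}(\Psi)=-t_0$ there, and the two weights satisfy $e^{-\Psi-t_0}c(t_0)\ge c(-\Psi)$ by monotonicity of $c(t)e^{-t}$. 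The point is that this inequality must be \emph{strict} on $\{\Psi<-t_2\}$ for some $t_2>t_0$: because $\int_{T_1}^{+\infty}c(t)e^{-t}dt<+\infty$ (Remark \ref{infty2}), $c(t)e^{-t}$ cannot equal the constant $c(t_0)e^{-t_0}$ for all large $t$, so $c(t)e^{-t}\le c(t_0)e^{-t_0}-\epsilon$ for $t\ge t_2$. Equality of the two integrals then forces $\int_{\{\Psi<-t_2\}}|\tilde F_{t_1}-F_{t_0}|^2e^{-\varphi_\alpha-\Psi}=0$, hence $\tilde F_{t_1}=F_{t_0}$ there and then on $\{\Psi<-t_0\}$. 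Without this integrability-forced strict decrease of $c(t)e^{-t}$ your "error term must vanish" assertion has no proof; this is the one idea you would still need to supply to complete the argument.
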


\begin{Remark}
\label{rem:linear}
If $\mathcal{H}^2(\tilde{c},t_0)\subset\mathcal{H}^2(c,t_0)$ for some $t_0\ge T$, we have
\begin{equation}
\begin{split}
  G(t_0;\tilde{c})=\int_{\{\Psi<-t_0\}}|\tilde{F}|^2e^{-\varphi_\alpha}\tilde{c}(-\Psi)=
  \frac{G(T_1;c)}{\int_{T_1}^{+\infty}c(t)e^{-t}dt}
  \int_{t_0}^{+\infty}\tilde{c}(s)e^{-s}ds,
  \label{other c also linear}
\end{split}
\end{equation}
 where $\tilde{c}$ is a nonnegative measurable function on $(T,+\infty)$ and $T_1 \in (T,+\infty)$.
\end{Remark}

\subsection{Applications}

In this section, we give some applications of Theorem \ref{main theorem}.

\subsubsection{Concavity property at inner points}

Let $M$ be a complex manifold. Let $X$ and $Z$ be closed subsets of $M$. Assume that the triple $(M,X,Z)$ satisfies condition $(A)$. Let $\psi$ be a plurisubharmonic function on $M$. Let $\varphi$ be a Lebesgue measurable function on $M$, such that $\psi+\varphi$ is a plurisubharmonic function on $M$. Denote $T=-\sup\limits_M \psi$.

\begin{Definition}
We call a positive measurable function $c$ (so-called ``\textbf{gain}") on $(T,+\infty)$ in class $P_{T,M}$ if the following two statements hold:
\par
$(1)$ $c(t)e^{-t}$ is decreasing with respect to $t$;
\par
$(2)$ there is a closed subset $E$ of $M$ such that $E\subset Z\cap \{\psi(z)=-\infty\}$ and for any compact subset $K\subset M\backslash E$, $e^{-\varphi}c(-\psi)$ has a positive lower bound on $K$.
\end{Definition}

Let $Z_0$ be a subset of $\{\psi=-\infty\}$ such that $Z_0 \cap
Supp(\mathcal{O}/\mathcal{I}(\varphi+\psi))\neq \emptyset$. Let $U \supset Z_0$ be
an open subset of $M$, and let $f$ be a holomorphic $(n,0)$ form on $U$. Let $\mathcal{F}_{z_0} \supset \mathcal{I}(\varphi+\psi)_{z_0}$ be an ideal of $\mathcal{O}_{z_0}$ for any $z_{0}\in Z_0$.

Denote
\begin{equation}\label{def of g(t) in concavity II}
\begin{split}
\inf\Bigg\{\int_{ \{ \psi<-t\}}|\tilde{f}|^2e^{-\varphi}c(-\psi): \tilde{f}\in
H^0(\{\psi<-t\},\mathcal{O} (K_M)  ) \\
\&\, (\tilde{f}-f)\in
H^0(Z_0 ,(\mathcal{O} (K_M) \otimes \mathcal{F})|_{Z_0})\Bigg\}
\end{split}
\end{equation}
by $G(t;c,\psi,\varphi,f,\mathcal{F})$, where $t\in[T,+\infty)$, $c$ is a nonnegative function on $(T,+\infty)$, $|f|^2:=\sqrt{-1}^{n^2}f\wedge \bar{f}$ for any $(n,0)$ form $f$,
and $(\tilde{f}-f)\in
H^0(Z_0 ,(\mathcal{O} (K_M) \otimes \mathcal{F})|_{Z_0} )$ means $(\tilde{f}-f,z_0)\in(\mathcal{O}(K_M)\otimes \mathcal{F})_{z_0}$ for all $z_0\in Z_0$.

Using Theorem \ref{main theorem}, we obtain the following concavity property of $G(t;c,\psi,\varphi,f,\mathcal{F})$.
\begin{Corollary}
\label{generalization for concavity II}

Assume that $c\in\mathcal{P}_{T,M}$. If there exists $t_0 \ge T$ satisfying that $G(t_0;c,\psi,\varphi,f,\mathcal{F})<+\infty$, then $G(h^{-1}(r);c,\psi,\varphi,f,\mathcal{F})$ is concave with respect to  $r\in (\int_{T_1}^{T}c(t)e^{-t}dt,\int_{T_1}^{+\infty}c(t)e^{-t}dt)$, $\lim\limits_{t\to T+0}G(t;c,\psi,\varphi,f,\mathcal{F})=G(T;c,\psi,\varphi,f,\mathcal{F})$ and \\ $\lim\limits_{t \to +\infty}G(t;c,\psi,\varphi,f,\mathcal{F})=0$, where $h(t)=\int_{T_1}^{t}c(t_1)e^{-t_1}dt_1$ and $T_1 \in (T,+\infty)$.
\end{Corollary}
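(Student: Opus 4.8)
The plan is to obtain Corollary \ref{generalization for concavity II} as a direct specialization of Theorem \ref{main theorem}, taking the holomorphic function in the theorem to be $F\equiv 1$ so that its weight $\Psi$ collapses to $\psi$. Keeping $X,Z$ as in the hypothesis (so $(M,X,Z)$ satisfies condition $(A)$) and setting $\varphi_\alpha:=\varphi$, I would first note that $T=-\sup_M\psi$ forces $\psi\le -T$ on $M$, hence $\Psi=\min\{\psi-2\log|F|,-T\}=\psi$, with no ambiguity at zeros of $F$ since $F$ has none; and $\psi+\varphi_\alpha=\psi+\varphi$ is plurisubharmonic by assumption, so the standing hypotheses of Theorem \ref{main theorem} are satisfied.

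Next I would match the remaining data. For $z_0\in Z_0$ take $J_{z_0}:=\mathcal{F}_{z_0}$. The structural point is that $Z_0\subset\{\psi=-\infty\}=\{\Psi=-\infty\}\subset\bigcap_{t>T}\{\Psi<-t\}$, so every $z_0\in Z_0$ is an ``inner point''; by the discussion in the excerpt following the definition of $J(\Psi)_{z_0}$ this gives $J(\Psi)_{z_0}=\mathcal{O}_{M,z_0}$, so the module condition $(\tilde f-f)_{z_0}\in\mathcal{O}(K_M)_{z_0}\otimes J_{z_0}$ in \eqref{def of g(t) for boundary pt} becomes the plain germ condition $(\tilde f-f,z_0)\in(\mathcal{O}(K_M)\otimes\mathcal{F})_{z_0}$ appearing in \eqref{def of g(t) in concavity II}, and $I(\Psi+\varphi_\alpha)_{z_0}=\mathcal{I}(\psi+\varphi)_{z_0}\subset\mathcal{F}_{z_0}=J_{z_0}$ holds by the hypothesis on $\mathcal{F}$. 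Taking $V:=U$ and restricting $f$ to $\{\Psi<-t_0\}\cap V$ for any real $t_0>T$, one sees that the admissible competitors and the integrand $|\tilde f|^2e^{-\varphi_\alpha}c(-\Psi)=|\tilde f|^2e^{-\varphi}c(-\psi)$ in \eqref{def of g(t) for boundary pt} agree with those in \eqref{def of g(t) in concavity II}, so $G(t;c,\Psi,\varphi_\alpha,J,f)=G(t;c,\psi,\varphi,f,\mathcal{F})$ for all $t\in[T,+\infty)$.

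The only step I expect to require actual work — and hence the main, if modest, obstacle — is verifying that $\mathcal{P}_{T,M}\subset\mathcal{P}_{T,M,\Psi}$ for this choice of $F$ and $\Psi$, so that the gain hypothesis transfers. Property $(1)$ is identical in both classes. For property $(2)$ I would keep the same closed set $E\subset Z\cap\{\psi=-\infty\}$, fix an arbitrary $T_1\in(T,+\infty)$, set $\Psi_1:=\min\{\psi,-T_1\}$, and for a compact $K\subset M\setminus E$ split $K=K_1\cup K_2$ with $K_1:=K\cap\{\psi\le -T_1\}$ and $K_2:=K\cap\{\psi>-T_1\}$. On $K_1$ (a compact subset of $M\setminus E$) one has $-\Psi_1=-\psi$, so $e^{-\varphi}c(-\Psi_1)=e^{-\varphi}c(-\psi)$ has a positive lower bound by the definition of $\mathcal{P}_{T,M}$; on $K_2$ one has $-\Psi_1\equiv T_1$, and since $\psi+\varphi$ is plurisubharmonic it is bounded above, say by $C$, on a neighborhood of $K$, whence $\varphi\le C+T_1$ on $K_2$ and $e^{-\varphi}c(-\Psi_1)\ge e^{-C-T_1}c(T_1)>0$ there (using positivity of $c$). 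Combining gives a positive lower bound on all of $K$, so $c\in\mathcal{P}_{T,M,\Psi}$.

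Granting these identifications, the conclusion is immediate: if $G(t_0;c,\psi,\varphi,f,\mathcal{F})<+\infty$ for some $t_0\ge T$ then $G(t_0;c,\Psi,\varphi_\alpha,J,f)<+\infty$, so Theorem \ref{main theorem} yields concavity of $G(h^{-1}(r);c,\Psi,\varphi_\alpha,J,f)$ on $(\int_{T_1}^{T}c(t)e^{-t}dt,\int_{T_1}^{+\infty}c(t)e^{-t}dt)$ together with $\lim\limits_{t\to T+0}G(t;c,\Psi,\varphi_\alpha,J,f)=G(T;c,\Psi,\varphi_\alpha,J,f)$ and $\lim\limits_{t\to+\infty}G(t;c,\Psi,\varphi_\alpha,J,f)=0$, where $h(t)=\int_{T_1}^{t}c(t_1)e^{-t_1}dt_1$; rewriting everything through $G(t;c,\Psi,\varphi_\alpha,J,f)=G(t;c,\psi,\varphi,f,\mathcal{F})$ gives exactly Corollary \ref{generalization for concavity II}.
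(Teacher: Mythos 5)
Your proposal is correct and follows essentially the same route as the paper's proof: set $F\equiv1$, $\varphi_\alpha:=\varphi$, $J_{z_0}:=\mathcal{F}_{z_0}$, observe that $\Psi=\psi$ so that the two minimal $L^2$ integrals coincide, verify $c\in\mathcal{P}_{T,M,\Psi}$ by splitting a compact $K\subset M\backslash E$ according to whether $\psi<-T_1$ or $\psi\ge-T_1$ and using plurisubharmonicity of $\varphi+\psi$ on the latter piece, and then invoke Theorem \ref{main theorem}. The only cosmetic caveat is that $K\cap\{\psi\le-T_1\}$ need not itself be compact (since $\{\psi\le-T_1\}$ need not be closed for an upper semicontinuous $\psi$), but this is immaterial because the positive lower bound of $e^{-\varphi}c(-\psi)$ on all of $K$ already restricts to that subset.
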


It follows from Corollary \ref{necessary condition for linear of G} and Corollary \ref{generalization for concavity II} that we have the following remark.

\begin{Remark}
\label{necessary condition for linear of concavity II}
Let $c\in\mathcal{P}_{T,M}$.
Assume that $G(t;c,\psi,\varphi,f,\mathcal{F})\in(0,+\infty)$ for some $t\ge T$, and $G(\hat{h}^{-1}(r);c,\psi,\varphi,f,\mathcal{F})$ is linear with respect to $r\in[0,\int_T^{+\infty}c(s)e^{-s}ds)$, where $\hat{h}(t)=\int_{t}^{+\infty}c(l)e^{-l}dl$.

Then there exists a unique holomorphic $(n,0)$ form $\tilde{F}$ on $M$
such that $(\tilde{F}-f)\in
H^0(Z_0 ,(\mathcal{O} (K_M) \otimes \mathcal{F})|_{Z_0})$
and $G(t;c,\psi,\varphi,f,\mathcal{F})=\int_{\{\psi<-t\}}|\tilde{F}|^2e^{-\varphi}c(-\psi)$ holds for any $t\ge T$.

Furthermore
\begin{equation}\nonumber
\begin{split}
  \int_{\{-t_1\le\psi<-t_2\}}|\tilde{F}|^2e^{-\varphi}a(-\psi)=\frac{G(T_1;c,\psi,\varphi,f,\mathcal{F})}{\int_{T_1}^{+\infty}c(t)e^{-t}dt}
  \int_{t_2}^{t_1}a(t)e^{-t}dt
\end{split}
\end{equation}
holds for any nonnegative measurable function $a$ on $(T,+\infty)$, where $T\le t_2<t_1\le+\infty$ and $T_1 \in (T,+\infty)$.
\end{Remark}

When $\psi$ is a plurisubharmonic function on $M$, and $\{\psi<-t\}\backslash (X\cup Z)$ is a weakly pseudoconvex K\"ahler manifold for any $t\in\mathbb{R}$, Corollary \ref{generalization for concavity II} and Remark \ref{necessary condition for linear of concavity II} can be referred to \cite{GMY} (see also \cite{GM_Sci} and \cite{BGMY7}).

\subsubsection{An optimal support function related to $I(a\Psi)$}
Let $M$ be an $n-$dimensional complex manifold. Let $X$ and $Z$ be closed subsets of $M$, and let $(M,X,Z)$  satisfy condition $(A)$. Let $K_M$ be the canonical line bundle on $M$. Let $F$ be a holomorphic function on $M$. Assume that $F$ is not identically zero. Let $\psi$ be a plurisubharmonic function on $M$.

 Denote that
$$\Psi:=\min\{\psi-2\log|F|,0\}.$$
For any $z \in M$ satisfying $F(z)=0$, we set $\Psi(z)=0$.
Denote that $M_t:=\{z\in M:-t\le\Psi(z)<0\}$.
Let $Z_0$ be a subset of $M$, and let  $f$ be a holomorphic $(n,0)$ form on $\{\Psi<0\}$.
Denote
\begin{displaymath}\begin{split}
	\inf\bigg\{\int_{M_t}|\tilde f|^2:&f\in H^0(\{\Psi<0\},\mathcal{O}(K_M))\\
	&\&\,(\tilde f-f)_{z_0}\in \mathcal{O}(K_M)\otimes I(\Psi)_{z_0}\,\text{for any }  z_0\in Z_0\bigg\}	
\end{split}\end{displaymath}
by $C_{f,\Psi,t}(Z_0)$ for any $t\ge0$. When $C_{f,\Psi,t}(Z_0)=0$ or $+\infty$, we set $\frac{\int_{M_t}|f|^2e^{-\Psi}}{C_{f,\Psi,t}(Z_0)}=+\infty$.

We obtain the following optimal support function of $\frac{\int_{M_t}|f|^2e^{-\Psi}}{C_{f,\Psi,t}(Z_0)}$.
\begin{Proposition}
	\label{p:1}
	Assume that $\int_{\{\Psi<-l\}}|f|^2<+\infty$ holds for any $l>0$. Then the inequality
	\begin{equation}
		\label{eq:0304a}
		\frac{\int_{M_t}|f|^2e^{-\Psi}}{C_{f,\Psi,t}(Z_0)}\ge\frac{t}{1-e^{-t}}
	\end{equation}
	holds for any $t\ge0$, where $\frac{t}{1-e^{-t}}$ is the optimal support function.
\end{Proposition}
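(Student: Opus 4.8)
The plan is first to dispose of the degenerate cases. By the stated convention the inequality is trivial when $C_{f,\Psi,t}(Z_0)\in\{0,+\infty\}$, and it holds with infinite left-hand side when $C_{f,\Psi,t}(Z_0)\in(0,+\infty)$ while $\int_{M_t}|f|^2e^{-\Psi}=+\infty$; the case $t=0$ is vacuous since $M_0=\varnothing$. Hence one may assume $t>0$, $C(t):=C_{f,\Psi,t}(Z_0)\in(0,+\infty)$, and $\int_{M_t}|f|^2e^{-\Psi}<+\infty$; since $e^{-\Psi}\ge 1$ on $M_t$, this forces $\int_{M_t}|f|^2<+\infty$, and together with the hypothesis $\int_{\{\Psi<-l\}}|f|^2<+\infty$ $(l>0)$ also $\int_{\{\Psi<0\}}|f|^2<+\infty$.

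Next I would introduce, for $\sigma\in[0,t)$, the annular minimal integral $G(\sigma)$ defined as the infimum of $\int_{\{-t\le\Psi<-\sigma\}}|\tilde g|^2$ over $\tilde g\in H^0(\{\Psi<-\sigma\},\mathcal O(K_M))$ with $(\tilde g-f)_{z_0}\in\mathcal O(K_M)_{z_0}\otimes I(\Psi)_{z_0}$ for all $z_0\in Z_0$, and $G(\sigma):=0$ for $\sigma\ge t$; thus $G(0)=C(t)$ and $G(\,\cdot\,)=G(\,\cdot\,;\mathbf 1_{(0,t]},\Psi,0,I(\Psi),f)$. The argument then rests on two facts. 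The first is an elementary slicing identity: since $e^{-\Psi}=1+\int_0^{-\Psi}e^{\sigma}\,d\sigma$ on $M_t$ (where $-\Psi\in(0,t]$), Fubini gives
\[
\int_{M_t}|f|^2e^{-\Psi}=\int_{M_t}|f|^2+\int_0^t e^{\sigma}\Big(\int_{\{-t\le\Psi<-\sigma\}}|f|^2\Big)d\sigma,
\]
and since $f$, resp.\ its restriction to $\{\Psi<-\sigma\}$, is a competitor for $C(t)$, resp.\ for $G(\sigma)$, this yields $\int_{M_t}|f|^2e^{-\Psi}\ge C(t)+\int_0^t e^{\sigma}G(\sigma)\,d\sigma$. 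The second is the concavity of $G$: although $\mathbf 1_{(0,t]}\notin\mathcal P_{0,M,\Psi}$, the gains $b_\varepsilon:=\mathbf 1_{(0,t]}+\varepsilon\mathbf 1_{(t,+\infty)}$ lie in $\mathcal P_{0,M,\Psi}$ for $0<\varepsilon<1$ (both defining conditions are immediate, with $E=\varnothing$), so Theorem \ref{main theorem} (via Remark \ref{infty2}, with $T=0$) applies to $G(\,\cdot\,;b_\varepsilon)$; reading off the reparametrization $r=\int_\sigma^{+\infty}b_\varepsilon(u)e^{-u}\,du=(e^{-\sigma}-e^{-t})+\varepsilon e^{-t}$ for $\sigma\in[0,t]$ and letting $\varepsilon\downarrow0$ (with $G(\sigma;b_\varepsilon)\to G(\sigma)$) exhibits $\sigma\mapsto G(\sigma)$ as a concave function of $e^{-\sigma}-e^{-t}$ on $[0,1-e^{-t}]$, vanishing at $\sigma=t$ and equal to $C(t)$ at $\sigma=0$; being concave with a zero endpoint it lies above its chord, i.e.\ $G(\sigma)\ge\frac{C(t)}{1-e^{-t}}(e^{-\sigma}-e^{-t})$. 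Feeding this into the slicing inequality and using $\int_0^t e^{\sigma}(e^{-\sigma}-e^{-t})\,d\sigma=t-(1-e^{-t})$ gives
\[
\int_{M_t}|f|^2e^{-\Psi}\ \ge\ C(t)+\frac{C(t)}{1-e^{-t}}\big(t-1+e^{-t}\big)=\frac{t}{1-e^{-t}}\,C(t),
\]
which is \eqref{eq:0304a}.

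For optimality of $\frac{t}{1-e^{-t}}$ I would exhibit equality in the case $M=\Delta$ the unit disc, $X=Z=\varnothing$, $F\equiv1$, $\psi=2\log|z|$ (so $\Psi=2\log|z|$ and $M_t=\{e^{-t/2}\le|z|<1\}$), $Z_0=\{0\}$ with $I(\Psi)_0=\mathfrak m_0$, and $f=dz$: every competitor is $h\,dz$ with $h\in\mathcal O(\Delta)$ and $h(0)=1$, a monomial expansion shows the minimizer of $\int_{M_t}|h|^2$ is $h\equiv1$, so $C_{f,\Psi,t}(\{0\})=\int_{M_t}|dz|^2=2\pi(1-e^{-t})$, while $\int_{M_t}|f|^2e^{-\Psi}=2\int_{M_t}|z|^{-2}\,dV=2\pi t$, and the ratio is exactly $\frac{t}{1-e^{-t}}$.

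The step I expect to require the most care is the concavity of the annular functional $G$ in the second fact above: transferring the concavity of Theorem \ref{main theorem} through the approximation of the discontinuous gain $\mathbf 1_{(0,t]}$ by elements of $\mathcal P_{0,M,\Psi}$, and in particular justifying $G(\sigma;b_\varepsilon)\to G(\sigma)$ as $\varepsilon\downarrow0$, which one obtains from the existence of near-minimizers with finite $\int_{\{\Psi<-t\}}|\cdot|^2$ (e.g.\ via the exhaustions $\{\Psi>-T_2\}$, $T_2\to+\infty$). The slicing identity, the elementary integral, and the disc computation are routine.
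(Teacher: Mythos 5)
Your proposal is correct and follows essentially the same route as the paper: approximate the truncated gain $\mathbf{1}_{(0,t]}$ from above by gains in $\mathcal{P}_{0,M,\Psi}$ (the paper uses smooth mollifications $c_t^n$ with values in $[\tfrac{1}{n+1},1]$ where you use the step functions $b_\varepsilon$), apply Theorem \ref{main theorem} to get the chord bound $\int_{\{-t\le\Psi<-\sigma\}}|f|^2\ge\frac{e^{-\sigma}-e^{-t}}{1-e^{-t}}C_{f,\Psi,t}(Z_0)$, and convert it into \eqref{eq:0304a} by the Fubini slicing. The one step you flag as delicate, the convergence $G(\sigma;b_\varepsilon)\to G(\sigma)$, is in fact avoidable: since $f$ itself satisfies $\int_{\{\Psi<-t\}}|f|^2<+\infty$ by hypothesis, you can write $\int_{\{-t\le\Psi<-\sigma\}}|f|^2+\varepsilon\int_{\{\Psi<-t\}}|f|^2\ge G(\sigma;b_\varepsilon)\ge\frac{(e^{-\sigma}-e^{-t})+\varepsilon e^{-t}}{(1-e^{-t})+\varepsilon e^{-t}}\,C_{f,\Psi,t}(Z_0)$ and let $\varepsilon\downarrow0$, which is exactly the one-sided sandwich the paper uses and never requires the limit functional $G(\sigma)$ or its concavity.
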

 Proposition \ref{p:1} can be referred to \cite{GY-support} when $M$ is a pseudoconvex domain in $\mathbb{C}^n$, $F\equiv1$, $Z_0=\{z_0\}\subset M$  and $\psi(z_0)=-\infty$.

Take $M=\Delta\subset\mathbb{C}$, $Z_{0}=o$ the origin of $\mathbb{C}$, $F\equiv1$, $\psi=2\log{|z|}$ and $f\equiv dz$.
It is clear that $\int_M|f|^2<+\infty$. Note that $C_{f,\Psi,t}(Z_0)=2\pi(1-e^{-t})$ and $\int_{M_t}|f|^2e^{-\Psi}=2t\pi$.
Then $\frac{\int_{M_t}|f|^2e^{-\Psi}}{C_{f,\Psi,t}(Z_0)}=\frac{t}{1-e^{-t}}$, which shows the optimality of the support function $\frac{t}{1-e^{-t}}$.

\subsubsection{Strong openness property of $I(a\Psi+\varphi)_o$ and a twisted version}

Let $D\subseteq\mathbb{C}^n$ be a pseudoconvex domain containing the origin $o$, and let $\psi$ be a plurisubharmonic function on $D$. Let $F\not\equiv0$ be a holomorphic function on $D$, and let $\varphi$ be a plurisubharmonic function on $D$.
Denote that
$$\Psi:=\min\{\psi-2\log|F|,0\}.$$
For any $z \in M$ satisfying $F(z)=0$, we set $\Psi(z)=0$.

Recall that $f_{o}\in I(a\Psi+b\varphi)_{o}$ if and only if there exist $t\gg 0$ and a neighborhood $V$ of $o$ such that $\int_{\{\Psi<-t\}\cap V}|h|^2e^{-a\Psi-b\varphi}<+\infty$, where $a\ge0$ and $b\ge0$. Denote that
$$I_+(a\Psi+b\varphi)_o:=\cup_{s>a}I(s\Psi+b\varphi)_o$$
 for any $b\ge0$. In \cite{BGY-boundary}, Bao-Guan-Yuan gave  the strong openness property of $I(a\Psi)_o$.

\begin{Theorem}
	[\cite{BGY-boundary}]\label{p:soc}
	$I(a\Psi)_o=I_+(a\Psi)_o$ holds for any $a\ge0$.
\end{Theorem}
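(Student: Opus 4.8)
\emph{Plan.} The asserted identity has a trivial inclusion and a substantial one. For $I_+(a\Psi)_o\subseteq I(a\Psi)_o$: since $\Psi\le 0$ we have $e^{-s\Psi}\ge e^{-a\Psi}$ for every $s>a$, hence $\int_{\{\Psi<-t\}\cap V}|h|^2e^{-a\Psi}\le\int_{\{\Psi<-t\}\cap V}|h|^2e^{-s\Psi}$, so $I(s\Psi)_o\subseteq I(a\Psi)_o$ and, taking the union over $s>a$, the inclusion follows. The rest is the strong openness direction $I(a\Psi)_o\subseteq I_+(a\Psi)_o$. First I would reduce to $a=1$: assuming $a>0$, pick an integer $m\ge a$ and set $F_1:=F^m$, $\psi_1:=a\psi+2(m-a)\log|F|$; then $\psi_1$ is plurisubharmonic (as $a>0$, $m-a\ge0$) and $\Psi_1:=\min\{\psi_1-2\log|F_1|,0\}=a\Psi$ with the usual convention on $\{F=0\}$. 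One checks $I(s\Psi_1)_o=I(sa\Psi)_o$ for all $s\ge0$, whence $I(\Psi_1)_o=I(a\Psi)_o$ and $I_+(\Psi_1)_o=\bigcup_{\tau>1}I(\tau\Psi_1)_o=\bigcup_{\sigma>a}I(\sigma\Psi)_o=I_+(a\Psi)_o$. As $\Psi_1$ again has the standard form $\min\{(\text{psh})-2\log|(\text{holo})|,0\}$, it suffices to prove $I(\Psi)_o=I_+(\Psi)_o$ in the case $a=1$; I write $\Psi,\psi,F$ for $\Psi_1,\psi_1,F_1$.

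Next I would isolate the genuinely new case. If $o\notin\overline{\{\Psi<-t\}}$ for some $t$, then $J(\Psi)_o=\{0\}$ and both sides are $\{0\}$. If $F(o)\neq0$, then on a neighbourhood $W$ of $o$ the function $\varphi:=\psi-2\log|F|$ is an honest plurisubharmonic function (being zero-free, $\log|F|$ is pluriharmonic on $W$), $\{\Psi<-t\}\cap W=\{\varphi<-t\}\cap W$ for $t>0$, and $I(s\Psi)_o=\mathcal I(s\varphi)_o$ for every $s\ge0$; hence $I(\Psi)_o=\mathcal I(\varphi)_o$ and $I_+(\Psi)_o=\bigcup_{s>1}\mathcal I(s\varphi)_o=\mathcal I_+(\varphi)_o$, so the claim is precisely the strong openness property of the multiplier ideal of $\varphi$, i.e. \cite{GZSOC}. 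The remaining (and essential) case is $F(o)=0$ together with $o\in\overline{\{\Psi<-t\}}$ for all $t$ --- the genuine boundary situation, in which $\psi-2\log|F|$ is not finite near $\{F=0\}$ and no reduction to a multiplier ideal on a full neighbourhood is available.

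In that boundary case I would argue by contradiction: fix $f_o\in I(\Psi)_o$ and suppose $f_o\notin I((1+\delta)\Psi)_o$ for every $\delta>0$. Shrink to a small Stein neighbourhood $M$ of $o$ (so $(M,\emptyset,\emptyset)$ satisfies condition $(A)$), take $Z_0=\{o\}$ and regard $f$ as an $(n,0)$ form on $\{\Psi<-t_0\}\cap M$. For a small $\delta>0$ one must choose a twist $\varphi_\alpha$ (with $\varphi_\alpha+\psi$ plurisubharmonic, built from $\psi$ and $\log|F|$) and a gain $c\in\mathcal P_{0,M,\Psi}$ with $\int_{1}^{+\infty}c(t)e^{-t}dt<+\infty$ so that, with $J_o:=I(\Psi+\varphi_\alpha)_o$ and $G(t)=G(t;c,\Psi,\varphi_\alpha,J,f)$: (i) $I(\Psi+\varphi_\alpha)_o\subseteq I_+(\Psi)_o$ but $f_o\notin J_o$ --- so, ideals being closed in $\mathcal O_{M,o}$ (resp. in $J(\Psi)_o$), $G(t)>0$ for all $t>0$; and (ii) $e^{-\varphi_\alpha}c(-\Psi)$ is dominated by $e^{-\Psi}$ along the singular locus near $o$, so that, since $f_o\in I(\Psi)_o$, the competitor $\tilde f=f$ gives $G(t_0)<+\infty$ for some $t_0>0$. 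Granting such a setup, Theorem \ref{main theorem} and Remark \ref{infty2} give that $G(\hat h^{-1}(r))$ is concave on $(0,\int_0^{+\infty}c(l)e^{-l}dl)$ with $\lim_{t\to+\infty}G(t)=0$ and $\lim_{t\to0+}G(t)=G(0)<+\infty$, where $\hat h(t)=\int_t^{+\infty}c(l)e^{-l}dl$. One then shows, following the method of \cite{GZSOC} (see also \cite{BGY-boundary}) --- an analysis of the minimizers $\tilde f_t$ and of the equality case of the differential inequality satisfied by $G$ --- that this concavity must degenerate to linearity on the whole interval. Corollary \ref{necessary condition for linear of G} then provides a unique holomorphic $(n,0)$ form $\tilde F$ on $\{\Psi<0\}\cap M$ with $(\tilde F-f)_o\in\mathcal O(K_M)_o\otimes J_o$, and, applying the identity \eqref{other a also linear} with a test weight $a(t)=e^{(1-\eta)t}$ ($\eta>0$ small, so that $\int_{t_0}^{+\infty}a(l)e^{-l}dl<+\infty$) for which $e^{-\varphi_\alpha}a(-\Psi)\gtrsim e^{-(1+\delta')\Psi}$ near $o$ with some $\delta'>0$, one gets $\int_{\{\Psi<-t_0\}\cap M}|\tilde F|^2e^{-(1+\delta')\Psi}<+\infty$, i.e. $\tilde F_o\in I((1+\delta')\Psi)_o\subseteq I_+(\Psi)_o$. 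Combined with $J_o\subseteq I_+(\Psi)_o$ this yields $f_o=\tilde F_o+(f-\tilde F)_o\in I_+(\Psi)_o$, contradicting the hypothesis. The hard part is exactly this last circle of ideas: choosing the auxiliary data $(\varphi_\alpha,c,J_o)$ in the boundary case so that $G$ is simultaneously finite, strictly positive, and ``non-gaining'', and establishing the forced linearity (equivalently, running an Ohsawa--Takegoshi extension with a small additional gain directly on the sublevel sets $\{\Psi<-t\}$); the geometric bookkeeping (verifying condition $(A)$ and $c\in\mathcal P_{0,M,\Psi}$) is routine by comparison.
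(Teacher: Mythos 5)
Your preliminary reductions are sound: the inclusion $I_+(a\Psi)_o\subseteq I(a\Psi)_o$, the rescaling $\Psi_1=a\Psi$ via $F_1=F^m$, $\psi_1=a\psi+2(m-a)\log|F|$ (the paper uses the same device, writing $p\Psi=\min\{p\psi+(2\lceil p\rceil-2p)\log|F|-2\log|F^{\lceil p\rceil}|,0\}$), and the observation that $F(o)\neq0$ reduces to classical strong openness. Two caveats there: the case $a=0$ escapes your rescaling entirely and is not trivial (it is the assertion that $a_o^f(\Psi;0)>0$, which the paper proves inside Proposition \ref{p:DK} by a dominated-convergence argument on $\{\Psi=-\infty\}$), and you should not discard the case $F(o)\neq0$ too quickly since $\psi(o)=-\infty$ may still occur. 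The genuine gap, however, is in the boundary case, which is the whole content of the theorem. You never specify the auxiliary data $(\varphi_\alpha,c,J_o)$, and the mechanism you invoke --- that under the contradiction hypothesis ``this concavity must degenerate to linearity on the whole interval'' --- is unsupported and is not how either \cite{GZSOC} or \cite{BGY-boundary} argues. There is no reason for $G$ to become linear here; Corollary \ref{necessary condition for linear of G} is a rigidity statement about an exceptional situation and cannot be applied without first establishing linearity, for which you give no argument.

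The paper's actual route (proof of Theorem \ref{thm:soc} via Proposition \ref{p:DK}) uses the concavity only through the elementary tail estimate it implies, with $c\equiv1$ throughout. Writing $a_0:=a_o^f(\Psi;\varphi)=a/2<+\infty$ under the contradiction hypothesis, Lemma \ref{l:m5} (a Noetherian argument transported through the isomorphism $P$ of Proposition \ref{module isomorphism}) realizes $I_+(2a_0\Psi+\varphi)_o$ as a single module $I(p_0\Psi+\varphi)_o$, so Lemma \ref{characterization of g(t)=0} gives $G(0):=G(0;c\equiv1,\Psi,\varphi,I_+(2a_0\Psi+\varphi)_o,f)>0$. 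For each $p>2a_0$, Theorem \ref{main theorem} applied to the weight $p\Psi$ gives that $G_p(-\log r)$ is concave in $r$ and tends to $0$ as $r\to0$, hence $r^{-2}\int_{\{p\Psi<2\log r\}}|f|^2e^{-\varphi}\ge G_p(0)\ge G(0)$; letting $p\downarrow 2a_0$ yields $\int_{\{a\Psi<2\log r\}\cap U}|f|^2e^{-\varphi}\ge C_U\,r^2$ for small $r$. Fubini's theorem then forces $\int_{\{a\Psi<-t\}\cap U}|f|^2e^{-a\Psi-\varphi}\ge C_U\int_{e^t}^{+\infty}l^{-1}\,dl=+\infty$, contradicting $f_o\in I(a\Psi+\varphi)_o$. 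In short, the engine is the lower bound on the tail masses coming from concavity plus $G(+\infty)=0$, followed by a layer-cake integration; your outline omits this step and substitutes an assertion (forced linearity) that would not hold.
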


 When $F\equiv1$ and $\psi(o)=-\infty$, Theorem \ref{p:soc} is the strong openness property of multiplier ideal sheaves \cite{GZSOC}, i.e. $\mathcal I(\psi)_o=\mathcal I_+(\psi)_o:=\cup_{s>1}\mathcal{I}(s\psi)_o$. We present the following strong openness property of $I(a\Psi+\varphi)_o$.

\begin{Theorem}\label{thm:soc}
	$I(a\Psi+\varphi)_o=I_+(a\Psi+\varphi)_o$ holds for any $a\ge0$.
\end{Theorem}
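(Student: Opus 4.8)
The plan is to deduce the strong openness property of $I(a\Psi+\varphi)_o$ from the concavity result (Theorem \ref{main theorem}), following the same scheme by which the untwisted case (Theorem \ref{p:soc}) is obtained in \cite{BGY-boundary}, but now carrying the extra plurisubharmonic weight $\varphi$ as a ``$\varphi_\alpha$'' in the setup of Section \ref{sec:Main result}. The inclusion $I(a\Psi+\varphi)_o\subseteq I_+(a\Psi+\varphi)_o$ is trivial, so the whole content is the reverse inclusion: if $f_o\in I(s\Psi+\varphi)_o$ for some $s>a$, then $f_o\in I(a\Psi+\varphi)_o$. By shrinking to a small pseudoconvex neighborhood $V$ of $o$ inside $D$ and replacing $\psi$ by $\psi$ (keeping $F$, $\varphi$), we may assume there is a representative $f$ holomorphic on $\{\Psi<-t_0\}\cap V$ with $\int_{\{\Psi<-t_0\}\cap V}|f|^2 e^{-s\Psi-\varphi}<+\infty$ for some $t_0\gg 0$; multiplying by $dz_1\wedge\cdots\wedge dz_n$ we view $f$ as a holomorphic $(n,0)$-form.

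The key step is to set up the minimal $L^2$ integral $G(t)$ of \eqref{def of g(t) for boundary pt} with the data $M=V$, $Z_0=\{o\}$, $T=0$, the weight $\varphi_\alpha:=(a\Psi+\varphi)-\Psi=(a-1)\Psi+\varphi$ adjusted appropriately — more precisely, I would choose $\varphi_\alpha$ and the gain $c$ so that the target integral $\int_{\{\Psi<-t\}}|\tilde f|^2 e^{-\varphi_\alpha}c(-\Psi)$ becomes comparable to $\int_{\{\Psi<-t\}}|\tilde f|^2 e^{-a\Psi-\varphi}$, and take $J_o:=I(\Psi+\varphi_\alpha)_o$, which by construction contains $I(\Psi+\varphi_\alpha)_o$ as required. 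One must check $c\in\mathcal P_{T,M,\Psi}$: condition (1) is a matter of choosing $c(t)e^{-t}$ decreasing, and condition (2) holds because $\varphi$ is plurisubharmonic hence locally bounded above, so $e^{-\varphi_\alpha}c(-\Psi_1)$ has positive lower bounds on compacts away from a suitable closed subset $E\subset\{\Psi=-\infty\}$. Assuming $f_o\notin I(a\Psi+\varphi)_o$, one shows $f_o\notin \mathcal O(K_M)_o\otimes J_o$, whence by Remark \ref{infty2} (when $\int^{+\infty}c(t)e^{-t}dt=+\infty$) or by the concavity of $G(\hat h^{-1}(r))$ together with the normalization $\lim_{t\to+\infty}G(t)=0$, one forces $G(t)=+\infty$ for all $t$, or at least derives a growth lower bound on $G(t)$ as $t\to+\infty$ incompatible with the hypothesis $f_o\in I(s\Psi+\varphi)_o$. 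Concretely: the hypothesis $f_o\in I(s\Psi+\varphi)_o$ with $s>a$ gives, for $\tilde f=f$ itself, that $\int_{\{\Psi<-t\}}|f|^2 e^{-a\Psi-\varphi}\le e^{-(s-a)t}\int_{\{\Psi<-t\}}|f|^2 e^{-s\Psi-\varphi}\to 0$ exponentially, so $G(t)$ decays at least like $e^{-(s-a)t}$; but concavity of $G$ in the variable $r=\hat h(t)=\int_t^{+\infty}c(l)e^{-l}dl$ forces, for a cleverly chosen $c$ (e.g. $c\equiv1$, so $r=e^{-t}$, or a $c$ tuned to the ratio $s/a$), that $G(t)$ cannot decay faster than a fixed power of $e^{-t}$ unless $G\equiv 0$ — and $G\equiv0$ combined with $L^2$-extension/regularity would put $f_o$ into $\mathcal O(K_M)_o\otimes J_o=I(a\Psi+\varphi)_o$ after dividing back by the volume form, a contradiction.

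The main obstacle I anticipate is the bookkeeping of the gain $c$ and the auxiliary weight $\varphi_\alpha$: one needs $(a-1)\Psi+\varphi+\psi$ (or the appropriate combination entering ``$\varphi_\alpha+\psi$ plurisubharmonic'') to be genuinely plurisubharmonic, which may fail if $a<1$, so the argument likely splits into the case $a\ge1$ (handled directly) and $a<1$ (where one must instead absorb part of $\Psi$ differently, perhaps writing $a\Psi = \Psi - (1-a)(-\Psi)$ and using that $-\Psi\ge0$ to keep $e^{(1-a)\Psi}$ bounded, folding it into $c$ rather than into $\varphi_\alpha$). A second delicate point is passing from ``$G(t)$ does not decay fast'' back to the membership $f_o\in I(a\Psi+\varphi)_o$: this uses that the infimum in $G(t)$ is attained (a standard weak-compactness argument in the weighted $L^2$ space, legitimate once $c\in\mathcal P_{T,M,\Psi}$ and $G(t)<+\infty$) and that the minimizer, being holomorphic near $o$ with controlled weighted norm, has germ lying in $\mathcal O(K_M)_o\otimes J_o$; combining the minimizer $\tilde F$ with the representative $f$ gives $(\,\tilde F - f\,)_o\in\mathcal O(K_M)_o\otimes J_o$ while $\tilde F$ itself is square-integrable against $e^{-a\Psi-\varphi}$ near $o$, hence so is $f$, i.e. $f_o\in I(a\Psi+\varphi)_o$. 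Once these two points are arranged the proof is a short deduction from Theorem \ref{main theorem} and Remark \ref{infty2}, exactly parallel to the derivation of Theorem \ref{p:soc} in \cite{BGY-boundary} with $\varphi$ along for the ride.
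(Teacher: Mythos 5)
You have the two inclusions backwards, and the whole proposal collapses once this is noticed. By definition $I_+(a\Psi+\varphi)_o=\cup_{s>a}I(s\Psi+\varphi)_o$, and since $\Psi\le 0$ and $s>a\ge 0$ imply $e^{-a\Psi}\le e^{-s\Psi}$ on $\{\Psi<-t\}$, one has $I(s\Psi+\varphi)_o\subset I(a\Psi+\varphi)_o$ for every $s>a$; hence the inclusion $I_+(a\Psi+\varphi)_o\subset I(a\Psi+\varphi)_o$ is the \emph{trivial} one. The nontrivial content of Theorem \ref{thm:soc} is exactly the opposite containment $I(a\Psi+\varphi)_o\subset I_+(a\Psi+\varphi)_o$, namely: from $f_o\in I(a\Psi+\varphi)_o$ one must produce some $s>a$ with $f_o\in I(s\Psi+\varphi)_o$. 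Your proposal declares that containment trivial and then sets out to show that $f_o\in I(s\Psi+\varphi)_o$ for some $s>a$ forces $f_o\in I(a\Psi+\varphi)_o$ --- but that follows immediately from the pointwise inequality $e^{-a\Psi}\le e^{-(s-a)t}e^{-s\Psi}$ on $\{\Psi<-t\}$, which is the very inequality you write down, and no $L^2$ machinery is needed. In the same vein, your intermediate contradiction hypothesis ``$f_o\notin I(a\Psi+\varphi)_o$'' is inconsistent with the standing hypothesis ``$f_o\in I(s\Psi+\varphi)_o$'' already for this trivial reason, so the concavity argument you build on top of it is vacuous and the conclusion you draw is the one that was already immediate.

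The paper's actual argument runs in the other direction and hinges on an ingredient your sketch does not supply. Starting from $f_o\in I(a\Psi+\varphi)_o$ and supposing $f_o\notin I_+(a\Psi+\varphi)_o$ (so $a_o^f(\Psi;\varphi)=a/2<+\infty$), one applies Theorem \ref{main theorem} with $c\equiv1$ to the rescaled weights $p\Psi$ for $p>2a_o^f(\Psi;\varphi)$, after rewriting $p\Psi=\min\{p\psi+(2\lceil p\rceil-2p)\log|F|-2\log|F^{\lceil p\rceil}|,0\}$ so that it is again of the required form; concavity of $G_p(-\log r)$ together with $\lim_{t\to+\infty}G_p(t)=0$ gives the lower bound $G_p(-2\log r)\ge r^2\,G_p(0)$, and a limiting argument in $p$ (Proposition \ref{p:DK}, which also needs Lemma \ref{l:m5} to replace $I_+$ by a single $I(a'\Psi+\varphi)_o$, and Lemma \ref{characterization of g(t)=0} to know $G_p(0)>0$) produces a constant $C_U>0$ with $\frac{1}{r^2}\int_{\{a\Psi<2\log r\}\cap U}|f|^2e^{-\varphi}\ge C_U$ for all small $r>0$. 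A Fubini computation, $\int_{\{a\Psi<-t\}\cap U}|f|^2e^{-a\Psi-\varphi}\ge\int_{e^t}^{+\infty}\big(\int_{\{a\Psi<-\log l\}\cap U}|f|^2e^{-\varphi}\big)\,dl\ge C_U\int_{e^t}^{+\infty}\frac{dl}{l}=+\infty$, then contradicts $f_o\in I(a\Psi+\varphi)_o$. Note that the direction of the contradiction is opposite to the one you aim for: the concavity supplies a \emph{lower} bound on mass near the singularity, and this is what conflicts with integrability; one is not refuting a fast decay of $G$. Straightening out the directions and supplying the $p\Psi$ rescaling, the $p\downarrow 2a_o^f$ limit, and the Fubini step is the real content of the proof, and none of it is present in your outline.
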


Let $p$ be a positive real number. Denote that
\begin{displaymath}
	c_{o,p}^h(\varphi):=\sup\{c\geq0: |h|^pe^{-2c\varphi}\,\text{is}\,L^1\,\,\text{on a neighborhood of}\,\, o\}.
\end{displaymath}
When $p=2$,  $c_{o,2}^h(\psi)$ is the jumping number $c_{o}^h(\psi)$ (see \cite{JM13}). Note that the strong openness property of multiplier ideal sheaves is equivalent to $|h|^2e^{-2c_o^h(\psi)\psi}$ is not integrable near $o$ for any holomorphic function $h$ on a neighborhood of $o$.

Using the strong openness property of multiplier ideal sheaves, Forn{\ae}ss \cite{Fo15} obtained the following strong openness property of multiplier ideal sheaves in $L^p$:

\emph{$|h|^pe^{-2c_{o,p}^h(\psi)\psi}$ is not integrable near $o$ for any holomorphic function $h$ on a neighborhood of $o$.}

In \cite{GY-twisted}, Guan-Yuan gave the following twisted version of strong openness property of multiplier ideal sheaves in $L^p$ (some related results can be referred to \cite{GZ-soc17} and \cite{chen18}).
\begin{Theorem}[\cite{GY-twisted}]
	\label{thm:tsoc}Let $p\in(0,+\infty)$, and let $a(t)$ be a positive measurable function on $(-\infty,+\infty)$. If  one of the following conditions holds:
	
	$(1)$ $a(t)$ is decreasing  near $+\infty$;
	
	$(2)$ $a(t)e^t$ is increasing near $+\infty$,
	
	 then the following three statements are equivalent:
	
	$(A')$ $a(t)$ is not integrable near $+\infty$;
	
	$(B')$ $a(-2c_{o,p}^{h}(\psi)\psi)\exp(-2c_{o,p}^{h}(\psi)\psi+p\log|h|)$ is not integrable near $o$ for any $\psi$ and $h$ satisfying $c_{o,p}^{h}(\psi)<+\infty$;
	
	$(C')$ $a(-2c_{o,p}^{h}(\psi)\psi+p\log|h|)\exp(-2c_{o,p}^{h}(\psi)\psi+p\log|h|)$ is not integrable near $o$ for any $\psi$ and $h$ satisfying $c_{o,p}^{h}(\psi)<+\infty$.	
\end{Theorem}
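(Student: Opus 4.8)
The plan is to derive the theorem from the concavity property of Theorem~\ref{main theorem}. I will prove the equivalence by establishing the three nontrivial implications $(A')\Rightarrow(C')$, $(A')\Rightarrow(B')$ and $\neg(A')\Rightarrow\neg(B')\wedge\neg(C')$. The last of these needs no $L^2$ machinery: taking $M=\Delta\subset\mathbb{C}$, $o$ the origin, $h\equiv 1$ and $\psi=2\log|z|$, one has $c^{h}_{o,p}(\psi)=\tfrac12$ for every $p>0$ and $-2c^{h}_{o,p}(\psi)\psi=-2\log|z|=-2c^{h}_{o,p}(\psi)\psi+p\log|h|$, so a direct polar-coordinate computation gives, for the integrands of both $(B')$ and $(C')$, $\int_{|z|<\varepsilon}(\text{integrand})=\pi\int_{-2\log\varepsilon}^{+\infty}a(t)\,dt$; hence integrability of $a$ near $+\infty$ forces both integrals to be finite.

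For $(A')\Rightarrow(C')$ I argue by contradiction: fix $\psi,h$ with $\beta:=c^{h}_{o,p}(\psi)<+\infty$, assume $a$ is not integrable near $+\infty$, and suppose $a(-2\beta\psi+p\log|h|)\,e^{-2\beta\psi}|h|^{p}$ is integrable on a neighbourhood of $o$. The first step is to realize $|h|^{p}$ as a squared holomorphic modulus and so enter the setting of Theorem~\ref{main theorem}. Writing $p=2j/m$ with $j,m\in\mathbb{Z}_{>0}$ (irrational $p$ is reached by sandwiching between rationals and using monotonicity in $p$ of the quantities involved), pass to the $m$-sheeted branched cover $\pi\colon M\to B$, $M=\{(z,w)\in B\times\mathbb{C}:w^{m}=h(z)^{j}\}$ of a small ball $B\ni o$: here $F:=w$ is holomorphic on $M$ with $|F|^{2}=\pi^{*}|h|^{p}$, and, with the branch locus and the singular locus placed into the negligible set $X$ and the analytic set $Z$, the triple $(M,X,Z)$ satisfies condition $(A)$, $M\setminus(X\cup Z)$ being a weakly pseudoconvex K\"{a}hler manifold. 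This branched cover is precisely where the K\"{a}hler-manifold generality of the present paper is needed, rather than only the pseudoconvex-domain case of \cite{BGY-boundary}. Now put $\tilde\psi:=2\beta\,\pi^{*}\psi$ (plurisubharmonic, as $\beta>0$), $\Psi:=\min\{\tilde\psi-2\log|F|,0\}$, $\varphi_{\alpha}:=0$, $T:=0$, $Z_{0}:=\pi^{-1}(o)$, $f:=\pi^{*}(dz_{1}\wedge\cdots\wedge dz_{n})$, $J_{z_{0}}:=I(\Psi)_{z_{0}}$, and take the gain $c(t):=a(t)e^{t}$; in case $(1)$ one has $c(t)e^{-t}=a(t)$ decreasing near $+\infty$, so $c\in\mathcal{P}_{T,M,\Psi}$, while in case $(2)$ one first passes to the reparametrised gain dictated by ``$a(t)e^{t}$ increasing'', reducing to the monotone-gain situation. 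On $\{\Psi<0\}\cap\{F\neq0\}$ one has $e^{-\Psi}=\pi^{*}(e^{-2\beta\psi}|h|^{p})$ and $-\Psi=\pi^{*}(-2\beta\psi+p\log|h|)$, hence $c(-\Psi)=\pi^{*}\big(a(-2\beta\psi+p\log|h|)\,e^{-2\beta\psi}|h|^{p}\big)$; so, after a routine localization (shrinking $B$ and extending the datum square-integrable near $Z_{0}$ by zero away from $Z_{0}$ via H\"{o}rmander's estimate on $\{\Psi<-t\}$), the contradiction hypothesis says $f\in\mathcal{H}^{2}(c,t)$, i.e. $G(t;c,\Psi,\varphi_{\alpha},J,f)<+\infty$, for all $t\gg 0$.

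To conclude, note that since $\beta=c^{h}_{o,p}(\psi)$, the strong openness property in $L^{p}$ (Forn{\ae}ss's theorem \cite{Fo15}, quoted above) gives that $|h|^{p}e^{-2\beta\psi}$ is not locally integrable at $o$, i.e. $e^{-\Psi}\notin L^{1}_{\mathrm{loc}}(Z_{0})$, so $f_{z_{0}}\notin\mathcal{O}(K_{M})_{z_{0}}\otimes J_{z_{0}}$ for $z_{0}\in Z_{0}$; in particular $G(t)>0$ for all $t\ge T$ (a standard compactness argument: a competitor with vanishing $L^{2}$ norm would force $f_{z_{0}}$ into the closed module $\mathcal{O}(K_{M})_{z_{0}}\otimes J_{z_{0}}$). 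By Theorem~\ref{main theorem}, $G(h^{-1}(r))$ is a finite concave function of $r\in\big(\int_{T_{1}}^{0}c(l)e^{-l}\,dl,\ \int_{T_{1}}^{+\infty}c(l)e^{-l}\,dl\big)$, where $h(t)=\int_{T_{1}}^{t}c(l)e^{-l}\,dl=\int_{T_{1}}^{t}a(l)\,dl$, with $\lim_{t\to+\infty}G(t)=0$. But by $(A')$ the right endpoint equals $+\infty$, so $G(h^{-1}(r))$ is a strictly positive concave function on a half-line that tends to $0$ at $+\infty$; no such function exists, since concavity forces its one-sided derivatives to be $\ge 0$ everywhere (otherwise it would eventually become negative), hence $G(h^{-1}(r))$ would be nondecreasing and bounded below by a positive constant, contradicting the limit $0$. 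This proves $(A')\Rightarrow(C')$; and $(A')\Rightarrow(B')$ follows from it by comparing the two arguments $-2\beta\psi$ and $-2\beta\psi+p\log|h|$ of $a$ (which differ by a bounded amount near $o$ when $h(o)\neq 0$, after normalising $\|h\|_{L^{\infty}(B)}\le 1$ using $c^{h}_{o,p}=c^{\lambda h}_{o,p}$), together with the additional considerations of \cite{GY-twisted} for the locus $\{h=0\}$ and for case $(2)$.

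The main obstacle lies in this reductive first step rather than in the concavity argument: realizing $|h|^{p}$ as $|F|^{2}$ for arbitrary real $p$ while keeping the data inside condition $(A)$ (the branched cover, the rational-to-real passage, and the choice of an analytic $Z$ absorbing the polar set of $\psi$), checking that $c(t)=a(t)e^{t}$ -- or its case-$(2)$ reparametrisation -- lies in $\mathcal{P}_{T,M,\Psi}$, and upgrading the \emph{local} square-integrability near $o$ to the \emph{global} finiteness $G(t)<+\infty$ required to apply Theorem~\ref{main theorem}. Once these are in place, the contradiction is immediate.
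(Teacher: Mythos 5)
Your overall strategy (reduce everything to the concavity theorem) is the right one, but the reduction you sketch has genuine gaps, and they occur exactly at the points you describe as routine. First, case $(2)$ is not handled: when $a(t)e^{t}$ is increasing, your gain $c(t)=a(t)e^{t}$ violates the defining requirement of $\mathcal{P}_{T,M,\Psi}$ that $c(t)e^{-t}=a(t)$ be decreasing, and no reparametrisation of the gain alone repairs this. The paper must argue case $(2)$ by an entirely different mechanism: replace $a$ by a minorant $\tilde a$ with $\tilde a(t)e^{t}$ strictly increasing (Lemma \ref{l:2}), extract from the concavity only the distribution-function lower bound $\mu_1(\{-2a_o^f(\Psi;\varphi)\Psi\ge-\log r\})\ge Cr$ of Proposition \ref{p:DK}, and conclude by the measure-comparison Lemma \ref{l:m}. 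Second, your passage from $(C')$ to $(B')$ breaks down precisely in the essential case $h(o)=0$: there $p\log|h|$ is unbounded near $o$, the two arguments of $a$ are not comparable, and deferring to ``additional considerations of \cite{GY-twisted}'' is not a proof. Third, the branched cover is both overkill and incomplete: $M=\{w^m=h^j\}$ need not be a complex manifold (condition $(A)$ requires one), and the rational-to-irrational sandwich in $p$ is unsubstantiated, since both $c_{o,p}^h(\psi)$ and the integrand depend on $p$ in ways that are not monotone without further normalisation.

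The paper avoids all of this by never attempting to write $|h|^{p}$ as $|F|^{2}$. It first proves the weighted statement, Theorem \ref{p:soc-twist}, allowing a plurisubharmonic weight $\varphi$, and then specialises: for $(B')$ take $F\equiv1$, $f=h^{\lceil p/2\rceil}$ and $\varphi=(2\lceil p/2\rceil-p)\log|h|$, so that $|h|^{p}=|f|^{2}e^{-\varphi}$ with $f$ honestly holomorphic and $\varphi$ plurisubharmonic, and note $a_o^f(\Psi;\varphi)=c_{o,p}^h(\psi)$; for $(C')$ take $\Psi=\min\{2c_{o,p}^h(\psi)\psi-p\log|h|,0\}$, $f\equiv1$, $\varphi\equiv0$, and use Lemma \ref{l:rtsoc} (which gives $a_o^1(\Psi;0)=\tfrac12$ by a H\"older-inequality argument) in place of your appeal to Forn{\ae}ss's theorem. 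This is exactly why the weight $\varphi$ appears in Theorems \ref{thm:soc} and \ref{p:soc-twist}: it absorbs the fractional power of $|h|$ that your branched cover is trying to uniformise. Your converse direction $\neg(A')\Rightarrow\neg(B')\wedge\neg(C')$ is correct and matches the paper's computation.
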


  Denote that
 $$a_o^f(\Psi;\varphi):=\sup\{a\ge0:f_o\in I(2a\Psi+\varphi)_o\}$$
for any $f_o\in I(\varphi)_o$.
Especially, $a_{o}^f(\Psi;\varphi)$ is the jumping number $c_o^f(\psi)$ (see \cite{JM13}) when $F\equiv1$, $\psi(o)=-\infty$ and $\varphi\equiv0$.

Note that the strong openness property of $I(a\Psi+\varphi)_o$ is equivalent to that $f_o\not\in I(2a_{o}^f(\Psi)\Psi+\varphi)_o$ for any $f_o\in I(\varphi)_o$ satisfying $a_{o}^f(\Psi;\varphi)<+\infty$. We present a twisted version of the strong openness property of $I(a\Psi+\varphi)_o$.

\begin{Theorem}
	\label{p:soc-twist}
Let $a(t)$ be a positive measurable function on $(-\infty,+\infty)$. If  one of the following conditions holds:
	
	$(1)$ $a(t)$ is decreasing  near $+\infty$;
	
	$(2)$ $a(t)e^t$ is increasing near $+\infty$,
	
	 then the following two statements are equivalent:
	
	$(A)$ $a(t)$ is not integrable near $+\infty$;
	
	$(B)$ for any $\Psi$, $\varphi$ and $f_o\in I(\varphi)_o$ satisfying $a_{o}^{f}(\Psi;\varphi)<+\infty$, we have
	$$a(-2a_{o}^{f}(\Psi;\varphi)\Psi)\exp(-2a_{o}^{f}(\Psi;\varphi)\Psi-\varphi+2\log|f|)\not\in L^1(U\cap\{\Psi<-t\}),$$ where $U$ is any neighborhood of $o$ and $t>0$.	
\end{Theorem}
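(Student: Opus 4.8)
The plan is to reduce the equivalence $(A)\Leftrightarrow(B)$ to the concavity property of a suitable minimal $L^2$ integral $G(t)$, following the scheme already used for Theorem \ref{thm:tsoc} in \cite{GY-twisted} but now invoking Theorem \ref{main theorem} (or rather the pseudoconvex-domain case, which is \cite{BGY-boundary}) in place of the inner-point concavity. Fix $\Psi$, $\varphi$ and $f_o\in I(\varphi)_o$ with $a:=a_o^f(\Psi;\varphi)<+\infty$. Shrinking to a small ball $U=D$ around $o$, one sets $\psi':=2a\Psi+\varphi$ (a plurisubharmonic weight), $T=0$, and considers $G(t)=G(t;c,2\Psi,\varphi,J,f)$ with $J_{z_0}=I(2a\Psi+\varphi)_{z_0}$ at $z_0=o$ and $Z_0=\{o\}$; the gain $c$ will be chosen from $a(t)$. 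The key point is that the definition of $a_o^f(\Psi;\varphi)$ forces $f_o\notin I((2a+\varepsilon)\Psi+\varphi)_o$ for all $\varepsilon>0$, which is exactly the statement that $G(t)>0$ for the relevant weight; and $f_o\in I(\varphi)_o$ makes $G(t)<+\infty$ for $t\gg0$. So Theorem \ref{main theorem} applies and $G(\hat h^{-1}(r))$ is concave and (by Remark \ref{infty2}) passes through the origin.

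For the implication $(A)\Rightarrow(B)$: argue by contradiction. Suppose $a(-2a\Psi)\exp(-2a\Psi-\varphi+2\log|f|)\in L^1(U\cap\{\Psi<-t_0\})$ for some neighborhood and some $t_0>0$. Choosing $c(t)$ essentially equal to $a(t)$ (after the standard surgery that makes $c\in\mathcal P_{0,D,2\Psi}$: under hypothesis $(1)$ one can take $c(t)=a(t)$ near $+\infty$ directly since $c(t)e^{-t}$ is then decreasing; under hypothesis $(2)$ one replaces $a$ by a comparable decreasing-after-$\times e^{-t}$ function, which is where the two cases split), the integrability assumption says precisely that $G(t_0;c)<+\infty$ with the constant section $\tilde f=f$ admissible, i.e. $f$ itself lies in the relevant $\mathcal H^2(c,t_0)$. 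Concavity of $G(\hat h^{-1}(r))$ together with $G(+\infty)=0$ then yields a definite upper bound $G(t)\le C\int_t^{+\infty}c(s)e^{-s}ds$. Feeding this back — via the optimal-jump mechanism, using that $\int_0^{+\infty}c(s)e^{-s}ds<+\infty$ would follow if $a(t)$ were integrable — one derives that $f_o\in I((2a+\varepsilon)\Psi+\varphi)_o$ for some $\varepsilon>0$, i.e. a contradiction with the definition of $a_o^f$; the non-integrability of $a$ near $+\infty$ is what guarantees that the "mass at infinity'' $\int_t^{+\infty}c(s)e^{-s}ds$ decays slowly enough relative to $e^{-t}$ to push the jumping number strictly up. The converse $(B)\Rightarrow(A)$ (equivalently $\neg(A)\Rightarrow\neg(B)$) is the soft direction: if $a(t)$ is integrable near $+\infty$, build an explicit example on $\Delta\subset\mathbb C$ — e.g. $\Psi=2\log|z|$, $\varphi\equiv0$, $f=dz$, so $a_o^f(\Psi;\varphi)=1$ and $-2a\Psi=-4\log|z|$ — and check by a direct polar-coordinate computation that $a(-4\log|z|)|z|^{-4}|z|^2=a(-4\log|z|)|z|^{-2}$ is integrable near $o$ iff $\int^{+\infty}a(s)e^{-s}\,ds<+\infty$, so integrability of $a$ itself (which dominates $a(s)e^{-s}$ near $+\infty$ under either monotonicity hypothesis) gives the failure of $(B)$.

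The translation between "$G$ stays finite along the concave curve'' and "the jumping number $a_o^f$ jumps'' is the routine part, essentially Lemma-level bookkeeping as in \cite{G16,BGY-boundary,GY-twisted}; the genuine obstacle is the gain-normalization step, namely producing from a merely monotone $a(t)$ a genuine element $c\in\mathcal P_{0,D,2\Psi}$ that is comparable to $a(t)$ near $+\infty$ and preserves both the finiteness and the non-integrability. Under $(1)$ this is immediate; under $(2)$, where $a(t)e^t$ is increasing, one must verify that the substitute gain still satisfies "$c(t)e^{-t}$ decreasing'' while keeping $\int^{+\infty}c(t)e^{-t}dt$ simultaneously controlling the $L^1$-integrability on $U\cap\{\Psi<-t\}$ — this is exactly the dichotomy that forces the hypothesis into two cases, and it is the only place where a careful estimate, rather than a reference, is needed. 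Once the gain is fixed, Theorem \ref{main theorem} and Remark \ref{infty2} do the rest.
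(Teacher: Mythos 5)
Your proposal's soft direction $(B)\Rightarrow(A)$ is sound in spirit (the polydisc example is exactly what the paper uses), but has an arithmetic slip: with $\Psi=2\log|z|$, $F\equiv1$, $\varphi\equiv0$, $f\equiv1$ you would get $a_o^f(\Psi;\varphi)=\tfrac12$, not $1$, and the polar integral $\int_\Delta a(-2a_o^f\Psi)e^{-2a_o^f\Psi}=\int_\Delta a(-2\log|z|)|z|^{-2}$ reduces to $\int^{+\infty}a(t)\,dt$ rather than $\int^{+\infty}a(t)e^{-t}\,dt$. The paper instead uses $\psi=\log|z_1|$ (so $a_o^1=1$) and gets the same clean conclusion.

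The serious problems are in $(A)\Rightarrow(B)$. Under hypothesis $(1)$ you propose taking $c(t)=a(t)$ because $a(t)e^{-t}$ is then decreasing. That gives an admissible gain, but it breaks the non-integrability transfer: $\int^{+\infty}c(t)e^{-t}\,dt=\int^{+\infty}a(t)e^{-t}\,dt$ is typically finite even when $\int^{+\infty}a(t)\,dt=+\infty$, so Remark~\ref{infty2} never fires and you get no contradiction from $G(t_0;c)<+\infty$. The paper's actual choice is $c(t)=a(t)e^{t}+1$, so that $c(t)e^{-t}=a(t)+e^{-t}$ is decreasing \emph{and} $\int^{+\infty}c(t)e^{-t}\,dt=+\infty$ whenever $a$ is non-integrable; then Remark~\ref{infty2} immediately gives $G\equiv+\infty$ for the weight $\Psi_1=2a_o^f(\Psi;\varphi)\Psi$ (using $f_o\notin I(\Psi_1+\varphi)_o$, which is Theorem~\ref{thm:soc}), contradicting the assumed $L^1$ bound. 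Your alternative mechanism --- bounding $G(t)\le C\int_t^{+\infty}c(s)e^{-s}\,ds$ and ``feeding back'' to push the jumping number --- is not fleshed out and does not match the argument needed; you also invoke $\int_0^{+\infty}c(s)e^{-s}\,ds<+\infty$ as a consequence of integrability of $a$, which with your $c=a$ is a one-way implication only.

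Under hypothesis $(2)$ the gap is even wider. You describe it as a ``gain-normalization'' issue, but there is in general no single element of $\mathcal P_{0,D,\Psi}$ that both has $c(t)e^{-t}$ decreasing and is comparable to $a(t)$ in the way the $L^1$ hypothesis requires when $a(t)e^t$ is merely increasing. The paper does \emph{not} run a concavity argument here at all. It first replaces $a$ by a minorant $\tilde a\le a$ near $+\infty$ with $\tilde a(t)e^t$ strictly increasing and continuous and $\tilde a$ still non-integrable (Lemma~\ref{l:2}), then proves the non-integrability of $\tilde a(-2a_o^f\Psi)e^{-2a_o^f\Psi}|f|^2e^{-\varphi}$ via a measure-comparison on sublevel sets: Proposition~\ref{p:DK} gives a lower bound $\mu_1(\{-2a_o^f\Psi\ge-\log r\})\ge Cr$, and Lemma~\ref{l:m} upgrades that to the divergence of the weighted integral. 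This is a genuinely different, level-set-comparison argument (in the style of the strong-openness proofs, not of the concavity/linearity machinery), and the dichotomy in the hypotheses is precisely because the two cases require two different proofs, not two different normalizations of the same proof.
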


\begin{Remark}
	\label{r:tsoc}Theorem \ref{p:soc-twist} is a generalization of Theorem \ref{thm:tsoc}. We prove this remark in Section \ref{sec:tsoc}.
\end{Remark}

\subsubsection{Effectiveness of the strong openness property of $I(a\Psi+\varphi)_{o}$}

Let $D$ be a pseudoconvex domain in $\mathbb{C}^n$ containing the origin $o$, and let $\psi$ be a plurisubharmonic function on $D$. Let $F\not\equiv0$ be a holomorphic function on $D$, and let $\varphi$ be a plurisubharmonic function on $D$.
Denote that
$$\Psi:=\min\{\psi-2\log|F|,0\}.$$
If $F(z)=0$ for some $z \in M$, we set $\Psi(z)=0$. Let $f$ be a holomorphic function on $\{\Psi<0\}$. Denote that
\begin{displaymath}
	\begin{split}
		\frac{1}{K_{\Psi,f,a}(o)}:=\inf\bigg\{\int_{\{\Psi<0\}}|\tilde f|^2e^{-\varphi-(1-a)\Psi}:(\tilde f-f)_{o}\in I_+(\varphi+&2a_{o}^{f}(\Psi;\varphi)\Psi)_{o}\\
		& \& \,\tilde f\in\mathcal{O}(\{\Psi<0\})\bigg\},
	\end{split}
\end{displaymath}
where $a\in(0,+\infty)$.

We present an effectiveness result of the strong openness property of $I(a\Psi+\varphi)_{z_0}$.
\begin{Theorem}
	\label{thm:effe}
	Let $C_1$ and $C_2$ be two positive constants. If there exists $a>0$, such that
	
	$(1)$ $\int_{\{\Psi<0\}}|f|^2e^{-\varphi-\Psi}\leq C_1$;
	
	$(2)$ $\frac{1}{K_{\Psi,f,a}(o)}\geq C_2$.
	
	Then for any $q>1$ satisfying $$\theta_a(q)>\frac{C_1}{C_2},$$ we have
$f_o\in I(q\Psi+\varphi)_o$, where $\theta_a(q)=\frac{q+a-1}{q-1}$.	\end{Theorem}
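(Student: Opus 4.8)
I would argue by contradiction: assume $f_o\notin I(q\Psi+\varphi)_o$ and derive $\theta_a(q)\le C_1/C_2$, contradicting the hypothesis.

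First, by $(1)$ we have $\int_{\{\Psi<0\}}|f|^2e^{-\varphi-\Psi}\le C_1<+\infty$, so $f_o\in I(\Psi+\varphi)_o\subset I(\varphi)_o$ and $b:=a_o^f(\Psi;\varphi)$ is well defined with $b\ge\tfrac12$; also $b<+\infty$, since $b=+\infty$ would give $f_o\in I(q\Psi+\varphi)_o$. By the strong openness property of $I(\cdot\,\Psi+\varphi)_o$ (Theorem~\ref{thm:soc}), $I(2b\Psi+\varphi)_o=I_+(\varphi+2b\Psi)_o$, and since $b$ is the supremum of the exponents, $f_o\notin I(2b\Psi+\varphi)_o$; as $\{s\ge0:f_o\in I(s\Psi+\varphi)_o\}$ is the interval $[0,2b)$, the assumption $f_o\notin I(q\Psi+\varphi)_o$ forces $q\ge 2b>1$. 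Since $\theta_a$ is strictly decreasing on $(1,+\infty)$, it suffices to prove $\theta_a(2b)\le C_1/C_2$, i.e. (using $(2)$ and $b>\tfrac12$; the degenerate case $b=\tfrac12$ makes $K_{\Psi,f,a}(o)^{-1}=0$, hence $(2)$ vacuous)
\begin{equation}\nonumber
\frac{1}{K_{\Psi,f,a}(o)}\ \le\ \frac{2b-1}{2b+a-1}\int_{\{\Psi<0\}}|f|^2e^{-\varphi-\Psi}.
\end{equation}

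Second, I would recognize $1/K_{\Psi,f,a}(o)$ as a value $G(0)$ of a minimal $L^2$ integral of the form \eqref{def of g(t) for boundary pt}. On a sublevel set $M':=\{\Psi<-t_*\}$ on which $F$ is zero-free (so $\log|F|$ is pluriharmonic there; some care is needed when $\Psi(o)\ge -t_*$, which a routine reduction handles), take $\varphi_\alpha:=\varphi+(2b-1)\Psi$ (then $\varphi_\alpha+\psi=\varphi+2b\psi-2(2b-1)\log|F|$ is plurisubharmonic on $M'$), the gain $c(t):=e^{(2-2b-a)t}$ (so $c(t)e^{-t}=e^{(1-2b-a)t}$ is decreasing, $2b+a>1$, and $c$ lies in the relevant class $\mathcal P$; when $a\ge1$ truncate $\psi$ by $\max\{\psi,-N\}$ first and let $N\to+\infty$), $Z_0=\{o\}$, $J_o:=I_+(\varphi+2b\Psi)_o$ (which equals $I(\Psi+\varphi_\alpha)_o=I(2b\Psi+\varphi)_o$, so the admissibility condition of Theorem~\ref{main theorem} holds with equality), and reference form $f$. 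Then the integrand $|\tilde f|^2e^{-\varphi_\alpha}c(-\Psi)$ equals $|\tilde f|^2e^{-\varphi-(1-a)\Psi}$, whence $G(0)=1/K_{\Psi,f,a}(o)$; and $\tilde f=f$ shows $G(0)\le\int_{\{\Psi<0\}}|f|^2e^{-\varphi-\Psi}\le C_1<+\infty$. By Theorem~\ref{main theorem} and Remark~\ref{infty2}, $r\mapsto G(\hat h^{-1}(r))$ is concave on $(0,\tfrac1{2b+a-1})$, where $\hat h(t)=\int_t^{+\infty}c(s)e^{-s}\,ds=\tfrac{1}{2b+a-1}e^{-(2b+a-1)t}$, with $\lim_{t\to+\infty}G(t)=0$ and $\lim_{t\to0+}G(t)=G(0)$. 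Concavity together with $G(+\infty)=0$ yields the chord bound
\begin{equation}\nonumber
\frac{1}{K_{\Psi,f,a}(o)}=G(0)\ \le\ e^{(2b+a-1)t_0}\,G(t_0)\qquad\text{for all }t_0>0 .
\end{equation}

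Third — and this is the step I expect to be the main obstacle — one needs a sharp upper bound for $G(t_0)$ as $t_0\to+\infty$, of the form $G(t_0)\le\tfrac{2b-1}{2b+a-1}e^{-(2b+a-1)t_0}\big(\int_{\{\Psi<0\}}|f|^2e^{-\varphi-\Psi}+o(1)\big)$; combined with the chord bound and $t_0\to+\infty$ this gives the displayed inequality, hence the contradiction, so $f_o\in I(q\Psi+\varphi)_o$ after all. The naive competitor $\tilde f=f$ only gives $G(t_0)\le e^{-at_0}\int_{\{\Psi<0\}}|f|^2e^{-\varphi-\Psi}$, which after the chord bound merely reproduces $C_2\le C_1$; the sharp bound genuinely uses that $f$ sits exactly at the jumping exponent $2b$ (so $f_o\in I(\Psi+\varphi)_o$ but $f_o\notin I(2b\Psi+\varphi)_o$): one truncates the part of $f$ lying deep in $\{\Psi=-\infty\}$ by solving a $\bar\partial$-equation (equivalently, invoking once more the Ohsawa--Takegoshi-type estimate behind Theorem~\ref{main theorem}), replacing $f$ by a holomorphic form unchanged modulo $I_+(\varphi+2b\Psi)_o$ at $o$ but negligible on $\{\Psi<-t_1\}$ for $t_1>t_0$; the constant $\tfrac{2b-1}{2b+a-1}=\theta_a(2b)^{-1}$ then emerges by integrating $c(t)e^{-t}=e^{(1-2b-a)t}$ over $(t_0,t_1)$ against the level–set density of $|f|^2e^{-\varphi-\Psi}$ and letting $t_1\to+\infty$. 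Verifying the admissibility hypotheses of Theorem~\ref{main theorem} for the constructed data and disposing of the boundary/limiting cases ($\Psi(o)\ge -t_*$, $a\ge1$, $b=\tfrac12$) are the routine remaining points.
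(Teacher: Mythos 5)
Your reduction to the inequality $\frac{1}{K_{\Psi,f,a}(o)}\le\frac{2b-1}{2b+a-1}\int_{\{\Psi<0\}}|f|^2e^{-\varphi-\Psi}$ with $b=a_o^f(\Psi;\varphi)$ is correct, and your identification of $1/K_{\Psi,f,a}(o)$ with a minimal $L^2$ integral $G(0)$ governed by Theorem \ref{main theorem} (weight $\varphi+(2b-1)\Psi$, gain $e^{(2-2b-a)t}$, module $I_+(2b\Psi+\varphi)_o$) is essentially the paper's setup, as is the chord bound $G(0)\le e^{(2b+a-1)t_0}G(t_0)$. The gap is exactly where you flag it: the bound $G(t_0)\le\frac{2b-1}{2b+a-1}e^{-(2b+a-1)t_0}\left(C_1+o(1)\right)$ is not a remaining technicality but is equivalent to the theorem itself. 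Since $G(\hat h^{-1}(r))$ is concave and tends to $0$ as $r\to0$, the quantity $e^{(2b+a-1)t}G(t)$ is nondecreasing in $t$, so your claimed asymptotic upper bound already implies $G(0)\le\frac{2b-1}{2b+a-1}C_1$ with no chord bound needed; conversely, nothing weaker than it can close your argument. The sketched proof cannot work as stated: because $f_o\notin I_+(2b\Psi+\varphi)_o$, every admissible competitor satisfies $\int_{\{\Psi<-t_1\}}|\tilde f|^2e^{-\varphi-(1-a)\Psi}\ge G(t_1)\ge e^{-(2b+a-1)(t_1-t_0)}G(t_0)>0$, so no $\bar\partial$-modification of $f$ is ``negligible on $\{\Psi<-t_1\}$'' to the precision required, and estimating the correction term on $\{-t_1\le\Psi<-t_0\}$ is precisely the problem you started with. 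Moreover, the heuristic of integrating $c(t)e^{-t}$ against the level-set density of $|f|^2e^{-\varphi-\Psi}$ only yields lower bounds of this shape, not upper bounds on the infimum $G(t_0)$.

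The paper closes the argument by using the chord bound in the opposite direction and at every level simultaneously: $\int_{\{\Psi<-l\}}|f|^2e^{-\varphi-(1-a)\Psi}\ge G(l)\ge e^{-(2b+a-1)l}\frac{1}{K_{\Psi,f,a}(o)}$ for all $l\ge0$ (inequality \eqref{eq:0307c}, which your step two already proves; note the paper handles $a>1$ by truncating the gain $c$ rather than $\psi$ --- truncating $\psi$ would alter $\Psi$, the sublevel sets and $a_o^f$, so your proposed limiting procedure is the more delicate choice). It then converts this family of lower bounds into the target inequality via the Fubini identity \eqref{eq:0307b}, namely $\int_{\{\Psi<0\}}|f|^2e^{-\varphi-\Psi}=\int_{\{\Psi<0\}}|f|^2e^{-\varphi-(1-a)\Psi}+\int_0^{+\infty}\big(\int_{\{\Psi<-l/a\}}|f|^2e^{-\varphi-(1-a)\Psi}\big)e^l\,dl$; integrating the exponential lower bound over $l$ is what produces the constant $\frac{2b+a-1}{2b-1}=\theta_a(2b)$. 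This layer-cake step is the ingredient missing from your proposal; you already have every estimate needed to run it.
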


When $F\equiv1$ and $\psi(o)=-\infty$, Theorem \ref{thm:effe} degenerates to the effectiveness result of strong openness property of multiplier ideal sheaves in $L^p$ \cite{GY-lp-effe} (some related results can be referred to \cite{GZeff,G16}).

\section{Some preparations}
In this section, we do some preparations.
\subsection{$L^2$ method}
Let $M$ be an $n-$dimensional weakly pseudoconvex K\"ahler manifolds. Let $\psi$ be a plurisubharmonic function on $M$. Let $F$ be a holomorphic function on $M$. We assume that $F$ is not identically zero. Let $\varphi_{\alpha}$ be a Lebesgue measurable function on $M$ such that $\varphi_{\alpha}+\psi$ is a plurisubharmonic function on $M$.

Let $\delta$ be a positive integer. Let $T$ be a real number. Denote
$$\varphi:=\varphi_{\alpha}+(1+\delta)\max\{\psi+T,2\log|F|\}$$
and
$$\Psi:=\min\{\psi-2\log|F|,-T\}.$$
If $F(z)=0$ for some $z \in M$, we set $\Psi(z)=-T$.

Let $c(t)$ be a positive measurable function on $[T,+\infty)$ such that $c(t)e^{-t}$ is decreasing with respect $t$. We have the following lemma.

\begin{Lemma}
\label{L2 method}
Let $B\in(0,+\infty)$ and $t_0>T$ be arbitrarily given. Let $f$ be a holomorphic $(n,0)$ form on $\{\Psi<-t_0\}$ such that
$$\int_{\{\Psi<-t_0\}\cap K}|f|^2<+\infty,$$
for any compact subset $K\subset M$, and
$$\int_{M}\frac{1}{B}\mathbb{I}_{\{-t_0-B<\Psi<-t_0\}}|f|^2e^{-\varphi_{\alpha}-\Psi}<+\infty.$$
 Then there exists a holomorphic $(n,0)$ form $\tilde{F}$ on $M$ such that
\begin{equation*}
  \begin{split}
      & \int_{M}|\tilde{F}-(1-b_{t_0,B}(\Psi))fF^{1+\delta}|^2e^{-\varphi+v_{t_0,B}(\Psi)-\Psi}c(-v_{t_0,B}(\Psi)) \\
      \le & (\frac{1}{\delta}c(T)e^{-T}+\int_{T}^{t_0+B}c(s)e^{-s}ds)
       \int_{M}\frac{1}{B}\mathbb{I}_{\{-t_0-B<\Psi<-t_0\}}|f|^2e^{-\varphi_{\alpha}-\Psi},
  \end{split}
\end{equation*}
where $b_{t_0,B}(t)=\int^{t}_{-\infty}\frac{1}{B} \mathbb{I}_{\{-t_0-B< s < -t_0\}}ds$,
$v_{t_0,B}(t)=\int^{t}_{-t_0}b_{t_0,B}(s)ds-t_0$.
\end{Lemma}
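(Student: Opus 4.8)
The plan is to establish Lemma \ref{L2 method} as an application of the Ohsawa--Takegoshi type $L^2$-extension theorem with negligible weights (in the form due to Guan--Zhou, or Demailly's twisted Bochner--Kodaira--Nakano inequality), exactly following the now-standard scheme used in \cite{G16} and \cite{GMY}, adapted to the presence of the holomorphic function $F$ and the truncation at level $-T$. First I would record the elementary properties of the auxiliary functions: $b_{t_0,B}$ is an increasing function taking values in $[0,1]$, identically $0$ for $t\le -t_0-B$ and identically $1$ for $t\ge -t_0$; hence $1-b_{t_0,B}(\Psi)$ is a cutoff that equals $1$ on $\{\Psi<-t_0-B\}$ and $0$ on $\{\Psi\ge -t_0\}$, so $(1-b_{t_0,B}(\Psi))fF^{1+\delta}$ is a smooth (not holomorphic) $(n,0)$-form on $M$ whose $\bar\partial$ is supported on $\{-t_0-B<\Psi<-t_0\}$, where $F$ has no zeros and $f$ is defined. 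Likewise $v_{t_0,B}$ is convex, increasing, with $v_{t_0,B}(t)=t$ for $t\ge -t_0$ and $v_{t_0,B}(t)=-t_0-\tfrac B2$ for $t\le -t_0-B$, so that $v_{t_0,B}(\Psi)$ is still plurisubharmonic-compatible with $\psi$ and the weights stay under control; this is the device that converts the gain $c$ into the integrable tail $\int_T^{t_0+B}c(s)e^{-s}ds$ plus the $\tfrac1\delta c(T)e^{-T}$ boundary term.

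Next I would set up the $\bar\partial$-equation. Put $\alpha:=\bar\partial\big((1-b_{t_0,B}(\Psi))fF^{1+\delta}\big)=-b'_{t_0,B}(\Psi)\,\bar\partial\Psi\wedge fF^{1+\delta}$, a $\bar\partial$-closed $(n,1)$-form supported in the annulus. The target weight is $e^{-\varphi+v_{t_0,B}(\Psi)-\Psi}c(-v_{t_0,B}(\Psi))$; one rewrites $-\varphi+v_{t_0,B}(\Psi)-\Psi=-\varphi_\alpha-(1+\delta)\max\{\psi+T,2\log|F|\}+v_{t_0,B}(\Psi)-\Psi$ and checks that, modulo the smooth bounded correction coming from $v_{t_0,B}$ and $c$, the curvature of this weight dominates the relevant Hessian term. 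The key computation is the pointwise estimate of $|\alpha|^2$ against the Hessian of an appropriately chosen weight $\Phi$ of the form $\Phi=\varphi-v_{t_0,B}(\Psi)+\Psi + (\text{term involving }-\log(\dots))$: one uses that on the support of $b'_{t_0,B}(\Psi)$ we have $\Psi=\psi-2\log|F|$ and $\max\{\psi+T,2\log|F|\}=\psi+T$ (since there $\Psi<-T$ means $\psi-2\log|F|<-T$, i.e. $2\log|F|>\psi+T$ --- so in fact the max equals $2\log|F|$; I would double-check this sign and adjust accordingly, it only shifts constants), so that $\varphi-\Psi$ there equals $\varphi_\alpha+(1+\delta)2\log|F|-\psi+2\log|F|$ and the factor $F^{1+\delta}$ in $\alpha$ is exactly absorbed. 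Then solving $\bar\partial u=\alpha$ with the Donnelly--Fefferman/Berndtsson-type estimate (valid because $M\setminus(X\cup Z)$ is weakly pseudoconvex Kähler, and $X$ negligible, $Z$ analytic, let one extend across) gives $u$ with $\int_M|u|^2 e^{-\varphi+v_{t_0,B}(\Psi)-\Psi}c(-v_{t_0,B}(\Psi))$ bounded by $\big(\tfrac1\delta c(T)e^{-T}+\int_T^{t_0+B}c(s)e^{-s}ds\big)\int_M \tfrac1B\mathbb{I}_{\{-t_0-B<\Psi<-t_0\}}|f|^2e^{-\varphi_\alpha-\Psi}$; setting $\tilde F:=(1-b_{t_0,B}(\Psi))fF^{1+\delta}-u$ yields the claimed form. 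Holomorphy of $\tilde F$ is immediate since $\bar\partial\tilde F=0$, and finiteness of the right-hand side is the stated hypothesis; the local $L^2$ hypothesis on $f$ together with $X$ being $L^2$-negligible and $Z$ analytic handles extension of $\tilde F$ across $X\cup Z$ to all of $M$.

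The main obstacle is the curvature/positivity bookkeeping: one must choose the auxiliary weights $v_{t_0,B}$ and the twist factor so that (i) the weight $e^{-\varphi+v_{t_0,B}(\Psi)-\Psi}c(-v_{t_0,B}(\Psi))$ has a well-defined (locally bounded below) density away from the negligible set, using that $c\in P_{T,M,\Psi}$ is not assumed here but $c(t)e^{-t}$ decreasing is, which is what makes $s\mapsto c(s)e^{-s}$ and hence the composite weight manageable; (ii) the singularities of $\varphi$ along $\{F=0\}$ are exactly matched by the $F^{1+\delta}$ in the source so that no mass is lost there --- this is the role of the exponent $1+\delta$ and the $\max\{\psi+T,2\log|F|\}$ construction; and (iii) the $\tfrac1\delta c(T)e^{-T}$ term emerges from the behavior of $v_{t_0,B}$ and $c$ near $t=-T$ (the truncation level), which is precisely where the $\delta$-dependence of the constant enters. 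I would handle (i)--(iii) by first proving the estimate on $M\setminus(X\cup Z)$ with a complete Kähler metric and a sequence of smooth strictly plurisubharmonic exhaustions regularizing $\psi$ and $\varphi_\alpha+\psi$ (Demailly's regularization), applying Ohsawa--Takegoshi/Guan--Zhou on each, and then passing to the limit via weak compactness of the solutions in $L^2_{loc}$, finally extending across $X\cup Z$ by condition $(A)$ and the Riemann-type removable singularity for $L^2$ holomorphic forms. All of this is routine modulo the curvature inequality, which is the one computation I would carry out in full detail.
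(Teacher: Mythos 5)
Your overall scheme matches the paper's: you set up the $\bar\partial$-data $\alpha=\bar\partial\big((1-b_{t_0,B}(\Psi))fF^{1+\delta}\big)$, twist the weight by $v_{t_0,B}(\Psi)$ and a gain factor, solve $\bar\partial u=\alpha$ with an error term via the twisted Bochner--Kodaira--Nakano machinery (Lemmas \ref{BKN Identity}--\ref{d-bar equation with error term}), and obtain $\tilde F$ by subtracting. You also correctly identify the regularization-then-limit structure (Demailly regularization of the plurisubharmonic data, smoothing $v_{t_0,B}$ to $v_\epsilon$, weak compactness, Fatou). So this is essentially the paper's route, not a genuinely different one.

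Two points, however, deserve flagging. First, a small misreading of the hypotheses: Lemma \ref{L2 method} is stated under the standing assumption that $M$ itself is a weakly pseudoconvex K\"ahler manifold; condition $(A)$, the negligible set $X$, and the analytic set $Z$ do not appear here at all. The only removable-singularity step the paper actually performs inside this proof is the reduction to $M\setminus\{F=0\}$ (a zero set of a nowhere-identically-zero holomorphic function), so ``extension across $X\cup Z$'' is not part of this lemma --- that bookkeeping is deferred to Lemma \ref{L2 method for c(t)}. Second, and more substantively, the step you defer (``the one computation I would carry out in full detail'') is the engine of the entire lemma: the paper does not invoke Ohsawa--Takegoshi or Guan--Zhou as a black box. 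Instead it constructs $\eta=s(-v_\epsilon(\Psi_m))$ and $\phi=u(-v_\epsilon(\Psi_m))$ and solves the coupled ODE system
\begin{equation*}
\bigl(s+\tfrac{s'^2}{u''s-s''}\bigr)e^{u-t}=\tfrac{1}{c(t)},\qquad s'-su'=1,
\end{equation*}
whose explicit solution $u(t)=-\log\bigl(\tfrac{1}{\delta}c(T)e^{-T}+\int_T^{t}c(t_1)e^{-t_1}dt_1\bigr)$ is precisely what produces the constant $\tfrac{1}{\delta}c(T)e^{-T}+\int_T^{t_0+B}c(s)e^{-s}ds$. Without exhibiting this $(s,u)$ pair (and verifying $s\ge\tfrac1\delta$, $u''s-s''>0$, $s'-su'=1$), the claimed constant is unmotivated. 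Relatedly, the paper needs several regularizations you do not mention: $c$ itself is mollified so that $c_i(t)e^{-t}$ is smooth and decreasing, and $\varphi_\alpha+\psi$ is first truncated to $h_l=\max\{\varphi_\alpha+\psi,-l\}$ (so Demailly's regularization applies with vanishing Lelong numbers) and only then smoothed; these extra limits ($i\to\infty$, $l\to\infty$) are interleaved with the $m,m',\epsilon,j$ limits and do real work. Your proposal is correct in outline, but a full proof has to carry out the ODE computation and the cascade of regularizations, which is where the entire content of the lemma lives.
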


We introduce the following property of gain function $c(t)$.
Let $T_0\in[-\infty,+\infty)$. Denote
$$\Psi_0:=\min\{\psi-2\log|F|,-T_0\}.$$
If $F(z)=0$ for some $z \in M$, we set $\Psi(z)=-T_0$. Let $c(t)\in \mathcal{P}_{T_0,M,\Psi_0}$, i.e.,
\par
$(1)$ $c(t)e^{-t}$ is decreasing with respect to $t$;
\par
$(2)$ There exist $T_1> T_0$ and a closed subset $E$ of $M$ such that $E\subset Z\cap \{\Psi(z)=-\infty\}$ and for any compact subset $K\subset M\backslash E$, $e^{-\varphi_\alpha}c(-\Psi_1)$ has a positive lower bound on $K$, where $\Psi_1:=\min\{\psi-2\log|F|,-T_1\}$.
\begin{Proposition}
\label{property of c} Let $c(t)\in \mathcal{P}_{T_0,M,\Psi_0}$. Let $T$ be a real number such that $T>T_0$.
Then for any compact subset $K\subset M\backslash E$, we have $e^{-\varphi_\alpha}c(-\Psi)$ has a positive lower bound on $K$,
where $\Psi:=\min\{\psi-2\log|F|,-T\}$.
\end{Proposition}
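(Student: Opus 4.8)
My plan is to bound $c(-\Psi)$ below by a fixed positive multiple of $c(-\Psi_1)$ \emph{pointwise on all of $M$}, which reduces the proposition immediately to statement $(2)$ in the definition of $\mathcal{P}_{T_0,M,\Psi_0}$. To set up notation, write $u:=\psi-2\log|F|$ (with the convention $u\equiv-\infty$ on $\{F=0\}$), so that $\Psi=\min\{u,-T\}$ and $\Psi_1=\min\{u,-T_1\}$, equivalently $-\Psi=\max\{-u,T\}$ and $-\Psi_1=\max\{-u,T_1\}$. I put $a:=\min\{T,T_1\}$ and $b:=\max\{T,T_1\}$; since $T>T_0$ and $T_1>T_0$, the interval $[a,b]$ is contained in the domain $(T_0,+\infty)$ of $c$.

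Next I would record two ingredients. The first, and the only place the monotonicity hypothesis enters, is a two-sided bound for $c$ on $[a,b]$: because $c(t)e^{-t}$ is decreasing, $c(b)e^{-b}\le c(t)e^{-t}\le c(a)e^{-a}$ for $t\in[a,b]$, hence $0<m:=c(b)e^{a-b}\le c(t)\le c(a)e^{b-a}=:M<+\infty$, where $m>0$ uses positivity of $c$. The second is a pointwise case split at $z\in M$. If $u(z)\le-b$, then $u(z)\le-T$ and $u(z)\le-T_1$, so $\Psi(z)=u(z)=\Psi_1(z)$, and in particular $c(-\Psi(z))=c(-\Psi_1(z))$. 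If $u(z)>-b$, then $-u(z)<b$; combined with $a\le T\le b$ and $a\le T_1\le b$ this forces $-\Psi(z)=\max\{-u(z),T\}\in[a,b]$ and $-\Psi_1(z)=\max\{-u(z),T_1\}\in[a,b]$, so by the first ingredient $c(-\Psi(z))\ge m$ and $c(-\Psi_1(z))\le M$, whence $c(-\Psi(z))\ge(m/M)c(-\Psi_1(z))$. Since $m/M\le1$, both cases give $c(-\Psi(z))\ge(m/M)c(-\Psi_1(z))$, and therefore $c(-\Psi)\ge(m/M)c(-\Psi_1)$ on $M$.

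Finally, multiplying by $e^{-\varphi_\alpha}$ gives $e^{-\varphi_\alpha}c(-\Psi)\ge(m/M)e^{-\varphi_\alpha}c(-\Psi_1)$ on $M$; for any compact $K\subset M\backslash E$, statement $(2)$ for $c\in\mathcal{P}_{T_0,M,\Psi_0}$ provides $\varepsilon_K>0$ with $e^{-\varphi_\alpha}c(-\Psi_1)\ge\varepsilon_K$ on $K$, so $e^{-\varphi_\alpha}c(-\Psi)\ge(m/M)\varepsilon_K>0$ on $K$, as desired. I do not expect a real obstacle: the two delicate points are checking $a>T_0$ (so that $[a,b]$ lies in the domain of $c$) and the bookkeeping of the $\min/\max$ identities relating $\Psi$ and $\Psi_1$ — it is exactly the split $u\le-b$ versus $u>-b$ that lets the single constant $m/M$ control the comparison everywhere, so one never has to separate the cases $T\le T_1$ and $T>T_1$.
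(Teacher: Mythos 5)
Your argument is correct in substance and takes essentially the same route as the paper: both observe that since $c(t)e^{-t}$ is decreasing and $c$ is positive, $c$ is bounded above and below by positive constants on the interval between $T$ and $T_1$, conclude that $c(-\Psi)/c(-\Psi_1)$ is bounded below by a positive constant on all of $M$, and then invoke condition $(2)$ of $\mathcal{P}_{T_0,M,\Psi_0}$. The paper states this in three sentences without the case split; you supply the details the paper elides, and they check out.

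One small slip: your stated convention ``$u\equiv-\infty$ on $\{F=0\}$'' is backwards. Since $-2\log|F(z)|\to+\infty$ as $F(z)\to0$, the convention compatible with the paper's $\Psi(z)=-T$ on $\{F=0\}$ is $u\equiv+\infty$ there, so that $\min\{u,-T\}=-T$; with $u=-\infty$ you would get $\Psi=-\infty$ on $\{F=0\}$, contradicting the paper's definition. This typo does not damage your argument, because with the correct convention $\{F=0\}$ falls into your second case ($u>-b$), where $-\Psi=T$ and $-\Psi_1=T_1$ both lie in $[a,b]$ and the bound $c(-\Psi)\ge(m/M)c(-\Psi_1)$ holds; just fix the sign in the statement of the convention.
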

\begin{proof}

We note that $c(t)$ is positive on $[T_1,T]$(or $[T,T_1]$) and has positive lower bound and upper bound on $[T_1,T]$(or $[T,T_1]$). Hence $\frac{c(-\Psi)}{c(-\Psi_1)}$ has a positive lower bound on $M$. Then we know that for any compact subset $K\subset M\backslash E$, $e^{-\varphi_\alpha}c(-\Psi)$ has a positive lower bound on $K$.
\end{proof}

Let $T_0\in[-\infty,+\infty)$. Let $c(t)\in \mathcal{P}_{T_0,M,\Psi_0}$.
It follows from Lemma \ref{L2 method} that we have the following lemma.

\begin{Lemma}
Let $(M,X,Z)$  satisfies condition $(A)$. Let $B \in (0, +\infty)$ and $t_0> T>T_0$ be arbitrarily given.
Let $f$ be a holomorphic $(n,0)$ form on $\{\Psi< -t_0\}$ such that
\begin{equation}
\int_{\{\Psi<-t_0\}} {|f|}^2e^{-\varphi_\alpha}c(-\Psi)<+\infty,
\label{condition of lemma 2.2}
\end{equation}
Then there exists a holomorphic $(n,0)$ form $\tilde{F}$ on $M$  such that
\begin{equation}
\begin{split}
&\int_{M}|\tilde{F}-(1-b_{t_0,B}(\Psi))fF^{1+\delta}|^2e^{-\varphi-\Psi+v_{t_0,B}(\Psi)}c(-v_{t_0,B}(\Psi))\\
\le & \left(\frac{1}{\delta}c(T)e^{-T}+\int_{T}^{t_0+B}c(t)e^{-t}dt\right)\int_M \frac{1}{B} \mathbb{I}_{\{-t_0-B< \Psi < -t_0\}}  {|f|}^2
e^{{-}\varphi_{\alpha}-\Psi},
\end{split}
\end{equation}
where  $b_{t_0,B}(t)=\int^{t}_{-\infty}\frac{1}{B} \mathbb{I}_{\{-t_0-B< s < -t_0\}}ds$ and
$v_{t_0,B}(t)=\int^{t}_{-t_0}b_{t_0,B}(s)ds-t_0$.
\label{L2 method for c(t)}
\end{Lemma}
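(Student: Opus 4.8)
\textbf{Proof proposal for Lemma \ref{L2 method for c(t)}.}

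The plan is to deduce this lemma directly from Lemma \ref{L2 method} by verifying that its hypotheses are met under the weaker assumption \eqref{condition of lemma 2.2}, using the gain-function property recorded in Proposition \ref{property of c}. The first step is to fix a relatively compact exhaustion $\{M_j\}$ of $M$ (available since $M\setminus(X\cup Z)$ is weakly pseudoconvex, and $X\cup Z$ is negligible/analytic), and to note that it suffices to produce the estimate on each $M_j\setminus(X\cup Z)$ with a bound uniform in $j$; then a standard normal-families/diagonal argument together with condition $(A)$ (the set $X$ is negligible for $L^2$ holomorphic functions, and $Z$ is analytic, so the limiting form extends across $X\cup Z$) yields a holomorphic $(n,0)$ form $\tilde F$ on all of $M$ satisfying the desired inequality. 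This reduction is routine and I would not dwell on it.

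The substance is checking the two integrability hypotheses of Lemma \ref{L2 method}. For the local finiteness $\int_{\{\Psi<-t_0\}\cap K}|f|^2<+\infty$: since $t_0>T>T_0$, Proposition \ref{property of c} gives that $e^{-\varphi_\alpha}c(-\Psi)$ has a positive lower bound on every compact $K\subset M\setminus E$, where $E\subset Z\cap\{\Psi=-\infty\}$ is closed; hence on such $K$ we control $\int|f|^2$ by a constant times $\int|f|^2 e^{-\varphi_\alpha}c(-\Psi)$, which is finite by \eqref{condition of lemma 2.2}. The behaviour near $E$ is handled because $E$ is contained in the analytic set $Z$ and in $\{\Psi=-\infty\}$, so after intersecting with $\{\Psi<-t_0\}$ one removes a set of measure zero (or applies the negligibility in condition $(A)$ to the holomorphic extension), so the local $L^2$ bound holds on arbitrary compact $K\subset M$. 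For the second hypothesis, the key monotonicity is that $c(t)e^{-t}$ is decreasing, so on $\{-t_0-B<\Psi<-t_0\}$ we have $c(-\Psi)e^{\Psi}\ge c(t_0+B)e^{-t_0-B}$, i.e. $e^{-\Psi}\le \frac{1}{c(t_0+B)e^{-t_0-B}}c(-\Psi)$; combined with the lower bound for $e^{-\varphi_\alpha}c(-\Psi)$ away from $E$ (again Proposition \ref{property of c}), this bounds $\frac1B\mathbb I_{\{-t_0-B<\Psi<-t_0\}}|f|^2 e^{-\varphi_\alpha-\Psi}$ by a constant multiple of $|f|^2 e^{-\varphi_\alpha}c(-\Psi)\mathbb I_{\{\Psi<-t_0\}}$, hence its integral over $M$ is finite by \eqref{condition of lemma 2.2}. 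Once both hypotheses hold, Lemma \ref{L2 method} applies verbatim and produces exactly the stated inequality with the same $b_{t_0,B}$ and $v_{t_0,B}$.

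The main obstacle I anticipate is bookkeeping near the set $E$ (and more generally near $X\cup Z$): the positive lower bounds from Proposition \ref{property of c} are only available on compact subsets of $M\setminus E$, so one must argue that the contribution of a neighbourhood of $E$ is harmless. The clean way is to run Lemma \ref{L2 method} on the weakly pseudoconvex Kähler manifold $M\setminus(X\cup Z)$ (where everything is genuinely smooth and the $L^2$-existence theorem applies), obtain $\tilde F$ there with the uniform estimate, and then invoke condition $(A)$: the negligibility of $X$ and the fact that $Z$ is a (measure-zero) analytic set let one extend $\tilde F$ holomorphically to $M$ without increasing any of the integrals, since the weights are locally bounded below on compacta of $M\setminus E$ and $E$ has measure zero. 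The remaining details — choosing $\delta$, the explicit constants $\frac1\delta c(T)e^{-T}+\int_T^{t_0+B}c(t)e^{-t}\,dt$, and the definitions of $b_{t_0,B},v_{t_0,B}$ — are inherited unchanged from Lemma \ref{L2 method}, so no new computation is needed.
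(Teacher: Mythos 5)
Your overall strategy matches the paper's proof: verify the two integrability hypotheses of Lemma \ref{L2 method} using \eqref{condition of lemma 2.2} together with Proposition \ref{property of c}, apply Lemma \ref{L2 method} on the weakly pseudoconvex K\"ahler manifold $M\setminus(X\cup Z)$, and then extend the resulting form across $X\cup Z$ and finally across $E$ using condition $(A)$. The hypothesis-checking is fine. The exhaustion/diagonal preamble is unnecessary (Lemma \ref{L2 method} is already stated on a weakly pseudoconvex manifold, so no exhaustion is needed at this level), but that is harmless.

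The genuine gap is the final extension across $E$. You justify it by saying ``$E$ has measure zero,'' but measure zero alone does not let you extend a holomorphic $(n,0)$ form across $E$; you need a local $L^2$ bound for the candidate extension near $E$, and this bound is not free because Proposition \ref{property of c} only gives a lower bound for $e^{-\varphi_\alpha}c(-\Psi)$ on compacta of $M\setminus E$, not near $E$. The paper fills this in with a separate local computation: since $E\subset\{\Psi=-\infty\}\subset\{\Psi<-t_0\}$ and the latter is open, any $z\in E$ has a neighborhood $U_z\Subset\{\Psi<-t_0\}$; on $U_z$ one uses $v_{t_0,B}(t)\ge -t_0-\tfrac{B}{2}$ and the monotonicity of $c(t)e^{-t}$ to get $c(-v_{t_0,B}(\Psi))e^{v_{t_0,B}(\Psi)}\ge c(t_0+\tfrac{B}{2})e^{-t_0-\tfrac{B}{2}}>0$, and the plurisubharmonicity of $\varphi+\Psi$ to bound $e^{-\varphi-\Psi}$ from below, giving $\int_{U_z\setminus E}|\tilde F_E-(1-b_{t_0,B}(\Psi))fF^{1+\delta}|^2<+\infty$ and hence $\int_{U_z\setminus E}|\tilde F_E|^2<+\infty$, after which the Riemann-type extension across the analytic set containing $E$ applies. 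Without this step the proof does not close.
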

\begin{proof} It follows from inequality \eqref{condition of lemma 2.2} and $c(t)e^{-t}$ is decreasing with respect to $t$ that
$$\int_M \frac{1}{B} \mathbb{I}_{\{-t_0-B< \Psi < -t_0\}}  {|f|}^2
e^{{-}\varphi_{\alpha}-\Psi}<+\infty.$$

 Note that $c(t)\in \mathcal{P}_{T_0,M,\Psi_0}$ and there exists a closed subset $E\subset Z\cap \{\Psi=-\infty\}$ such that  $e^{-\varphi}c(-\Psi)$ has locally positive lower bound on $M\backslash E$. For any compact subset $K\subset M\backslash E$, we obtain from inequality \eqref{condition of lemma 2.2} that
\begin{equation}\nonumber
\int_{K\cap \{\Psi<-t_0\}}|f|^2<+\infty.
\end{equation}

As $(M,X,Z)$  satisfies condition $(A)$,  $M\backslash (Z\cup X)$ is a weakly pseudoconvex K\"ahler manifold. It follows from Lemma \ref{L2 method} that there exists a holomorphic $(n,0)$ form $\tilde{F}_Z$ on $M\backslash (Z\cup X)$ such that
\begin{equation*}
  \begin{split}
      & \int_{M\backslash (Z\cup X)}|\tilde{F}_Z-(1-b_{t_0,B}(\Psi))fF^{1+\delta}|^2e^{-\varphi+v_{t_0,B}(\Psi)-\Psi}c(-v_{t_0,B}(\Psi)) \\
      \le & (\frac{1}{\delta}c(T)e^{-T}+\int_{T}^{t_0+B}c(s)e^{-s}ds)
       \int_{M}\frac{1}{B}\mathbb{I}_{\{-t_0-B<\Psi<-t_0\}}|f|^2e^{-\varphi_{\alpha}-\Psi}.
  \end{split}
\end{equation*}

For any $z\in \left((Z\cup X)\backslash E\right)$, there exists an open neighborhood $V_z$ of $z$ such that $V_z\Subset M\backslash E$.

Note that $c(t)e^{-t}$ is decreasing with respect to $t$ and $v_{t_0,B}(\Psi)\ge \Psi$, we have

\begin{equation*}
  \begin{split}
   &\int_{M\backslash (Z\cup X)}|\tilde{F}_Z-(1-b_{t_0,B}(\Psi))fF^{1+\delta}|^2e^{-\varphi}c(-\Psi)\\
   \le
      & \int_{M\backslash (Z\cup X)}|\tilde{F}_Z-(1-b_{t_0,B}(\Psi))fF^{1+\delta}|^2e^{-\varphi+v_{t_0,B}(\Psi)-\Psi}c(-v_{t_0,B}(\Psi))<+\infty.
  \end{split}
\end{equation*}

Note that $\varphi=\varphi_{\alpha}+(1+\delta)2\log|F|$ on $\{\Psi<-t_0\}$. We have
$$\int_{\{\Psi<-t_0\}}|fF^{1+\delta}|^2e^{-\varphi}c(-\Psi)=
\int_{\{\Psi<-t_0\}}|f|^2e^{-\varphi_\alpha}c(-\Psi)<+\infty.$$

Note that there exists a positive number $C>0$ such that $c(-\Psi)e^{-\varphi}>C$ on $V_z$. Then we have
\begin{equation*}
  \begin{split}
  &\int_{V_z\backslash (Z\cup X)}|\tilde{F}_Z|^2\\
  \le &
  \frac{1}{C} \int_{V_z\backslash (Z\cup X)}|\tilde{F}_Z|^2c(-\Psi)e^{-\varphi}\\
  \le &  \frac{2}{C}\big(\int_{V_z\backslash (Z\cup X)}|\tilde{F}_Z-(1-b_{t_0,B}(\Psi))fF^{1+\delta}|^2e^{-\varphi}c(-\Psi)\\
  &+\int_{V_z\backslash (Z\cup X)}|(1-b_{t_0,B}(\Psi))fF^{1+\delta}|^2e^{-\varphi}c(-\Psi)
  \big)\\
  \le &
  \frac{2}{C}\big(\int_{M\backslash (Z\cup X)}|\tilde{F}_Z-(1-b_{t_0,B}(\Psi))fF^{1+\delta}|^2e^{-\varphi+v_{t_0,B}(\Psi)-\Psi}c(-v_{t_0,B}(\Psi))\\
  &+\int_{\{\Psi<-t_0\}}|fF^{1+\delta}|^2e^{-\varphi}c(-\Psi)
  \big)<+\infty.
  \end{split}
\end{equation*}
As $Z\cup X$ is locally negligible with respect to $L^2$ holomorphic function, we can find a holomorphic extension $\tilde{F}_E$ of $\tilde{F}_Z$ from $M\backslash (Z\cup X)$ to $M\backslash E$ such that

\begin{equation*}
  \begin{split}
      & \int_{M\backslash E}|\tilde{F}_E-(1-b_{t_0,B}(\Psi))fF^{1+\delta}|^2e^{-\varphi+v_{t_0,B}(\Psi)-\Psi}c(-v_{t_0,B}(\Psi)) \\
      \le & (\frac{1}{\delta}c(T)e^{-T}+\int_{T}^{t_0+B}c(s)e^{-s}ds)
       \int_{M}\frac{1}{B}\mathbb{I}_{\{-t_0-B<\Psi<-t_0\}}|f|^2e^{-\varphi_{\alpha}-\Psi}.
  \end{split}
\end{equation*}

 Note that $E\subset\{\Psi=-\infty\}\subset\{\Psi<-t_0\}$ and $\{\Psi<-t_0\}$ is open, then for any $z\in E$, there exists an open neighborhood $U_z$ of $z$ such that $U_z\Subset\{\Psi<-t_0\}$.

As $v_{t_0,B}(t)\ge -t_0-\frac{B}{2}$, we have $c(-v_{t_0,B}(\Psi))e^{v_{t_0,B}(\Psi)}\ge c(t_0+\frac{B}{2})e^{-t_0-\frac{B}{2}}>0$. Note that $\varphi+\Psi$ is plurisubharmonic on $M$. Thus we have
\begin{equation*}
  \begin{split}
      & \int_{U_z\backslash E}|\tilde{F}_E-(1-b_{t_0,B}(\Psi))fF^{1+\delta}|^2 \\
      \le & \frac{1}{C_1}\int_{U_z\backslash E}|\tilde{F}_E-(1-b_{t_0,B}(\Psi))fF^{1+\delta}|^2e^{-\varphi+v_{t_0,B}(\Psi)-\Psi}c(-v_{t_0,B}(\Psi))
      <+\infty,
  \end{split}
\end{equation*}
where $C_1$ is some positive number.

As $U_z\Subset\{\Psi<-t_0\}$, we have
\begin{equation*}
  \begin{split}
       \int_{U_z\backslash E}|(1-b_{t_0,B}(\Psi))fF^{1+\delta}|^2
      \le
      \left(\sup_{U_Z}|F^{1+\delta}|^2\right)\int_{U_z}|f|^2 <+\infty.
  \end{split}
\end{equation*}
Hence we have
$$\int_{U_z\backslash E}|\tilde{F}_E|^2<+\infty. $$
As $E$ is contained in some analytic subset of $M$, we can find a holomorphic extension $\tilde{F}$ of $\tilde{F}_E$ from $M\backslash E$ to $M$ such that

\begin{equation}
\begin{split}
&\int_{M}|\tilde{F}-(1-b_{t_0,B}(\Psi))fF^{1+\delta}|^2e^{-\varphi-\Psi+v_{t_0,B}(\Psi)}c(-v_{t_0,B}(\Psi))\\
\le & \left(\frac{1}{\delta}c(T)e^{-T}+\int_{T}^{t_0+B}c(t)e^{-t}dt\right)\int_M \frac{1}{B} \mathbb{I}_{\{-t_0-B< \Psi < -t_0\}}  {|f|}^2
e^{{-}\varphi_{\alpha}-\Psi}.
\end{split}
\end{equation}
Lemma \ref{L2 method for c(t)} is proved.
\end{proof}

Let $T_0\in[-\infty,+\infty)$. Let $c(t)\in \mathcal{P}_{T_0,M,\Psi_0}$.
Using Lemma \ref{L2 method for c(t)}, we have the following lemma, which will be used to prove Theorem \ref{main theorem}.
\begin{Lemma}
\label{L2 method in JM concavity}
Let $(M,X,Z)$  satisfies condition $(A)$. Let $B \in (0, +\infty)$ and $t_0>t_1\ge T> T_0$ be arbitrarily given.
Let $f$ be a holomorphic $(n,0)$ form on $\{\Psi< -t_0\}$ such that
\begin{equation}
\int_{\{\Psi<-t_0\}} {|f|}^2e^{-\varphi_\alpha}c(-\Psi)<+\infty,
\label{condition of JM concavity}
\end{equation}
Then there exists a holomorphic $(n,0)$ form $\tilde{F}$ on $\{\Psi<-t_1\}$ such that

 \begin{equation*}
  \begin{split}
      & \int_{\{\Psi<-t_1\}}|\tilde{F}-(1-b_{t_0,B}(\Psi))f|^2e^{-\varphi_\alpha+v_{t_0,B}(\Psi)-\Psi}c(-v_{t_0,B}(\Psi)) \\
      \le & \left(\int_{t_1}^{t_0+B}c(s)e^{-s}ds\right)
       \int_{M}\frac{1}{B}\mathbb{I}_{\{-t_0-B<\Psi<-t_0\}}|f|^2e^{-\varphi_{\alpha}-\Psi},
  \end{split}
\end{equation*}
where $b_{t_0,B}(t)=\int^{t}_{-\infty}\frac{1}{B} \mathbb{I}_{\{-t_0-B< s < -t_0\}}ds$,
$v_{t_0,B}(t)=\int^{t}_{-t_0}b_{t_0,B}(s)ds-t_0$.
\end{Lemma}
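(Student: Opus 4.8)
The plan is to deduce Lemma \ref{L2 method in JM concavity} from Lemma \ref{L2 method for c(t)}, applied on the open submanifold $\{\Psi<-t_1\}$ with the holomorphic function $F$ there replaced by the constant function $1$, and then to remove the resulting $\frac{1}{\delta}$-loss by letting the auxiliary positive integer $\delta$ tend to $+\infty$.

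First I would set up the data on $\{\Psi<-t_1\}$. Since $t_1\ge T$, one has $\{\Psi<-t_1\}=\{\psi-2\log|F|<-t_1\}$, so $F$ has no zeros on this set, $\Psi=\psi-2\log|F|$ is plurisubharmonic there, and $\varphi_\alpha+\Psi=(\varphi_\alpha+\psi)-2\log|F|$ is plurisubharmonic there. I would check that the triple $\big(\{\Psi<-t_1\},X\cap\{\Psi<-t_1\},Z\cap\{\Psi<-t_1\}\big)$ again satisfies condition $(A)$: local negligibility with respect to $L^2$ holomorphic functions and being an analytic subset are local and pass to open subsets, while $\{\Psi<-t_1\}\backslash(X\cup Z)$ is K\"ahler as an open subset of $M\backslash(X\cup Z)$ and is weakly pseudoconvex because $\frac{1}{-t_1-\Psi}$ is plurisubharmonic on $\{\Psi<-t_1\}$ (an increasing convex function of the plurisubharmonic function $\Psi$), tends to $+\infty$ at the boundary, and can be combined with, and smoothed against, a smooth plurisubharmonic exhaustion of $M\backslash(X\cup Z)$ to give an exhaustion of $\{\Psi<-t_1\}\backslash(X\cup Z)$. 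I would also verify, using Proposition \ref{property of c}, that the restriction of $c$ to $(t_1,+\infty)$ lies in the gain class associated to $\{\Psi<-t_1\}$ and $F\equiv1$ for a parameter $T_0'$ with $T_0\le T_0'<t_1$: the decreasing condition is unchanged, and the lower-bound condition holds with the closed set $E\cap\{\Psi<-t_1\}$ and any $T_1'>\max\{T_0,t_1\}$ by Proposition \ref{property of c}.

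Then I would apply Lemma \ref{L2 method for c(t)} on $\{\Psi<-t_1\}$, with $\psi$ replaced by $\Psi$, $F$ replaced by $1$, the same $\varphi_\alpha$ and $B$ and $t_0$, with $t_1$ in the role of $T$ and $T_0'$ in the role of $T_0$, for an arbitrary positive integer $\delta$. On $\{\Psi<-t_1\}$ one has $\Psi+t_1<0$, so the weight $\varphi_\alpha+(1+\delta)\max\{\Psi+t_1,0\}$ of that lemma equals $\varphi_\alpha$, the twisting factor $1^{1+\delta}$ equals $1$, the function $\min\{\Psi,-t_1\}$ equals $\Psi$, and $\{-t_0-B<\Psi<-t_0\}\subset\{\Psi<-t_1\}$; condition \eqref{condition of JM concavity} is precisely the integrability hypothesis needed. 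Thus for every positive integer $\delta$ we obtain a holomorphic $(n,0)$ form $\tilde{F}_{\delta}$ on $\{\Psi<-t_1\}$ with
\begin{equation*}
\begin{split}
& \int_{\{\Psi<-t_1\}}|\tilde{F}_{\delta}-(1-b_{t_0,B}(\Psi))f|^2e^{-\varphi_\alpha+v_{t_0,B}(\Psi)-\Psi}c(-v_{t_0,B}(\Psi)) \\
\le & \left(\frac{1}{\delta}c(t_1)e^{-t_1}+\int_{t_1}^{t_0+B}c(s)e^{-s}ds\right)\int_M\frac{1}{B}\mathbb{I}_{\{-t_0-B<\Psi<-t_0\}}|f|^2e^{-\varphi_\alpha-\Psi}.
\end{split}
\end{equation*}

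Finally I would let $\delta\to+\infty$. The right-hand sides are bounded by a constant independent of $\delta$. Since $v_{t_0,B}(\Psi)\ge-t_0-\frac{B}{2}$ gives $e^{v_{t_0,B}(\Psi)}c(-v_{t_0,B}(\Psi))\ge c(t_0+\frac{B}{2})e^{-t_0-\frac{B}{2}}>0$, and since $e^{-\varphi_\alpha-\Psi}=e^{-(\varphi_\alpha+\psi)}|F|^2$ has a positive lower bound on every compact subset of $\{\Psi<-t_1\}$ (as $\varphi_\alpha+\psi$ is plurisubharmonic hence locally bounded above, and $F$ is zero-free there), the weight in the estimate has a positive lower bound on compact subsets of $\{\Psi<-t_1\}$; together with the local $L^2$ control of $(1-b_{t_0,B}(\Psi))f$, which is supported in $\{\Psi<-t_0\}$ where $f$ is holomorphic, this shows that the $\tilde{F}_{\delta}$ are uniformly bounded in $L^2$ on compact subsets of $\{\Psi<-t_1\}$. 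By a normal families argument a subsequence converges, uniformly on compact subsets, to a holomorphic $(n,0)$ form $\tilde{F}$ on $\{\Psi<-t_1\}$, and by the lower semicontinuity of the weighted $L^2$ norm under this convergence (Fatou's lemma) $\tilde{F}$ satisfies the asserted inequality, the term $\frac{1}{\delta}c(t_1)e^{-t_1}$ having dropped out. I expect the step requiring the most care to be the verification that $\{\Psi<-t_1\}\backslash(X\cup Z)$ is weakly pseudoconvex K\"ahler and that $c$ restricts to a gain in the correct class; once these are in hand, the remainder is a routine specialization of Lemma \ref{L2 method for c(t)} followed by the standard $\delta\to+\infty$ limiting procedure.
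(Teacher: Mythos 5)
Your reduction to Lemma \ref{L2 method for c(t)} applied on the open set $\{\Psi<-t_1\}$ with $F$ replaced by $1$ has a genuine gap at exactly the point you flag as delicate: the verification that $\{\Psi<-t_1\}\backslash(X\cup Z)$ is a weakly pseudoconvex K\"ahler manifold. The function $\frac{1}{-t_1-\Psi}$ is indeed plurisubharmonic on $\{\Psi<-t_1\}$ and blows up at the inner boundary, but it is only upper semicontinuous, whereas condition $(A)$ (and the proof of Lemma \ref{L2 method}, which builds the relatively compact pieces $M_j=\{P<j\}$ and their complete K\"ahler metrics from a \emph{smooth} plurisubharmonic exhaustion $P$) requires a smooth one. ``Smoothing against'' a smooth exhaustion of $M\backslash(X\cup Z)$ is not an available operation here: on a general weakly pseudoconvex K\"ahler manifold there is no global regularization of plurisubharmonic functions preserving plurisubharmonicity (Demailly's regularization, Lemma \ref{regularization on cpx mfld}, only yields quasi-plurisubharmonic approximations with a small curvature loss, and Richberg's theorem needs continuity of $\Psi$, which fails in general). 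This is not a routine verification one can wave through; it is precisely the obstruction that forces earlier works (e.g.\ the setting recalled after Remark \ref{necessary condition for linear of concavity II}) to \emph{assume} that the sublevel sets are weakly pseudoconvex, and the present lemma is designed to remove that assumption.

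The paper circumvents the issue by never leaving $M$: it sets $\tilde{\varphi}:=\varphi_\alpha+(1+\delta)\max\{\psi+t_1,2\log|F|\}$ and $\tilde{\Psi}:=\min\{\psi-2\log|F|,-t_1\}$ and applies Lemma \ref{L2 method for c(t)} on all of $M$ (where condition $(A)$ supplies the weak pseudoconvexity of $M\backslash(X\cup Z)$), producing $\tilde F_\delta$ on $M$ with the target $(1-b_{t_0,B}(\tilde\Psi))fF^{1+\delta}$. Restricting to $\{\Psi<-t_1\}$, where $\tilde\varphi=\varphi_\alpha+(1+\delta)2\log|F|$ and $F$ is zero-free, it divides by $F^{\delta}$ to get $F_\delta$ with the estimate for $F_\delta-(1-b_{t_0,B}(\Psi))fF$ against the weight $e^{-\varphi_\alpha+v_{t_0,B}(\Psi)-\psi}$, then lets $\delta\to+\infty$ exactly as you do (killing the $\frac{1}{\delta}c(t_1)e^{-t_1}$ term), extends across $E$, and finally divides by one more factor of $F$ using $\Psi=\psi-2\log|F|$. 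Your limiting step and Fatou argument are fine and essentially identical to the paper's; the missing ingredient is the $\max\{\psi+t_1,2\log|F|\}$ device that lets the $\bar\partial$-problem be solved on $M$ rather than on the sublevel set. If you replace your first step by this construction, the rest of your argument goes through.
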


\begin{proof}[Proof of Lemma \ref{L2 method in JM concavity}]

Denote that
$$\tilde{\varphi}:=\varphi_\alpha+(1+\delta)\max\{\psi+t_1,2\log|F|\}$$
and
$$\tilde{\Psi}:=\min\{\psi-2\log|F|,-t_1\}.$$

As $t_0>t_1\ge T$, we have $\{\Psi<-t_0\}=\{\tilde{\Psi}<-t_0\}$. It follows from inequality \eqref{condition of JM concavity} and Lemma \ref{L2 method for c(t)} that there exists a holomorphic function $\tilde{F}_{\delta}$ on $M$ such that

\begin{equation*}
  \begin{split}
      & \int_{M}|\tilde{F}_{\delta}-(1-b_{t_0,B}(\tilde\Psi))fF^{1+\delta}|^2e^{-\tilde \varphi+v_{t_0,B}(\tilde \Psi)-\tilde \Psi}c(-v_{t_0,B}(\tilde \Psi)) \\
      \le & \left(\frac{1}{\delta}c(t_1)e^{-t_1}+\int_{t_1}^{t_0+B}c(s)e^{-s}ds\right)
       \int_{M}\frac{1}{B}\mathbb{I}_{\{-t_0-B<\tilde\Psi<-t_0\}}|f|^2e^{-\varphi_{\alpha}-\tilde\Psi}.
  \end{split}
\end{equation*}

Note that on $\{\Psi<-t_1\}$, we have $\Psi=\tilde{\Psi}=\psi-2\log|F|$ and $\tilde{\varphi}=\varphi=\varphi_\alpha+(1+\delta)2\log|F|$.  Hence
\begin{equation}\label{1st formula in L2 method JM concavity}
\begin{split}
    & \int_{\{\Psi<-t_1\}}|\tilde{F}_{\delta}-(1-b_{t_0,B}(\Psi))fF^{1+\delta}|^2
    e^{-\varphi+v_{t_0,B}(\Psi)-\Psi}c(-v_{t_0,B}(\Psi)) \\
    = & \int_{\{\Psi<-t_1\}}|\tilde{F}_{\delta}-(1-b_{t_0,B}(\tilde\Psi))fF^{1+\delta}|^2
     e^{-\tilde\varphi+v_{t_0,B}(\tilde\Psi)-\tilde\Psi}c(-v_{t_0,B}(\tilde\Psi))\\
     \le &
     \int_{M}|\tilde{F}_{\delta}-(1-b_{t_0,B}(\tilde\Psi))fF^{1+\delta}|^2
     e^{-\tilde\varphi+v_{t_0,B}(\tilde\Psi)-\tilde\Psi}c(-v_{t_0,B}(\tilde\Psi))\\
     \le & \left(\frac{1}{\delta}c(t_1)e^{-t_1}+\int_{t_1}^{t_0+B}c(s)e^{-s}ds\right)
       \int_{M}\frac{1}{B}\mathbb{I}_{\{-t_0-B<\tilde\Psi<-t_0\}}|f|^2e^{-\varphi_{\alpha}-\tilde\Psi}\\
       =&
     \left(\frac{1}{\delta}c(t_1)e^{-t_1}+\int_{t_1}^{t_0+B}c(s)e^{-s}ds\right)
       \int_{M}\frac{1}{B}\mathbb{I}_{\{-t_0-B<\Psi<-t_0\}}|f|^2e^{-\varphi_{\alpha}-\Psi}<+\infty.
\end{split}
\end{equation}

 Let $F_{\delta}:=\frac{\tilde{F}_{\delta}}{F^{\delta}}$ be a holomorphic function on $\{\Psi<-t_1\}$. Then it follows from \eqref{1st formula in L2 method JM concavity} that
\begin{equation}\label{2nd formula in L2 method JM concavity}
\begin{split}
    & \int_{\{\Psi<-t_1\}}|F_{\delta}-(1-b_{t_0,B}(\Psi))fF|^2
    e^{-\varphi_\alpha+v_{t_0,B}(\Psi)-\psi}c(-v_{t_0,B}(\Psi)) \\
    \le&
     \bigg(\frac{1}{\delta}c(t_1)e^{-t_1}+\int_{t_1}^{t_0+B}c(s)e^{-s}ds\bigg)
       \int_{M}\frac{1}{B}\mathbb{I}_{\{-t_0-B<\Psi<-t_0\}}|f|^2e^{-\varphi_{\alpha}-\Psi}.
\end{split}
\end{equation}

 Let $K$ be any compact subset of $M\backslash E$. Note that $\inf_{K}e^{-\varphi_{\alpha}+v_{t_0,B}(\Psi)-\psi}c(-v_{t_0,B}(\Psi))\ge \left(c(t_0+\frac{2}{B})e^{-t_0-\frac{2}{B}}\right)\inf_{K}e^{-\varphi_{\alpha}-\psi}>0$. It follows from \eqref{2nd formula in L2 method JM concavity} that we have
 $$\sup_{\delta} \int_{\{\Psi<-t_1\}\cap K}|F_{\delta}-(1-b_{t_0,B}(\Psi))fF|^2<+\infty.$$

We also note that
$$\int_{\{\Psi<-t_1\}\cap K}|(1-b_{t_0,B}(\Psi))fF|^2\le
\left(\sup_{K}|F|^2\right)\int_{\{\Psi<-t_0\}\cap K}|f|^2<+\infty.$$
Then we know that
$$\sup_{\delta} \int_{\{\Psi<-t_1\}\cap K}|F_{\delta}|^2<+\infty,$$
and there exists a subsequence of $\{F_\delta\}$ (also denoted by $F_\delta$) compactly convergent to a holomorphic $(n,0)$ form $\tilde{F}_1$ on $\{\Psi<-t_1\}\backslash E$.
It follows from Fatou's Lemma and inequality \eqref{2nd formula in L2 method JM concavity} that we have

\begin{equation}\label{3rd formula in L2 method JM concavity}
\begin{split}
  & \int_{\{\Psi<-t_1\}\backslash E}|\tilde{F}_1-(1-b_{t_0,B}(\Psi))fF|^2
    e^{-\varphi_{\alpha}+v_{t_0,B}(\Psi)-\psi}c(-v_{t_0,B}(\Psi)) \\
    \le &\liminf_{\delta\to +\infty} \int_{\{\Psi<-t_1\}\backslash E}|F_{\delta}-(1-b_{t_0,B}(\Psi))fF|^2
    e^{-\varphi_{\alpha}+v_{t_0,B}(\Psi)-\psi}c(-v_{t_0,B}(\Psi)) \\
     \le &\liminf_{\delta\to +\infty} \int_{\{\Psi<-t_1\}}|F_{\delta}-(1-b_{t_0,B}(\Psi))fF|^2
    e^{-\varphi_{\alpha}+v_{t_0,B}(\Psi)-\psi}c(-v_{t_0,B}(\Psi)) \\
       \le&\liminf_{\delta\to +\infty}
     \bigg(\frac{1}{\delta}c(t_1)e^{-t_1}+\int_{t_1}^{t_0+B}c(s)e^{-s}ds\bigg)
       \int_{M}\frac{1}{B}\mathbb{I}_{\{-t_0-B<\Psi<-t_0\}}|f|^2e^{-\varphi_{\alpha}-\Psi}\\
       \le &\left(\int_{t_1}^{t_0+B}c(s)e^{-s}ds\right)
       \int_{M}\frac{1}{B}\mathbb{I}_{\{-t_0-B<\Psi<-t_0\}}|f|^2e^{-\varphi_{\alpha}-\Psi}.
\end{split}
\end{equation}
Note that $E\subset\{\Psi=-\infty\}\subset\{\Psi<-t_1\}$ and $\{\Psi<-t_1\}$ is open, then for any $z\in E$, there exists an open neighborhood $U_z$ of $z$ such that $U_z\Subset\{\Psi<-t_1\}$. Note that $\varphi_{\alpha}+\psi$ is plurisubharmonic function on $M$. As $v_{t_0,B}(t)\ge -t_0-\frac{B}{2}$, we have $c(-v_{t_0,B}(\Psi_1))e^{v_{t_0,B}(\Psi_1)}\ge c(t_0+\frac{B}{2})e^{-t_0-\frac{B}{2}}>0$. Thus by \eqref{3rd formula in L2 method JM concavity}, we have
\begin{equation*}
  \begin{split}
      & \int_{U_z\backslash E}|\tilde{F}_1-(1-b_{t_0,B}(\Psi))fF|^2 \\
      \le  & \frac{1}{C_1}\int_{U_z\backslash E}|\tilde{F}_1-(1-b_{t_0,B}(\Psi))fF|^2e^{-\varphi_{\alpha}+v_{t_0,B}(\Psi)-\psi}c(-v_{t_0,B}(\Psi))
      <+\infty,
  \end{split}
\end{equation*}
where $C_1:=c(t_0+\frac{B}{2})e^{-t_0-\frac{B}{2}}\inf_{U_z}e^{-\varphi_{\alpha}-\psi}$ is some positive number.

As $U_z\Subset\{\Psi<-t_1\}$, we have
\begin{equation*}
  \begin{split}
       \int_{U_z\backslash E}|(1-b_{t_0,B}(\Psi))fF|^2
      \le
      \left(\sup_{U_z}|F|^2\right)\int_{U_z}|f|^2 <+\infty.
  \end{split}
\end{equation*}
Hence we have
$$\int_{U_z\backslash E}|\tilde{F}_1|^2<+\infty. $$
As $E$ is contained in some analytic subset of $M$, we can find a holomorphic extension $\tilde{F}_0$ of $\tilde{F}_1$ from $\{\Psi<-t_1\}\backslash E$ to $\{\Psi<-t_1\}$ such that
\begin{equation}\label{4th formula in L2 method JM concavity}
\begin{split}
  & \int_{\{\Psi<-t_1\}}|\tilde{F}_0-(1-b_{t_0,B}(\Psi))fF|^2
    e^{-\varphi_{\alpha}+v_{t_0,B}(\Psi)-\psi}c(-v_{t_0,B}(\Psi)) \\
       \le &\left(\int_{t_1}^{t_0+B}c(s)e^{-s}ds\right)
       \int_{M}\frac{1}{B}\mathbb{I}_{\{-t_0-B<\Psi<-t_0\}}|f|^2e^{-\varphi_{\alpha}-\Psi}.
\end{split}
\end{equation}

Denote $\tilde{F}:=\frac{\tilde{F}_0}{F}$. Note that on $\{\Psi<-t_1\}$, we have $\Psi=\psi-2\log|F|$. It follows from \eqref{4th formula in L2 method JM concavity} that we have
\begin{equation}\nonumber
\begin{split}
  & \int_{\{\Psi<-t_1\}}|\tilde{F}-(1-b_{t_0,B}(\Psi))f|^2
    e^{-\varphi_{\alpha}+v_{t_0,B}(\Psi)-\Psi}c(-v_{t_0,B}(\Psi)) \\
       \le &\left(\int_{t_1}^{t_0+B}c(s)e^{-s}ds\right)
       \int_{M}\frac{1}{B}\mathbb{I}_{\{-t_0-B<\Psi<-t_0\}}|f|^2e^{-\varphi_{\alpha}-\Psi}.
\end{split}
\end{equation}

Lemma \ref{L2 method in JM concavity} is proved.

\end{proof}

\subsection{Properties of $\mathcal{O}_{M,z_0}$-module $J_{z_0}$}
\label{sec:properties of module}
In this section, we present some properties of $\mathcal{O}_{M,z_0}$-module $J_{z_0}$.

Since the case is local, we assume that $F$ is a holomorphic function on a pseudoconvex domain $D\subset \mathbb{C}^n$ containing the origin $o\in \mathbb{C}^n$. Let $\psi$ be a plurisubharmonic function on $D$. Let $\varphi_\alpha$ be a Lebesgue measurable function on $D$ such that $\psi+\varphi_\alpha$ is plurisubharmonic. Let $T_0\in [-\infty,+\infty)$ and $T>T_0$ be a real number. Denote
$$\varphi_1:=2\max\{\psi+T,2\log|F|\}$$
and
$$\Psi:=\min\{\psi-2\log|F|,-T\}.$$
If $F(z)=0$ for some $z \in D$, we set $\Psi(z)=-T$.

Let $c(t)$ be a positive measurable function on $(T_0,+\infty)$ such that\\
(1) $c(t)e^{-t}$ is decreasing with respect $t$;\\
(2) $c(-\Psi)e^{-\varphi_\alpha}$ has a positive lower bound on $D$.\\

Denote that $H_o:=\{f_o\in J(\Psi)_o:\int_{\{\Psi<-t\}\cap V_0}|f|^2e^{-\varphi_\alpha}c(-\Psi)<+\infty \text{ for some }t>T_0 \text{ and } V_0 \text{ is an open neighborhood of o}\}$ and
$\mathcal{H}_o:=\{(F,o)\in \mathcal{O}_{\mathbb{C}^n,o}:\int_{U_0}|F|^2e^{-\varphi_\alpha-\varphi_1}c(-\Psi)<+\infty \text{ for some open neighborhood} \ U_0 \text{ of } o\}$.

As  $c(-\Psi)e^{-\varphi_\alpha}$ has a positive lower bound on $D$ and $c(t)e^{-t}$ is decreasing with respect to $t$, we have $I(\Psi+\varphi_\alpha)_o\subset H_o\subset I_o$.
We also note that $\mathcal{H}_o$ is an ideal of $\mathcal{O}_{\mathbb{C}^n,o}$.

\begin{Lemma}\label{construction of morphism} For any $f_o\in H_o$, there exist a pseudoconvex domain  $D_0\subset D$ containing $o$ and a holomorphic function $\tilde{F}$ on $D_0$ such that $(\tilde{F},o)\in \mathcal{H}_o$ and
$$\int_{\{\Psi<-t_1\}\cap D_0}|\tilde{F}-fF^2|e^{-\varphi_\alpha-\varphi_1-\Psi}<+\infty,$$
for some $t_1>T$.
\end{Lemma}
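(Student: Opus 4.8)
The plan is to obtain $\tilde F$ by applying the $L^2$ estimate of Lemma~\ref{L2 method for c(t)} with the integer $\delta=1$ on a small ball centered at $o$, and then to read off the two required properties from that estimate by elementary manipulations of the auxiliary functions $b_{t_0,B}$ and $v_{t_0,B}$ together with the monotonicity of $c(t)e^{-t}$.

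Setting up the data: since $f_o\in H_o$, there are $t>T_0$ and an open $V_0\ni o$ with $f\in\mathcal{O}(\{\Psi<-t\}\cap V_0)$ and $\int_{\{\Psi<-t\}\cap V_0}|f|^2e^{-\varphi_\alpha}c(-\Psi)<+\infty$. I would fix $t_0:=\max\{t,T+1\}>T$, $B:=1$, and choose a ball $D_0=B(o,r)\subset V_0\cap D$; then $f$ is holomorphic on $\{\Psi<-t_0\}\cap D_0$ and, since $\{\Psi<-t_0\}\subset\{\Psi<-t\}$, the integral $\int_{\{\Psi<-t_0\}\cap D_0}|f|^2e^{-\varphi_\alpha}c(-\Psi)$ is finite. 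With $\delta=1$ the weight appearing in Lemma~\ref{L2 method for c(t)} is $\varphi=\varphi_\alpha+2\max\{\psi+T,2\log|F|\}=\varphi_\alpha+\varphi_1$ and $F^{1+\delta}=F^2$; moreover $(D_0,\emptyset,\emptyset)$ satisfies condition $(A)$ and $c\in\mathcal{P}_{T_0,D_0,\Psi_0}$ (take $E=\emptyset$ and $T_1=T$, and use hypothesis $(2)$ on $c$, which even gives a positive lower bound of $e^{-\varphi_\alpha}c(-\Psi)$ on all of $D$).

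Applying Lemma~\ref{L2 method for c(t)} to $M=D_0$ with these $t_0,B,\delta$ yields a holomorphic function $\tilde F$ on $D_0$ with
\[
\int_{D_0}\big|\tilde F-(1-b_{t_0,B}(\Psi))fF^2\big|^2e^{-\varphi_\alpha-\varphi_1-\Psi+v_{t_0,B}(\Psi)}c(-v_{t_0,B}(\Psi))\le C\int_{D_0}\frac{1}{B}\mathbb{I}_{\{-t_0-B<\Psi<-t_0\}}|f|^2e^{-\varphi_\alpha-\Psi}<+\infty,
\]
where $C=c(T)e^{-T}+\int_{T}^{t_0+B}c(t)e^{-t}dt$ and the finiteness of the right-hand side is part of the proof of that lemma. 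From here the two assertions follow. For the integrability of $\tilde F-fF^2$: on $\{\Psi<-t_1\}$ with $t_1:=t_0+B>T$ one has $b_{t_0,B}(\Psi)=0$ and $v_{t_0,B}(\Psi)\equiv-t_0-B/2$, so the integrand on the left collapses to $c(t_0+B/2)e^{-t_0-B/2}\,|\tilde F-fF^2|^2e^{-\varphi_\alpha-\varphi_1-\Psi}$, and since $c(t_0+B/2)e^{-t_0-B/2}>0$ we conclude $\int_{\{\Psi<-t_1\}\cap D_0}|\tilde F-fF^2|^2e^{-\varphi_\alpha-\varphi_1-\Psi}<+\infty$. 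For $(\tilde F,o)\in\mathcal{H}_o$: I would bound $|\tilde F|^2\le 2|\tilde F-(1-b_{t_0,B}(\Psi))fF^2|^2+2|(1-b_{t_0,B}(\Psi))fF^2|^2$ and integrate against $e^{-\varphi_\alpha-\varphi_1}c(-\Psi)$ over $D_0$; the first term is dominated by the finite left-hand side above, because $v_{t_0,B}(t)\ge t$ pointwise together with $c(s)e^{-s}$ decreasing forces $c(-\Psi)\le c(-v_{t_0,B}(\Psi))e^{v_{t_0,B}(\Psi)-\Psi}$, while the second term is supported in $\{\Psi<-t_0\}$, where $\psi+T<2\log|F|$, so $\varphi_1=4\log|F|$ and $|(1-b_{t_0,B}(\Psi))fF^2|^2e^{-\varphi_1}\le|f|^2$, giving the bound $\int_{\{\Psi<-t_0\}\cap D_0}|f|^2e^{-\varphi_\alpha}c(-\Psi)<+\infty$. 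Hence $\int_{D_0}|\tilde F|^2e^{-\varphi_\alpha-\varphi_1}c(-\Psi)<+\infty$, i.e. $(\tilde F,o)\in\mathcal{H}_o$ with $U_0=D_0$.

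I do not expect a genuine obstacle here: the statement is essentially a repackaging of Lemma~\ref{L2 method for c(t)}. The points that must be handled with care are the verification that the hypotheses of that lemma survive the passage to the ball $D_0$ (in particular the $\mathcal{P}_{T_0,D_0,\Psi_0}$ condition), and keeping track of the weight $\varphi_1=2\max\{\psi+T,2\log|F|\}$, which equals $4\log|F|$ precisely on the region $\{\Psi<-t_0\}$ where $fF^2$ lives, so that $|fF^2|^2e^{-\varphi_1}$ reduces to $|f|^2$.
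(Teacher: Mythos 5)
Your proof is correct and follows essentially the same route as the paper's: both proceed by applying the Ohsawa--Takegoshi-type estimate with $\delta=1$ and $B=1$ on a small pseudoconvex neighborhood of $o$, then read off the two conclusions from the shape of $v_{t_0,B}$ and $b_{t_0,B}$ (constancy of $v_{t_0,B}$ and vanishing of $b_{t_0,B}$ on $\{\Psi<-t_0-B\}$ for the residue estimate; the inequality $c(-\Psi)\le e^{v_{t_0,B}(\Psi)-\Psi}c(-v_{t_0,B}(\Psi))$ plus $|F^2|^2e^{-\varphi_1}=1$ on $\{\Psi<-t_0\}$ for membership in $\mathcal{H}_o$). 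The only cosmetic difference is that you invoke the wrapper Lemma~\ref{L2 method for c(t)} on $(D_0,\emptyset,\emptyset)$, while the paper calls Lemma~\ref{L2 method} directly on $D_0$, which avoids the (straightforward) verification of condition $(A)$ and the $\mathcal{P}_{T_0,D_0,\Psi_0}$ membership that you carry out.
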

\begin{proof}It follows from $f_o\in H_o$ that there exists $t_0>T>T_0$ and a pseudoconvex domain  $D_0\subset D$ containing $o$ such that
\begin{equation}\label{construction of morphism formula 1}
\int_{\{\Psi<-t_0\}\cap D_0}|f|^2e^{-\varphi_\alpha}c(-\Psi)<+\infty.
\end{equation}
As $c(t)e^{-t}$ is decreasing with respect to $t$, it follows from inequality \eqref{construction of morphism formula 1} that we have
$\int_{D_0}\mathbb{I}_{\{-t_0-1<\Psi<-t_0\}}|f|^2e^{-\varphi_\alpha-\Psi}<+\infty$.
As  $c(-\Psi)e^{-\varphi_\alpha}$ has a positive lower bound on $D$, we have $\int_{\{\Psi<-t_0\}\cap D_0}|f|^2<+\infty$. Then it follows from Lemma \ref{L2 method} that there exists a holomorphic function $\tilde F$ on $D_0$ such that
\begin{equation*}
  \begin{split}
      & \int_{D_0}|\tilde{F}-(1-b_{t_0}(\Psi))fF^{2}|^2e^{-\varphi_\alpha-\varphi_1+v_{t_0}(\Psi)-\Psi}c(-v_{t_0}(\Psi)) \\
      \le & \left(c(T)e^{-T}+\int_{T}^{t_0+1}c(s)e^{-s}ds\right)
       \int_{D_0}\mathbb{I}_{\{-t_0-1<\Psi<-t_0\}}|f|^2e^{-\varphi_{\alpha}-\Psi},
  \end{split}
\end{equation*}
where $b_{t_0}(t)=\int^{t}_{-\infty} \mathbb{I}_{\{-t_0-1< s < -t_0\}}ds$,
$v_{t_0}(t)=\int^{t}_{-t_0}b_{t_0}(s)ds-t_0$. Denote $C:=c(T)e^{-T}+\int_{T}^{t_0+B}c(s)e^{-s}ds$, we note that $C$ is a positive number.

Note that $v_{t_0}(t)>-t_0-1$. We have $e^{v_{t_0}(\Psi)}c(-v_{t_0}(\Psi))\ge c(t_0+1)e^{-(t_0+1)}>0$. As $b_{t_0}(t)\equiv 0$ on $(-\infty,-t_0-1)$, we have
\begin{equation}\label{construction of morphism formula 2}
\begin{split}
   &\int_{D_0\cap\{\Psi<-t_0-1\}}|\tilde{F}-fF^2|^2e^{-\varphi_{\alpha}-\varphi_1-\Psi} \\
   \le & \frac{1}{c(t_0+1)e^{-(t_0+1)}}
   \int_{D_0}|\tilde{F}-(1-b_{t_0}(\Psi))fF^2|^2e^{-\varphi_{\alpha}-\varphi_1-\Psi+v_{t_0}(\Psi)}c(-v_{t_0}(\Psi))\\
   \le &\frac{C}{c(t_0+1)e^{-(t_0+1)}}
   \int_{D_0}\mathbb{I}_{\{-t_0-1<\Psi<-t_0\}}|f|^2e^{-\varphi_{\alpha}-\Psi}<+\infty.
\end{split}
\end{equation}
Note that on $\{\Psi<-t_0\}$, $|F|^2e^{-\varphi_1}=1$. As $v_{t_0}(\Psi)\ge \Psi$, we have $c(-v_{t_0}(\Psi))e^{v_{t_0}(\Psi)}\ge c(-\Psi)e^{-\Psi}$. Hence we have
\begin{equation}\nonumber
\begin{split}
   &\int_{D_0}|\tilde{F}|^2e^{-\varphi_{\alpha}-\varphi_1}c(-\Psi) \\
   \le & 2\int_{D_0}|\tilde{F}-(1-b_{t_0}(\Psi))fF^2|^2e^{-\varphi_{\alpha}-\varphi_1}c(-\Psi)\\
   +&2\int_{D_0}|(1-b_{t_0}(\Psi))fF^2|^2e^{-\varphi_{\alpha}-\varphi_1}c(-\Psi)\\
   \le&
   2\int_{D_0}|\tilde{F}-(1-b_{t_0}(\Psi))fF^2|^2e^{-\varphi_{\alpha}-\varphi_1-\Psi+v_{t_0}(\Psi)}c(-v_{t_0}(\Psi))\\
   +&2\int_{D_0\cap\{\Psi<-t_0\}}|f|^2e^{-\varphi_{\alpha}}c(-\Psi)\\
   < &+\infty.
\end{split}
\end{equation}
Hence we know that $(\tilde{F},o)\in \mathcal{H}_o$.
\end{proof}

For any $(\tilde{F},o)\in\mathcal{H}_o$ and $(\tilde{F}_1,o)\in\mathcal{H}_o$ such that $\int_{D_1\cap\{\Psi<-t_1\}}|\tilde{F}-fF^2|^2e^{-\varphi_{\alpha}-\varphi_1-\Psi}<+\infty$ and
$\int_{D_1\cap\{\Psi<-t_1\}}|\tilde{F}_1-fF^2|^2e^{-\varphi_{\alpha}-\varphi_1-\Psi}<+\infty$, for some open neighborhood $D_1$ of $o$ and $t_1\ge T$, we have
$$\int_{D_1\cap\{\Psi<-t_1\}}|\tilde{F}_1-\tilde{F}|^2e^{-\varphi_{\alpha}-\varphi_1-\Psi}<+\infty.$$
As $(\tilde{F},o)\in\mathcal{H}_o$ and $(\tilde{F}_1,o)\in\mathcal{H}_o$, there exists a neighborhood $D_2$ of $o$ such that
\begin{equation}\label{construction of morphism formula 3}
\int_{D_2}|\tilde{F}_1-\tilde{F}|^2e^{-\varphi_{\alpha}-\varphi_1}c(-\Psi)<+\infty.
\end{equation}
Note that we have $c(-\Psi)e^{\Psi}\ge c(t_1)e^{-t_1}$ on $\{\Psi\ge-t_1\}$. It follows from inequality \eqref{construction of morphism formula 3} that we have
$$\int_{D_2\cap \{\Psi\ge-t_1\}}|\tilde{F}_1-\tilde{F}|^2e^{-\varphi_{\alpha}-\varphi_1-\Psi}<+\infty.$$
Hence we have $(\tilde{F}-\tilde{F}_1,o)\in \mathcal{I}(\varphi_{\alpha}+\varphi_1+\Psi)_o$.

Thus it follows from Lemma \ref{construction of morphism} that there exists a map $\tilde{P}:H_o\to \mathcal{H}_o/\mathcal{I}(\varphi_{\alpha}+\varphi_1+\Psi)_o$ given by
$$\tilde{P}(f_o)=[(\tilde{F},o)]$$
for any $f_o\in H_o$, where $(\tilde{F},o)$ satisfies $(\tilde{F},o)\in \mathcal{H}_o$ and
$\int_{D_1\cap\{\Psi<-t\}}|\tilde{F}-fF^2|^2e^{-\varphi_{\alpha}-\varphi_1-\Psi}<+\infty,$
for some $t>T$ and some open neighborhood $D_1$ of $o$, and $[(\tilde{F},o)]$ is the equivalence class of $(\tilde{F},o)$ in $\mathcal{H}_o/\mathcal{I}(\varphi_{\alpha}+\varphi_1+\Psi)_o$.

\begin{Proposition}\label{proposition of morphism}
$\tilde{P}$ is an $\mathcal{O}_{\mathbb{C}^n,o}$-module homomorphism and $Ker(\tilde{P})=I(\varphi_\alpha+\Psi)_o$.
\end{Proposition}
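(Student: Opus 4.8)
The plan is to verify three things about the map $\tilde P\colon H_o\to \mathcal{H}_o/\mathcal{I}(\varphi_\alpha+\varphi_1+\Psi)_o$: that it is additive, that it is $\mathcal{O}_{\mathbb{C}^n,o}$-linear, and that its kernel is exactly $I(\varphi_\alpha+\Psi)_o$. For additivity, given $f_o,g_o\in H_o$ with associated holomorphic functions $\tilde F,\tilde G$ on small neighborhoods of $o$ satisfying $\int_{D_1\cap\{\Psi<-t\}}|\tilde F-fF^2|^2e^{-\varphi_\alpha-\varphi_1-\Psi}<+\infty$ and the analogous bound for $\tilde G$, one observes that $\tilde F+\tilde G$ is a legitimate representative for $\tilde P(f_o+g_o)$ since $(\tilde F+\tilde G)-(f+g)F^2=(\tilde F-fF^2)+(\tilde G-gF^2)$ and the $L^2$ bound is subadditive (up to the elementary inequality $|a+b|^2\le 2|a|^2+2|b|^2$). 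By the well-definedness discussion already carried out in the excerpt (the difference of two valid representatives lies in $\mathcal{I}(\varphi_\alpha+\varphi_1+\Psi)_o$), it follows that $\tilde P(f_o+g_o)=[(\tilde F,o)]+[(\tilde G,o)]=\tilde P(f_o)+\tilde P(g_o)$. For $\mathcal{O}_{\mathbb{C}^n,o}$-linearity, given $(h,o)\in\mathcal{O}_{\mathbb{C}^n,o}$, one uses $h\tilde F$ as a representative for $\tilde P((h,o)\cdot f_o)$: shrinking the neighborhood so that $|h|$ is bounded, the inequality $\int|h\tilde F-hfF^2|^2e^{-\varphi_\alpha-\varphi_1-\Psi}\le(\sup|h|^2)\int|\tilde F-fF^2|^2e^{-\varphi_\alpha-\varphi_1-\Psi}<+\infty$ shows $h\tilde F$ is valid, and one also needs $(h\tilde F,o)\in\mathcal{H}_o$, which follows from the same bounding of $|h|$ in the defining integral of $\mathcal{H}_o$. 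Hence $\tilde P((h,o)f_o)=[(h\tilde F,o)]=(h,o)[(\tilde F,o)]=(h,o)\tilde P(f_o)$.

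The substantive part is the identification of the kernel. For the inclusion $I(\varphi_\alpha+\Psi)_o\subset Ker(\tilde P)$: if $f_o\in I(\varphi_\alpha+\Psi)_o$, then $\int_{\{\Psi<-t\}\cap V}|f|^2e^{-\varphi_\alpha-\Psi}<+\infty$ for some $t>T$, and since $|F|^2e^{-\varphi_1}=1$ on $\{\Psi<-t\}$ (as $\varphi_1=2\max\{\psi+T,2\log|F|\}=4\log|F|$ there, wait — on $\{\Psi<-t\}$ with $t>T$ we have $\psi-2\log|F|<-t<-T$, so $\psi+T<2\log|F|$ and $\varphi_1=4\log|F|$, hence $|F^2|^2e^{-\varphi_1}=|F|^4e^{-4\log|F|}=1$), we get $\int_{\{\Psi<-t\}\cap V}|fF^2|^2e^{-\varphi_\alpha-\varphi_1-\Psi}<+\infty$. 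This exhibits $\tilde F=0$ as a valid representative, so $\tilde P(f_o)=[(0,o)]=0$. For the reverse inclusion $Ker(\tilde P)\subset I(\varphi_\alpha+\Psi)_o$: suppose $\tilde P(f_o)=0$, i.e. the representative $\tilde F$ satisfies $(\tilde F,o)\in\mathcal{I}(\varphi_\alpha+\varphi_1+\Psi)_o$. Then $\int_{D_1}|\tilde F|^2e^{-\varphi_\alpha-\varphi_1-\Psi}<+\infty$ near $o$, and combining with $\int_{D_1\cap\{\Psi<-t\}}|\tilde F-fF^2|^2e^{-\varphi_\alpha-\varphi_1-\Psi}<+\infty$ yields $\int_{D_1\cap\{\Psi<-t\}}|fF^2|^2e^{-\varphi_\alpha-\varphi_1-\Psi}<+\infty$; using again that $|F^2|^2e^{-\varphi_1}=1$ on $\{\Psi<-t\}$, this gives $\int_{D_1\cap\{\Psi<-t\}}|f|^2e^{-\varphi_\alpha-\Psi}<+\infty$, i.e. $f_o\in I(\varphi_\alpha+\Psi)_o$.

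The main obstacle I expect is bookkeeping the region where $\varphi_1$ equals $4\log|F|$ versus $2(\psi+T)$, and making sure all the $L^2$ estimates are carried out on $\{\Psi<-t\}$ for an appropriate $t>T$ (not just on all of $D_1$), since the identity $|F^2|^2e^{-\varphi_1}=1$ only holds on the sublevel set where $F$ is nonvanishing and $\psi-2\log|F|<-T$. One must also be slightly careful that the representative $\tilde F$ produced by Lemma \ref{construction of morphism} is only defined on some shrunken pseudoconvex $D_0$, so all neighborhoods must be intersected appropriately; but since the statement is about germs at $o$ this causes no real difficulty. I would also double-check that the decreasing property of $c(t)e^{-t}$ and the positive lower bound of $c(-\Psi)e^{-\varphi_\alpha}$ are invoked only where needed — they are used to pass between the various $L^2$ conditions (e.g. from an integral with $c(-\Psi)$ to one with $e^{-\Psi}$ via $c(-\Psi)e^{\Psi}\ge c(t_1)e^{-t_1}$ on $\{\Psi\ge -t_1\}$), exactly as in the well-definedness argument preceding the proposition.
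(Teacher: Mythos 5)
Your proof is correct and follows essentially the same approach as the paper's: verify additivity and $\mathcal{O}_{\mathbb{C}^n,o}$-linearity via the well-definedness argument preceding the proposition, and identify the kernel by combining the $L^2$ condition defining $\tilde P$ with the identity $|F^2|^2e^{-\varphi_1}\equiv1$ on $\{\Psi<-t\}$ for $t>T$. The only stylistic difference is that you directly exhibit $\tilde F+\tilde G$, $h\tilde F$, and $0$ as valid representatives and invoke well-definedness, whereas the paper compares the representative produced by Lemma~\ref{construction of morphism} with the expected one modulo $\mathcal{I}(\varphi_\alpha+\varphi_1+\Psi)_o$; both routes rest on the same underlying estimates.
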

\begin{proof}For any $f_o,g_o\in H_o$. Denote that $\tilde{P}(f_o)=[(\tilde{F},o)]$, $\tilde{P}(g_o)=[(\tilde{G},o)]$ and $\tilde{P}(f_o+g_o)=[(\tilde{H},o)]$.

Note that there exists an open neighborhood $D_1$ of $o$ and $t\ge T$ such that $\int_{D_1\cap\{\Psi<-t\}}|\tilde{F}-fF^2|^2e^{-\varphi_{\alpha}-\varphi_1-\Psi}<+\infty$,
$\int_{D_1\cap\{\Psi<-t\}}|\tilde{G}-gF^2|^2e^{-\varphi_{\alpha}-\varphi_1-\Psi}<+\infty$, and
$\int_{D_1\cap\{\Psi<-t\}}|\tilde{H}-(f+g)F^2|^2e^{-\varphi_{\alpha}-\varphi_1-\Psi}<+\infty$. Hence we have
$$\int_{D_1\cap\{\Psi<-t\}}|\tilde{H}-(\tilde{F}+\tilde{G})|^2e^{-\varphi_{\alpha}-\varphi_1-\Psi}<+\infty.$$
As $(\tilde{F},o),(\tilde{G},o)$ and $(\tilde{H},o)$ belong to $ \mathcal{H}_o$, there exists an open neighborhood $\tilde{D}_1\subset D_1$ of $o$ such that
$\int_{\tilde{D}_1}|\tilde{H}-(\tilde{F}+\tilde{G})|^2e^{-\varphi_{\alpha}-\varphi_1}c(-\Psi)<+\infty$.
As $c(t)e^{-t}$ is decreasing with respect to $t$, we have $c(-\Psi)e^{\Psi}\ge c(t)e^{-t}$ on $\{\Psi\ge -t\}$. Hence we have
$$\int_{\tilde{D}_1\cap\{\Psi\ge -t\}}|\tilde{H}-(\tilde{F}+\tilde{G})|^2e^{-\varphi_{\alpha}-\varphi_1-\Psi}
\le\frac{1}{c(t)e^{-t}}\int_{\tilde{D}_1\cap\{\Psi\ge -t\}}|\tilde{H}-(\tilde{F}+\tilde{G})|^2e^{-\varphi_{\alpha}-\varphi_1}c(-\Psi)<+\infty.$$
Thus we have $\int_{\tilde{D}_1}|\tilde{H}-(\tilde{F}+\tilde{G})|^2e^{-\varphi_{\alpha}-\varphi_1-\Psi}<+\infty$, which implies that $\tilde{P}(f_o+g_o)=\tilde{P}(f_o)+\tilde{P}(g_o)$.

For any $(h,o) \in \mathcal{O}_{\mathbb{C}^n,o}$. Denote $\tilde{P}((hf)_o)=[(\tilde{F}_h,o)]$. Note that there exists an open neighborhood $D_2$ of $o$ and $t\ge T$ such that $\int_{D_2\cap\{\Psi<-t\}}|\tilde{F}_h-(hf)F^2|^2e^{-\varphi_{\alpha}-\varphi_1-\Psi}<+\infty$. It follows from $\int_{D_2\cap\{\Psi<-t\}}|\tilde{F}-fF^2|^2e^{-\varphi_{\alpha}-\varphi_1-\Psi}<+\infty$ and $h$ is holomorphic on $\overline{D_2}$ (shrink $D_2$ if necessary) that $\int_{D_2\cap\{\Psi<-t\}}|h\tilde{F}-hfF^2|^2e^{-\varphi_{\alpha}-\varphi_1-\Psi}<+\infty$. Then we have
$$\int_{D_2\cap\{\Psi<-t\}}|\tilde{F}_h-h\tilde{F}|^2e^{-\varphi_{\alpha}-\varphi_1-\Psi}<+\infty.$$
Note that $(h\tilde{F},o) $ and $(\tilde{F}_h,o)$ belong to $ \mathcal{H}_o$, we have
$\int_{D_2}|\tilde{F}_h-h\tilde{F}|^2e^{-\varphi_{\alpha}-\varphi_1}c(-\Psi)<+\infty$.
As $c(t)e^{-t}$ is decreasing with respect to $t$, we have $c(-\Psi)e^{\Psi}\ge c(t)e^{-t}$ on $\{\Psi\ge -t\}$. Hence we have
$$\int_{D_2\cap\{\Psi\ge -t\}}|\tilde{F}_h-h\tilde{F}|^2e^{-\varphi_{\alpha}-\varphi_1-\Psi}
\le\frac{1}{c(t)e^{-t}}\int_{D_2\cap\{\Psi\ge -t\}}|\tilde{F}_h-h\tilde{F}|^2e^{-\varphi_{\alpha}-\varphi_1}c(-\Psi)<+\infty.$$
Thus we have $\int_{D_2}|\tilde{F}_h-h\tilde{F}|^2e^{-\varphi_{\alpha}-\varphi_1-\Psi}<+\infty$, which implies that $\tilde{P}(hf_o)=(h,o)\tilde{P}(f_o)$.

Now we have proved that $\tilde{P}$ is an $\mathcal{O}_{\mathbb{C}^n,o}$-module homomorphism.

Next, we prove $Ker(\tilde{P})=I(\varphi_\alpha+\Psi)_o$.

If $f_o\in I(\varphi_\alpha+\Psi)_o$. Denote $\tilde{P}(f_o)=[(\tilde{F},o)]$. It follows from Lemma \ref{construction of morphism} that $(\tilde{F},o)\in \mathcal{H}_o$ and there exists an open neighborhood $D_3$ of $o$ and a real number $t_1>T$ such that
$$\int_{\{\Psi<-t_1\}\cap D_3}|\tilde{F}-fF^2|e^{-\varphi_\alpha-\varphi_1-\Psi}<+\infty.$$
As $f_o\in I(\varphi_\alpha+\Psi)_o$, shrink $D_3$ and $t_1$ if necessary, we have
\begin{equation}\label{proposition of morphism formula 1}
\begin{split}
&\int_{\{\Psi<-t_1\}\cap D_3}|\tilde{F}|^2e^{-\varphi_\alpha-\varphi_1-\Psi}\\
\le &2\int_{\{\Psi<-t_1\}\cap D_3}|\tilde{F}-fF^2|^2e^{-\varphi_\alpha-\varphi_1-\Psi}
+2\int_{\{\Psi<-t_1\}\cap D_3}|fF^2|^2e^{-\varphi_\alpha-\varphi_1-\Psi}\\
\le &2\int_{\{\Psi<-t_1\}\cap D_3}|\tilde{F}-fF^2|^2e^{-\varphi_\alpha-\varphi_1-\Psi}
+2\int_{\{\Psi<-t_1\}\cap D_3}|f|^2e^{-\varphi_\alpha-\Psi}\\
<&+\infty.
\end{split}
\end{equation}
As $c(t)e^{-t}$ is decreasing with respect to $t$, $c(-\Psi)e^{\Psi}\ge C_0>0$ for some positive number $C_0$ on $\{\Psi\ge-t_1\}$. Then we have
\begin{equation}\label{proposition of morphism formula 1'}
\begin{split}
\int_{\{\Psi\ge-t_1\}\cap D_3}|\tilde{F}|^2e^{-\varphi_\alpha-\varphi_1-\Psi}
\le\frac{1}{C_0}\int_{\{\Psi\ge-t_1\}\cap D_3}|\tilde{F}|^2e^{-\varphi_\alpha-\varphi_1}c(-\Psi)<+\infty.
\end{split}
\end{equation}
Combining inequality \eqref{proposition of morphism formula 1} and inequality  \eqref{proposition of morphism formula 1'}, we know that $\tilde{F}\in \mathcal{I}(\varphi_\alpha+\varphi_1+\Psi)_o$, which means $\tilde{P}(f_o)=0$ in $\mathcal{H}_o/\mathcal{I}(\varphi_{\alpha}+\varphi_1+\Psi)_o$. Hence we know $I(\varphi_\alpha+\Psi)_o\subset Ker(\tilde{P})$.

If $f_o\in Ker(\tilde{P})$, we know $\tilde{F}\in \mathcal{I}(\varphi_\alpha+\varphi_1+\Psi)_o$.
We can assume that $\tilde{F}$ satisfies $\int_{D_4}|\tilde{F}|^2e^{-\varphi_\alpha-\varphi_1-\Psi}<+\infty$ for some open neighborhood $D_4$ of $o$. Then we have
\begin{equation}\label{proposition of morphism formula 2'}
\begin{split}
&\int_{ \{\Psi<-t_1\}\cap D_4}|f|^2e^{-\varphi_\alpha-\Psi}\\
=&\int_{\{\Psi<-t_1\}\cap D_4}|fF^2|^2e^{-\varphi_\alpha-\varphi_1-\Psi}\\
\le & \int_{\{\Psi<-t_1\}\cap D_4}|\tilde{F}|^2e^{-\varphi_\alpha-\varphi_1-\Psi}+\int_{\{\Psi<-t_1\}\cap D_4}|\tilde{F}-fF^2|e^{-\varphi_\alpha-\varphi_1-\Psi}\\
\le &+\infty.
\end{split}
\end{equation}
By definition, we know $f_o\in I(\varphi_\alpha+\Psi)_o$. Hence $ Ker(\tilde{P})\subset I(\varphi_\alpha+\Psi)_o$.

$ Ker(\tilde{P})= I(\varphi_\alpha+\Psi)_o$ is proved.

\end{proof}

Now we can define an $\mathcal{O}_{\mathbb{C}^n,o}$-module homomorphism $P:H_o/I(\varphi_\alpha+\Psi)_o\to \mathcal{H}_o/\mathcal{I}(\varphi_{\alpha}+\varphi_1+\Psi)_o$ as follows,
$$P([f_o])=\tilde{P}(f_o)$$
for any $[f_o]\in H_o/I(\varphi_\alpha+\Psi)_o$, where $f_o\in H_o$ is any representative of $[f_o]$. It follows from Proposition \ref{proposition of morphism} that $P([f_o])$ is independent of the choices of the representatives of $[f_o]$.

Let $(\tilde{F},o)\in \mathcal{H}_o$, i.e. $\int_{U}|\tilde{F}|^2e^{-\varphi_\alpha-\varphi_1}c(-\Psi)<+\infty$ for some neighborhood $U$ of $o$. Note that $|F|^4e^{-\varphi_1}\equiv 1$ on $\{\Psi<-T\}$. Hence we have $\int_{U\cap \{\Psi<-t\}}|\frac{\tilde{F}}{F^2}|^2e^{-\varphi_\alpha}c(-\Psi)<+\infty$ for some $t>T$, i.e. $(\frac{\tilde{F}}{F^2})_o\in H_o$. And if $(\tilde{F},o)\in \mathcal{I}(\varphi_{\alpha}+\varphi_1+\Psi)_o$, it is easy to verify that $(\frac{\tilde{F}}{F^2})_o\in I(\varphi_\alpha+\Psi)_o$. Hence we have an $\mathcal{O}_{\mathbb{C}^n,o}$-module homomorphism $Q:\mathcal{H}_o/\mathcal{I}(\varphi_{\alpha}+\varphi_1+\Psi)_o\to H_o/I(\varphi_\alpha+\Psi)_o$ defined as follows,
$$Q([(\tilde{F},o)])=[(\frac{\tilde{F}}{F^2})_o].$$

The above discussion shows that $Q$ is independent of the choices of the representatives of $[(\tilde{F},o)]$ and hence $Q$ is well defined.

\begin{Proposition}\label{module isomorphism}$P:H_o/I(\varphi_\alpha+\Psi)_o\to \mathcal{H}_o/\mathcal{I}(\varphi_{\alpha}+\varphi_1+\Psi)_o$ is an $\mathcal{O}_{\mathbb{C}^n,o}$-module isomorphism and $P^{-1}=Q$.
\end{Proposition}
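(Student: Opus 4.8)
The plan is to prove that $P$ and $Q$ are mutually inverse. Since both $P$ and $Q$ have already been shown to be well-defined $\mathcal{O}_{\mathbb{C}^n,o}$-module homomorphisms, it suffices to check $Q\circ P=\mathrm{id}$ and $P\circ Q=\mathrm{id}$; this yields simultaneously that $P$ is bijective and that $P^{-1}=Q$.

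First I would verify $Q\circ P=\mathrm{id}_{H_o/I(\varphi_\alpha+\Psi)_o}$. Let $[f_o]$ be represented by $f_o\in H_o$, and write $P([f_o])=[(\tilde F,o)]$, where $(\tilde F,o)\in\mathcal H_o$ and $\int_{\{\Psi<-t\}\cap D_1}|\tilde F-fF^2|^2e^{-\varphi_\alpha-\varphi_1-\Psi}<+\infty$ for some $t>T$ and some neighborhood $D_1$ of $o$. Since $|F|^4e^{-\varphi_1}\equiv 1$ on $\{\Psi<-T\}$, dividing the integrand by $|F|^4$ yields $\int_{\{\Psi<-t\}\cap D_1}|\tfrac{\tilde F}{F^2}-f|^2e^{-\varphi_\alpha-\Psi}<+\infty$, which is exactly the condition defining $(\tfrac{\tilde F}{F^2}-f)_o\in I(\varphi_\alpha+\Psi)_o$. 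Hence $Q(P([f_o]))=[(\tfrac{\tilde F}{F^2})_o]=[f_o]$.

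Next I would verify $P\circ Q=\mathrm{id}_{\mathcal H_o/\mathcal I(\varphi_\alpha+\varphi_1+\Psi)_o}$. Let $[(\tilde F,o)]$ be represented by $(\tilde F,o)\in\mathcal H_o$, put $f:=\tilde F/F^2$ so that $f_o\in H_o$ and $Q([(\tilde F,o)])=[f_o]$, and write $P([f_o])=\tilde P(f_o)=[(\tilde G,o)]$ with $(\tilde G,o)\in\mathcal H_o$ and $\int_{\{\Psi<-t\}\cap D_1}|\tilde G-fF^2|^2e^{-\varphi_\alpha-\varphi_1-\Psi}<+\infty$. Because $fF^2=\tilde F$, this says $\int_{\{\Psi<-t\}\cap D_1}|\tilde G-\tilde F|^2e^{-\varphi_\alpha-\varphi_1-\Psi}<+\infty$. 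On $\{\Psi\ge-t\}$ the hypothesis that $c(t)e^{-t}$ is decreasing gives $c(-\Psi)e^{\Psi}\ge c(t)e^{-t}>0$, so (after possibly shrinking $D_1$)
\[
\int_{\{\Psi\ge-t\}\cap D_1}|\tilde G-\tilde F|^2e^{-\varphi_\alpha-\varphi_1-\Psi}\le\frac{1}{c(t)e^{-t}}\int_{D_1}|\tilde G-\tilde F|^2e^{-\varphi_\alpha-\varphi_1}c(-\Psi)<+\infty,
\]
the finiteness on the right following from $(\tilde G,o),(\tilde F,o)\in\mathcal H_o$. Adding the two regions shows $(\tilde G-\tilde F,o)\in\mathcal I(\varphi_\alpha+\varphi_1+\Psi)_o$, i.e. $P(Q([(\tilde F,o)]))=[(\tilde G,o)]=[(\tilde F,o)]$.

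The whole argument is routine bookkeeping with the identity $|F|^4e^{-\varphi_1}\equiv1$ on $\{\Psi<-T\}$. The only step using the hypotheses substantively is the passage from integrability on $\{\Psi<-t\}$ to integrability on a full neighborhood of $o$: there one splits off $\{\Psi\ge-t\}$ and combines the monotonicity of $c(t)e^{-t}$ with the defining finiteness conditions of $\mathcal H_o$ and $H_o$. I therefore do not anticipate any genuine obstacle, only the need to keep careful track of which neighborhood and which threshold $t$ one is working on.
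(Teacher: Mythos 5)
Your proof is correct and takes essentially the same approach as the paper: the computation of $P\circ Q=\mathrm{id}$ is the paper's surjectivity argument verbatim. The only departure is in establishing injectivity of $P$: the paper simply invokes Proposition \ref{proposition of morphism} ($\mathrm{Ker}(\tilde{P})=I(\varphi_\alpha+\Psi)_o$), whereas you verify $Q\circ P=\mathrm{id}$ directly via the identity $|F|^4e^{-\varphi_1}\equiv 1$ on $\{\Psi<-T\}$ — a slightly more self-contained route to the same conclusion.
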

\begin{proof} It follows from Proposition \ref{proposition of morphism} that we know $P$ is injective.

Now we prove $P$ is surjective.

For any $[(\tilde{F},o)]$ in $\mathcal{H}_o/\mathcal{I}(\varphi_{\alpha}+\varphi_1+\Psi)_o$. Let $(\tilde{F},o)$ be any representatives of $[(\tilde{F},o)]$ in $\mathcal{H}_o$. Denote that $[(f_1)_o]:=[(\frac{\tilde{F}}{F^2})_o]=Q([(\tilde{F},o)])$. Let $(f_1)_o:=(\frac{\tilde{F}}{F^2})_o\in H_o$ be the representative of $[(f_1)_o]$. Denote $[(\tilde{F}_1,o)]:=\tilde{P}((f_1)_o)=P([(f_1)_o])$. By the construction of $\tilde{P}$, we know that $(\tilde{F}_1,o)\in \mathcal{H}_o$ and
$$\int_{D_1\cap\{\Psi<-t\}}|\tilde{F}_1-f_1F^2|e^{-\varphi_\alpha-\varphi_1-\Psi}<+\infty,$$
where $t>T$ and $D_1$ is some neighborhood of $o$. Note that $(f_1)_o:=(\frac{\tilde{F}}{F^2})_o$. Hence  we have
$$\int_{D_1\cap\{\Psi<-t\}}|\tilde{F}_1-\tilde{F}|e^{-\varphi_\alpha-\varphi_1-\Psi}<+\infty.$$
It follows from $(\tilde{F},o)\in \mathcal{H}_o$ and $(\tilde{F}_1,o)\in \mathcal{H}_o$ that there exists a neighborhood $D_2\subset D_1$ of $o$ such that
$$\int_{D_2}|\tilde{F}-\tilde{F}_1|^2e^{-\varphi_\alpha-\varphi_1}c(-\Psi)<+\infty.$$
Note that on $\{\Psi\ge -t\}$, we have $c(-\Psi)e^{\Psi}\ge c(t)e^{-t}>0$. Hence we have
$$\int_{D_2\cap \{\Psi\ge-t\}}|\tilde{F}-\tilde{F}_1|^2e^{-\varphi_\alpha-\varphi_1-\Psi}<+\infty.$$
Thus we know that $(\tilde{F}_1-\tilde{F},o) \in \mathcal{I}(\varphi_{\alpha}+\varphi_1+\Psi)_o$, i.e. $[(\tilde{F},o)]=[(\tilde{F}_1,o)]$ in $ \mathcal{H}_o/\mathcal{I}(\varphi_{\alpha}+\varphi_1+\Psi)_o$. Hence we have $P\circ Q([(\tilde{F},o)])=[(\tilde{F},o)]$, which implies that $P$ is surjective.

We have proved that $P:H_o/I(\varphi_\alpha+\Psi)_o\to \mathcal{H}_o/\mathcal{I}(\varphi_{\alpha}+\varphi_1+\Psi)_o$ is an $\mathcal{O}_{\mathbb{C}^n,o}$-module isomorphism and $P^{-1}=Q$.
\end{proof}

We recall the following property of closedness of holomorphic functions on a neighborhood of $o$.
\begin{Lemma}[see \cite{G-R}]
\label{closedness}
Let $N$ be a submodule of $\mathcal O_{\mathbb C^n,o}^q$, $1\leq q<+\infty$, let $f_j\in\mathcal O_{\mathbb C^n}(U)^q$ be a sequence of $q-$tuples holomorphic in an open neighborhood $U$ of the origin $o$. Assume that the $f_j$ converge uniformly in $U$ towards  a $q-$tuples $f\in\mathcal O_{\mathbb C^n}(U)^q$, assume furthermore that all germs $(f_{j},o)$ belong to $N$. Then $(f,o)\in N$.	
\end{Lemma}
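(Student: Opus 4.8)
The plan is to reduce the statement to the Krull intersection theorem by passing through the finite-dimensional jet quotients of $\mathcal O_{\mathbb C^n,o}$. Write $R:=\mathcal O_{\mathbb C^n,o}$ and let $\mathfrak m\subset R$ be its maximal ideal; recall that $R$ is Noetherian. For each integer $m\ge 1$ set $E_m:=R^q/(N+\mathfrak m^mR^q)$. Since $R/\mathfrak m^m$ is finite dimensional over $\mathbb C$, so is $E_m$, and the canonical surjection $\pi_m\colon R^q\to E_m$ factors through the $(m-1)$-jet map $R^q\to R^q/\mathfrak m^mR^q$; thus $\pi_m(g,o)$ depends only on the Taylor coefficients of $g$ at $o$ of order $<m$. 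This is the mechanism that will let uniform convergence enter: convergence of holomorphic functions forces convergence of each Taylor coefficient, and in a finite-dimensional space coefficientwise convergence is genuine convergence.

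Next I would make this precise. After shrinking $U$ to a polydisc $U_0\Subset U$ centered at $o$, the Cauchy estimates bound each Taylor coefficient of $f-f_j$ at $o$ by $\sup_{U_0}|f-f_j|$ times a fixed geometric factor, so every such coefficient tends to $0$ as $j\to\infty$. Hence, for each fixed $m$, the $(m-1)$-jet of $f-f_j$ tends to $0$ in the finite-dimensional space $R^q/\mathfrak m^mR^q$, and therefore $\pi_m(f-f_j,o)\to 0$ in $E_m$. Since $(f_j,o)\in N$ by hypothesis we have $\pi_m(f_j,o)=0$, so $\pi_m(f-f_j,o)=\pi_m(f,o)$ is independent of $j$; letting $j\to\infty$ gives $\pi_m(f,o)=0$, i.e. $(f,o)\in N+\mathfrak m^mR^q$. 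As $m$ is arbitrary, $(f,o)\in\bigcap_{m\ge1}(N+\mathfrak m^mR^q)$. Now $R^q/N$ is a finitely generated module over the Noetherian local ring $R$, so by the Krull intersection theorem $\bigcap_{m\ge1}\mathfrak m^m(R^q/N)=0$, which is exactly the statement $\bigcap_{m\ge1}(N+\mathfrak m^mR^q)=N$; equivalently, $N$ is closed in the $\mathfrak m$-adic topology. Therefore $(f,o)\in N$, as desired.

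The hard part is conceptual rather than computational: one must resist concluding directly from uniform convergence, since that only places $f$ in each $N+\mathfrak m^mR^q$, not a priori in $N$, and it is essential to combine the finite-dimensionality of the jet quotients $E_m$ with the Noetherian/Krull input to close the gap. The only technical care needed elsewhere is the reduction to a polydisc $U_0\Subset U$ so that the Cauchy integral formula legitimately controls the Taylor coefficients of $f-f_j$; everything else is routine. (One could equally invoke the Artin--Rees lemma in place of Krull, but Krull is the cleanest input here.)
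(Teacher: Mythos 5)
Your proof is correct, and it is essentially the standard argument given in Grauert--Remmert (the source cited by the paper, which itself supplies no proof): reduce to the finite-dimensional quotients $R^q/(N+\mathfrak m^m R^q)$ via Cauchy estimates on the Taylor coefficients, and then invoke the Krull intersection theorem $\bigcap_m(N+\mathfrak m^m R^q)=N$ to conclude. No gaps.
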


The following lemma shows the closedness of submodules of $H_o$.

\begin{Lemma}\label{closedness of module}
Let $J_o$ be an $\mathcal{O}_{\mathbb{C}^n,o}$-submodule of $H_o$ such that $I(\varphi_\alpha+\Psi)_o\subset J_o$. Assume that $f_o\in J(\Psi)_o$. Let $U_0$ be a Stein open neighborhood of $o$. Let $\{f_j\}_{j\ge 1}$ be a sequence of holomorphic functions on $U_0\cap \{\Psi<-t_j\}$ for any $j\ge 1$, where $t_j>T>T_0$. Assume that $t_0:=\lim_{j\to +\infty}t_j\in[T,+\infty)$,
\begin{equation}\label{convergence property of module}
\limsup\limits_{j\to+\infty}\int_{U_0\cap\{\Psi<-t_j\}}|f_j|^2e^{-\varphi_\alpha}c(-\Psi)\le C<+\infty,
\end{equation}
and $(f_j-f)_o\in J_o$. Assume that $e^{-\varphi_\alpha}c(-\Psi)$ has a positive lower bound on $U_o$. Then there exists a subsequence of $\{f_j\}_{j\ge 1}$ compactly convergent to a holomorphic function $f_0$ on $\{\Psi<-t_0\}\cap U_0$ which satisfies
$$\int_{U_0\cap\{\Psi<-t_0\}}|f_0|^2e^{-\varphi_\alpha}c(-\Psi)\le C,$$
and $(f_0-f)_o\in J_o$.
\end{Lemma}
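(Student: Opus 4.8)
The plan is to obtain $f_0$ by an $L^2$-compactness argument and then to deduce $(f_0-f)_o\in J_o$ by carrying the problem over to genuine germs at $o$ via Lemma \ref{L2 method} and applying the closedness property Lemma \ref{closedness}. For $f_0$: after passing to a subsequence I may assume $\int_{U_0\cap\{\Psi<-t_j\}}|f_j|^2e^{-\varphi_\alpha}c(-\Psi)\le C+1$ for all $j$, so that, since $e^{-\varphi_\alpha}c(-\Psi)$ is bounded below on $U_0$, the unweighted $L^2$-norms of the $f_j$ over $U_0\cap\{\Psi<-t_j\}$ are uniformly bounded. Given a compact $K\subset\{\Psi<-t_0\}\cap U_0$, upper semicontinuity of $\Psi$ gives $\sup_K\Psi<-t_0$, hence $K\subset\{\Psi<-t_j\}$ for $j$ large since $t_j\to t_0$; interior estimates for holomorphic functions then bound the $f_j$ uniformly on $K$. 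Montel's theorem, an exhaustion of $\{\Psi<-t_0\}\cap U_0$ by such $K$, and a diagonal argument give a subsequence converging locally uniformly to a holomorphic $f_0$, and Fatou's lemma on each $K$ followed by taking the supremum over $K$ yields $\int_{U_0\cap\{\Psi<-t_0\}}|f_0|^2e^{-\varphi_\alpha}c(-\Psi)\le C$.

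Next I note that $f_o\in H_o$ (write $f_o=(f_1)_o-(f_1-f)_o$ with $(f_1)_o\in H_o$ after the above extraction and $(f_1-f)_o\in J_o\subset H_o$), and likewise $(f_0-f)_o\in H_o$ since $f_0\in H_o$. Consider the $\mathcal O_{\mathbb C^n,o}$-submodule $\mathcal J_o:=\{(\tilde F,o)\in\mathcal H_o:(\tilde F/F^2)_o\in J_o\}$; since $\mathcal H_o$ is an ideal of $\mathcal O_{\mathbb C^n,o}$, $\mathcal J_o$ is a submodule of $\mathcal O_{\mathbb C^n,o}$ and Lemma \ref{closedness} applies to it (equivalently one may argue with the isomorphism $P$ of Proposition \ref{module isomorphism}). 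It therefore suffices to produce, on one fixed neighborhood $W'$ of $o$, holomorphic functions $\tilde G_j$ with $(\tilde G_j,o)\in\mathcal J_o$ converging uniformly on $W'$ to some $\tilde G_\infty$ with $(\tilde G_\infty/F^2-(f_0-f))_o\in I(\varphi_\alpha+\Psi)_o$; then $(\tilde G_\infty,o)\in\mathcal J_o$ by Lemma \ref{closedness}, so $(\tilde G_\infty/F^2)_o\in J_o$, whence $(f_0-f)_o\in J_o$ because $I(\varphi_\alpha+\Psi)_o\subset J_o$.

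To build the $\tilde G_j$, fix $s>T$ with $\int_{\{\Psi<-s\}\cap V}|f|^2e^{-\varphi_\alpha}c(-\Psi)<+\infty$, set $\tau:=\max\{t_0,s\}+1$, pick a relatively compact Stein neighborhood $W$ with $\overline W\subset U_0\cap V$, and apply Lemma \ref{L2 method} on $M=W$ with $\delta=1$, $B=1$, and $\tau$ in the role of $t_0$, to $g_j:=f_j-f$; its hypotheses hold since $e^{-\varphi_\alpha}c(-\Psi)$ is bounded below on $W$ and $c(t)e^{-t}$ is decreasing. This yields holomorphic $\tilde G_j$ on $W$ with
\begin{equation*}
\int_{W}\big|\tilde G_j-(1-b(\Psi))g_jF^{2}\big|^2e^{-\varphi_\alpha-\varphi_1+v(\Psi)-\Psi}c(-v(\Psi))\le C'\int_{W}\mathbb{I}_{\{-\tau-1<\Psi<-\tau\}}|g_j|^2e^{-\varphi_\alpha-\Psi},
\end{equation*}
where $b=b_{\tau,1}$, $v=v_{\tau,1}$, and $C'$ does not depend on $j$. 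Since $\{\Psi<-\tau\}\cap W\subset(\{\Psi<-t_j\}\cap U_0)\cap(\{\Psi<-s\}\cap V)$ the right side is bounded uniformly in $j$, and using $v(\Psi)\ge\Psi$, monotonicity of $c(t)e^{-t}$, $|F|^4e^{-\varphi_1}\equiv1$ on $\{\Psi<-\tau\}$, and that $1-b(\Psi)$ is supported in $\{\Psi<-\tau\}$, one gets $\sup_j\int_W|\tilde G_j|^2e^{-\varphi_\alpha-\varphi_1}c(-\Psi)<+\infty$; as $e^{-\varphi_\alpha-\varphi_1}c(-\Psi)$ is bounded below on any $W'\Subset W$, a further subsequence of $\tilde G_j$ converges uniformly on $W'$ to a holomorphic $\tilde G_\infty$ with $(\tilde G_\infty,o)\in\mathcal H_o$. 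On $\{\Psi<-\tau-1\}$, where $1-b(\Psi)\equiv1$ and $e^{v(\Psi)}c(-v(\Psi))$ is bounded below, the estimate gives $\sup_j\int_{\{\Psi<-\tau-1\}\cap W}|\tilde G_j-g_jF^2|^2e^{-\varphi_\alpha-\varphi_1-\Psi}<+\infty$, so dividing by $|F|^4$ and using $(g_j)_o\in J_o$, $I(\varphi_\alpha+\Psi)_o\subset J_o$ shows $(\tilde G_j,o)\in\mathcal J_o$; and since $\tilde G_j\to\tilde G_\infty$ and $g_j\to f_0-f$ locally uniformly on $\{\Psi<-\tau-1\}\cap W'$, Fatou's lemma on compact subsets gives $\int_{\{\Psi<-\tau-1\}\cap W'}|\tilde G_\infty-(f_0-f)F^2|^2e^{-\varphi_\alpha-\varphi_1-\Psi}<+\infty$, i.e. $(\tilde G_\infty/F^2-(f_0-f))_o\in I(\varphi_\alpha+\Psi)_o$, as required.

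I expect the main obstacle to be precisely this uniformity: placing all the $L^2$ extensions $\tilde G_j$ on one neighborhood of $o$ with constants independent of $j$, which is what makes the Grauert--Remmert closedness lemma usable; the choice of $\tau$ (through $f_o\in H_o$) together with the monotonicity of $c(t)e^{-t}$ and the lower bound on $e^{-\varphi_\alpha}c(-\Psi)$ are essential here. A secondary subtlety is that $e^{-\varphi_\alpha-\varphi_1-\Psi}$ need not be integrable on the compact sets appearing in the limit passage, so Fatou's lemma, rather than dominated convergence, must be invoked.
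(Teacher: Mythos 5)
Your proof is correct and follows essentially the same route as the paper's: Montel plus Fatou for the extraction of $f_0$, then transfer of the membership question to genuine germs at $o$ via the $L^2$ extension Lemma \ref{L2 method} with uniform constants, and the Grauert--Remmert closedness Lemma \ref{closedness}. The only (harmless) reorganizations are that you extend the differences $f_j-f$ with a fixed truncation parameter $\tau$ and work with the preimage module $\mathcal{J}_o$ directly, whereas the paper extends the $f_j$ themselves (with parameters $t_j$) and routes the argument through the isomorphism $P$ of Proposition \ref{module isomorphism} and the image ideal $\tilde{J}$.
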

\begin{proof}It follows from $\limsup\limits_{j\to+\infty}\int_{U_0\cap\{\Psi<-t_j\}}|f_j|^2e^{-\varphi_\alpha}c(-\Psi)\le C<+\infty$ and $e^{-\varphi_\alpha}c(-\Psi)$ has a positive lower bound on $U_o$ that
$$\limsup\limits_{j\to+\infty}\int_{U_0\cap\{\Psi<-t_j\}}|f_j|^2<+\infty.$$
As $t_0:=\lim_{j\to +\infty}t_j\in[T,+\infty)$, by Montel's theorem, there exists a subsequence of $\{f_j\}_{j\ge 1}$ (also denoted by $\{f_j\}_{j\ge 1}$) compactly convergent to a holomorphic function $f_0$ on $\{\Psi<-t_0\}\cap U_0$. It follows from Fatou's Lemma that
$$\int_{U_0\cap\{\Psi<-t_0\}}|f_0|^2e^{-\varphi_\alpha}c(-\Psi)\le \liminf\limits_{j\to+\infty}\int_{U_0\cap\{\Psi<-t_j\}}|f_j|^2e^{-\varphi_\alpha}c(-\Psi)\le C.$$

Now we prove $(f_0-f)_o\in J_o$. We firstly recall some constructions in Lemma \ref{construction of morphism}.

As $c(t)e^{-t}$ is decreasing with respect to $t$, it follows from inequality \eqref{convergence property of module} that we have
$\sup_{j\ge1}\left(\int_{U_0}\mathbb{I}_{\{-t_j-1<\Psi<-t_j\}}|f_j|^2e^{-\varphi_\alpha-\Psi}\right)<+\infty$.
As  $c(-\Psi)e^{-\varphi_\alpha}$ has a positive lower bound on $U_0$, we have $\sup_{j\ge1}\left(\int_{\{\Psi<-t_j\}\cap U_0}|f_j|^2\right)<+\infty$. Then it follows from Lemma \ref{L2 method} that there exists a holomorphic function $\tilde{F}_j$ on $U_0$ such that
\begin{equation}\label{convergence property of module formula 1}
  \begin{split}
      & \int_{U_0}|\tilde{F}_j-(1-b_{t_j}(\Psi))f_jF^{2}|^2e^{-\varphi_\alpha-\varphi_1+v_{t_j}(\Psi)-\Psi}c(-v_{t_j}(\Psi)) \\
      \le & \left(c(T)e^{-T}+\int_{T}^{t_j+1}c(s)e^{-s}ds\right)
       \int_{U_0}\mathbb{I}_{\{-t_0-1<\Psi<-t_0\}}|f_j|^2e^{-\varphi_{\alpha}-\Psi},
  \end{split}
\end{equation}
where $b_{t_j}(t)=\int^{t}_{-\infty} \mathbb{I}_{\{-t_j-1< s < -t_j\}}ds$,
$v_{t_j}(t)=\int^{t}_{-t_j}b_{t_j}(s)ds-t_j$. Denote $C_j:=c(T)e^{-T}+\int_{T}^{t_j+1}c(s)e^{-s}ds$. As $t_0:=\lim_{j\to +\infty}t_j\in[T,+\infty)$, we can assume that there exists a positive number $C_0<+\infty$ such that $C_j\le C_0$ for all $j\ge 1$.

Note that $v_{t_j}(t)>-t_j-1$. We have $e^{v_{t_j}(\Psi)-\Psi}c(-v_{t_j}(\Psi))\ge c(t_j+1)e^{-(t_j+1)}>0$. As $b_{t_j}(t)\equiv 0$ on $(-\infty,-t_j-1)$, we have
\begin{equation}\label{convergence property of module formula 2}
\begin{split}
   &\int_{U_0\cap\{\Psi<-t_j-1\}}|\tilde{F}_j-f_jF^2|^2e^{-\varphi_{\alpha}-\varphi_1-\Psi} \\
   \le & \frac{1}{c(t_j+1)e^{-(t_j+1)}}
   \int_{U_0}|\tilde{F}_j-(1-b_{t_j}(\Psi))f_jF^2|^2e^{-\varphi_{\alpha}-\varphi_1-\Psi+v_{t_j}(\Psi)}c(-v_{t_j}(\Psi))\\
   \le &\frac{C}{c(t_j+1)e^{-(t_j+1)}}
   \int_{U_0}\mathbb{I}_{\{-t_j-1<\Psi<-t_j\}}|f_j|^2e^{-\varphi_{\alpha}-\Psi}<+\infty.
\end{split}
\end{equation}
Note that $|F^2|^2e^{-\varphi_1}=1$ on $\{\Psi<-t_j\}$. As $v_{t_j}(\Psi)\ge \Psi$, we have $c(-v_{t_j}(\Psi))e^{v_{t_j}(\Psi)}\ge c(-\Psi)e^{-\Psi}$. Hence we have
\begin{equation}\label{convergence property of module formula 3}
\begin{split}
   &\int_{U_0}|\tilde{F}_j|^2e^{-\varphi_{\alpha}-\varphi_1}c(-\Psi) \\
   \le & 2\int_{U_0}|\tilde{F}_j-(1-b_{t_j}(\Psi))f_jF^2|^2e^{-\varphi_{\alpha}-\varphi_1}c(-\Psi)\\
   +&2\int_{U_0}|(1-b_{t_j}(\Psi))f_jF^2|^2e^{-\varphi_{\alpha}-\varphi_1}c(-\Psi)\\
   \le&
   2\int_{U_0}|\tilde{F}_j-(1-b_{t_j}(\Psi))f_jF^2|^2e^{-\varphi_{\alpha}-\varphi_1-\Psi+v_{t_j}(\Psi)}c(-v_{t_j}(\Psi))\\
   +&2\int_{U_0\cap\{\Psi<-t_j\}}|f_j|^2e^{-\varphi_{\alpha}}c(-\Psi)\\
   < &+\infty.
\end{split}
\end{equation}
Hence we know that $(\tilde{F}_j,o)\in \mathcal{H}_o$.

It follows from inequality \eqref{convergence property of module}, $\sup_{j\ge1}\left(\int_{U_0}\mathbb{I}_{\{-t_j-1<\Psi<-t_j\}}|f_j|^2e^{-\varphi_\alpha-\Psi}\right)<+\infty$ and inequality \eqref{convergence property of module formula 3} that we actually have $\sup_j\left(\int_{U_0}|\tilde{F}_j|^2e^{-\varphi_{\alpha}-\varphi_1}c(-\Psi)\right)<+\infty$. Note that $\varphi_1$ is a plurisubharmonic function and $c(-\Psi)e^{-\varphi_\alpha}$ has a positive lower bound on $U_0$. We have (shrink $U_0$ if necessary)
$$\sup_j\left(\int_{U_0}|\tilde{F}_j|^2\right)<+\infty.$$
Hence we know there exists a subsequence of $\{\tilde{F}_j\}_{j\ge 1}$ (also denoted by $\{\tilde{F}_j\}_{j\ge 1}$) compactly convergent to a holomorphic function $\tilde{F}_0$ on $U_0$.
It follows from Fatou's Lemma and inequality \eqref{convergence property of module formula 3} that
\begin{equation}\label{convergence property of module formula 4}
\int_{U_0}|\tilde{F}_0|^2e^{-\varphi_{\alpha}-\varphi_1}c(-\Psi)\le
\liminf_{j\to +\infty}\int_{U_0}|\tilde{F}_j|^2e^{-\varphi_{\alpha}-\varphi_1}c(-\Psi)<+\infty.
\end{equation}
As $f_j$ converges to $f_0$, it follows from Fatou's Lemma and inequality \eqref{convergence property of module formula 1} that
\begin{equation}\nonumber
  \begin{split}
  &\int_{U_0}|\tilde{F}_0-(1-b_{t_0}(\Psi))f_0F^{2}|^2e^{-\varphi_\alpha-\varphi_1+v_{t_0}(\Psi)-\Psi}c(-v_{t_0}(\Psi)) \\
     \le & \liminf_{j\to+\infty} \int_{U_0}|\tilde{F}_j-(1-b_{t_j}(\Psi))f_jF^{2}|^2e^{-\varphi_\alpha-\varphi_1+v_{t_j}(\Psi)-\Psi}c(-v_{t_j}(\Psi)) \\
     < &+\infty,
  \end{split}
\end{equation}
which implies that
\begin{equation}\label{convergence property of module formula 5}
  \begin{split}
\int_{U_0\cap\{\Psi<-t_0-1\}}|\tilde{F}_0-f_0F^2|^2e^{-\varphi_{\alpha}-\varphi_1-\Psi}<+\infty.
  \end{split}
\end{equation}
It follows from inequality \eqref{convergence property of module formula 2}, inequality \eqref{convergence property of module formula 3}, inequality \eqref{convergence property of module formula 4}, inequality \eqref{convergence property of module formula 5} and definition of $P:H_o/I(\varphi_\alpha+\Psi)_o\to \mathcal{H}_o/\mathcal{I}(\varphi_{\alpha}+\varphi_1+\Psi)_o$ that for any $j\ge 0$, we have
$$P([(f_j)_o])=[(\tilde{F}_j,o)].$$

As $(f_j-f)_o\in J_o$ for any $j\ge 1$, we have $(f_j-f_1)_o\in J_o$ for any $j\ge 1$.
It follows from Proposition \ref{module isomorphism} that there exists an ideal $\tilde{J}$ of $\mathcal{O}_{\mathbb{C}^n,o}$ such that $\mathcal{I}(\varphi_{\alpha}+\varphi_1+\Psi)_o\subset \tilde{J}\subset \mathcal{H}_o$ and $\tilde{J}/\mathcal{I}(\varphi_{\alpha}+\varphi_1+\Psi)_o=\text{Im}(P|_{J_o/I(\varphi_\alpha+\Psi)_o})$. It follows from $(f_j-f_1)_o\in J_o$ and $P([(f_j)_o])=[(F_j,o)]$ for any $j\ge 1$ that we have
$$(\tilde{F}_j-\tilde{F}_1)\in \tilde{J},$$
for any $j\ge 1$.

As $\tilde{F}_j$ compactly converges to $\tilde{F}_0$, using Lemma \ref{closedness}, we obtain that $(\tilde{F}_0-\tilde{F}_1,o)\in\tilde{J}$. Note that $P$ is an  $\mathcal{O}_{\mathbb{C}^n,o}$-module isomorphism and $\tilde{J}/\mathcal{I}(\varphi_{\alpha}+\varphi_1+\Psi)_o=\text{Im}(P|_{J_o/I(\varphi_\alpha+\Psi)_o})$. We have $(f_0-f_1)_o\in J_o$, which implies that $(f_0-f)_o\in J_o$.

Lemma \ref{closedness of module} is proved.
\end{proof}

Let $\varphi_{\alpha}$ be a plurisubharmonic function on $D$, and let $c\equiv1$. Note that $H_o=I(\varphi_{\alpha})_o$ and   $\mathcal{H}_o=\mathcal{I}(\varphi_{\alpha}+\varphi_1)_o$.
We know that $I(a\Psi+\varphi_{\alpha})_o\subset I(a'\Psi+\varphi_{\alpha})_o$ for any $0\le a'<a<+\infty$. Denote that $I_+(a\Psi+\varphi_{\alpha})_o:=\cup_{p>a}I(p\Psi+\varphi_{\alpha})_o$ is an $\mathcal{O}_{\mathbb{C}^n,o}$-submodule of $H_o$, where $a\ge0$.
\begin{Lemma}
	\label{l:m5} There exists $a'>a$ such that $I(a'\Psi+\varphi_{\alpha})_o=I_+(a\Psi+\varphi_{\alpha})_o$ for any $a\ge0$.
\end{Lemma}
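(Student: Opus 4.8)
The plan is to reduce the statement to a Noetherian property of an ideal and then transport it back through the isomorphism $P$ of Proposition \ref{module isomorphism}. First I would recall that, in the present setting, $\varphi_\alpha$ is plurisubharmonic and $c\equiv 1$, so by the remark immediately preceding the lemma we have $H_o=I(\varphi_\alpha)_o$ and $\mathcal H_o=\mathcal I(\varphi_\alpha+\varphi_1)_o$, where $\varphi_1=2\max\{\psi+T,2\log|F|\}$. Since $a\Psi+\varphi_\alpha$ is still of the form ``weight $+\varphi_\alpha$'', the submodule $I(a\Psi+\varphi_\alpha)_o$ of $H_o$ is exactly the preimage under $P$ of an ideal of $\mathcal O_{\mathbb C^n,o}$; concretely, applying Proposition \ref{module isomorphism} with $c\equiv1$ and with $a\Psi$ in place of $\Psi$ (equivalently, noting that $(f)_o\in I(a\Psi+\varphi_\alpha)_o$ iff $(fF^2)_o=\tilde F$ lies in $\mathcal I(\varphi_\alpha+\varphi_1+a\Psi)_o$) identifies $I(a\Psi+\varphi_\alpha)_o/I(\varphi_\alpha+\Psi)_o$ with an ideal of the local ring, hence these are finitely generated $\mathcal O_{\mathbb C^n,o}$-modules.

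Next I would invoke the strong openness property in the form already available: by Theorem \ref{p:soc} (Bao-Guan-Yuan) applied with weight $a\Psi$ — more precisely its twisted analogue incorporating $\varphi_\alpha$, which is what Theorem \ref{thm:soc} asserts — we have $I(a\Psi+\varphi_\alpha)_o=\cup_{p>a}I(p\Psi+\varphi_\alpha)_o=I_+(a\Psi+\varphi_\alpha)_o$. Actually, to keep the argument self-contained and not circular (Theorem \ref{thm:soc} may be proved later using the present lemma), I would instead argue directly: the family $\{I(p\Psi+\varphi_\alpha)_o\}_{p>a}$ is a decreasing-in-$p$ (increasing as $p\downarrow a$) nested family of submodules of the Noetherian module $H_o=I(\varphi_\alpha)_o$ over the Noetherian local ring $\mathcal O_{\mathbb C^n,o}$; the union $I_+(a\Psi+\varphi_\alpha)_o=\cup_{p>a}I(p\Psi+\varphi_\alpha)_o$ is a submodule of $H_o$, hence finitely generated, say by germs $(g_1)_o,\dots,(g_m)_o$. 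Each generator lies in some $I(p_k\Psi+\varphi_\alpha)_o$ with $p_k>a$; setting $a':=\min_k p_k>a$ and using that $I(p\Psi+\varphi_\alpha)_o\subset I(a'\Psi+\varphi_\alpha)_o$ whenever $p\ge a'$ (because $e^{-p\Psi}\ge e^{-a'\Psi}$ on $\{\Psi<0\}$, as $\Psi\le 0$), we get $(g_1)_o,\dots,(g_m)_o\in I(a'\Psi+\varphi_\alpha)_o$, whence $I_+(a\Psi+\varphi_\alpha)_o\subset I(a'\Psi+\varphi_\alpha)_o$. The reverse inclusion $I(a'\Psi+\varphi_\alpha)_o\subset I_+(a\Psi+\varphi_\alpha)_o$ is immediate from $a'>a$ and the definition of the union, giving equality.

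The only real obstacle is making the finite generation step rigorous: $H_o$ is a submodule of $\mathcal O_{M,z_0}$-type objects but a priori lives in $J(\Psi)_o$, not inside $\mathcal O_{\mathbb C^n,o}$, so ``Noetherian'' has to be justified. This is exactly what Proposition \ref{module isomorphism} buys us: $P$ carries $H_o/I(\varphi_\alpha+\Psi)_o$ isomorphically onto $\mathcal H_o/\mathcal I(\varphi_\alpha+\varphi_1+\Psi)_o$, and the latter is a quotient of the ideal $\mathcal H_o\subset\mathcal O_{\mathbb C^n,o}$, hence a Noetherian $\mathcal O_{\mathbb C^n,o}$-module; since $I(\varphi_\alpha+\Psi)_o\subset I(p\Psi+\varphi_\alpha)_o$ for all $p\ge 1$ (and we may as well take $a\ge 1$ after noting $I_+(a\Psi+\varphi_\alpha)_o=I_+(\max\{a,1\}\Psi+\varphi_\alpha)_o$ is false in general — so more care is needed here: one should instead observe $I(\varphi_\alpha+\Psi)_o\subset I(p\Psi+\varphi_\alpha)_o$ fails when $p<1$, so one works directly with $P$ applied to the weight $a\Psi$ rather than $\Psi$), the images of the $I(p\Psi+\varphi_\alpha)_o$ under the appropriate isomorphism form an ascending chain of submodules of a Noetherian module, which stabilizes. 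I would carry out this identification cleanly by noting that all the arguments of Section \ref{sec:properties of module} go through verbatim with $\Psi$ replaced by $a\Psi$ for fixed $a\ge 0$ (the function $a\Psi$ is still plurisubharmonic where needed and $\le 0$), so $H_o$, with the weight $a\Psi$, is Noetherian, and then the chain-stabilization argument above concludes the proof.
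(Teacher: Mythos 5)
Your high-level strategy---transport everything through the module isomorphism $P$ of Proposition \ref{module isomorphism} to an ideal in the Noetherian ring $\mathcal O_{\mathbb C^n,o}$, use finite generation of the ascending union, and then push the finitely many generators into a single $I(a'\Psi+\varphi_\alpha)_o$---is exactly the paper's approach. But there is a genuine gap in the crucial technical step, and moreover the fix you sketch in your last paragraph goes in the wrong direction.

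The gap is this: to work inside $H_o/I(\varphi_\alpha+\Psi_*)_o$ for some choice of weight $\Psi_*$ (the ``bottom'' of your quotient), you need $I(\varphi_\alpha+\Psi_*)_o\subset I(p\Psi+\varphi_\alpha)_o$ for all $p$ in the relevant range $(a,k)$. Since $\Psi\le 0$, the ideals $I(p\Psi+\varphi_\alpha)_o$ are \emph{decreasing} in $p$; so the containment you need holds precisely when the coefficient in $\Psi_*$ is at least as large as every $p$ you care about. In particular $I(\varphi_\alpha+\Psi)_o\subset I(p\Psi+\varphi_\alpha)_o$ holds for $p\le 1$ and fails for $p>1$, which is the \emph{opposite} of what you write (``fails when $p<1$''). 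Your proposed remedy---rerun Section \ref{sec:properties of module} with $\Psi$ replaced by $a\Psi$---makes this worse, not better: $I(\varphi_\alpha+a\Psi)_o$ is the \emph{largest} of the ideals $I(p\Psi+\varphi_\alpha)_o$ for $p>a$, so it is not contained in any of them, and the quotients $I(p\Psi+\varphi_\alpha)_o/I(\varphi_\alpha+a\Psi)_o$ you would want simply do not make sense. The paper resolves this by fixing an \emph{integer} $k>a$ and replacing $(\Psi,\varphi_1)$ by $(\tilde\Psi,\tilde\varphi_1)=(k\Psi,k\varphi_1)$, which is still of the required form since $k\Psi=\min\{k\psi-2\log|F^k|,0\}$; then $I(k\Psi+\varphi_\alpha)_o$ lies inside every $I(p\Psi+\varphi_\alpha)_o$ for $a<p<k$, the isomorphism $P:I(\varphi_\alpha)_o/I(\varphi_\alpha+k\Psi)_o\to\mathcal I(\varphi_\alpha+\varphi_1)_o/\mathcal I(\varphi_\alpha+k\varphi_1+k\Psi)_o$ is legitimate, and $I_+(a\Psi+\varphi_\alpha)_o/I(\varphi_\alpha+k\Psi)_o$ corresponds to a quotient of an honest ideal $L=\cup_{a<p<k}K_p$ of $\mathcal O_{\mathbb C^n,o}$. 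Finite generation of $L$ then pulls back to finitely many generators of $I_+(a\Psi+\varphi_\alpha)_o$ modulo $I(k\Psi+\varphi_\alpha)_o$, each lying in some $I(p_k\Psi+\varphi_\alpha)_o$; taking $a'=\min_k p_k\in(a,k)$ and noting $I(k\Psi+\varphi_\alpha)_o\subset I(a'\Psi+\varphi_\alpha)_o$ completes the argument. So the missing idea in your proposal is precisely the rescaling by an integer $k>a$ rather than by $a$ itself.
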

\begin{proof}
By the definition of $I_+(a\Psi+\varphi_{\alpha})_o$, we know $I(p\Psi+\varphi_{\alpha})_o\subset I_+(a\Psi+\varphi_{\alpha})_o$ for any $p>a$. It suffices to prove that  there exists $a'>a$ such that $I_+(a\Psi+\varphi_{\alpha})_o\subset I(a'\Psi+\varphi_{\alpha})_o$.

 Let $k>a$ be an integer. Denote that $\tilde\varphi_1:=k\varphi_1=2\max\{k\psi+kT,2\log|F^k|\}$ and $\tilde \Psi:=k\Psi=\min\{k\psi-2\log|F^k|,-kT\}$. Proposition \ref{module isomorphism} shows that there exists an $\mathcal{O}_{\mathbb{C}^n,o}$-module isomorphism $P$ from $I(\varphi_{\alpha})_o/I(\varphi_{\alpha}+\tilde \Psi)_o\rightarrow \mathcal{I}(\varphi_{\alpha}+\varphi_1)_o/\mathcal{I}(\varphi_{\alpha}+\tilde\varphi_1+\tilde \Psi)_o$, which implies that there exists an ideal $K_p$ of $\mathcal{O}_{\mathbb{C}^n,o}$ such that
 $$P(I(\varphi_{\alpha}+p\Psi)_o/I(\varphi_{\alpha}+\tilde \Psi)_o)=K_p/\mathcal{I}(\varphi_{\alpha}+\tilde\varphi_1+\tilde \Psi)_o,$$
 where $p\in(0,k)$.
	Denote that
	$$L:=\cup_{a<p<k}K_p$$
	 be an ideal of $\mathcal{O}_{\mathbb{C}^n,o}$. Hence $P|_{I_+(a\Psi+\varphi_{\alpha})_o/I(\varphi_{\alpha}+\tilde \Psi)_o}$ is an  $\mathcal{O}_{\mathbb{C}^n,o}$-module isomorphism from $I_+(a\Psi+\varphi_{\alpha})_o/I(\varphi_{\alpha}+\tilde \Psi)_o$ to $L/\mathcal{I}(\varphi_{\alpha}+\tilde\varphi_1+\tilde \Psi)_o$. As $\mathcal{O}_{\mathbb{C}^n,o}$ is a Noetherian ring (see \cite{hormander}), we get that $L$ is finitely generated. Hence there exists a finite set $\{(f_1)_o,\ldots,(f_m)_o\}\subset I_+(a\Psi+\varphi_{\alpha})_o$, which satisfies that for any $f_o\in I_+(a\Psi+\varphi_{\alpha})_o$ there exists  $(h_j,o)\in\mathcal{O}_{\mathbb{C}^n,o}$ for any $1\le j\le m$ such that
	 $$f_o-\sum_{j=1}^{m}(h_j,o)\cdot (f_j)_o\in I(\varphi_{\alpha}+\tilde \Psi)_o.$$
	  By the definition of $I_+(a\Psi+\varphi_{\alpha})_o$, there exists $a'\in(a,k)$ such that $\{(f_1)_o,\ldots,(f_m)_o\}\subset I(a'\Psi+\varphi_{\alpha})_o$. As $(h_j,o)\cdot (f_j)_o\in I(a'\Psi+\varphi_{\alpha})_o$ for any $1\le j\le m$ and $I(\varphi_{\alpha}+\tilde\Psi)_o=I(k\Psi+\varphi_{\alpha})_o\subset I(a'\Psi+\varphi_{\alpha})_o$, we obtain that $I_+(a\Psi+\varphi_{\alpha})_o\subset I(a'\Psi+\varphi_{\alpha})_o$.
	
	  Thus, Lemma \ref{l:m5} holds.
\end{proof}

\section{Properties of $G(t)$}
Following the notations in Section \ref{sec:Main result}, we present some properties of the function $G(t)$ in this section.

\begin{Lemma}[see \cite{GY-concavity}]
	\label{l:converge}
	Let $M$ be a complex manifold. Let $S$ be an analytic subset of $M$.  	
	Let $\{g_j\}_{j=1,2,...}$ be a sequence of nonnegative Lebesgue measurable functions on $M$, which satisfies that $g_j$ are almost everywhere convergent to $g$ on  $M$ when $j\rightarrow+\infty$,  where $g$ is a nonnegative Lebesgue measurable function on $M$. Assume that for any compact subset $K$ of $M\backslash S$, there exist $s_K\in(0,+\infty)$ and $C_K\in(0,+\infty)$ such that
	$$\int_{K}{g_j}^{-s_K}dV_M\leq C_K$$
	 for any $j$, where $dV_M$ is a continuous volume form on $M$.
	
 Let $\{F_j\}_{j=1,2,...}$ be a sequence of holomorphic $(n,0)$ form on $M$. Assume that $\liminf_{j\rightarrow+\infty}\int_{M}|F_j|^2g_j\leq C$, where $C$ is a positive constant. Then there exists a subsequence $\{F_{j_l}\}_{l=1,2,...}$, which satisfies that $\{F_{j_l}\}$ is uniformly convergent to a holomorphic $(n,0)$ form $F$ on $M$ on any compact subset of $M$ when $l\rightarrow+\infty$, such that
 $$\int_{M}|F|^2g\leq C.$$
\end{Lemma}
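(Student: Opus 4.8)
The plan is a normal families (Montel) argument: reduce the weighted $L^2$-bound to a local pointwise bound on the $F_j$, extract a uniformly convergent subsequence, and conclude by Fatou. The only delicate point is that the lower bound on the weights $g_j$ is available only away from the analytic set $S$, so one has to propagate the local boundedness of $\{F_j\}$ across $S$ separately.

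First I would pass to a subsequence of $\{F_j\}$ along which $\int_M|F_j|^2g_j$ converges to $\liminf_{j\to+\infty}\int_M|F_j|^2g_j\le C$, and relabel it $\{F_j\}$. Then, fixing a compact set $K\subset M\setminus S$ and setting $p_K:=\frac{s_K}{1+s_K}\in(0,1)$ so that $\frac{p_K}{1-p_K}=s_K$, I would write (locally, $|F_j|^2$ being a nonnegative density w.r.t. $dV_M$) $|F_j|^{2p_K}=(|F_j|^2g_j)^{p_K}(g_j^{-1})^{p_K}$ and apply H\"older's inequality with exponents $\frac1{p_K}$ and $\frac1{1-p_K}$:
\[
\int_K|F_j|^{2p_K}\,dV_M\le\Big(\int_K|F_j|^2g_j\,dV_M\Big)^{p_K}\Big(\int_Kg_j^{-s_K}\,dV_M\Big)^{1-p_K},
\]
which is bounded uniformly in $j$ by hypothesis. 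In a local chart $F_j=f_j\,dz_1\wedge\cdots\wedge dz_n$ with $f_j$ holomorphic, so $|f_j|^{2p_K}=\exp(2p_K\log|f_j|)$ is plurisubharmonic, hence subharmonic; the sub-mean-value inequality over Euclidean balls then converts the uniform $L^1$ bound on $|f_j|^{2p_K}$ over $K$ into a uniform pointwise bound on $|f_j|$, hence on $|F_j|$, on a neighbourhood of each interior point of $K$. As $K\subset M\setminus S$ is arbitrary, this shows $\{F_j\}$ is locally uniformly bounded on $M\setminus S$.

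The main step — and the one I expect to be the real obstacle — is propagating this boundedness across $S$, since one has no control whatsoever on $g_j$ near $S$. Fixing $z_0\in S$ and local coordinates near it, analyticity of $S$ gives a holomorphic $h\not\equiv0$ near $z_0$ with $S\subset\{h=0\}$; after a linear change of coordinates $h$ is regular in $z_1$, so the Weierstrass preparation theorem produces a polydisc $\Delta_\rho\times P'$ about $z_0$ (in coordinates $(z_1,z')$) on which $h$ does not vanish on the compact set $\{|z_1|=\rho\}\times\overline{P'}\subset M\setminus S$. For each fixed $z'\in P'$, $z_1\mapsto|F_j(z_1,z')|^2$ is subharmonic on a disc containing $\overline{\Delta_\rho}$, so the maximum principle bounds it on $\Delta_\rho$ by its maximum over $\{|z_1|=\rho\}\times\overline{P'}$, which is uniform in $j$ by the previous step; hence $\{F_j\}$ is uniformly bounded on $\Delta_\rho\times P'$. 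Covering $M$ by such neighbourhoods of points of $S$ together with $M\setminus S$, one gets that $\{F_j\}$ is locally uniformly bounded on all of $M$. It is precisely here that analyticity of $S$ (rather than mere pluripolarity) is used — a generic complex line meets $S$ in a discrete set.

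To finish, I would invoke Montel's theorem — applied chart by chart to the holomorphic coefficients, with a diagonal extraction over a countable compact exhaustion of $M$ — to extract a subsequence $\{F_{j_l}\}$ converging uniformly on compact subsets of $M$ to a holomorphic $(n,0)$ form $F$ on $M$. Then $|F_{j_l}|^2\to|F|^2$ uniformly on compacts and $g_{j_l}\to g$ almost everywhere, so $|F_{j_l}|^2g_{j_l}\to|F|^2g$ almost everywhere on $M$, and Fatou's lemma gives
\[
\int_M|F|^2g\le\liminf_{l\to+\infty}\int_M|F_{j_l}|^2g_{j_l}=\liminf_{j\to+\infty}\int_M|F_j|^2g_j\le C,
\]
as required.
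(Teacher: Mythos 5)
Your proof is correct. The paper itself does not prove this lemma---it is quoted from \cite{GY-concavity}---so there is no in-paper argument to compare against, but your blind reconstruction is a complete and standard normal-families proof: the H\"older step with $p_K=\frac{s_K}{1+s_K}$ correctly trades the integral lower bound $\int_K g_j^{-s_K}\le C_K$ together with the weighted $L^2$ bound for a uniform $L^{2p_K}$ bound; the sub-mean-value inequality for the plurisubharmonic functions $|f_j|^{2p_K}$ then yields local uniform boundedness away from $S$; the slicing/maximum-principle argument (take $h\not\equiv 0$ locally cutting out $S$, make it $z_1$-regular, and bound $|f_j(\cdot,z')|$ on $\Delta_\rho$ by its values on $\{|z_1|=\rho\}\times\overline{P'}\subset M\setminus S$) is exactly the Riemann-extension mechanism that pushes boundedness across the analytic set; and Montel plus Fatou finish the argument. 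One small bookkeeping point you already implicitly handle: you must first pass to a subsequence along which $\int_M|F_j|^2g_j$ converges to the $\liminf$, and then extract the Montel subsequence inside it, so that Fatou's lemma on the final subsequence lands on the claimed bound $C$ rather than the $\liminf$ of a rearranged sequence.

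One minor caveat worth recording: the step producing $h\not\equiv 0$ near $z_0\in S$ with $S\subset\{h=0\}$ tacitly assumes $S$ is a \emph{proper} analytic subset near $z_0$ (so that such a nonvanishing germ exists). This is harmless here because the hypothesis ``$\int_K g_j^{-s_K}\,dV_M\le C_K$ for all compact $K\subset M\setminus S$'' is vacuous on any component contained in $S$, and in all the paper's applications $S$ is a proper analytic subset; but if you were writing this out formally you would want to restrict to that case or handle the degenerate component separately. The distinction you draw in passing between analytic and merely pluripolar $S$ is not really load-bearing: what the argument uses is that $S$ is locally contained in the zero set of a single holomorphic function, giving the $z_1$-regular slicing. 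Everything else is exactly as in the cited reference's setting.
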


Let $c(t)\in P_{T,M,\Psi}$.
The following lemma will be used to discuss the convergence property of holomorphic forms on $\{\Psi<-t\}$.
\begin{Lemma}\label{global convergence property of module}
 Let $f$ be a holomorphic $(n,0)$ form on $\{\Psi<-\hat{t}_0\}\cap V$, where $V\supset Z_0$ is an open subset of $M$ and $\hat{t}_0>T$
is an real number. For any $z_0\in Z_0$, let $J_{z_0}$ be an $\mathcal{O}_{M,z_0}$-submodule of $J(\Psi)_{z_0}$ such that $I\big(\Psi+\varphi_\alpha\big)_{z_0}\subset J_{z_0}$.

Let $\{f_j\}_{j\ge 1}$ be a sequence of holomorphic $(n,0)$ form on $\{\Psi<-t_j\}$. Assume that $t_0:=\lim_{j\to +\infty}t_j\in[T,+\infty)$,
\begin{equation}\label{global convergence property of module 1}
\limsup\limits_{j\to+\infty}\int_{\{\Psi<-t_j\}}|f_j|^2e^{-\varphi_\alpha}c(-\Psi)\le C<+\infty,
\end{equation}
and $(f_j-f)_{z_0}\in \mathcal{O} (K_M)_{z_0}\otimes J_{z_0}$ for any $z_0\in Z_0$. Then there exists a subsequence of $\{f_j\}_{j\in \mathbb{N}^+}$ compactly convergent to a holomorphic $(n,0)$ form $f_0$ on $\{\Psi<-t_0\}$ which satisfies
$$\int_{\{\Psi<-t_0\}}|f_0|^2e^{-\varphi_\alpha}c(-\Psi)\le C,$$
and $(f_0-f)_{z_0}\in \mathcal{O} (K_M)_{z_0}\otimes  J_{z_0}$ for any $z_0\in Z_0$.
\end{Lemma}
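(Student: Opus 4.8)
The plan is to first use a normal-families argument to extract from $\{f_j\}$ a subsequence converging locally uniformly on $\{\Psi<-t_0\}$ to a holomorphic $(n,0)$ form $f_0$ satisfying the $L^2$ estimate, and then to verify the membership $(f_0-f)_{z_0}\in\mathcal{O}(K_M)_{z_0}\otimes J_{z_0}$ pointwise, at each $z_0\in Z_0$, by a purely local argument.

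\emph{Step 1: compactness and the $L^2$ bound.} Write $\{\Psi<-t_0\}=\bigcup_{k\ge1}M_k$ with $M_k:=\{\Psi<-t_0-\frac1k\}$. Since $\Psi$ is plurisubharmonic (hence upper semicontinuous) on $\{\Psi<-T\}\supseteq\{\Psi<-t_0\}$, every compact subset of $\{\Psi<-t_0\}$ is contained in some $M_k$; and since $t_j\to t_0$, for each $k$ we have $M_k\subseteq\{\Psi<-t_j\}$ for all $j$ large, so the $f_j$ are eventually defined on $M_k$. On $M_k$ the weight $e^{-\varphi_\alpha}c(-\Psi)$ has, by $c\in\mathcal{P}_{T,M,\Psi}$ and Proposition \ref{property of c}, a positive lower bound on every compact subset of $M_k\setminus Z$, where $Z$ is the analytic set in condition $(A)$, and $\liminf_j\int_{M_k}|f_j|^2e^{-\varphi_\alpha}c(-\Psi)\le\liminf_j\int_{\{\Psi<-t_j\}}|f_j|^2e^{-\varphi_\alpha}c(-\Psi)\le C$ by \eqref{global convergence property of module 1}. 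Applying Lemma \ref{l:converge} on $M_k$ with $S=Z\cap M_k$, and diagonalizing over $k$, we obtain a subsequence of $\{f_j\}$ (still denoted $\{f_j\}$) converging locally uniformly on $\{\Psi<-t_0\}$ to a holomorphic $(n,0)$ form $f_0$; Fatou's lemma on each $M_k$ together with $M_k\uparrow\{\Psi<-t_0\}$ gives $\int_{\{\Psi<-t_0\}}|f_0|^2e^{-\varphi_\alpha}c(-\Psi)\le C$.

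\emph{Step 2: the module condition.} Fix $z_0\in Z_0$. If $\Psi(z_0)=-\infty$, then $z_0$ is an interior point of $\{\Psi<-t_0\}$, $J(\Psi)_{z_0}=\mathcal{O}_{M,z_0}$, and $J_{z_0}$ is an ordinary $\mathcal{O}_{M,z_0}$-submodule; choosing a coordinate chart around $z_0$ trivializing $K_M$, the forms $f_j-f$ become holomorphic functions converging uniformly near $z_0$ with germs at $z_0$ in $J_{z_0}$, so the closedness statement Lemma \ref{closedness} gives $(f_0-f)_{z_0}\in\mathcal{O}(K_M)_{z_0}\otimes J_{z_0}$. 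If $\Psi(z_0)>-\infty$, then $z_0\notin E$ (since $E\subseteq\{\Psi=-\infty\}$), so by Proposition \ref{property of c} we may choose a coordinate ball $D_0\ni z_0$ with $\overline{D_0}\subset(M\setminus E)\cap V$ on which $e^{-\varphi_\alpha}c(-\Psi)$ has a positive lower bound; trivializing $K_M$ over $D_0$ (and extending $c$ below $T$ if necessary), the hypotheses of Lemma \ref{closedness of module} are satisfied by $D_0$, the restricted sequence $\{f_j|_{D_0}\}$, the form $f$ and the module $J_{z_0}$, so that lemma yields a further subsequence converging compactly on $D_0\cap\{\Psi<-t_0\}$ to a limit whose germ minus $f_{z_0}$ lies in $\mathcal{O}(K_M)_{z_0}\otimes J_{z_0}$; by uniqueness of locally uniform limits this limit agrees with $f_0$ near $z_0$, so again $(f_0-f)_{z_0}\in\mathcal{O}(K_M)_{z_0}\otimes J_{z_0}$. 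As $z_0\in Z_0$ was arbitrary, $f_0$ has all the asserted properties.

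The delicate point is Step 2 at the genuine boundary points $z_0$ with $-\infty<\Psi(z_0)$: there $J(\Psi)_{z_0}$ is strictly larger than $\mathcal{O}_{M,z_0}$, so one cannot invoke the closedness of submodules of $\mathcal{O}_{\mathbb{C}^n,o}^q$ directly — this is precisely why Lemma \ref{closedness of module}, which transports $J_{z_0}$ to an honest ideal through the isomorphisms $P,Q$ of Propositions \ref{proposition of morphism} and \ref{module isomorphism} (multiplication by $F^2$), is needed. A second, more routine, source of care is that the $f_j$ are defined only on the moving domains $\{\Psi<-t_j\}$, which necessitates the exhaustion and diagonal argument in Step 1 before Lemma \ref{l:converge} can be applied.
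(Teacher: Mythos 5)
Your overall plan is the paper's: Lemma \ref{l:converge} to extract the compactly convergent subsequence with the $L^2$ bound, then a dichotomy at each $z_0\in Z_0$ according to whether $\Psi(z_0)=-\infty$ (use Lemma \ref{closedness}) or $\Psi(z_0)>-\infty$ (use Lemma \ref{closedness of module}). Your Step 1 is in fact more careful than the paper's, which invokes Lemma \ref{l:converge} without spelling out the exhaustion by $M_k=\{\Psi<-t_0-1/k\}$ even though the $f_j$ live on the shrinking domains $\{\Psi<-t_j\}$; your diagonal argument is the right way to make that precise.

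However, there is a real gap in Step 2 at the points with $\Psi(z_0)>-\infty$. You feed Lemma \ref{closedness of module} the data ``sequence $\{f_j\}$, reference form $f$, module $J_{z_0}$''. But that lemma requires the module $J_o$ to be an $\mathcal{O}_{\mathbb{C}^n,o}$-submodule of $H_o$ (germs with locally finite weighted $L^2$ norm) and requires $(f_j-f)_o\in J_o$. In Lemma \ref{global convergence property of module} you are only given $I(\Psi+\varphi_\alpha)_{z_0}\subset J_{z_0}\subset J(\Psi)_{z_0}$; nothing forces $J_{z_0}\subseteq H_{z_0}$, and nothing forces $f_{z_0}\in H_{z_0}$, so $(f_j-f)_{z_0}$ need not lie in $H_{z_0}$ either. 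As written, the hypotheses of Lemma \ref{closedness of module} are simply not verified. The paper avoids this by applying the lemma with the module $H_{z_0}\cap J_{z_0}$ (which is a submodule of $H_{z_0}$ containing $I(\Psi+\varphi_\alpha)_{z_0}$, since the latter is contained in both) and with the reference form $f_1$ in place of $f$: the $L^2$ bound \eqref{global convergence property of module 1} gives $(f_j)_{z_0}\in H_{z_0}$ for all $j$, hence $(f_j-f_1)_{z_0}\in H_{z_0}\cap J_{z_0}$, the lemma then yields $(f_0-f_1)_{z_0}\in H_{z_0}\cap J_{z_0}\subset J_{z_0}$, and finally $(f_0-f)_{z_0}=(f_0-f_1)_{z_0}+(f_1-f)_{z_0}\in J_{z_0}$. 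You should make this substitution explicitly; without it, the invocation of Lemma \ref{closedness of module} is illegitimate.
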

\begin{proof}
As $e^{-\varphi_\alpha}c(-\Psi)$ has positive lower bound on any compact subset of $M\backslash Z$, where $Z$ is some analytic subset of $M$, it follows from Lemma \ref{l:converge} that there exists a subsequence of $\{f_j\}_{j\in \mathbb{N}^+}$ (also denoted by $\{f_j\}_{j\in \mathbb{N}^+}$) that compactly convergent to a holomorphic $(n,0)$ form $f_0$ on $\{\Psi<-t_0\}$ which satisfies
$$\int_{\{\Psi<-t_0\}}|f_0|^2e^{-\varphi}c(-\Psi)\le\liminf_{j\to+\infty}\int_{\{\Psi<-t_j\}}|f_j|^2e^{-\varphi}c(-\Psi)\le C.$$

Next we prove $(f_0-f)_{z_0}\in \mathcal{O} (K_M)_{z_0}\otimes  J_{z_0}$ for any $z_0\in Z_0$.

For any $z_0\in Z_0\cap\{\Psi=-\infty\}$, we know that $\{f_j\}_{j\ge 0}$ and $f$ are holomorphic $(n,0)$ forms on some neighborhood $U_{z_0}$ of $z_0$. It is also easy to verify that  $J(\Psi)_{z_0}=I_{z_0}=\mathcal{O}_{M,z_0}$ and $J_{z_0}$ is an $\mathcal{O}_{M,z_0}$-submodule of $\mathcal{O}_{M,z_0}$. As $J_{z_0}\subset \mathcal{O}_{M,z_0}$ is an $\mathcal{O}_{M,z_0}$-submodule, it follows from Lemma \ref{closedness}, $(f_j-f)_{z_0}\in
\mathcal{O} (K_M)_{z_0} \otimes J_{z_0}$ and $\{f_j\}_{j\in \mathbb{N}^+}$ compactly converges to $f_0$ that we know $(f_0-f)_{z_0}\in
\mathcal{O} (K_M)_{z_0} \otimes J_{z_0}$, for any $z_0\in Z_0\cap\{\Psi=-\infty\}$.

Let $z_0\in Z_0\backslash\{\Psi=-\infty\}$. As $\limsup\limits_{j\to+\infty}\int_{\{\Psi<-t_j\}}|f_j|^2e^{-\varphi_\alpha}c(-\Psi)\le C<+\infty$, we know $(f_j-f_1)\in H_{z_0}$. The definition of $H_{z_0}$ can be referred to Section \ref{sec:properties of module}. It follows from $(f_j-f)_{z_0}\in J_{z_0}$ that we know $(f_j-f_1)_{z_0}\in J_{z_0}$. Hence we have $(f_j-f_1)\in H_{z_0}\cap J_{z_0}$. We note that $e^{-\varphi_\alpha}c(-\Psi)$ has a positive lower bound on some open neighborhood of $z_0$. It follows from inequality \eqref{global convergence property of module 1}, $(f_j-f_1)\in H_{z_0}\cap J_{z_0}$, the uniqueness of limit function and Lemma \ref{closedness of module} that we know $(f_0-f_1)_{z_0}\in
\mathcal{O} (K_M)_{z_0} \otimes (H_{z_0}\cap J_{z_0})$. Hence we know that $(f_0-f)_{z_0}\in
\mathcal{O} (K_M)_{z_0} \otimes J_{z_0}$ for any $z_0\in Z_0\backslash\{\Psi=-\infty\}$.

Now we have $(f_0-f)_{z_0}\in \mathcal{O} (K_M)_{z_0}\otimes J_{z_0}$ for any $z_0\in Z_0$.
Lemma \ref{global convergence property of module} is proved.
\end{proof}

\begin{Lemma}
\label{characterization of g(t)=0} Let $t_0>T$.
The following two statements are equivalent,\\
(1) $G(t_0)=0$;\\
(2) $f_{z_0}\in
\mathcal{O} (K_M)_{z_0} \otimes J_{z_0}$, for any  $ z_0\in Z_0$.
\end{Lemma}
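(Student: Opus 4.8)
The plan is to prove the two implications directly. The implication $(2)\Rightarrow(1)$ is immediate by using the zero form as a competitor, while $(1)\Rightarrow(2)$ is a normal-families argument whose only substantial ingredient — the stability of the germ condition under locally uniform limits of a minimizing sequence — is already isolated in Lemma \ref{global convergence property of module}.

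For $(2)\Rightarrow(1)$: assume $f_{z_0}\in\mathcal{O}(K_M)_{z_0}\otimes J_{z_0}$ for every $z_0\in Z_0$. Then the zero $(n,0)$ form $\tilde f\equiv 0$ on $\{\Psi<-t_0\}$ satisfies $(\tilde f-f)_{z_0}=-f_{z_0}\in\mathcal{O}(K_M)_{z_0}\otimes J_{z_0}$, since an $\mathcal{O}_{M,z_0}$-module is closed under negation; hence $\tilde f\equiv0$ lies in the admissible class in \eqref{def of g(t) for boundary pt}. As the integrand there is nonnegative and $\int_{\{\Psi<-t_0\}}|0|^2e^{-\varphi_\alpha}c(-\Psi)=0$, we get $0\le G(t_0)\le 0$, that is, $G(t_0)=0$.

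For $(1)\Rightarrow(2)$: assume $G(t_0)=0$. Then $G(t_0)<+\infty$, so the admissible class is nonempty and we may choose a minimizing sequence $\{f_j\}_{j\ge1}$ of holomorphic $(n,0)$ forms on $\{\Psi<-t_0\}$ with $(f_j-f)_{z_0}\in\mathcal{O}(K_M)_{z_0}\otimes J_{z_0}$ for all $z_0\in Z_0$ and $\int_{\{\Psi<-t_0\}}|f_j|^2e^{-\varphi_\alpha}c(-\Psi)\to0$. In particular $\limsup_{j\to+\infty}\int_{\{\Psi<-t_0\}}|f_j|^2e^{-\varphi_\alpha}c(-\Psi)=0<+\infty$, so Lemma \ref{global convergence property of module}, applied with $t_j\equiv t_0$ (hence $\lim_jt_j=t_0\in[T,+\infty)$), furnishes a subsequence converging locally uniformly on $\{\Psi<-t_0\}$ to a holomorphic $(n,0)$ form $f_0$ with $\int_{\{\Psi<-t_0\}}|f_0|^2e^{-\varphi_\alpha}c(-\Psi)\le0$ and $(f_0-f)_{z_0}\in\mathcal{O}(K_M)_{z_0}\otimes J_{z_0}$ for all $z_0\in Z_0$. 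Since $c>0$ and $e^{-\varphi_\alpha}>0$, the weight $e^{-\varphi_\alpha}c(-\Psi)$ is strictly positive Lebesgue-a.e.\ on $\{\Psi<-t_0\}$ (cf.\ also Proposition \ref{property of c}), so the vanishing of the integral forces $f_0\equiv0$ on $\{\Psi<-t_0\}$. Consequently $-f_{z_0}=(f_0-f)_{z_0}\in\mathcal{O}(K_M)_{z_0}\otimes J_{z_0}$, and since this is a module, $f_{z_0}\in\mathcal{O}(K_M)_{z_0}\otimes J_{z_0}$ for every $z_0\in Z_0$, which is statement $(2)$.

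The only delicate point — the main obstacle — is extracting from the minimizing sequence a limit $f_0$ that still satisfies the germ constraint $(f_0-f)_{z_0}\in\mathcal{O}(K_M)_{z_0}\otimes J_{z_0}$ at every $z_0\in Z_0$; this is exactly the content of Lemma \ref{global convergence property of module}, which itself rests on the closedness results (Lemma \ref{closedness}, Lemma \ref{closedness of module}) and the module isomorphism of Proposition \ref{module isomorphism}. Once that lemma is in hand, the remainder is the trivial observation that a holomorphic form of zero weighted $L^2$ norm vanishes, together with the module property used in both directions.
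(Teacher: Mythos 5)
Your proof is correct and follows essentially the same route as the paper: take $\tilde f\equiv0$ for the easy implication, and for the converse extract, via Lemma \ref{global convergence property of module} applied to a minimizing sequence at the fixed level $t_0$, a compactly convergent subsequence whose limit has zero weighted $L^2$ norm, hence vanishes, forcing the germ condition. Your brief justification that a holomorphic form of zero weighted $L^2$ norm must vanish is a welcome bit of extra detail not spelled out in the paper, but the substance of the argument is identical.
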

\begin{proof}If $f_{z_0}\in
\mathcal{O} (K_M)_{z_0} \otimes J_{z_0}$, for any  $ z_0\in Z_0$, then take $\tilde{f}\equiv 0$ in the definition of $G(t)$ and we get $G(t_0)\equiv 0$.

If $G(t_0)=0$, by definition, there exists a sequence of holomorphic $(n,0)$ forms $\{f_j\}_{j\in\mathbb{Z}^+}$ on $\{\Psi<-t_0\}$ such that
\begin{equation}\label{estimate in G(t)=0}
\lim_{j\to+\infty}\int_{\{\Psi<-t_0\}}|f_j|^2e^{-\varphi}c(-\Psi)=0,
\end{equation}
 and $(f_j-f)_{z_0}\in
\mathcal{O} (K_M)_{z_0} \otimes J_{z_0}$, for any  $ z_0\in Z_0$ and $j\ge 1$. It follows from Lemma \ref{global convergence property of module} that there
exists a subsequence of $\{f_j\}_{j\in \mathbb{N}^+}$ compactly convergent to a holomorphic $(n,0)$ form $f_0$ on $\{\Psi<-t_0\}$ which satisfies
$$\int_{\{\Psi<-t_0\}}|f_0|^2e^{-\varphi_\alpha}c(-\Psi)=0$$
and
$(f_0-f)_{z_0}\in \mathcal{O} (K_M)_{z_0}\otimes J_{z_0}$ for any $z_0\in Z_0$. It follows from $\int_{\{\Psi<-t_0\}}|f_0|^2e^{-\varphi_\alpha}c(-\Psi)=0$ that we know $f_0\equiv 0$. Hence we have $f_{z_0}\in \mathcal{O} (K_M)_{z_0}\otimes J_{z_0}$ for any $z_0\in Z_0$. Statement (2) is proved.
\end{proof}

The following lemma shows the existence and uniqueness of the holomorphic $(n,0)$ form related to $G(t)$.
\begin{Lemma}
\label{existence of F}
Assume that $G(t)<+\infty$ for some $t\in [T,+\infty)$. Then there exists a unique
holomorphic $(n,0)$ form $F_t$ on $\{\Psi<-t\}$ satisfying
$$\ \int_{\{\Psi<-t\}}|F_t|^2e^{-\varphi_\alpha}c(-\Psi)=G(t)$$  and
$\ (F_t-f)\in
\mathcal{O} (K_M)_{z_0} \otimes J_{z_0}$, for any  $ z_0\in Z_0$.
\par
Furthermore, for any holomorphic $(n,0)$ form $\hat{F}$ on $\{\Psi<-t\}$ satisfying
$$\int_{\{\Psi<-t\}}|\hat{F}|^2e^{-\varphi_\alpha}c(-\Psi)<+\infty$$ and $\ (\hat{F}-f)\in
\mathcal{O} (K_M)_{z_0} \otimes J_{z_0}$, for any  $ z_0\in Z_0$. We have the following equality
\begin{equation}
\begin{split}
&\int_{\{\Psi<-t\}}|F_t|^2e^{-\varphi_\alpha}c(-\Psi)+
\int_{\{\Psi<-t\}}|\hat{F}-F_t|^2e^{-\varphi_\alpha}c(-\Psi)\\
=&\int_{\{\Psi<-t\}}|\hat{F}|^2e^{-\varphi_\alpha}c(-\Psi).
\label{orhnormal F}
\end{split}
\end{equation}
\end{Lemma}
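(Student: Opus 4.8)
The plan is to realize $G(t)$ as the squared norm of a well-defined orthogonal projection in a Hilbert space. First I would fix $t$ with $G(t)<+\infty$ and introduce the linear space $\mathcal{H}^2(c,t)$ of holomorphic $(n,0)$ forms $\tilde f$ on $\{\Psi<-t\}$ with $\int_{\{\Psi<-t\}}|\tilde f|^2e^{-\varphi_\alpha}c(-\Psi)<+\infty$ and $(\tilde f-f)_{z_0}\in\mathcal{O}(K_M)_{z_0}\otimes J_{z_0}$ for all $z_0\in Z_0$. Since $G(t)<+\infty$ this set is nonempty, and $G(t)$ is by definition the infimum of the norm squared over $\mathcal{H}^2(c,t)$. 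The subspace $\mathcal{H}^2_0(c,t)$ obtained by replacing the constraint with $(\tilde f)_{z_0}\in\mathcal{O}(K_M)_{z_0}\otimes J_{z_0}$ is a linear subspace of the Hilbert space $A^2:=\{$holomorphic $(n,0)$ forms on $\{\Psi<-t\}$ with finite weighted norm $\|\cdot\|^2=\int|\cdot|^2e^{-\varphi_\alpha}c(-\Psi)\}$, and $\mathcal{H}^2(c,t)$ is the affine set $f_1+\mathcal{H}^2_0(c,t)$ for any fixed $f_1\in\mathcal{H}^2(c,t)$, so $G(t)=\mathrm{dist}(0,\mathcal{H}^2(c,t))^2=\mathrm{dist}(-f_1,\overline{\mathcal{H}^2_0(c,t)})^2$.

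The two things to check are: (i) $A^2$ with the given inner product is a Hilbert space, i.e. norm convergence implies a holomorphic limit with the limit lying in $A^2$; and (ii) $\mathcal{H}^2_0(c,t)$ is closed in $A^2$, so that the closest-point projection exists and is unique. Both are exactly what Lemma 3.2 (\texttt{global convergence property of module}) provides: given a norm-bounded sequence $\{f_j\}$ in $\mathcal{H}^2(c,t)$, since $e^{-\varphi_\alpha}c(-\Psi)$ has a positive lower bound on compact subsets of $M\setminus Z$ (by $c\in\mathcal P_{T,M,\Psi}$ and Proposition 2.3), Lemma 3.2 with $t_j\equiv t$ yields a subsequence converging uniformly on compact subsets to a holomorphic $(n,0)$ form $f_0$ on $\{\Psi<-t\}$ with $\|f_0\|^2\le\liminf\|f_j\|^2$ and $(f_0-f)_{z_0}\in\mathcal{O}(K_M)_{z_0}\otimes J_{z_0}$ for all $z_0\in Z_0$. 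Applying this to a minimizing sequence for $G(t)$ produces $F_t\in\mathcal{H}^2(c,t)$ with $\|F_t\|^2=G(t)$, giving existence. For uniqueness and the Pythagorean identity \eqref{orhnormal F}, I would use the standard parallelogram/midpoint argument: if $F_t'$ is another minimizer, then $\tfrac12(F_t+F_t')\in\mathcal{H}^2(c,t)$ and the parallelogram law forces $\|F_t-F_t'\|=0$, hence $F_t=F_t'$; and for any competitor $\hat F$, the function $\lambda\mapsto\|F_t+\lambda(\hat F-F_t)\|^2$ (with $\hat F-F_t\in\mathcal{H}^2_0(c,t)$, so $F_t+\lambda(\hat F-F_t)\in\mathcal{H}^2(c,t)$ for all $\lambda\in\mathbb C$) is minimized at $\lambda=0$, which gives $\mathrm{Re}\int_{\{\Psi<-t\}}\langle F_t,\hat F-F_t\rangle e^{-\varphi_\alpha}c(-\Psi)=0$; replacing $\lambda$ by $i\lambda$ kills the imaginary part too, so $\int_{\{\Psi<-t\}}\langle F_t,\hat F-F_t\rangle e^{-\varphi_\alpha}c(-\Psi)=0$, and expanding $\|\hat F\|^2=\|F_t+(\hat F-F_t)\|^2$ yields \eqref{orhnormal F}.

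The only genuinely nontrivial input is closedness under the module constraint — i.e. that the pointwise conditions $(\tilde f-f)_{z_0}\in\mathcal{O}(K_M)_{z_0}\otimes J_{z_0}$ survive compact convergence — but that is precisely the content of Lemma 3.2, which in turn rests on the closedness results of Section 2.2 (Lemma 2.9, built on the Noetherian property and Guan–Rossi closedness Lemma 2.7). So the main obstacle has already been dispatched upstream, and the proof here is the routine Hilbert-space projection argument once $A^2$ is known to be complete and $\mathcal{H}^2_0(c,t)$ closed. One should also note the trivial case $G(t)=0$: then by Lemma 2.11, $f_{z_0}\in\mathcal{O}(K_M)_{z_0}\otimes J_{z_0}$ for all $z_0\in Z_0$, so $F_t\equiv0$ works and is the unique minimizer, and \eqref{orhnormal F} is immediate.
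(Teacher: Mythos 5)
Your proposal is correct and follows essentially the same route as the paper: a minimizing sequence plus the compactness/closedness Lemma \ref{global convergence property of module} gives existence, the parallelogram law gives uniqueness, and the first-order variation $\lambda\mapsto\|F_t+\lambda h\|^2$ gives the orthogonality that yields \eqref{orhnormal F}. The Hilbert-space-projection framing is a harmless repackaging; note only that for \eqref{orhnormal F} the vanishing of the real part of $\int F_t\bar h\,e^{-\varphi_\alpha}c(-\Psi)$ already suffices when expanding $|F_t+h|^2$, so the extra $i\lambda$ step, while valid, is not needed.
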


\begin{proof} We firstly show the existence of $F_t$. As $G(t)<+\infty$, then there exists a sequence of holomorphic $(n,0)$ forms $\{f_j\}_{j\in \mathbb{N}^+}$ on $\{\Psi<-t\}$ such that $$\lim\limits_{j \to +\infty}\int_{\{\Psi<-t\}}|f_j|^2e^{-\varphi_\alpha}c(-\Psi)=G(t)$$ and $(f_j-f)\in
\mathcal{O} (K_M)_{z_0} \otimes J_{z_0}$, for any  $ z_0\in Z_0$ and any $j\ge 1$.
It follows from Lemma \ref{global convergence property of module} that there
exists a subsequence of $\{f_j\}_{j\in \mathbb{N}^+}$ compactly convergent to a holomorphic $(n,0)$ form $F$ on $\{\Psi<-t\}$ which satisfies
$$\int_{\{\Psi<-t\}}|F|^2e^{-\varphi_\alpha}c(-\Psi)\le G(t)$$
and
$(F-f)_{z_0}\in \mathcal{O} (K_M)_{z_0}\otimes J_{z_0}$ for any $z_0\in Z_0$. By the definition of $G(t)$, we have $\int_{\{\Psi<-t\}}|F|^2e^{-\varphi_\alpha}c(-\Psi)= G(t)$. Then we obtain the existence of $F_t(=F)$.

We prove the uniqueness of $F_t$ by contradiction: if not, there exist
two different holomorphic $(n,0)$ forms $f_1$ and $f_2$ on $\{\Psi<-t\}$
satisfying $\int_{\{\Psi<-t\}}|f_1|^2e^{-\varphi_\alpha}$  $c(-\Psi)=
\int_{\{\Psi<-t\}}|f_2|^2e^{-\varphi_\alpha}c(-\Psi)=G(t)$, $(f_1-f)_{z_0}\in \mathcal{O} (K_M)_{z_0}\otimes J_{z_0}$ for any $z_0\in Z_0$ and $(f_2-f)_{z_0}\in \mathcal{O} (K_M)_{z_0}\otimes J_{z_0}$ for any $z_0\in Z_0$. Note that
\begin{equation}\nonumber
\begin{split}
\int_{\{\Psi<-t\}}|\frac{f_1+f_2}{2}|^2e^{-\varphi_\alpha}c(-\Psi)+
\int_{\{\Psi<-t\}}|\frac{f_1-f_2}{2}|^2e^{-\varphi_\alpha}c(-\Psi)\\
=\frac{1}{2}(\int_{\{\Psi<-t\}}|f_1|^2e^{-\varphi_\alpha}c(-\Psi)+
\int_{\{\Psi<-t\}}|f_1|^2e^{-\varphi_\alpha}c(-\Psi))=G(t),
\end{split}
\end{equation}
then we obtain that
\begin{equation}\nonumber
\begin{split}
\int_{\{\Psi<-t\}}|\frac{f_1+f_2}{2}|^2e^{-\varphi_\alpha}c(-\Psi)
< G(t)
\end{split}
\end{equation}
and $(\frac{f_1+f_2}{2}-f)_{z_0}\in \mathcal{O} (K_M)_{z_0}\otimes J_{z_0}$ for any $z_0\in Z_0$, which contradicts to the definition of $G(t)$.

Now we prove the equality \eqref{orhnormal F}. Let $h$ be any holomorphic $(n,0)$ form on $\{\Psi<-t\}$
such that $\int_{\{\Psi<-t\}}|h|^2e^{-\varphi_\alpha}c(-\Psi)<+\infty$ and $h \in \mathcal{O} (K_M)_{z_0}\otimes J_{z_0}$ for any $z_0\in Z_0$.  It is clear that for any complex
number $\alpha$, $F_t+\alpha h$ satisfying $((F_t+\alpha h)-f) \in \mathcal{O} (K_M)_{z_0}\otimes J_{z_0}$ for any $z_0\in Z_0$ and
$\int_{\{\Psi<-t\}}|F_t|^2e^{-\varphi_\alpha}c(-\Psi) \leq \int_{\{\Psi<-t\}}|F_t+\alpha
h|^2e^{-\varphi_\alpha}c(-\Psi)$. Note that
\begin{equation}\nonumber
\begin{split}
\int_{\{\Psi<-t\}}|F_t+\alpha
h|^2e^{-\varphi_\alpha}c(-\Psi)-\int_{\{\psi<-t\}}|F_t|^2e^{-\varphi_\alpha}c(-\Psi)\geq 0
\end{split}
\end{equation}
(By considering $\alpha \to 0$) implies
\begin{equation}\nonumber
\begin{split}
\mathfrak{R} \int_{\{\Psi<-t\}}F_t\bar{h}e^{-\varphi_\alpha}c(-\Psi)=0,
\end{split}
\end{equation}
then we have
\begin{equation}\nonumber
\begin{split}
\int_{\{\Psi<-t\}}|F_t+h|^2e^{-\varphi_\alpha}c(-\Psi)=
\int_{\{\Psi<-t\}}(|F_t|^2+|h|^2)e^{-\varphi_\alpha}c(-\Psi).
\end{split}
\end{equation}
\par
Letting $h=\hat{F}-F_t$, we obtain equality \eqref{orhnormal F}.
\end{proof}

The following lemma shows the  lower semicontinuity property of $G(t)$.
\begin{Lemma}$G(t)$ is decreasing with respect to $t\in
[T,+\infty)$, such that $\lim \limits_{t \to t_0+0}G(t)=G(t_0)$ for any $t_0\in
[T,+\infty)$, and if $G(t)<+\infty$ for some $t>T$, then $\lim \limits_{t \to +\infty}G(t)=0$. Especially, $G(t)$ is lower semicontinuous on $[T,+\infty)$.
 \label{semicontinuous}
\end{Lemma}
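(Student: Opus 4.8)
The plan is to establish the three asserted properties — monotonicity, right continuity, and vanishing as $t\to+\infty$ — and then read off lower semicontinuity formally from the first two. Throughout I would use that the restriction of a competitor stays a competitor, together with the existence/uniqueness statement Lemma \ref{existence of F} and the compactness statement Lemma \ref{global convergence property of module}.

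\textbf{Monotonicity.} First I would check that $G$ is decreasing. Given $T\le t_1<t_2$ and a competitor $\tilde f\in H^0(\{\Psi<-t_1\},\mathcal{O}(K_M))$ for $G(t_1)$ (so $(\tilde f-f)_{z_0}\in\mathcal{O}(K_M)_{z_0}\otimes J_{z_0}$ for all $z_0\in Z_0$), its restriction to $\{\Psi<-t_2\}\subset\{\Psi<-t_1\}$ represents the same germ at each $z_0\in Z_0$ inside $J(\Psi)_{z_0}$ (germs in $J(\Psi)_{z_0}$ only see $\{\Psi<-t\}$ for $t>T$), hence is a competitor for $G(t_2)$. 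Since the integrand $|\tilde f|^2e^{-\varphi_\alpha}c(-\Psi)$ is nonnegative, $\int_{\{\Psi<-t_2\}}|\tilde f|^2e^{-\varphi_\alpha}c(-\Psi)\le\int_{\{\Psi<-t_1\}}|\tilde f|^2e^{-\varphi_\alpha}c(-\Psi)$, and taking the infimum over $\tilde f$ gives $G(t_2)\le G(t_1)$.

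\textbf{Right continuity.} Fix $t_0\in[T,+\infty)$ and choose $t_j\downarrow t_0$ with $t_j>t_0$. By monotonicity $G(t_j)\le G(t_0)$ and $G(t_j)$ increases to $C:=\lim_{t\to t_0+0}G(t)\le G(t_0)$. If $C=+\infty$ then $G(t_0)=+\infty$ and we are done, so assume $C<+\infty$; then $G(t_j)\le C<+\infty$, and Lemma \ref{existence of F} provides $F_{t_j}$ on $\{\Psi<-t_j\}$ with $\int_{\{\Psi<-t_j\}}|F_{t_j}|^2e^{-\varphi_\alpha}c(-\Psi)=G(t_j)\le C$ and $(F_{t_j}-f)_{z_0}\in\mathcal{O}(K_M)_{z_0}\otimes J_{z_0}$ for all $z_0\in Z_0$. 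I would then feed $\{F_{t_j}\}$ into Lemma \ref{global convergence property of module} (its hypotheses hold since $t_j\to t_0\in[T,+\infty)$, $\limsup_j\int_{\{\Psi<-t_j\}}|F_{t_j}|^2e^{-\varphi_\alpha}c(-\Psi)\le C$, and the germ conditions hold), obtaining a subsequential limit $f_0$ on $\{\Psi<-t_0\}$ with $\int_{\{\Psi<-t_0\}}|f_0|^2e^{-\varphi_\alpha}c(-\Psi)\le C$ and $(f_0-f)_{z_0}\in\mathcal{O}(K_M)_{z_0}\otimes J_{z_0}$ for all $z_0\in Z_0$. Thus $f_0$ is a competitor for $G(t_0)$, giving $G(t_0)\le C$, hence $\lim_{t\to t_0+0}G(t)=G(t_0)$. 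This compactness-plus-extremal-form step is the only place where anything beyond bookkeeping enters: its content is precisely that the germ conditions survive passing to a compactly convergent limit, which is exactly what Lemma \ref{global convergence property of module} supplies, so the hard part is really already isolated in that lemma.

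\textbf{Vanishing at $+\infty$ and lower semicontinuity.} Suppose $G(t')<+\infty$ for some $t'>T$; by Lemma \ref{existence of F} there is $F_{t'}$ on $\{\Psi<-t'\}$ with $\int_{\{\Psi<-t'\}}|F_{t'}|^2e^{-\varphi_\alpha}c(-\Psi)=G(t')<+\infty$ and the germ condition. For every $t\ge t'$ the restriction $F_{t'}|_{\{\Psi<-t\}}$ is a competitor for $G(t)$, so $0\le G(t)\le\int_{\{\Psi<-t\}}|F_{t'}|^2e^{-\varphi_\alpha}c(-\Psi)$. Since $\bigcap_{t>T}\{\Psi<-t\}=\{\Psi=-\infty\}\subseteq\{\psi=-\infty\}$ has Lebesgue measure zero while $|F_{t'}|^2e^{-\varphi_\alpha}c(-\Psi)$ is integrable over $\{\Psi<-t'\}$, dominated convergence forces $\int_{\{\Psi<-t\}}|F_{t'}|^2e^{-\varphi_\alpha}c(-\Psi)\to0$, hence $\lim_{t\to+\infty}G(t)=0$. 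Finally, lower semicontinuity on $[T,+\infty)$ is then automatic: for $t<t_0$ monotonicity gives $G(t)\ge G(t_0)$, so $\liminf_{t\to t_0-0}G(t)\ge G(t_0)$, while $\lim_{t\to t_0+0}G(t)=G(t_0)$ from right continuity, whence $\liminf_{t\to t_0}G(t)\ge G(t_0)$.
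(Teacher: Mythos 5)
Your proposal is correct and follows essentially the same path as the paper: monotonicity is immediate from the definition, the limit $\lim_{t\to+\infty}G(t)=0$ is obtained via Lemma~\ref{existence of F} and dominated convergence, and right continuity is obtained by extracting an extremal form $F_{t_j}$ for each $t_j\downarrow t_0$ via Lemma~\ref{existence of F} and then passing to a compactly convergent subsequence via Lemma~\ref{global convergence property of module} (which is exactly what preserves the germ constraint). The only cosmetic difference is that the paper phrases the right-continuity step as a proof by contradiction, while you argue it directly; the substance is identical.
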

\begin{proof}By the definition of $G(t)$, it is clear that $G(t)$ is decreasing on
$[T,+\infty)$. If $G(t)<+\infty$ for some $t>T$, by the dominated convergence theorem, we know $\lim\limits_{t\to +\infty}G(t)=0$. It suffices
to prove $\lim \limits_{t \to t_0+0}G(t)=G(t_0)$ . We prove it by
contradiction: if not, then $\lim \limits_{t \to t_0+0}G(t)<
G(t_0)$.

By using Lemma \ref{existence of F}, for any $t>t_0$, there exists a unique holomorphic $(n,0)$ form
$F_t$ on $\{\Psi<-t\}$ satisfying
$\int_{\{\Psi<-t\}}|F_t|^2e^{-\varphi_\alpha}c(-\Psi)=G(t)$ and $(F_t-f) \in \mathcal{O} (K_M)_{z_0}\otimes J_{z_0}$ for any $z_0\in Z_0$. Note that $G(t)$ is decreasing with respect to $t$. We have $\int_{\{\Psi<-t\}}|F_t|^2e^{-\varphi}c(-\psi)\leq \lim
\limits_{t \to t_0+0}G(t)$ for any $t>t_0$. If $\lim\limits_{t \to t_0+0}G(t)=+\infty$, the equality $\lim \limits_{t \to t_0+0}G(t)=G(t_0)$ obviously holds, thus it suffices to prove the case $\lim\limits_{t \to t_0+0}G(t)<+\infty$. It follows from $\int_{\{\Psi<-t\}}|F_t|^2e^{-\varphi_\alpha}c(-\Psi)\le \lim\limits_{t \to t_0+0}G(t)<+\infty$  holds for any $t\in (t_0,t_1]$ (where $t_1>t_0$ is a fixed number) and Lemma \ref{global convergence property of module} that there exists a subsequence of $\{F_t\}$ (denoted by $\{F_{t_j}\}$) compactly convergent to a holomorphic $(n,0)$ form $\hat{F}_{t_0}$ on $\{\Psi<-t_0\}$ satisfying
$$\int_{\{\Psi<-t_0\}}|\hat{F}_{t_0}|^2e^{-\varphi_\alpha}c(-\Psi)\le \lim\limits_{t \to t_0+0}G(t)<+\infty$$
and $(\hat{F}_{t_0}-f)_{z_0} \in \mathcal{O} (K_M)_{z_0}\otimes J_{z_0}$ for any $z_0\in Z_0$.

Then we obtain that $G(t_0)\leq
\int_{\{\Psi<-t_0\}}|\hat{F}_{t_0}|^2e^{-\varphi_\alpha}c(-\Psi)
\leq \lim \limits_{t\to t_0+0} G(t)$,
which contradicts $\lim \limits_{t\to t_0+0} G(t) <G(t_0)$. Thus we have $\lim \limits_{t \to t_0+0}G(t)=G(t_0)$.
\end{proof}

We consider the derivatives of $G(t)$ in the following lemma.

\begin{Lemma}
\label{derivatives of G}
Assume that $G(t_1)<+\infty$, where $t_1\in (T,+\infty)$. Then for any $t_0>t_1$, we have
\begin{equation}\nonumber
\begin{split}
\frac{G(t_1)-G(t_0)}{\int^{t_0}_{t_1} c(t)e^{-t}dt}\leq
\liminf\limits_{B \to
0+0}\frac{G(t_0)-G(t_0+B)}{\int_{t_0}^{t_0+B}c(t)e^{-t}dt},
\end{split}
\end{equation}
i.e.
\begin{equation}\nonumber
\frac{G(t_0)-G(t_1)}{\int_{T_1}^{t_0}
c(t)e^{-t}dt-\int_{T_1}^{t_1} c(t)e^{-t}dt} \geq
\limsup \limits_{B \to 0+0}
\frac{G(t_0+B)-G(t_0)}{\int_{T_1}^{t_0+B}
c(t)e^{-t}dt-\int_{T_1}^{t_0} c(t)e^{-t}dt}.
\end{equation}
\end{Lemma}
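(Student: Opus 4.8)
\emph{Reductions.} The two displayed inequalities are equivalent once one divides through by the positive numbers $\int_{t_1}^{t_0}c(t)e^{-t}\,dt$ and $\int_{t_0}^{t_0+B}c(t)e^{-t}\,dt$ and uses additivity of the integral, so it suffices to prove the first. Since $G$ is decreasing (Lemma \ref{semicontinuous}) and $G(t_1)<+\infty$, we automatically have $G(t_0)\le G(t_1)<+\infty$; and if $L:=\liminf_{B\to0+}\frac{G(t_0)-G(t_0+B)}{\int_{t_0}^{t_0+B}c(t)e^{-t}\,dt}=+\infty$ there is nothing to prove, so assume $L<+\infty$. Let $F_{t_0}$ be the holomorphic $(n,0)$ form on $\{\Psi<-t_0\}$ given by Lemma \ref{existence of F}, so $\int_{\{\Psi<-t_0\}}|F_{t_0}|^2e^{-\varphi_\alpha}c(-\Psi)=G(t_0)$ and $(F_{t_0}-f)_{z_0}\in\mathcal O(K_M)_{z_0}\otimes J_{z_0}$ for all $z_0\in Z_0$.

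\emph{The $L^2$ extension step.} Fix a small $B>0$ and apply Lemma \ref{L2 method in JM concavity} to $f:=F_{t_0}$ (legitimate since $G(t_0)<+\infty$, and, as $t_1>T$ and $c\in\mathcal P_{T,M,\Psi}$, the free real parameter $T$ of that lemma may be taken equal to $t_1$). This produces a holomorphic $(n,0)$ form $\tilde F_B$ on $\{\Psi<-t_1\}$ with
$$\int_{\{\Psi<-t_1\}}\bigl|\tilde F_B-(1-b_{t_0,B}(\Psi))F_{t_0}\bigr|^2e^{-\varphi_\alpha+v_{t_0,B}(\Psi)-\Psi}c(-v_{t_0,B}(\Psi))\le\Bigl(\int_{t_1}^{t_0+B}c(s)e^{-s}\,ds\Bigr)\int_M\frac{1}{B}\mathbb{I}_{\{-t_0-B<\Psi<-t_0\}}|F_{t_0}|^2e^{-\varphi_\alpha-\Psi}.$$
Because $v_{t_0,B}(t)\ge t$ and $c(s)e^{-s}$ is decreasing, $e^{v_{t_0,B}(\Psi)-\Psi}c(-v_{t_0,B}(\Psi))\ge c(-\Psi)$, so the left side dominates $\int_{\{\Psi<-t_1\}}|\tilde F_B-(1-b_{t_0,B}(\Psi))F_{t_0}|^2e^{-\varphi_\alpha}c(-\Psi)$; and on the annulus $\{-t_0-B<\Psi<-t_0\}$ we have $c(-\Psi)\ge c(t_0+B)e^{-(t_0+B)}e^{-\Psi}$, while, since $F_{t_0}|_{\{\Psi<-t_0-B\}}$ is a competitor for $G(t_0+B)$ (Lemma \ref{existence of F}), $\int_{\{-t_0-B<\Psi<-t_0\}}|F_{t_0}|^2e^{-\varphi_\alpha}c(-\Psi)\le G(t_0)-G(t_0+B)$. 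Hence, with
$$C(t_0,B):=\frac{\bigl(\int_{t_1}^{t_0+B}c(s)e^{-s}\,ds\bigr)\bigl(G(t_0)-G(t_0+B)\bigr)}{B\,c(t_0+B)e^{-(t_0+B)}},$$
one gets $\int_{\{\Psi<-t_1\}}|\tilde F_B-(1-b_{t_0,B}(\Psi))F_{t_0}|^2e^{-\varphi_\alpha}c(-\Psi)\le C(t_0,B)$. Restricting this estimate to $\{\Psi<-t_0-B\}$, where $b_{t_0,B}(\Psi)=0$, shows $(\tilde F_B-F_{t_0})_{z_0}\in\mathcal O(K_M)_{z_0}\otimes I(\Psi+\varphi_\alpha)_{z_0}\subset\mathcal O(K_M)_{z_0}\otimes J_{z_0}$, so each $\tilde F_B$ is an admissible competitor for $G(t_1)$; moreover $(1-b_{t_0,B}(\Psi))F_{t_0}$ has $e^{-\varphi_\alpha}c(-\Psi)$-norm over $\{\Psi<-t_1\}$ at most $\sqrt{G(t_0)}$, so the $\tilde F_B$ (for $B$ small) have uniformly bounded norm.

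\emph{Passing to the limit $B\to0$ and orthogonality.} Pick $B_j\to0+$ with $\frac{G(t_0)-G(t_0+B_j)}{\int_{t_0}^{t_0+B_j}c(s)e^{-s}\,ds}\to L$; then $\sup_jC(t_0,B_j)<+\infty$, so by Lemma \ref{global convergence property of module} (with all $t_j=t_1$) a subsequence of $\{\tilde F_{B_j}\}$ converges locally uniformly on $\{\Psi<-t_1\}$ to a holomorphic $(n,0)$ form $\tilde F_0$ that is again admissible for $G(t_1)$. On $\{\Psi<-t_0\}$ we have $b_{t_0,B_j}(\Psi)\to0$, and on $\{\Psi\ge-t_0\}$ we have $1-b_{t_0,B_j}(\Psi)=0$; therefore $\tilde F_{B_j}-(1-b_{t_0,B_j}(\Psi))F_{t_0}\to\tilde F_0-F_{t_0}$ pointwise on $\{\Psi<-t_0\}$ and $\to\tilde F_0$ pointwise on $\{-t_0\le\Psi<-t_1\}$. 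Applying Fatou's lemma on each of these two sets and adding,
$$\int_{\{\Psi<-t_0\}}|\tilde F_0-F_{t_0}|^2e^{-\varphi_\alpha}c(-\Psi)+\int_{\{-t_0\le\Psi<-t_1\}}|\tilde F_0|^2e^{-\varphi_\alpha}c(-\Psi)\le\liminf_{j\to\infty}C(t_0,B_j).$$
Since $\tilde F_0|_{\{\Psi<-t_0\}}$ is admissible for $G(t_0)$, the orthogonality identity \eqref{orhnormal F} of Lemma \ref{existence of F} gives $\int_{\{\Psi<-t_0\}}|\tilde F_0|^2e^{-\varphi_\alpha}c(-\Psi)=G(t_0)+\int_{\{\Psi<-t_0\}}|\tilde F_0-F_{t_0}|^2e^{-\varphi_\alpha}c(-\Psi)$, and as $G(t_1)\le\int_{\{\Psi<-t_1\}}|\tilde F_0|^2e^{-\varphi_\alpha}c(-\Psi)$ we conclude $G(t_1)-G(t_0)\le\liminf_{j\to\infty}C(t_0,B_j)$. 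Finally, since $c(s)e^{-s}$ is decreasing and $t_0>T$, the quotient $\frac{\int_{t_0}^{t_0+B}c(s)e^{-s}\,ds}{B\,c(t_0+B)e^{-(t_0+B)}}\to1$ and $\int_{t_1}^{t_0+B}c(s)e^{-s}\,ds\to\int_{t_1}^{t_0}c(s)e^{-s}\,ds$ as $B\to0+$, so writing $C(t_0,B_j)$ as the product of these two factors with $\frac{G(t_0)-G(t_0+B_j)}{\int_{t_0}^{t_0+B_j}c(s)e^{-s}\,ds}$ and letting $j\to\infty$ yields $\liminf_jC(t_0,B_j)\le\bigl(\int_{t_1}^{t_0}c(s)e^{-s}\,ds\bigr)L$. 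Hence $G(t_1)-G(t_0)\le\bigl(\int_{t_1}^{t_0}c(s)e^{-s}\,ds\bigr)L$, the first asserted inequality.

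\emph{Main obstacle.} The delicate step is obtaining the \emph{sharp} constant. A direct triangle inequality for $\tilde F_B=(1-b_{t_0,B}(\Psi))F_{t_0}+\bigl(\tilde F_B-(1-b_{t_0,B}(\Psi))F_{t_0}\bigr)$ loses a multiplicative constant on the error term $C(t_0,B)$, which does \emph{not} tend to $0$ as $B\to0+$ (its limit is $\bigl(\int_{t_1}^{t_0}c(s)e^{-s}\,ds\bigr)L$), and this would destroy the inequality. The remedy is to pass first to the limit $\tilde F_0$ — so that the layer $\{-t_0-B<\Psi<-t_0\}$ on which $b_{t_0,B}$ is non-constant shrinks to a null set — and then to invoke the \emph{exact} orthogonality identity of Lemma \ref{existence of F} at level $t_0$ rather than a triangle inequality; this is what puts the coefficient $1$ in front of $\liminf_jC(t_0,B_j)$. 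A minor additional technicality is the verification $\frac{\int_{t_0}^{t_0+B}c(s)e^{-s}\,ds}{B\,c(t_0+B)e^{-(t_0+B)}}\to1$, which uses monotonicity of $c(s)e^{-s}$ together with $t_0>T$ (so that $c(s)e^{-s}$ stays bounded for $s$ slightly larger than $t_0$).
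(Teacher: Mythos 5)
Your proof is correct and follows essentially the same route as the paper's: extend $F_{t_0}$ to $\{\Psi<-t_1\}$ via Lemma \ref{L2 method in JM concavity}, pass to a limit along a minimizing sequence $B_j\to 0$ using Lemma \ref{global convergence property of module} and Fatou, and then invoke the orthogonality identity \eqref{orhnormal F} at level $t_0$ to avoid losing a constant. The only cosmetic difference is your choice of $c(t_0+B)e^{-(t_0+B)}$ as the lower bound for $c(s)e^{-s}$ on the collar $(t_0,t_0+B)$, where the paper uses $e^{-(t_0+B)}\inf_{(t_0,t_0+B)}c$; both normalizing factors tend to the same limit by monotonicity of $c(s)e^{-s}$.
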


\begin{proof}
It follows from Lemma \ref{semicontinuous} that $G(t)<+\infty$ for any $t>t_1$. By Lemma \ref{existence of F}, there exists a holomorphic $(n,0)$ form $F_{t_0}$ on $\{\Psi<-t_0\}$, such that $(F_{t_0}-f)\in \mathcal{O} (K_M)_{z_0}\otimes J_{z_0}$ for any $z_0\in Z_0$ and $G(t_0)=\int_{\{\Psi<-t_0\}}|F_{t_0}|^2e^{-\varphi_\alpha}c(-\Psi)$.

It suffices to consider that $\liminf\limits_{B\to 0+0} \frac{G(t_0)-G(t_0+B)}{\int_{t_0}^{t_0+B}c(t)e^{-t}dt}\in [0,+\infty)$ because of the decreasing property of $G(t)$. Then there exists $1\ge B_j\to 0+0$ (as $j\to+\infty$) such that
\begin{equation}
	\label{derivatives of G c(t)1}
\lim\limits_{j\to +\infty} \frac{G(t_0)-G(t_0+B_j)}{\int_{t_0}^{t_0+B_j}c(t)e^{-t}dt}=\liminf\limits_{B\to 0+0} \frac{G(t_0)-G(t_0+B)}{\int_{t_0}^{t_0+B}c(t)e^{-t}dt}
\end{equation}
and $\{\frac{G(t_0)-G(t_0+B_j)}{\int_{t_0}^{t_0+B_j}c(t)e^{-t}dt}\}_{j\in\mathbb{N}^{+}}$ is bounded. As $c(t)e^{-t}$ is decreasing and positive on $(t,+\infty)$, then
\begin{equation}\label{derivatives of G c(t)2}
\begin{split}
\lim\limits_{j\to +\infty} \frac{G(t_0)-G(t_0+B_j)}{\int_{t_0}^{t_0+B_j}c(t)e^{-t}dt}
=&\big(\lim\limits_{j\to +\infty} \frac{G(t_0)-G(t_0+B_j)}{B_j})(\frac{1}{\lim\limits_{t\to t_0+0}c(t)e^{-t}}\big)\\
=&\big(\lim\limits_{j\to +\infty} \frac{G(t_0)-G(t_0+B_j)}{B_j})(\frac{e^{t_0}}{\lim\limits_{t\to t_0+0}c(t)}\big).
\end{split}
\end{equation}
Hence $\{\frac{G(t_0)-G(t_0+B_j)}{B_j}\}_{j\in\mathbb{N}^+}$ is uniformly bounded with respect to $j$.

As $t \leq v_{t_0,j}(t)$, the decreasing property of $c(t)e^{-t}$ shows that
\begin{equation}\nonumber
e^{-\Psi+v_{t_0,B_j}(\Psi)}c(-v_{t_0,B_j}(\Psi))\geq c(-\Psi).
\end{equation}
\par
It follows from Lemma \ref{L2 method in JM concavity} that, for any $B_j$, there exists holomorphic
$(n,0)$ form $\tilde{F}_j$ on $\{\Psi<-t_1\}$ such that
\begin{flalign}
&\int_{\{\Psi<-t_1\}}|\tilde{F}_j-(1-b_{t_0,B_j}(\Psi))F_{t_0}|^2e^{-\varphi_\alpha}c(-\Psi)\nonumber\\
\leq &
\int_{\{\Psi<-t_1\}}|\tilde{F}_j-(1-b_{t_0,B_j}(\Psi))F_{t_0}|^2e^{-\varphi_\alpha}e^{-\Psi+v_{t_0,B_j}(\Psi)}c(-v_{t_0,B_j}(\Psi))\nonumber\\
\leq &
\int^{t_0+B_j}_{t_1}c(t)e^{-t}dt\int_{\{\Psi<-t_1\}}\frac{1}{B_j}
\mathbb{I}_{\{-t_0-B_j<\Psi<-t_0\}}|F_{t_0}|^2e^{-\varphi_\alpha-\Psi}\nonumber\\
\leq &
\frac{e^{t_0+B_j}\int^{t_0+B_j}_{t_1}c(t)e^{-t}dt}{\inf
\limits_{t\in(t_0,t_0+B_j)}c(t)}\int_{\{\Psi<-t_1\}}\frac{1}{B_j}
\mathbb{I}_{\{-t_0-B_j<\Psi<-t_0\}}|F_{t_0}|^2e^{-\varphi_\alpha}c(-\Psi)\nonumber\\
= &
\frac{e^{t_0+B_j}\int^{t_0+B_j}_{t_1}c(t)e^{-t}dt}{\inf
\limits_{t\in(t_0,t_0+B_j)}c(t)}\times
\bigg(\int_{\{\Psi<-t_1\}}\frac{1}{B_j}\mathbb{I}_{\{\Psi<-t_0\}}|F_{t_0}|^2e^{-\varphi_\alpha}c(-\Psi)\nonumber\\
&-\int_{\{\Psi<-t_1\}}\frac{1}{B_j}\mathbb{I}_{\{\Psi<-t_0-B_j\}}|F_{t_0}|^2e^{-\varphi_\alpha}c(-\Psi)\bigg)\nonumber\\
\leq &
\frac{e^{t_0+B_j}\int^{t_0+B_j}_{t_1}c(t)e^{-t}dt}{\inf
\limits_{t\in(t_0,t_0+B_j)}c(t)} \times
\frac{G(t_0)-G(t_0+B_j)}{B_j}<+\infty.
\label{derivative of G 1}
\end{flalign}

Note that $b_{t_0,B_j}(t)=0$ for $t\le-t_0-B_j$, $b_{t_0,B_j}(t)=1$ for $t\ge t_0$, $v_{t_0,B_j}(t)>-t_0-B_j$ and $c(t)e^{-t}$ is decreasing with respect to $t$. It follows from inequality \eqref{derivative of G 1} that $(F_j-F_{t_0})_{z_0}\in \mathcal{O} (K_M)_{z_0}\otimes I(\Psi+\varphi_\alpha)_{z_0} \subset \mathcal{O} (K_M)_{z_0}\otimes J_{z_0}$ for any $z_0\in Z_0$.

Note that
\begin{equation}\label{derivative of G 2}
\begin{split}
&\int_{\{\Psi<-t_1\}}|\tilde{F}_j|^2e^{-\varphi_\alpha}c(-\Psi)\\
\le&2\int_{\{\Psi<-t_1\}}|\tilde{F}_j-(1-b_{t_0,B_j}(\Psi))F_{t_0}|^2e^{-\varphi_\alpha}c(-\Psi)
+2\int_{\{\Psi<-t_1\}}|(1-b_{t_0,B_j}(\Psi))F_{t_0}|^2e^{-\varphi_\alpha}c(-\Psi)\\
\le&2
\frac{e^{t_0+B_j}\int^{t_0+B_j}_{t_1}c(t)e^{-t}dt}{\inf
\limits_{t\in(t_0,t_0+B_j)}c(t)} \times
\frac{G(t_0)-G(t_0+B_j)}{B_j}
+2\int_{\{\Psi<-t_0\}}|F_{t_0}|^2e^{-\varphi_\alpha}c(-\Psi).
\end{split}
\end{equation}

We also note that $B_j\le 1$, $\frac{G(t_0)-G(t_0+B_j)}{B_j}$ is uniformly bounded with respect to $j$ and $G(t_0)=\int_{\{\Psi<-t_0\}}|F_{t_0}|^2e^{-\varphi_\alpha}c(-\Psi)$. It follows from inequality \eqref{derivative of G 2} that we know $\int_{\{\Psi<-t_1\}}|\tilde{F}_j|^2e^{-\varphi_\alpha}c(-\Psi)$ is uniformly bounded with respect to $j$.

It follows from Lemma \ref{global convergence property of module} that there exists a subsequence of $\{\tilde{F}_j\}_{j\in \mathbb{N}^+}$ compactly convergent to a holomorphic $(n,0)$ form $\tilde{F}_{t_1}$ on $\{\Psi<-t_1\}$ which satisfies
$$\int_{\{\Psi<-t_1\}}|\tilde{F}_{t_1}|^2e^{-\varphi_\alpha}c(-\Psi)\le \liminf_{j\to+\infty} \int_{\{\Psi<-t_1\}}|\tilde{F}_j|^2e^{-\varphi_\alpha}c(-\Psi)<+\infty,$$
and $(\tilde{F}_{t_1}-F_{t_0})_{z_0}\in \mathcal{O} (K_M)_{z_0}\otimes  J_{z_0}$ for any $z_0\in Z_0$.

Note that
$\lim_{j\to+\infty}b_{t_0,B_j}(t)=\mathbb{I}_{\{t\ge -t_0\}}$ and
\begin{equation}\nonumber
v_{t_0}(t):=\lim_{j\to+\infty}v_{t_0,B_j}(t)=\left\{
\begin{aligned}
&-t_0  &\text{ if } & x<-t_0, \\
&\ t  &\text{ if }  & x\ge t_0 .
\end{aligned}
\right.
\end{equation}

It follows from inequality \eqref{derivative of G 1} and Fatou's lemma that

\begin{flalign}
\label{derivative of G 3}
&\int_{\{\Psi<-t_0\}}|\tilde{F}_{t_1}-F_{t_0}|^2e^{-\varphi_\alpha}c(-\Psi)
+\int_{\{-t_0\le\Psi<-t_1\}}|\tilde{F}_{t_1}|^2e^{-\varphi_\alpha}c(-\Psi)\nonumber\\
\leq &
\int_{\{\Psi<-t_1\}}|\tilde{F}_{t_1}-\mathbb{I}_{\{\Psi< -t_0\}}F_{t_0}|^2e^{-\varphi_\alpha}e^{-\Psi+v_{t_0}(\Psi)}c(-v_{t_0}(\Psi))\nonumber\\
\le&\liminf_{j\to+\infty}\int_{\{\Psi<-t_1\}}|\tilde{F}_j-(1-b_{t_0,B_j}(\Psi))F_{t_0}|^2e^{-\varphi_\alpha}c(-\Psi)\nonumber\\
\leq &\liminf_{j\to+\infty}
\bigg(\frac{e^{t_0+B_j}\int^{t_0+B_j}_{t_1}c(t)e^{-t}dt}{\inf
\limits_{t\in(t_0,t_0+B_j)}c(t)} \times
\frac{G(t_0)-G(t_0+B_j)}{B_j}\bigg).
\end{flalign}

It follows from Lemma \ref{existence of F}, equality \eqref{derivatives of G c(t)1}, equality \eqref{derivatives of G c(t)2} and inequality \eqref{derivative of G 3} that we have

\begin{equation}
\label{derivative of G 4}
\begin{split}
&\int_{\{\Psi<-t_1\}}|\tilde{F}_{t_1}|^2e^{-\varphi_\alpha}c(-\Psi)
-\int_{\{\Psi<-t_0\}}|F_{t_0}|^2e^{-\varphi_\alpha}c(-\Psi)\\
\le&\int_{\{\Psi<-t_0\}}|\tilde{F}_{t_1}-F_{t_0}|^2e^{-\varphi_\alpha}c(-\Psi)
+\int_{\{-t_0\le\Psi<-t_1\}}|\tilde{F}_{t_1}|^2e^{-\varphi_\alpha}c(-\Psi)\\
\leq &
\int_{\{\Psi<-t_1\}}|\tilde{F}_{t_1}-\mathbb{I}_{\{\Psi< -t_0\}}F_{t_0}|^2e^{-\varphi_\alpha}e^{-\Psi+v_{t_0}(\Psi)}c(-v_{t_0}(\Psi))\\
\le&\liminf_{j\to+\infty}\int_{\{\Psi<-t_1\}}|\tilde{F}_j-(1-b_{t_0,B_j}(\Psi))F_{t_0}|^2e^{-\varphi_\alpha}c(-\Psi)\\
\leq &\liminf_{j\to+\infty}
\big(\frac{e^{t_0+B_j}\int^{t_0+B_j}_{t_1}c(t)e^{-t}dt}{\inf
\limits_{t\in(t_0,t_0+B_j)}c(t)} \times
\frac{G(t_0)-G(t_0+B_j)}{B_j}\big)\\
\le &\bigg(\int^{t_0}_{t_1}c(t)e^{-t}dt\bigg)\liminf\limits_{B\to 0+0} \frac{G(t_0)-G(t_0+B)}{\int_{t_0}^{t_0+B}c(t)e^{-t}dt}.
\end{split}
\end{equation}
Note that $(\tilde{F}_{t_1}-F_{t_0})_{z_0}\in \mathcal{O} (K_M)_{z_0}\otimes  J_{z_0}$ for any $z_0\in Z_0$. It follows from the definition of $G(t)$ and inequality \eqref{derivative of G 4} that we have

\begin{equation}
\label{derivative of G 5}
\begin{split}
&G(t_1)-G(t_0)\\
\le&\int_{\{\Psi<-t_1\}}|\tilde{F}_{t_1}|^2e^{-\varphi_\alpha}c(-\Psi)
-\int_{\{\Psi<-t_0\}}|F_{t_0}|^2e^{-\varphi_\alpha}c(-\Psi)\\
\le&\int_{\{\Psi<-t_1\}}|\tilde{F}_{t_1}-\mathbb{I}_{\{\Psi< -t_0\}}F_{t_0}|^2e^{-\varphi_\alpha}c(-\Psi)\\
\leq &
\int_{\{\Psi<-t_1\}}|\tilde{F}_{t_1}-\mathbb{I}_{\{\Psi< -t_0\}}F_{t_0}|^2e^{-\varphi_\alpha}e^{-\Psi+v_{t_0}(\Psi)}c(-v_{t_0}(\Psi))\\\
\le &\big(\int^{t_0}_{t_1}c(t)e^{-t}dt\big)\liminf\limits_{B\to 0+0} \frac{G(t_0)-G(t_0+B)}{\int_{t_0}^{t_0+B}c(t)e^{-t}dt}.
\end{split}
\end{equation}

Lemma \ref{derivatives of G} is proved.

\end{proof}

The following property of concave
functions will be used in the proof of Theorem \ref{main theorem}.
\begin{Lemma}[see \cite{G16}]
Let $H(r)$ be a lower semicontinuous function on $(0,R]$. Then $H(r)$ is concave
if and only if
\begin{equation}\nonumber
\begin{split}
\frac{H(r_1)-H(r_2)}{r_1-r_2} \leq
\liminf\limits_{r_3 \to r_2-0}
\frac{H(r_3)-H(r_2)}{r_3-r_2}
\end{split}
\end{equation}
holds for any $0<r_2<r_1 \leq R$.
\label{characterization of concave function}
\end{Lemma}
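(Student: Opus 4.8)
The plan is to prove the two implications separately, treating $H$ as real-valued on $(0,R]$ (the case where $H$ may take the value $+\infty$, relevant to the applications with $H=G(h^{-1}(r))\ge 0$, goes through verbatim since the minimizer produced below is automatically finite). For the ``only if'' direction I would assume $H$ concave and invoke the classical monotone-slope (three-chord) property. For $0<r_3<r_2<r_1\le R$, writing $r_2=\lambda r_3+(1-\lambda)r_1$ with $\lambda=\frac{r_1-r_2}{r_1-r_3}\in(0,1)$, concavity gives $H(r_2)\ge\lambda H(r_3)+(1-\lambda)H(r_1)$, which rearranges to
\[
\frac{H(r_2)-H(r_3)}{r_2-r_3}\ \ge\ \frac{H(r_1)-H(r_2)}{r_1-r_2}.
\]
By the same three-chord inequality the left-hand side is nonincreasing as $r_3\uparrow r_2$, so its limit as $r_3\to r_2-0$ exists and is still $\ge\frac{H(r_1)-H(r_2)}{r_1-r_2}$; since $\frac{H(r_3)-H(r_2)}{r_3-r_2}=\frac{H(r_2)-H(r_3)}{r_2-r_3}$, this limit equals $\liminf_{r_3\to r_2-0}\frac{H(r_3)-H(r_2)}{r_3-r_2}$, which is exactly the asserted inequality. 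Only concavity is used here, and $r_1$ enters solely through a difference quotient, so the endpoint case $r_1=R$ causes no trouble.

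For the ``if'' direction I would argue by contradiction, using lower semicontinuity exactly once. Fix $0<a<b\le R$ and let $\ell$ be the affine function with $\ell(a)=H(a)$ and $\ell(b)=H(b)$; set $g:=H-\ell$ on $[a,b]$. Then $g$ is lower semicontinuous on the compact interval $[a,b]$, one has $g(a)=g(b)=0$, and $g$ satisfies the same hypothesis as $H$ because all difference quotients of $g$ and of $H$ differ only by the constant slope of $\ell$ (hence so do the corresponding $\liminf$'s). Suppose $\min_{[a,b]}g<0$. A lower semicontinuous function attains its infimum on a compact set, so the minimum is attained at some $r_0\in[a,b]$, and $g(a)=g(b)=0$ forces $r_0\in(a,b)$. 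Apply the hypothesis to $g$ with $r_2=r_0$ and an arbitrary $r_1\in(r_0,b]$: the left-hand side $\frac{g(r_1)-g(r_0)}{r_1-r_0}$ is $\ge 0$ because $r_0$ minimizes $g$, whereas the right-hand side $\liminf_{r_3\to r_0-0}\frac{g(r_3)-g(r_0)}{r_3-r_0}$ is $\le 0$ (nonnegative numerator, negative denominator). Hence both sides equal $0$, so $g(r_1)=g(r_0)$ for every $r_1\in(r_0,b]$; taking $r_1=b$ gives $0=g(b)=g(r_0)<0$, a contradiction. Therefore $g\ge 0$ on $[a,b]$, i.e.\ $H(r)\ge\ell(r)$ there, which is precisely the concavity inequality on $[a,b]$; letting $a,b$ vary yields concavity of $H$ on $(0,R]$.

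I do not expect a genuine obstacle here, as this is a standard real-analysis fact; the only points that need care are the sign bookkeeping in the difference quotients and their $\liminf$'s, and the single appeal to lower semicontinuity, namely that it guarantees $g$ attains its minimum on the compact interval $[a,b]$ and that this minimizer is interior (so that both a right difference quotient and a left-hand $\liminf$ at $r_0$ are available to be compared).
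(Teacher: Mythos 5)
Your proof is correct. The paper itself does not prove this lemma (it only cites \cite{G16}), so there is no in-text argument to compare against; but your approach is the standard one and I see no gap. The ``only if'' direction is the usual three-chord inequality, and the rearrangement $H(r_2)\ge \lambda H(r_3)+(1-\lambda)H(r_1) \Leftrightarrow \frac{H(r_2)-H(r_3)}{r_2-r_3}\ge\frac{H(r_1)-H(r_2)}{r_1-r_2}$ is right. The ``if'' direction correctly reduces modulo the affine function $\ell$ (noting that the slope condition is invariant under this shift), uses lower semicontinuity exactly where it is needed --- to guarantee that $g=H-\ell$ attains a minimum on $[a,b]$, which the boundary conditions force to be interior --- and the comparison of the nonnegative right difference quotient at $r_0$ with the nonpositive left $\liminf$ produces the contradiction cleanly.
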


\section{Proofs of Theorem \ref{main theorem}, Remark \ref{infty2}, Corollary \ref{necessary condition for linear of G} and Remark \ref{rem:linear}}
We firstly prove Theorem \ref{main theorem}.

\begin{proof} We firstly show that if $G(t_0)<+\infty$ for some $t_0> T$, then $G(t_1)<+\infty$ for any $T< t_1<t_0$. As $G(t_0)<+\infty$, it follows from Lemma \ref{existence of F} that there exists a unique
holomorphic $(n,0)$ form $F_{t_0}$ on $\{\Psi<-t\}$ satisfying
$$\ \int_{\{\Psi<-t_0\}}|F_{t_0}|^2e^{-\varphi_\alpha}c(-\Psi)=G(t_0)<+\infty$$  and
$\ (F_{t_0}-f)_{z_0}\in
\mathcal{O} (K_M)_{z_0} \otimes J_{z_0}$, for any  $ z_0\in Z_0$.

It follows from Lemma \ref{L2 method in JM concavity} that there exists a holomorphic $(n,0)$ form $\tilde{F}_1$ on $\{\Psi<-t_1\}$ such that

 \begin{equation}\label{main theorem 1}
  \begin{split}
      & \int_{\{\Psi<-t_1\}}|\tilde{F}_1-(1-b_{t_0,B}(\Psi))F_{t_0}|^2e^{-\varphi_\alpha+v_{t_0,B}(\Psi)-\Psi}c(-v_{t_0,B}(\Psi)) \\
      \le & (\int_{t_1}^{t_0+B}c(s)e^{-s}ds)
       \int_{M}\frac{1}{B}\mathbb{I}_{\{-t_0-B<\Psi<-t_0\}}|F_{t_0}|^2e^{-\varphi_{\alpha}-\Psi}<+\infty.
  \end{split}
\end{equation}
Note that $b_{t_0,B}(t)=0$ on $\{\Psi<-t_0-B\}$ and $v_{t_0,B}(\Psi)> -t_0-B$. We have $e^{v_{t_0,B}(\Psi)}c(-v_{t_0,B}(\Psi))$ has a positive lower bound. It follows from inequality \eqref{main theorem 1} that
we have  $\ (F_{1}-F_{t_0})_{z_0}
\in \mathcal{O} (K_M)_{z_0}\otimes I(\Psi+\varphi_\alpha)_{z_0} \subset \mathcal{O} (K_M)_{z_0}\otimes J_{z_0}$ for any $z_0\in Z_0$, which implies that
$(F_{1}-f)_{z_0}\in
\mathcal{O} (K_M)_{z_0} \otimes J_{z_0}$, for any  $ z_0\in Z_0$. As $v_{t_0,B}(\Psi)\ge\Psi$ and $c(t)e^{-t}$ is decreasing with respect to $t$, it follows from inequality \eqref{main theorem 1} that we have
 \begin{equation}\label{main theorem 2}
  \begin{split}
      & \int_{\{\Psi<-t_1\}}|\tilde{F}_1-(1-b_{t_0,B}(\Psi))F_{t_0}|^2e^{-\varphi_\alpha}c(-\Psi)
      \\
      \le&\int_{\{\Psi<-t_1\}}|\tilde{F}_1-(1-b_{t_0,B}(\Psi))F_{t_0}|^2e^{-\varphi_\alpha+v_{t_0,B}(\Psi)-\Psi}c(-v_{t_0,B}(\Psi)) \\
      \le & (\int_{t_1}^{t_0+B}c(s)e^{-s}ds)
       \int_{M}\frac{1}{B}\mathbb{I}_{\{-t_0-B<\Psi<-t_0\}}|F_{t_0}|^2e^{-\varphi_{\alpha}-\Psi}<+\infty.
  \end{split}
\end{equation}
Then we have
 \begin{equation}\label{main theorem 3}
  \begin{split}
  &\int_{\{\Psi<-t_1\}}|\tilde{F}_1|^2e^{-\varphi_\alpha}c(-\Psi)\\
     \le & 2\int_{\{\Psi<-t_1\}}|\tilde{F}_1-(1-b_{t_0,B}(\Psi))F_{t_0}|^2e^{-\varphi_\alpha}c(-\Psi)
      +2\int_{\{\Psi<-t_1\}}|(1-b_{t_0,B}(\Psi))F_{t_0}|^2e^{-\varphi_\alpha}c(-\Psi)\\
      \le & 2(\int_{t_1}^{t_0+B}c(s)e^{-s}ds)
       \int_{M}\frac{1}{B}\mathbb{I}_{\{-t_0-B<\Psi<-t_0\}}|F_{t_0}|^2e^{-\varphi_{\alpha}-\Psi}
       +2\int_{\{\Psi<-t_0\}}|F_{t_0}|^2e^{-\varphi_\alpha}c(-\Psi)\\
       <&+\infty.
  \end{split}
\end{equation}
Hence we have $G(t_1)\le \int_{\{\Psi<-t_1\}}|\tilde{F}_1|^2e^{-\varphi_\alpha}c(-\Psi)<+\infty$.

Now, it follows from Lemma \ref{semicontinuous}, Lemma \ref{derivatives of G} and Lemma \ref{characterization of concave function} that we know $G(h^{-1}(r))$ is concave with respect to $r$. It follows from Lemma \ref{semicontinuous} that $\lim\limits_{t\to T+0}G(t)=G(T)$ and $\lim\limits_{t\to+\infty}G(t)=0$.

Theorem \ref{main theorem} is proved.

\end{proof}

Now we prove Remark \ref{infty2}.
\begin{proof}Note that if there exists a positive decreasing concave function $g(t)$ on $(a,b)\subset\mathbb{R}$ and $g(t)$ is not a constant function, then $b<+\infty$.

Assume that $G(t_0)<+\infty$ for some $t_0\geq T$. As $f_{z_0}\notin
\mathcal{O} (K_M)_{z_0} \otimes J_{z_0}$ for some  $ z_0\in Z_0$, Lemma \ref{characterization of g(t)=0} shows that $G(t_0)\in(0,+\infty)$. Following from Theorem \ref{main theorem} we know $G({h}^{-1}(r))$ is concave with respect to $r\in(\int_{T_1}^{T}c(t)e^{-t}dt,\int_{T_1}^{+\infty}c(t)e^{-t}dt)$ and $G({h}^{-1}(r))$ is not a constant function, therefore we obtain $\int_{T_1}^{+\infty}c(t)e^{-t}dt<+\infty$, which contradicts to $\int_{T_1}^{+\infty}c(t)e^{-t}dt=+\infty$. Thus we have $G(t)\equiv+\infty$.

When $G(t_2)\in(0,+\infty)$ for some $t_2\in[T,+\infty)$, Lemma \ref{characterization of g(t)=0} shows that $f_{z_0}\notin
\mathcal{O} (K_M)_{z_0} \otimes J_{z_0}$, for any  $ z_0\in Z_0$. Combining the above discussion, we know $\int_{T_1}^{+\infty}c(t)e^{-t}dt<+\infty$. Using Theorem \ref{main theorem}, we obtain that $G(\hat{h}^{-1}(r))$ is concave with respect to  $r\in (0,\int_{T}^{+\infty}c(t)e^{-t}dt)$, where $\hat{h}(t)=\int_{t}^{+\infty}c(l)e^{-l}dl$.

Thus, Remark \ref{infty2} holds.
\end{proof}

Now we prove Corollary \ref{necessary condition for linear of G}

\begin{proof} As $G(h^{-1}(r))$ is linear with respect to $r\in[0,\int_T^{+\infty}c(s)e^{-s}ds)$, we have $G(t)=\frac{G(T_1)}{\int_{T_1}^{+\infty}c(s)e^{-s}ds}\int_{t}^{+\infty}c(s)e^{-s}ds$ for any $t\in[T,+\infty)$ and $T_1 \in (T,+\infty)$.

We follow the notation and the construction in Lemma \ref{derivatives of G}. Let $t_0>t_1> T$ be given. It follows from $G(h^{-1}(r))$ is linear with respect to $r\in[0,\int_T^{+\infty}c(s)e^{-s}ds)$ that we know that all inequalities in \eqref{derivative of G 5} should be equalities, i.e., we have

\begin{equation}
\label{necessary condition for linear of G 1}
\begin{split}
&G(t_1)-G(t_0)\\
=&\int_{\{\Psi<-t_1\}}|\tilde{F}_{t_1}|^2e^{-\varphi_\alpha}c(-\Psi)
-\int_{\{\Psi<-t_0\}}|F_{t_0}|^2e^{-\varphi_\alpha}c(-\Psi)\\
=&\int_{\{\Psi<-t_1\}}|\tilde{F}_{t_1}-\mathbb{I}_{\{\Psi< -t_0\}}F_{t_0}|^2e^{-\varphi_\alpha}c(-\Psi)\\
= &
\int_{\{\Psi<-t_1\}}|\tilde{F}_{t_1}-\mathbb{I}_{\{\Psi< -t_0\}}F_{t_0}|^2e^{-\varphi_\alpha}e^{-\Psi+v_{t_0}(\Psi)}c(-v_{t_0}(\Psi))\\
= &\big(\int^{t_0}_{t_1}c(t)e^{-t}dt\big)\liminf\limits_{B\to 0+0} \frac{G(t_0)-G(t_0+B)}{\int_{t_0}^{t_0+B}c(t)e^{-t}dt}.
\end{split}
\end{equation}
Note that $G(t_0)=\int_{\{\Psi<-t_0\}}|F_{t_0}|^2e^{-\varphi_\alpha}c(-\Psi)$. Equality \eqref{necessary condition for linear of G 1} shows that $G(t_1)=\int_{\{\Psi<-t_1\}}|\tilde{F}_{t_1}|^2e^{-\varphi_\alpha}c(-\Psi)$.

Note that on $\{\Psi\ge -t_0\}$, we have $e^{-\Psi+v_{t_0}(\Psi)}c(-v_{t_0}(\Psi))=c(-\Psi)$. It follows from
\begin{equation}\nonumber
\begin{split}
&\int_{\{\Psi<-t_1\}}|\tilde{F}_{t_1}-\mathbb{I}_{\{\Psi< -t_0\}}F_{t_0}|^2e^{-\varphi_\alpha}c(-\Psi)\\
= &
\int_{\{\Psi<-t_1\}}|\tilde{F}_{t_1}-\mathbb{I}_{\{\Psi< -t_0\}}F_{t_0}|^2e^{-\varphi_\alpha}e^{-\Psi+v_{t_0}(\Psi)}c(-v_{t_0}(\Psi))\\
\end{split}
\end{equation}
that we have (note that $v_{t_0}(\Psi)=-t_0$ on  $\{\Psi< -t_0\}$)
\begin{equation}
\begin{split}
\label{necessary condition for linear of G 2}
&\int_{\{\Psi<-t_0\}}|\tilde{F}_{t_1}-F_{t_0}|^2e^{-\varphi_\alpha}c(-\Psi)\\
= &
\int_{\{\Psi<-t_0\}}|\tilde{F}_{t_1}-F_{t_0}|^2e^{-\varphi_\alpha}e^{-\Psi-t_0}c(t_0).\\
\end{split}
\end{equation}
As $\int_{T}^{+\infty}c(t)e^{-t}dt<+\infty$ and $c(t)e^{-t}$ is decreasing with respect to $t$, we know that there exists $t_2>t_0$ such that $c(t)e^{-t}<c(t_0)e^{-t_0}-\epsilon$ for any $t\ge t_2$, where $\epsilon>0$ is a constant. Then equality \eqref{necessary condition for linear of G 2} implies that
\begin{equation}
\begin{split}
\label{necessary condition for linear of G 3}
&\epsilon\int_{\{\Psi<-t_2\}}|\tilde{F}_{t_1}-F_{t_0}|^2e^{-\varphi_\alpha-\Psi}\\
\le &
\int_{\{\Psi<-t_2\}}|\tilde{F}_{t_1}-F_{t_0}|^2e^{-\varphi_\alpha}(e^{-\Psi-t_0}c(t_0)-c(-\Psi))\\
\le &
\int_{\{\Psi<-t_0\}}|\tilde{F}_{t_1}-F_{t_0}|^2e^{-\varphi_\alpha}(e^{-\Psi-t_0}c(t_0)-c(-\Psi))\\
= &0.
\end{split}
\end{equation}
Note that $e^{-\varphi_\alpha-\Psi}\ge e^{-(\varphi_\alpha+\psi)}|F|^2$, $\varphi_\alpha+\psi$ is a plurisubharmonic function and the integrand in \eqref{necessary condition for linear of G 3} is nonnegative, we must have $\tilde{F}_{t_1}|_{\{\Psi<-t_0\}}=F_{t_0}$.

It follows from Lemma \ref{existence of F} that for any $t>T$, there exists a unique holomorphic $(n,0)$ form $F_t$ on $\{\Psi<-t\}$ satisfying
$$\ \int_{\{\Psi<-t\}}|F_t|^2e^{-\varphi_\alpha}c(-\Psi)=G(t)$$  and
$\ (F_t-f)\in
\mathcal{O} (K_M)_{z_0} \otimes J_{z_0}$, for any  $ z_0\in Z_0$.  By the above discussion, we know $F_{t}=F_{t'}$ on $\{\Psi<-\max{\{t,t'\}}\}$ for any $t\in(T,+\infty)$ and $t'\in(T,+\infty)$. Hence combining $\lim_{t\rightarrow T+0}G(t)=G(T)$, we obtain that there  exists a unique holomorphic $(n,0)$ form $\tilde{F}$ on $\{\Psi<-T\}$ satisfying $(\tilde{F}-f)_{z_0}\in
\mathcal{O} (K_M)_{z_0} \otimes J_{z_0}$ for any  $z_0\in Z_0$ and $G(t)=\int_{\{\Psi<-t\}}|\tilde{F}|^2e^{-\varphi_{\alpha}}c(-\Psi)$ for any $t\ge T$.

Secondly, we prove equality \eqref{other a also linear}.

As $a(t)$ is a nonnegative measurable function on $(T,+\infty)$, then there exists a sequence of functions $\{\sum\limits_{j=1}^{n_i}a_{ij}\mathbb{I}_{E_{ij}}\}_{i\in\mathbb{N}^+}$ $(n_i<+\infty$ for any $i\in\mathbb{N}^+)$ satisfying that $\sum\limits_{j=1}^{n_i}a_{ij}\mathbb{I}_{E_{ij}}$ is increasing with respect to $i$ and $\lim\limits_{i\to +\infty}\sum\limits_{j=1}^{n_i}a_{ij}\mathbb{I}_{E_{ij}}=a(t)$ for any $t\in(T,+\infty)$, where $E_{ij}$ is a Lebesgue measurable subset of $(T,+\infty)$ and $a_{ij}\ge 0$ is a constant for any $i,j$.  It follows from Levi's Theorem that it suffices to prove the case that $a(t)=\mathbb{I}_{E}(t)$, where $E\subset\subset (T,+\infty)$ is a Lebesgue measurable set.

Note that $G(t)=\int_{\{\Psi<-t\}}|\tilde{F}|^2e^{-\varphi_\alpha}c(-\Psi)=\frac{G(T_1)}{\int_{T_1}^{+\infty}c(s)e^{-s}ds}
\int_{t}^{+\infty}c(s)e^{-s}ds$ where $T_1 \in (T,+\infty)$, then
  \begin{equation}\label{linear 3.4}
\int_{\{-t_1\le\Psi<-t_2\}}|\tilde{F}|^2e^{-\varphi_\alpha}c(-\Psi)=\frac{G(T_1)}{\int_{T_1}^{+\infty}c(s)e^{-s}ds}
\int_{t_2}^{t_1}c(s)e^{-s}ds
  \end{equation}
  holds for any $T\le t_2<t_1<+\infty$. It follows from the dominated convergence theorem and equality \eqref{linear 3.4} that
    \begin{equation}\label{linear 3.5}
\int_{\{z\in M:-\Psi(z)\in N\}}|\tilde{F}|^2e^{-\varphi_\alpha}=0
  \end{equation}
  holds for any $N\subset\subset (T,+\infty)$ such that $\mu(N)=0$, where $\mu$ is the Lebesgue measure on $\mathbb{R}$.

  As $c(t)e^{-t}$ is decreasing on $(T,+\infty)$, there are at most countable points denoted by $\{s_j\}_{j\in \mathbb{N}^+}$ such that $c(t)$ is not continuous at $s_j$. Then there is a decreasing sequence of open sets $\{U_k\}$,  such that
$\{s_j\}_{j\in \mathbb{N}^+}\subset U_k\subset (T,+\infty)$ for any $k$, and $\lim\limits_{k \to +\infty}\mu(U_k)=0$. Choosing any closed interval $[t'_2,t'_1]\subset (T,+\infty)$, then we have
\begin{equation}\label{linear 3.6}
\begin{split}
&\int_{\{-t'_1\le\Psi<-t'_2\}}|\tilde{F}|^2e^{-\varphi_\alpha}\\
=&\int_{\{z\in M:-\Psi(z)\in(t'_2,t'_1]\backslash U_k\}}|\tilde{F}|^2e^{-\varphi_\alpha}+
\int_{\{z\in M:-\Psi(z)\in[t'_2,t'_1]\cap U_k\}}|\tilde{F}|^2e^{-\varphi_\alpha}\\
=&\lim_{n\to+\infty}\sum_{i=0}^{n-1}\int_{\{z\in M:-\Psi(z)\in I_{i,n}\backslash U_k\}}|\tilde{F}|^2e^{-\varphi_\alpha}+
\int_{\{z\in M:-\Psi(z)\in[t'_2,t'_1]\cap U_k\}}|\tilde{F}|^2e^{-\varphi_\alpha},
\end{split}
\end{equation}
where $I_{i,n}=(t'_1-(i+1)\alpha_n,t'_1-i\alpha_n]$ and $\alpha_n=\frac{t'_1-t'_2}{n}$. Note that
\begin{equation}\label{linear 3.7}
\begin{split}
&\lim_{n\to+\infty}\sum_{i=0}^{n-1}\int_{\{z\in M:-\Psi(z)\in I_{i,n}\backslash U_k\}}|\tilde{F}|^2e^{-\varphi_\alpha}\\
\le&\limsup_{n\to+\infty}\sum_{i=0}^{n-1}\frac{1}{\inf_{I_{i,n}\backslash U_k}c(t)}\int_{\{z\in M:-\Psi(z)\in I_{i,n}\backslash U_k\}}|\tilde F|^2e^{-\varphi_\alpha}c(-\Psi).
\end{split}
\end{equation}

It follows from equality \eqref{linear 3.4} that inequality \eqref{linear 3.7} becomes
\begin{equation}\label{linear 3.8}
\begin{split}
&\lim_{n\to+\infty}\sum_{i=0}^{n-1}\int_{\{z\in M:-\Psi(z)\in I_{i,n}\backslash U_k\}}|\tilde F|^2e^{-\varphi_\alpha}\\
\le&\frac{G(T_1)}{\int_{T_1}^{+\infty}c(s)e^{-s}ds}
\limsup_{n\to+\infty}\sum_{i=0}^{n-1}\frac{1}{\inf_{I_{i,n}\backslash U_k}c(t)}\int_{I_{i,n}\backslash U_k}c(s)e^{-s}ds.
\end{split}
\end{equation}
It is clear that $c(t)$ is uniformly continuous and has positive lower bound and upper bound on $[t'_2,t'_1]\backslash U_k$. Then we have
\begin{equation}\label{linear 3.9}
\begin{split}
&\limsup_{n\to+\infty}\sum_{i=0}^{n-1}\frac{1}{\inf_{I_{i,n}\backslash U_k}c(t)}\int_{I_{i,n}\backslash U_k}c(s)e^{-s}ds \\
\le&\limsup_{n\to+\infty}\sum_{i=0}^{n-1}\frac{\sup_{I_{i,n}\backslash U_k}c(t)}{\inf_{I_{i,n}\backslash U_k}c(t)}\int_{I_{i,n}\backslash U_k}e^{-s}ds\\
=&\int_{(t'_2,t'_1]\backslash U_k}e^{-s}ds.
\end{split}
\end{equation}

Combining inequality \eqref{linear 3.6}, \eqref{linear 3.8} and \eqref{linear 3.9}, we have
\begin{equation}\label{linear 3.10}
\begin{split}
&\int_{\{-t'_1\le\Psi<-t'_2\}}|\tilde{F}|^2e^{-\varphi_\alpha}\\
=&\int_{\{z\in M:-\Psi(z)\in(t'_2,t'_1]\backslash U_k\}}|\tilde{F}|^2e^{-\varphi_\alpha}+
\int_{\{z\in M:-\Psi(z)\in[t'_2,t'_1]\cap U_k\}}|\tilde{F}|^2e^{-\varphi_\alpha}\\
\le&\frac{G(T_1)}{\int_{T_1}^{+\infty}c(s)e^{-s}ds}\int_{(t'_2,t'_1]\backslash U_k}e^{-s}ds+
\int_{\{z\in M:-\Psi(z)\in[t'_2,t'_1]\cap U_k\}}|\tilde{F}|^2e^{-\varphi_\alpha}.
\end{split}
\end{equation}
Let $k\to +\infty$, following from equality \eqref{linear 3.5} and inequality \eqref{linear 3.10}, then we obtain that
\begin{equation}\label{linear 3.11}
\begin{split}
\int_{\{-t'_1\le\psi<-t'_2\}}|\tilde{F}|^2e^{-\varphi_\alpha}
\le\frac{G(T_1)}{\int_{T_1}^{+\infty}c(s)e^{-s}ds}\int_{t'_2}^{t'_1}e^{-s}ds.
\end{split}
\end{equation}
Following from a similar discussion we can obtain that
\begin{equation}\nonumber
\begin{split}
\int_{\{-t'_1\le\psi<-t'_2\}}|\tilde{F}|^2e^{-\varphi_\alpha}
\ge\frac{G(T_1)}{\int_{T_1}^{+\infty}c(s)e^{-s}ds}\int_{t'_2}^{t'_1}e^{-s}ds.
\end{split}
\end{equation}
Then combining inequality \eqref{linear 3.11}, we know
\begin{equation}\label{linear 3.12}
\begin{split}
\int_{\{-t'_1\le\Psi<-t'_2\}}|\tilde{F}|^2e^{-\varphi_\alpha}
=\frac{G(T_1)}{\int_{T_1}^{+\infty}c(s)e^{-s}ds}\int_{t'_2}^{t'_1}e^{-s}ds.
\end{split}
\end{equation}
Then it is clear that for any open set $U\subset (T,+\infty)$ and compact set $V\subset (T,+\infty)$,
$$
\int_{\{z\in M;-\Psi(z)\in U\}}|\tilde{F}|^2e^{-\varphi_\alpha}
=\frac{G(T_1)}{\int_{T_1}^{+\infty}c(s)e^{-s}ds}\int_{U}e^{-s}ds,
$$
and
$$
\int_{\{z\in M;-\Psi(z)\in V\}}|\tilde{F}|^2e^{-\varphi_\alpha}
=\frac{G(T_1)}{\int_{T_1}^{+\infty}c(s)e^{-s}ds}\int_{V}e^{-s}ds.
$$
As $E\subset\subset (T,+\infty)$, then $E\cap(t_2,t_1]$ is a Lebesgue measurable subset of $(T+\frac{1}{n},n)$ for some large $n$, where $T\le t_2<t_1\le+\infty$. Then there exists a sequence of compact sets $\{V_j\}$ and a sequence of open subsets $\{V'_j\}$ satisfying $V_1\subset \ldots \subset V_j\subset V_{j+1}\subset\ldots \subset E\cap(t_2,t_1]\subset \ldots \subset V'_{j+1}\subset V'_j\subset \ldots\subset V'_1\subset\subset (T,+\infty)$ and $\lim\limits_{j\to +\infty}\mu(V'_j-V_j)=0$, where $\mu$ is the Lebesgue measure on $\mathbb{R}$. Then we have
\begin{equation}\nonumber
\begin{split}
\int_{\{-t'_1\le\Psi<-t'_2\}}|\tilde{F}|^2e^{-\varphi_\alpha}\mathbb{I}_E(-\Psi)
=&\int_{z\in M:-\Psi(z)\in E\cap (t_2,t_1]}|\tilde{F}|^2e^{-\varphi_\alpha}\\
\le&\liminf_{j\to+\infty}\int_{\{z\in M:-\Psi(z)\in V'_j\}}|\tilde{F}|^2e^{-\varphi_\alpha}\\
\le&\liminf_{j\to+\infty}\frac{G(T_1)}{\int_{T_1}^{+\infty}c(s)e^{-s}ds}\int_{V'_j}e^{-s}ds\\
\le&\frac{G(T_1)}{\int_{T_1}^{+\infty}c(s)e^{-s}ds}\int_{E\cap(t_2,t_1]}e^{-s}ds\\
=&\frac{G(T_1)}{\int_{T_1}^{+\infty}c(s)e^{-s}ds}\int_{t_2}^{t_1}e^{-s}\mathbb{I}_E(s)ds,
\end{split}
\end{equation}
and
\begin{equation}\nonumber
\begin{split}
\int_{\{-t'_1\le\Psi<-t'_2\}}|\tilde{F}|^2e^{-\varphi_\alpha}\mathbb{I}_E(-\Psi)
\ge&\liminf_{j\to+\infty}\int_{\{z\in M:-\Psi(z)\in V_j\}}|\tilde{F}|^2e^{-\varphi_\alpha}\\
\ge&\liminf_{j\to+\infty}\frac{G(T_1)}{\int_{T_1}^{+\infty}c(s)e^{-s}ds}\int_{V_j}e^{-s}ds\\
=&\frac{G(T_1)}{\int_{T_1}^{+\infty}c(s)e^{-s}ds}\int_{t_2}^{t_1}e^{-s}\mathbb{I}_E(s)ds,
\end{split}
\end{equation}
which implies that
$$\int_{\{-t'_1\le\Psi<-t'_2\}}|\tilde{F}|^2e^{-\varphi_\alpha}\mathbb{I}_E(-\Psi)=
\frac{G(T_1)}{\int_{T_1}^{+\infty}c(s)e^{-s}ds}\int_{t_2}^{t_1}e^{-s}\mathbb{I}_E(s)ds.$$
Hence we know that equality \eqref{other a also linear} holds.

Corollary \ref{necessary condition for linear of G} is proved.
\end{proof}

Now we prove Remark \ref{rem:linear}.

\begin{proof}[Proof of Remark \ref{rem:linear}]
By the definition of $G(t;\tilde{c})$, we have $G(t_0;\tilde{c})\le\int_{\{\Psi<-t_0\}}|\tilde{F}|^2e^{-\varphi_\alpha}\tilde{c}(-\Psi)$, where $\tilde{F}$ is the holomorphic $(n,0)$ form on $\{\Psi<-T\}$ such that $G(t)=\int_{\{\Psi<-t\}}|\tilde{F}|^2e^{-\varphi_{\alpha}}c(-\Psi)$ for any $t\ge T$.  Hence we only consider the case $G(t_0;\tilde{c})<+\infty$.

By the definition of $G(t;\tilde{c})$, we can choose a holomorphic $(n,0)$ form $F_{t_0,\tilde{c}}$ on $\{\Psi<-t_0\}$ satisfying $\ (F_{t_0,\tilde{c}}-f)_{z_0}\in
\mathcal{O} (K_M)_{z_0} \otimes J_{z_0}$, for any  $ z_0\in Z_0$ and $\int_{ \{ \Psi<-t_0\}}|F_{t_0,\tilde{c}}|^2e^{-\varphi_\alpha}\tilde{c}(-\Psi)<+\infty$. As $\mathcal{H}^2(\tilde{c},t_0)\subset \mathcal{H}^2(c,t_0)$, we have $\int_{ \{ \Psi<-t_0\}}|F_{t_0,\tilde{c}}|^2e^{-\varphi_\alpha}c(-\Psi)<+\infty$. Using
Lemma \ref{existence of F}, we obtain that
\begin{equation}\nonumber
\begin{split}
\int_{ \{ \Psi<-t\}}|F_{t_0,\tilde{c}}|^2e^{-\varphi_\alpha}c(-\Psi)
=&\int_{ \{ \Psi<-t\}}|\tilde{F}|^2e^{-\varphi_\alpha}c(-\Psi)\\
+&\int_{ \{ \Psi<-t\}}|F_{t_0,\tilde{c}}-\tilde{F}|^2e^{-\varphi_\alpha}c(-\Psi)
\end{split}
\end{equation}
for any $t\ge t_0,$ then
\begin{equation}\label{linear 3.13}
\begin{split}
\int_{ \{-t_3\le \Psi<-t_4\}}|F_{t_0,\tilde{c}}|^2e^{-\varphi_\alpha}c(-\Psi)
=&\int_{ \{-t_3\le \Psi<-t_4\}}|\tilde{F}|^2e^{-\varphi_\alpha}c(-\Psi)\\
+&\int_{\{-t_3\le \Psi<-t_4\}}|F_{t_0,\tilde{c}}-\tilde{F}|^2e^{-\varphi_\alpha}c(-\Psi)
\end{split}
\end{equation}
holds for any $t_3>t_4\ge t_0$. It follows from the dominant convergence theorem, equality \eqref{linear 3.13}, \eqref{linear 3.5} and $c(t)>0$ for any $t>T$, that
\begin{equation}\label{linear 3.14}
\begin{split}
\int_{ \{z\in M:-\Psi(z)=t\}}|F_{t_0,\tilde{c}}|^2e^{-\varphi_\alpha}
=\int_{\{z\in M:-\Psi(z)=t\}}|F_{t_0,\tilde{c}}-\tilde{F}|^2e^{-\varphi_\alpha}
\end{split}
\end{equation}
holds for any $t>t_0$.

Choosing any closed interval $[t'_4,t'_3]\subset (t_0,+\infty)\subset (T,+\infty)$. Note that $c(t)$ is uniformly continuous and have positive lower bound and upper bound on $[t'_4,t'_3]\backslash U_k$, where $\{U_k\}$ is the decreasing sequence of open subsets of $(T,+\infty)$, such that $c$ is continuous on $(T,+\infty)\backslash U_k$ and $\lim\limits_{k \to +\infty}\mu(U_k)=0$. Take $N=\cap_{k=1}^{+\infty}U_k.$ Note that
\begin{equation}\label{linear 3.15}
\begin{split}
&\int_{ \{-t'_3\le\Psi<-t'_4\}}|F_{t_0,\tilde{c}}|^2e^{-\varphi_\alpha}\\
=&\lim_{n\to+\infty}\sum_{i=0}^{n-1}\int_{\{z\in M:-\Psi(z)\in S_{i,n}\backslash U_k\}}|F_{t_0,\tilde{c}}|^2e^{-\varphi_\alpha}
+\int_{\{z\in M:-\Psi(z)\in(t'_4,t'_3]\cap U_k\}}|F_{t_0,\tilde{c}}|^2e^{-\varphi_\alpha}\\
\le&\limsup_{n\to+\infty}\sum_{i=0}^{n-1}\frac{1}{\inf_{S_{i,n}}c(t)}\int_{\{z\in M:-\Psi(z)\in S_{i,n}\backslash U_k\}}|F_{t_0,\tilde{c}}|^2e^{-\varphi_\alpha}c(-\Psi)\\
&+\int_{\{z\in M:-\Psi(z)\in(t'_4,t'_3]\cap U_k\}}|F_{t_0,\tilde{c}}|^2e^{-\varphi_\alpha},
\end{split}
\end{equation}
where $S_{i,n}=(t'_4-(i+1)\alpha_n,t'_3-i\alpha_n]$ and $\alpha_n=\frac{t'_3-t'_4}{n}$.
It follows from equality \eqref{linear 3.13},\eqref{linear 3.14}, \eqref{linear 3.5} and the dominated theorem that
\begin{equation}\label{linear 3.16}
\begin{split}
&\int_{\{z\in M:-\Psi(z)\in S_{i,n}\backslash U_k\}}|F_{t_0,\tilde{c}}|^2e^{-\varphi_\alpha}c(-\Psi)\\
=&\int_{\{z\in M:-\Psi(z)\in S_{i,n}\backslash U_k\}}|\tilde F|^2e^{-\varphi_\alpha}c(-\Psi)
+\int_{\{z\in M:-\Psi(z)\in S_{i,n}\backslash U_k\}}|F_{t_0,\tilde{c}}-\tilde F|^2e^{-\varphi_\alpha}c(-\Psi).
\end{split}
\end{equation}
As $c(t)$ is uniformly continuous and have positive lower bound and upper bound on $[t'_3,t'_4]\backslash U_k$, combining equality \eqref{linear 3.16}, we have
\begin{equation}\label{linear 3.17}
\begin{split}
&\limsup_{n\to+\infty}\sum_{i=0}^{n-1}\frac{1}{\inf_{S_{i,n}\backslash U_k}c(t)}\int_{\{z\in M:-\Psi(z)\in S_{i,n}\backslash U_k\}}|F_{t_0,\tilde{c}}|^2e^{-\varphi_\alpha}c(-\Psi)\\
=&\limsup_{n\to+\infty}\sum_{i=0}^{n-1}\frac{1}{\inf_{S_{i,n}\backslash U_k}c(t)}(\int_{\{z\in M:-\Psi(z)\in S_{i,n}\backslash U_k\}}|\tilde F|^2e^{-\varphi_\alpha}c(-\Psi)\\
&+\int_{\{z\in M:-\Psi(z)\in S_{i,n}\backslash U_k\}}|F_{t_0,\tilde{c}}-\tilde F|^2e^{-\varphi_\alpha}c(-\Psi))\\
\le & \limsup_{n\to+\infty}\sum_{i=0}^{n-1}\frac{\sup_{S_{i,n}\backslash U_k}c(t)}{\inf_{S_{i,n}\backslash U_k}c(t)}(\int_{\{z\in M:-\Psi(z)\in S_{i,n}\backslash U_k\}}|\tilde F|^2e^{-\varphi_\alpha}\\
&+\int_{\{z\in M:-\Psi(z)\in S_{i,n}\backslash U_k\}}|F_{t_0,\tilde{c}}-\tilde F|^2e^{-\varphi_\alpha})\\
=&\int_{\{z\in M:-\Psi(z)\in (t'_4,t'_3]\backslash U_k\}}|\tilde F|^2e^{-\varphi_\alpha}
+\int_{\{z\in M:-\Psi(z)\in (t'_4,t'_3]\backslash U_k\}}|F_{t_0,\tilde{c}}-\tilde F|^2e^{-\varphi_\alpha}.
\end{split}
\end{equation}
If follows from inequality \eqref{linear 3.15} and \eqref{linear 3.17} that
\begin{equation}\label{linear 3.18}
\begin{split}
&\int_{ \{-t'_3\le\Psi<-t'_4\}}|F_{t_0,\tilde{c}}|^2e^{-\varphi_\alpha}\\
\le & \int_{\{z\in M:-\Psi(z)\in (t'_4,t'_3]\backslash U_k\}}|\tilde F|^2e^{-\varphi_\alpha}
+\int_{\{z\in M:-\Psi(z)\in (t'_4,t'_3]\backslash U_k\}}|F_{t_0,\tilde{c}}-\tilde F|^2e^{-\varphi_\alpha}\\
&+\int_{ \{z\in M: -\Psi(z)\in(t'_4,t'_3]\cap U_k\}}|F_{t_0,\tilde{c}}|^2e^{-\varphi_\alpha}.
\end{split}
\end{equation}
It follows from $F_{t_0,\tilde{c}}\in\mathcal{H}^2(c,t_0)$ that $\int_{ \{-t'_3\le\Psi<-t'_4\}}|F_{t_0,\tilde{c}}|^2e^{-\varphi_\alpha}<+\infty$. Let $k\to+\infty,$ by equality \eqref{linear 3.5}, inequality \eqref{linear 3.18} and the dominated theorem, we have
\begin{equation}\label{linear 3.19}
\begin{split}
&\int_{ \{-t'_3\le\Psi<-t'_4\}}|F_{t_0,\tilde{c}}|^2e^{-\varphi_\alpha}\\
\le & \int_{\{z\in M:-\Psi(z)\in (t'_4,t'_3]\}}|\tilde F|^2e^{-\varphi_\alpha}
+\int_{\{z\in M:-\Psi(z)\in (t'_4,t'_3]\backslash N\}}|F_{t_0,\tilde{c}}-\tilde F|^2e^{-\varphi_\alpha}\\
&+\int_{ \{z\in M: -\Psi(z)\in(t'_4,t'_3]\cap N\}}|F_{t_0,\tilde{c}}|^2e^{-\varphi_\alpha}.
\end{split}
\end{equation}
By similar discussion, we also have that
\begin{equation}\nonumber
\begin{split}
&\int_{ \{-t'_3\le\Psi<-t'_4\}}|F_{t_0,\tilde{c}}|^2e^{-\varphi_\alpha}\\
\ge & \int_{\{z\in M:-\Psi(z)\in (t'_4,t'_3]\}}|\tilde F|^2e^{-\varphi_\alpha}
+\int_{\{z\in M:-\Psi(z)\in (t'_4,t'_3]\backslash N\}}|F_{t_0,\tilde{c}}-\tilde F|^2e^{-\varphi_\alpha}\\
&+\int_{ \{z\in M: -\Psi(z)\in(t'_4,t'_3]\cap N\}}|F_{t_0,\tilde{c}}|^2e^{-\varphi_\alpha}.
\end{split}
\end{equation}
then combining inequality \eqref{linear 3.19}, we have
\begin{equation}\label{linear 3.20}
\begin{split}
&\int_{ \{-t'_3\le\Psi<-t'_4\}}|F_{t_0,\tilde{c}}|^2e^{-\varphi_\alpha}\\
= & \int_{\{z\in M:-\Psi(z)\in (t'_4,t'_3]\}}|\tilde F|^2e^{-\varphi_\alpha}
+\int_{\{z\in M:-\Psi(z)\in (t'_4,t'_3]\backslash N\}}|F_{t_0,\tilde{c}}-\tilde F|^2e^{-\varphi_\alpha}\\
&+\int_{ \{z\in M: -\Psi(z)\in(t'_4,t'_3]\cap N\}}|F_{t_0,\tilde{c}}|^2e^{-\varphi_\alpha}.
\end{split}
\end{equation}
Using equality \eqref{linear 3.5}, \eqref{linear 3.14} and Levi's Theorem, we have
\begin{equation}\label{linear 3.21}
\begin{split}
&\int_{ \{z\in M:-\Psi(z)\in U\}}|F_{t_0,\tilde{c}}|^2e^{-\varphi_\alpha}\\
= & \int_{\{z\in M:-\Psi(z)\in U\}}|\tilde F|^2e^{-\varphi}
+\int_{\{z\in M:-\Psi(z)\in U\backslash N\}}|F_{t_0,\tilde{c}}-\tilde F|^2e^{-\varphi_\alpha}\\
&+\int_{ \{z\in M: -\Psi(z)\in U\cap N\}}|F_{t_0,\tilde{c}}|^2e^{-\varphi_\alpha}
\end{split}
\end{equation}
holds for any open set $U\subset\subset (t_0,+\infty)$, and
\begin{equation}\label{linear 3.22}
\begin{split}
&\int_{ \{z\in M:-\Psi(z)\in V\}}|F_{t_0,\tilde{c}}|^2e^{-\varphi_\alpha}\\
= & \int_{\{z\in M:-\Psi(z)\in V\}}|\tilde F|^2e^{-\varphi}
+\int_{\{z\in M:-\Psi(z)\in V\backslash N\}}|F_{t_0,\tilde{c}}-\tilde F|^2e^{-\varphi_\alpha}\\
&+\int_{ \{z\in M: -\Psi(z)\in V\cap N\}}|F_{t_0,\tilde{c}}|^2e^{-\varphi_\alpha}
\end{split}
\end{equation}
holds for any compact set $V\subset (t_0,+\infty)$. For any measurable set $E\subset\subset (t_0,+\infty)$, there exists a sequence of compact set $\{V_l\}$, such that $V_l\subset V_{l+1}\subset E$ for any $l$ and $\lim\limits_{l\to +\infty}\mu(V_l)=\mu(E)$, hence by equality \eqref{linear 3.22}, we have
\begin{equation}\label{linear 3.23}
\begin{split}
\int_{ \{\Psi<-t_0\}}|F_{t_0,\tilde{c}}|^2e^{-\varphi_\alpha}\mathbb{I}_E(-\psi)
\ge&\lim_{l \to +\infty} \int_{ \{\psi<-t_0\}}|F_{t_0,\tilde{c}}|^2e^{-\varphi_\alpha}\mathbb{I}_{V_j}(-\psi)\\
\ge&\lim_{l \to +\infty} \int_{ \{\psi<-t_0\}}|\tilde F|^2e^{-\varphi_\alpha}\mathbb{I}_{V_j}(-\psi)\\
=& \int_{ \{\psi<-t_0\}}|\tilde F|^2e^{-\varphi}\mathbb{I}_{V_j}(-\psi).
\end{split}
\end{equation}
It is clear that for any $t>t_0$, there exists a sequence of functions $\{\sum_{j=1}^{n_i}\mathbb{I}_{E_{i,j}}\}_{i=1}^{+\infty}$ defined on $(t,+\infty)$, satisfying $E_{i,j}\subset\subset (t,+\infty)$, $\sum_{j=1}^{n_{i+1}}\mathbb{I}_{E_{i+1,j}}(s)\ge \sum_{j=1}^{n_{i}}\mathbb{I}_{E_{i,j}}(s)$ and $\lim\limits_{i\to+\infty}\sum_{j=1}^{n_i}\mathbb{I}_{E_{i,j}}(s)=\tilde{c}(s)$ for any $s>t$. Combining Levi's Theorem and inequality \eqref{linear 3.23}, we have
\begin{equation}\label{linear 3.24}
\begin{split}
\int_{ \{\Psi<-t_0\}}|F_{t_0,\tilde{c}}|^2e^{-\varphi_\alpha}\tilde{c}(-\Psi)
\ge\int_{ \{\Psi<-t_0\}}|\tilde F|^2e^{-\varphi_\alpha}\tilde{c}(-\Psi).
\end{split}
\end{equation}
By the definition of $G(t_0,\tilde{c})$, we have $G(t_0,\tilde{c})=\int_{ \{\Psi<-t_0\}}|\tilde F|^2e^{-\varphi_\alpha}\tilde{c}(-\Psi).$ Equality \eqref{other c also linear} is proved.

\end{proof}

\section{Proof of Corollary \ref{generalization for concavity II}}

In this section, we prove Corollary \ref{generalization for concavity II}.
\begin{proof}Let $Z_0\subset \{\psi=-\infty\}$. Let $F\equiv 1$ on $M$. Denote $\varphi_\alpha:=\varphi$. As $\psi<-T$ on $M$, we have
$\Psi:=\min\{\psi-2\log|F|,-T\}=\psi$. which implies that $H_{z_0}=\mathcal{H}_{z_0}$ for any $z_0\in Z_0$. Denote $J_{z_0}:=\mathcal{F}_{z_0}$ for any $z_0\in Z_0$.

Let $c(t)\in P_{T,M}$. Now we prove that $c(t)$ belongs to $P_{T,M,\Psi}$. Let $T_1>T $ be a real number. Denote $\Psi_1:=\min\{\psi,-T_1\}$. Let $K$ be any compact subset of $M\backslash E$. If $K\cap \{\psi<-T_1\}\neq\emptyset$, as $\psi=\Psi_1$ on $\{\psi<-T_1\}$, it follows from $e^{-\varphi}c(-\psi)$ has a positive lower bound on $K$ that $e^{-\varphi}c(-\Psi_1)$ has a positive lower bound on $K\cap \{\psi<-T_1\}$. If $K\cap \{\psi\ge-T_1\}\neq \emptyset$, as $\varphi+\psi$ is a plurisubharmonic function on $M$, then the function
$e^{-\varphi}c(-\Psi_1)=e^{-\varphi-\psi}c(-\Psi_1)e^{\psi}\ge e^{-(\varphi+\psi)}c(T_1)e^{-T_1}$ has a positive lower bound on  $ K\cap \{\psi\ge-T_1\}$. Hence $e^{-\varphi}c(-\Psi_1)$ has a positive lower bound on any compact set $K\subset M\backslash E$. We know $c(t) \in P_{T,M,\Psi}$.

By the definition \eqref{def of g(t) for boundary pt} of $G(t;c,\Psi,\varphi_\alpha,J,f)$ and the definition \eqref{def of g(t) in concavity II} of $G(t;c,\psi,\varphi,\mathcal{F},f)$, we know that $G(t;c,\Psi,\varphi_\alpha,J,f)=G(t;c,\psi,\varphi,\mathcal{F},f)$ for any $t\in [T,+\infty)$.
As there exists a $t_0\ge T$ such that $G(t_0;c,\psi,\varphi,f,\mathcal{F})<+\infty$, we know that $G(t_0;c,\Psi,\varphi_\alpha,J,f)<+\infty$.

Then it follows from Theorem \ref{main theorem} that Corollary \ref{generalization for concavity II} holds.
\end{proof}

\section{Proof of Proposition \ref{p:1}}

In this section, we prove Proposition \ref{p:1} by using Theorem \ref{main theorem}.

\begin{proof}
Let $h(x)=\left\{ \begin{array}{cc}
	e^{-\frac{1}{1-(x-1)^2}} & \textrm{if $|x-1|<1$}\\
	0 & \textrm{if $|x-1|\geq1$}
    \end{array} \right. $ be a real function defined on $\mathbb R$, and let $g_n(x)=\frac{n}{(n+1)d}\int_0^{nx}h(s)ds$, where $d=\int_{\mathbb R}h(s)ds$.
    Note that $h(x)\in C_0^\infty(\mathbb R)$ and $h(x)\geq0$ for any $x\in \mathbb R$. Then  we get that $g_n(x)$ is increasing with respect to $x$, $g_n(x)\leq g_{n+1}(x)$ for any $n\in \mathbb{N}$ and $x\in \mathbb R$, and $\lim_{n\rightarrow+\infty}g_n(x)=\mathbb I_{\{s\in \mathbb R: s>0\}}(x)$ for any $x\in \mathbb R$.
Setting $c_t^n(x)=1-g_n(x-t)$, where $t$ is the given positive number in Proposition \ref{p:1}, it follows from the properties of $\{g_n(x)\}_{n\in \mathbb{N}}$ that $c_t^n(x)$ is decreasing with respect to $x$, $c_t^n(x)\geq c_t^{n+1}(x)$ for any $n\in \mathbb{N}$ and $x\in \mathbb R$, and $\lim_{n\rightarrow+\infty}c_t^n(x)=\mathbb I_{\{s\in \mathbb R: s\leq t\}}(x)$ for any $x\in \mathbb R$. Let $\varphi_{\alpha}\equiv0$. Note that $c_t^n(x)\in[\frac{1}{n+1},1]$ on $(0,+\infty)$, then $c_t^n(x)\in P_{0,M,\Psi}$ for any $n\in \mathbb{N}$.

Denote \begin{equation*}
\begin{split}
\inf\bigg\{\int_{\{ \Psi<-t\}}|\tilde{f}|^2c_t^n(-\Psi): &\tilde{f}\in
H^0(\{\Psi<-t\},\mathcal{O} (K_M)  ) \\
&\&\, (\tilde{f}-f)_{z_0}\in
\mathcal{O} (K_M)_{z_0} \otimes I(\Psi)_{z_0}\,\text{for any }  z_0\in Z_0\bigg\}
\end{split}
\end{equation*}
 by $G_{t,n}(s)$.
    It follows from Theorem \ref{main theorem} that
    \begin{equation}
    	\label{eq:proof1}
    	\int_{\{\Psi<-l\}}|f|^2
c_t^n(-\Psi)\geq G_{t,n}(l)\geq \frac{\int_l^{+\infty}c_t^n(s)e^{-s}ds}{\int_0^{+\infty}c_t^n(s)e^{-s}ds}G_{t,n}(0)
	    \end{equation}
	    for any $l>0$.
	    Following from $\int_{\{\Psi<-l\}}|f|^2<+\infty$ for any $l>0$, the properties of $\{c_t^n\}_{n\in \mathbb{N}}$ and the dominated convergence theorem, we obtain that
	    \begin{equation}
	    	\label{eq:proof2}
	    	\lim_{n\rightarrow+\infty}\int_{\{\Psi<-l\}}|f|^2
c_t^n(-\Psi)=\int_{\{-t\leq\Psi<-l\}}|f|^2.	    \end{equation}
As $c_t^n(x)\geq\mathbb I_{\{s\in\mathbb R:s\leq t\}}(x)$ for any $x>0$ and $n\in \mathbb{N}$, then it follows from the definitions of $G_{t,n}(0)$ and $C_{f,\Psi,t}(Z_0)$ that
\begin{equation}
	\label{eq:proof3}
	G_{t,n}(0)\geq C_{f,\Psi,t}(Z_0).
\end{equation}
Combining inequality \eqref{eq:proof1}, equality \eqref{eq:proof2}, and inequality \eqref{eq:proof3}, we obtain that
\begin{displaymath}
	\begin{split}
		\int_{\{-t\leq\Psi<-l\}}|f|^2 &=	\lim_{n\rightarrow+\infty}\int_{\{\Psi<-l\}}|f|^2
c_t^n(-\Psi)\\
&\geq \lim_{n\rightarrow+\infty} \frac{\int_l^{+\infty}c_t^n(s)e^{-s}ds}{\int_0^{+\infty}c_t^n(s)e^{-s}ds}C_{f,\Psi,t}(Z_0)\\
&=\frac{e^{-l}-e^{-t}}{1-e^{-t}}C_{f,\Psi,t}(Z_0)
	\end{split}	
\end{displaymath}
for any $l\in (0,t)$. Following from the difinition of $C_{f,\Psi,t}(Z_0)$, we have $\int_{\{-t\leq\Psi<0\}}|f|^2\ge C_{f,\Psi,t}(Z_0)$. Thus, we have
    \begin{equation}
    \label{eq:proof4}
    	\int_{\{-t\leq\Psi<-l\}}|f|^2\geq\frac{e^{-l}-e^{-t}}{1-e^{-t}}C_{f,\Psi,t}(Z_0)
    	 \end{equation}
    for any $l\in [0,t)$.

Following from Fubini's Theorem and inequality \eqref{eq:proof4}, we obtain that
\begin{displaymath}
	\begin{split}
		\int_{M_t}|f|^2
e^{-\Psi}&=\int_{M_t}\left(|f|^2
\int_0^{e^{-\Psi}}dr\right)\\
&=\int_0^{+\infty}\left(\int_{M_t\cap\{r<e^{-\Psi}\}}|f|^2 \right)dr\\
&=\int_{-\infty}^t\left(\int_{\{-t\leq\Psi<\min\{-l,0\}\}}|f|^2\right)e^ldl\\
&=\int_{-\infty}^0\left(\int_{\{-t\leq\Psi<\min\{-l,0\}\}}|f|^2\right)e^ldl+\int_{0}^t\left(\int_{\{-t\leq\Psi<-l\}}|f|^2\right)e^ldl\\
&\geq C_{f,\Psi,t}(Z_0)\left(\int_{-\infty}^0e^ldl+\int_0^t\frac{1-e^{l-t}}{1-e^{-t}}dl\right)\\
&=\frac{t}{1-e^{-t}}C_{f,\Psi,t}(Z_0).	\end{split}
\end{displaymath}
Then Proposition \ref{p:1} has thus been proved.	
\end{proof}

\section{Proofs of Theorem \ref{thm:soc}, Theorem \ref{p:soc-twist} and Remark \ref{r:tsoc}}\label{sec:tsoc}

The following estimate will be used in the proofs of Theorem \ref{thm:soc} and Theorem \ref{p:soc-twist}.

\begin{Proposition}\label{p:DK}Let $\varphi$ be a plurisubharmonic function $D$, and let $f$ be a holomorphic function on $\{\Psi<-t_0\}$ such that $f_o\in I(\varphi)_o$.
	Assume that $a_o^f(\Psi;\varphi)<+\infty$, then we have $a_o^f(\Psi;\varphi)>0$ and
\begin{displaymath}
		\frac{1}{r^2}\int_{\{a_o^f(\Psi;\varphi)\Psi<\log r\}}|f|^2e^{-\varphi}\ge G(0;c\equiv1,\Psi,\varphi,I_+(2a_o^f(\Psi;\varphi)\Psi+\varphi)_o,f)>0
\end{displaymath}
holds for any $r\in(0,e^{-a_o^f(\Psi;\varphi)t_0}]$, where the definition of $G(0;c\equiv1,\Psi,\varphi,I_+(2a_o^f(\Psi;\varphi)\Psi+\varphi)_o,f)$ can be found in Section \ref{sec:Main result}.
\end{Proposition}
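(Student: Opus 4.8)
The plan is to derive the estimate from the concavity property in Theorem \ref{main theorem}, applied not to $\Psi$ itself but to a rescaled weight, so that the module $I_+(2a\Psi+\varphi)_o$ becomes compatible with the hypotheses of that theorem. Write $a:=a_o^f(\Psi;\varphi)$, assumed finite. I would first check $a>0$. Since $f_o\in I(\varphi)_o$ and $\varphi$ is plurisubharmonic, $|f|^2e^{-\varphi}$ is integrable on $\{\Psi<-t\}\cap V$ near $o$; combining the strong openness property of multiplier ideal sheaves (yielding $\int_V|f|^2e^{-(1+\epsilon)\varphi}<+\infty$ for some $\epsilon>0$ when $\Psi(o)=-\infty$, and a slit-domain variant when $\Psi(o)>-\infty$) with H\"older's inequality and the positivity of the complex singularity exponent of $\psi$ relative to $|f|^2$, one gets $\int_{\{\Psi<-t'\}\cap V'}|f|^2e^{-2a_0\Psi-\varphi}<+\infty$ for some $a_0>0$, i.e. $a\geq a_0>0$. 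I would also record once and for all that $f_o\notin I_+(2a\Psi+\varphi)_o$: if $f_o\in I(s\Psi+\varphi)_o$ for some $s>2a$, then $s/2$ belongs to the set defining $a_o^f(\Psi;\varphi)$, forcing $a\geq s/2>a$, a contradiction.

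Next I would set up the rescaling. Fix $\epsilon>0$ and put $\lambda:=(1+\epsilon)\cdot 2a>2a$, $k:=\lceil\lambda\rceil$, $\hat F:=F^k$, $\hat\psi:=\lambda\psi+2(k-\lambda)\log|F|$ and $\hat\varphi_\alpha:=\varphi$. Then $\hat\psi$ is plurisubharmonic (a sum of plurisubharmonic functions, since $k\geq\lambda>0$), $\hat\varphi_\alpha+\hat\psi$ is plurisubharmonic, and $\hat\Psi:=\min\{\hat\psi-2\log|\hat F|,0\}=\lambda\Psi$. The triple $(D,\emptyset,\emptyset)$ satisfies condition $(A)$, and $c\equiv1\in\mathcal{P}_{0,D,\hat\Psi}$ (the second condition holding with $E=\emptyset$, because $e^{-\varphi}$ is locally bounded below by a positive constant). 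The key point is that $I(\hat\Psi+\hat\varphi_\alpha)_o=I(\lambda\Psi+\varphi)_o\subset I_+(2a\Psi+\varphi)_o$ since $\lambda>2a$, so $J_o:=I_+(2a\Psi+\varphi)_o$ is admissible for the functional $\hat G(t):=G(t;c\equiv1,\hat\Psi,\varphi,J,f)$ with $Z_0=\{o\}$; here $\{\hat\Psi<-t\}=\{\Psi<-t/\lambda\}$ and $f$ is holomorphic on $\{\hat\Psi<-\lambda t_0\}=\{\Psi<-t_0\}$.

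Then I would run the concavity argument. If $\hat G\equiv+\infty$, the asserted inequality is trivial and $G(0)=\hat G(0)=+\infty>0$; otherwise $\hat G(t_*)<+\infty$ for some $t_*$, and by Lemma \ref{characterization of g(t)=0} together with $f_o\notin J_o$ we get $\hat G(t)>0$ for all $t>0$, while Lemma \ref{existence of F} (applied at $t=0$ when $\hat G(0)<+\infty$) excludes $\hat G(0)=0$; hence $\hat G(t)\in(0,+\infty)$ for some $t$. Since $\int_{T_1}^{+\infty}e^{-t}dt<+\infty$, Theorem \ref{main theorem} and Remark \ref{infty2} give that $r\mapsto\hat G(-\log r)$ is concave on $(0,1)$ with $\hat G(-\log r)\to\hat G(0)$ as $r\to1^-$ and $\to0$ as $r\to0^+$; since a concave function lies above its chords, $\hat G(-\log r)\geq r\hat G(0)$, i.e. $\hat G(t)\geq e^{-t}\hat G(0)$ for $t\geq0$. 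For $u\geq t_0$, taking $\tilde f=f$ in the definition of $\hat G(\lambda u)$ gives $\hat G(\lambda u)\leq\int_{\{\Psi<-u\}}|f|^2e^{-\varphi}$, hence $e^{\lambda u}\int_{\{\Psi<-u\}}|f|^2e^{-\varphi}\geq\hat G(0)$. Now $\hat G(0)=G(0;c\equiv1,\Psi,\varphi,I_+(2a\Psi+\varphi)_o,f)$, since both equal $\inf\{\int_{\{\Psi<0\}}|\tilde f|^2e^{-\varphi}:(\tilde f-f)_o\in\mathcal{O}(K_D)_o\otimes I_+(2a\Psi+\varphi)_o\}$ and in particular do not depend on $\epsilon$. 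Letting $\epsilon\to0^+$ yields $e^{2au}\int_{\{\Psi<-u\}}|f|^2e^{-\varphi}\geq G(0)$ for all $u\geq t_0$, and the substitution $r=e^{-au}$ (so $1/r^2=e^{2au}$, $\{\Psi<-u\}=\{a\Psi<\log r\}$, and $u\geq t_0\Leftrightarrow r\in(0,e^{-at_0}]$) is exactly the claimed inequality, with $G(0)>0$ as already shown.

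The main obstacle I anticipate is precisely the compatibility of the module $I_+(2a\Psi+\varphi)_o$ with the requirement $I(\Psi+\varphi_\alpha)_o\subset J_o$ in Theorem \ref{main theorem}: this fails for the unrescaled weight once $a\geq1/2$, and even for smaller $a$ the target prefactor $e^{2at}$ (rather than the $e^{t}$ that the naive application would produce) forces a rescaling; this is why passing to $\lambda=(1+\epsilon)2a$ and then letting $\epsilon\to0^+$ is essential. A secondary delicate point is the positivity $a_o^f(\Psi;\varphi)>0$, which genuinely needs the strong openness of multiplier ideal sheaves and cannot be extracted from the concavity machinery alone without circularity with Theorem \ref{thm:soc}.
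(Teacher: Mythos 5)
Your rescaling strategy closely matches the paper's: both replace $\Psi$ by $p\Psi$ for $p>2a_o^f(\Psi;\varphi)$ so that the chosen module becomes admissible for Theorem~\ref{main theorem}, apply the concavity $G_p(-\log r)\ge r\,G_p(0)$ obtained from that theorem, bound $G_p(pu)$ from above by $\int_{\{\Psi<-u\}}|f|^2e^{-\varphi}$ by taking $\tilde f=f$, and then let $p\downarrow 2a_o^f(\Psi;\varphi)$. Working directly with $J_o=I_+(2a_o^f(\Psi;\varphi)\Psi+\varphi)_o$ rather than passing through $I(p_0\Psi+\varphi)_o$ via Lemma~\ref{l:m5} is a cosmetic variant; the identity $\hat G(0)=G(0;c\equiv1,\Psi,\varphi,I_+(2a_o^f(\Psi;\varphi)\Psi+\varphi)_o,f)$ that you record is what makes the $\epsilon\to0^+$ limit close cleanly, and your observation that $f_o\notin I_+(2a_o^f(\Psi;\varphi)\Psi+\varphi)_o$ together with Lemma~\ref{characterization of g(t)=0} gives the positivity of the right-hand side just as in the paper.

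The genuine gap is in your treatment of $a_o^f(\Psi;\varphi)>0$. Your claim that this ``cannot be extracted from the concavity machinery alone without circularity with Theorem~\ref{thm:soc}'' is incorrect: the paper proves it with exactly that machinery and no circularity. The inequality $\frac{1}{r_1^2}\int_{\{p\Psi<2\log r_1\}}|f|^2e^{-\varphi}\ge G(0;c\equiv1,\Psi,\varphi,I_+(2a_o^f(\Psi;\varphi)\Psi+\varphi)_o,f)$ is valid for every $p>2a_o^f(\Psi;\varphi)$; if $a_o^f(\Psi;\varphi)=0$ one sends $p\to0^+$, the sublevel sets $\{p\Psi<2\log r_1\}$ shrink to $\{\Psi=-\infty\}$, which is Lebesgue-null, and dominated convergence forces $G(0;\cdots)=0$, contradicting the already established $G(0;\cdots)>0$. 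No strong openness of multiplier ideal sheaves is invoked. By contrast, your proposed route through classical strong openness, H\"older, and an unspecified ``slit-domain variant'' does not close in the case $\Psi(o)>-\infty$: there $f$ is holomorphic only on $\{\Psi<-t\}\cap V$, a slit domain near $o$, where Guan--Zhou strong openness is not directly applicable, $f$ need not be locally bounded so the auxiliary H\"older factor need not be finite, and no slit-domain strong openness theorem is available as a black box (such results are part of what this very paper is building towards). Replace your positivity step with the $p\to0^+$ measure-zero limit argument and the remainder of your proof goes through.
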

\begin{proof}
	Lemma \ref{l:m5} tells us that there exists $p_0>2a_o^f(\Psi;\varphi)$ such that $I(p_0\Psi+\varphi)_o=I_+(2a_o^f(\Psi;\varphi)\Psi+\varphi)_o$. Following from the definition of $a_o^{f}(\Psi;\varphi)$ and Lemma \ref{characterization of g(t)=0}, we obtain that
\begin{equation}
	\label{eq:0222d}G(0;c\equiv1,\Psi,\varphi,I_+(2a_o^f(\Psi;\varphi)\Psi+\varphi)_o,f)>0.
\end{equation}
Without loss of generality, assume that there exists $t>t_0$ such that $\int_{\{\Psi<-t\}}|f|^2e^{-\varphi}<+\infty$.
Denote that $t_1:=\inf\{t\ge t_0:\int_{\{\Psi<-t\}}|f|^2e^{-\varphi}<+\infty\}.$
 Denote
\begin{displaymath}
 	\inf\left\{\int_{\{p\Psi<-t\}}|\tilde f|^2e^{-\varphi}:\tilde f\in\mathcal{O}(\{p\Psi<-t\})\,\&\,(\tilde f-f)_o\in I(p\Psi+\varphi)_o\right\}
 \end{displaymath}
by $G_{p}(t)$, where $t\in[0,+\infty)$ and $p>2a_o^f(\Psi;\varphi)$. Then we know that $G_{p}(0)\ge G(0;c\equiv1,\Psi,\varphi,I_+(2a_o^f(\Psi;\varphi)\Psi+\varphi)_o,f)$ for any $p>2a_o^f(\Psi;\varphi)$.
Note that
$$p\Psi=\min\{p\psi+(2\lceil p\rceil-2p)\log|F|-2\log|F^{\lceil p\rceil}|,0\},$$
where $\lceil p\rceil=\min\{n\in \mathbb{Z}:n\geq p\}$,
 and
 $$G_{p}(pt)\le\int_{\{\Psi<-t\}}|f|^2e^{-\varphi}<+\infty$$
 for any $t>t_1$.
  Theorem \ref{main theorem} shows that $G_{p}(-\log r)$ is concave with respect to $r\in(0,1]$ and $\lim_{t\rightarrow+\infty}G_{p}(t)=0$, which implies that
 \begin{equation}
 	\label{eq:0222b}
 	\begin{split}
 	\frac{1}{r_1^2}\int_{\{p\Psi<2\log r_1\}}|f|^2e^{-\varphi}&\ge \frac{1}{r_1^2}G_{p}(-2\log r_1)\\
 	&\ge  G_{p}(0)\\
 	&\ge G(0;c\equiv1,\Psi,\varphi,I_+(2a_o^f(\Psi;\varphi)\Psi+\varphi)_o,f),
 	\end{split}
 \end{equation}
where  $0<r_1\le e^{-\frac{pt_0}{2}}$.

We prove $a_o^f(\Psi;\varphi)>0$ by contradiction: if $a_o^f(\Psi;\varphi)=0$, as $\int_{\{\Psi<-t_1-1\}}|f|^2e^{-\varphi}<+\infty$, it follows from the dominated convergence theorem and inequality \eqref{eq:0222b} that
\begin{equation}
	\label{eq:0222c}\begin{split}
	\frac{1}{r_1^2}\int_{\{\Psi=-\infty\}}|f|^2e^{-\varphi}&=\lim_{p\rightarrow0+0}\frac{1}{r_1^2}\int_{\{p\Psi<2\log r_1\}}|f|^2e^{-\varphi}\\
	&\ge  G(0;c\equiv1,\Psi,\varphi,I_+(2a_o^f(\Psi;\varphi)\Psi+\varphi)_o,f).\end{split}
\end{equation}
Note that $\mu(\{\Psi=-\infty\})=\mu(\{\psi=-\infty\})=0$, where $\mu$ is the Lebesgue measure on $\mathbb{C}^n$. Inequality \eqref{eq:0222c} implies that $G(0;c\equiv1,\Psi,\varphi,I_+(2a_o^f(\Psi;\varphi)\Psi+\varphi)_o,f)=0$, which contradicts inequality \eqref{eq:0222d}. Thus, we get that $a_o^f(\Psi;\varphi)>0$.

For any $r_2\in (0,e^{-a_o^{f}(\Psi;\varphi)t_1})$, note that $\frac{2\log r_2}{p}<-t_1$ for any $p\in(2a_0^f(\Psi;\varphi),-\frac{2\log r_2}{t_1})$, which implies that
$\int_{\{p\Psi<2\log r_2\}}|f|^2e^{-\varphi}<+\infty$ for any $p\in(2a_0^f(\Psi;\varphi),-\frac{2\log r_2}{t_1}).$
 Then it follows from the dominated convergence theorem and inequality \eqref{eq:0222b} that
\begin{equation}
	\label{eq:0222e}\begin{split}
	\frac{1}{r_2^2}\int_{\{2a_0^f(\Psi;\varphi)\Psi\le2\log r_2\}}|f|^2e^{-\varphi}&=\lim_{p\rightarrow2a_0^f(\Psi;\varphi)+0}\frac{1}{r_2^2}\int_{\{p\Psi<2\log r_2\}}|f|^2e^{-\varphi}\\
	&\ge G(0;c\equiv1,\Psi,\varphi,I_+(2a_o^f(\Psi;\varphi)\Psi+\varphi)_o,f).	
	\end{split}
\end{equation}
For any $r\in(0,e^{-a_o^f(\Psi;\varphi)t_0}]$, if $r>e^{-a_o^{f}(\Psi;\varphi)t_1}$, we have $\int_{\{a_0^f(\Psi;\varphi)\Psi<\log r\}}|f|^2e^{-\varphi}=+\infty>G(0;c\equiv1,\Psi,\varphi,I_+(2a_o^f(\Psi;\varphi)\Psi+\varphi)_o,f)$, and if $r\in(0,e^{-a_o^f(\Psi;\varphi)t_1}]$, it follows from  $\{a_o^f(\Psi;\varphi)\Psi<\log r\}=\cup_{0<r_2<r}\{a_o^f(\Psi;\varphi)\Psi<\log r_2\}$ and inequality \eqref{eq:0222e}  that
\begin{displaymath}
	\begin{split}
\int_{\{a_0^f(\Psi;\varphi)\Psi<\log r\}}|f|^2e^{-\varphi}&=
\sup_{r_2\in(0,r)}\int_{\{2a_0^f(\Psi;\varphi)\Psi\le2\log r_2\}}|f|^2e^{-\varphi}
\\&\ge \sup_{r_2\in(0,r)}r_2^2G(0;c\equiv1,\Psi,\varphi,I_+(2a_o^f(\Psi;\varphi)\Psi+\varphi)_o,f)\\
&=r^2G(0;c\equiv1,\Psi,\varphi,I_+(2a_o^f(\Psi;\varphi)\Psi+\varphi)_o,f).
	\end{split}
\end{displaymath}

Thus, Proposition \ref{p:DK} holds.
\end{proof}

\begin{proof}
	[Proof of Theorem \ref{thm:soc}]
	It is clear that $I_+(a\Psi+\varphi)_o\subset I(a\Psi+\varphi)_o$, hence it suffices to prove that   $I(a\Psi+\varphi)\subset I_+(a\Psi+\varphi)_o$.

If there exists $f_o\in I(a\Psi+\varphi)_o$ such that $f_o\not\in I_+(a\Psi+\varphi)_o$, then $a_o^f(\Psi;\varphi)_o=\frac{a}{2}<+\infty$.  Proposition \ref{p:DK} shows that $a>0$. Without loss of generality, assume that $f$ is a holomorphic function on $\{\Psi<-t_0\}\cap D$, where $t_0>0$. For any neighborhood $U\subset D$ of $o$, it follows from Proposition \ref{p:DK} that  there exists $C_U>0$ such that
\begin{equation}
	\label{eq:0222f}\frac{1}{r^2}\int_{\{a\Psi<2\log r\}\cap U}|f|^2e^{-\varphi}\ge C_U
\end{equation}
for any $r\in(0,e^{-\frac{at_0}{2}}]$. For any $t>at_0$, it follows from Fubini's Theorem and inequality \eqref{eq:0222f} that
\begin{displaymath}
	\begin{split}
		\int_{\{a\Psi<-t\}\cap U}|f|^2e^{-a\Psi-\varphi}
		=&\int_{\{a\Psi<-t\}\cap U}\left(|f|^2e^{-\varphi}\int_0^{e^{-a\Psi}}dl\right)\\
		=&\int_0^{+\infty}\left(\int_{\{l<e^{-a\Psi}\}\cap\{a\Psi<-t\}\cap U}|f|^2e^{-\varphi}\right)dl\\
		\ge&\int_{e^t}^{+\infty}\left(\int_{\{a\Psi<-\log l\}\cap U}|f|^2e^{-\varphi}\right)dl\\
		\ge&C_U\int_{e^t}^{+\infty}\frac{1}{l}dl\\
		=&+\infty,
	\end{split}
\end{displaymath}
which contradicts  $f_o\in I(a\Psi+\varphi)_o$. Thus, there is no $f_o\in I(a\Psi+\varphi)_o$ such that $f_o\not\in I_+(a\Psi+\varphi)_o$, which implies that $I(a\Psi+\varphi)= I_+(a\Psi+\varphi)_o$ for any $a\ge0$.	
\end{proof}

We recall two lemmas, which will be used in the proof of Theorem \ref{p:soc-twist}.

\begin{Lemma}[see \cite{GY-twisted}]\label{l:2}
Let $a(t)$ be a positive measurable function on $(-\infty,+\infty)$,
such that $a(t)e^{t}$ is increasing near $+\infty$,
and $a(t)$ is not integrable near $+\infty$.
Then there exists a positive measurable function $\tilde a(t)$ on $(-\infty,+\infty)$ satisfying the following statements:
		
		$(1)$ there exists $T<+\infty$ such that $\tilde{a}(t)\leq a(t)$ for any $t>T$;
		
		$(2)$  $\tilde a(t)e^{t}$ is strictly increasing and continuous near $+\infty$;
		
		$(3)$ $\tilde a(t)$ is not integrable near $+\infty$.
\end{Lemma}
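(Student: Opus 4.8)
The plan is to transfer the whole statement to the increasing function $b(t):=a(t)e^{t}$. By hypothesis there is $T_{0}<+\infty$ with $b$ increasing on $(T_{0},+\infty)$, and the non-integrability of $a$ near $+\infty$ says precisely $\int^{+\infty}b(t)e^{-t}\,dt=+\infty$. It then suffices to construct a function $\tilde b$ on $(T_{0}+1,+\infty)$ with (i) $0<\tilde b(t)\le b(t)$ there, (ii) $\tilde b$ continuous and strictly increasing near $+\infty$, and (iii) $\int^{+\infty}\tilde b(t)e^{-t}\,dt=+\infty$; then $\tilde a(t):=\tilde b(t)e^{-t}$ for $t>T_{0}+1$, extended by $\tilde a:=a$ on $(-\infty,T_{0}+1]$, is positive and measurable on $(-\infty,+\infty)$ and satisfies (1)--(3) of the lemma with $T:=T_{0}+1$.

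The construction I would use is a \emph{running average with a small exponential correction}:
$$\tilde b(t):=\int_{t-1}^{t}b(s)\,ds-c\,e^{-t},\qquad t>T_{0}+1,$$
for a sufficiently small constant $c>0$. Three of the needed properties are essentially formal. \emph{Continuity}: $b$ is increasing on $(T_{0},+\infty)$, hence locally bounded and locally integrable there, so $t\mapsto\int_{t-1}^{t}b(s)\,ds$ is continuous, and $e^{-t}$ is continuous. \emph{Minorant property}: monotonicity of $b$ gives $b(s)\le b(t)$ for $s\in[t-1,t]$, hence $\int_{t-1}^{t}b\le b(t)$ and so $\tilde b(t)\le b(t)$. \emph{Strict monotonicity}: the map $u(t):=\int_{t-1}^{t}b(s)\,ds$ is non-decreasing, since for $0<t_{2}-t_{1}\le1$ one has $u(t_{2})-u(t_{1})=\int_{t_{1}}^{t_{2}}\big(b(s)-b(s-1)\big)\,ds\ge0$ (and the general case chains through intermediate points); therefore for $t_{1}<t_{2}$ near $+\infty$, $\tilde b(t_{2})-\tilde b(t_{1})=\big(u(t_{2})-u(t_{1})\big)+c\big(e^{-t_{1}}-e^{-t_{2}}\big)>0$. \emph{Positivity}: an increasing $b$ with $\int^{+\infty}b(t)e^{-t}\,dt=+\infty$ cannot be bounded (else $a(t)\le Ce^{-t}$ near $+\infty$, contradicting non-integrability), so $b(t)\to+\infty$ and $u(t)\ge b(t-1)$ eventually exceeds $c\,e^{-t}$; enlarging $T$ (and shrinking $c$) if necessary secures $\tilde b>0$.

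The only genuine computation is (iii), handled by Tonelli's theorem: with all integrands nonnegative,
$$\int_{T_{0}+1}^{+\infty}\!\Big(\int_{t-1}^{t}b(s)\,ds\Big)e^{-t}\,dt=\int_{T_{0}}^{+\infty}\! b(s)\Big(\int_{\max(s,\,T_{0}+1)}^{s+1}e^{-t}\,dt\Big)ds\ \ge\ (1-e^{-1})\int_{T_{0}+1}^{+\infty}\! b(s)e^{-s}\,ds=+\infty,$$
since $b(s)e^{-s}=a(s)$; as $\int_{T_{0}+1}^{+\infty}c\,e^{-2t}\,dt<+\infty$, subtracting gives $\int^{+\infty}\tilde b(t)e^{-t}\,dt=+\infty$, i.e. $\tilde a$ is not integrable near $+\infty$. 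The one (mild) obstacle is the tension among the requirements $\tilde b\le b$, $\tilde b$ \emph{strictly} increasing — which the bare running average may fail, e.g. when $b$ is constant on an interval of length $>1$ — and preservation of (iii) after any correction; the choice $-c\,e^{-t}$ is exactly what resolves it, because $t\mapsto -c\,e^{-t}$ is strictly increasing (forcing strict monotonicity of $\tilde b$) while its contribution $-c\,e^{-2t}$ to $\int\tilde b(t)e^{-t}\,dt$ is integrable, hence harmless for non-integrability.
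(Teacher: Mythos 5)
Your proposal is correct, and since the paper merely cites Lemma~\ref{l:2} from \cite{GY-twisted} without reproducing a proof, there is no in-paper argument to compare against; your construction serves as a self-contained verification. Passing to $b(t)=a(t)e^{t}$, nondecreasing on some $(T_{0},+\infty)$, and taking the running average $u(t)=\int_{t-1}^{t}b(s)\,ds$ produces a continuous, nondecreasing function dominated by $b$ (by monotonicity of $b$); the strictly increasing, integrably small perturbation $-c\,e^{-t}$ then promotes $u$ to a \emph{strictly} increasing $\tilde b$ without spoiling $\int \tilde b(t)e^{-t}\,dt=+\infty$, since the correction contributes only $c\,e^{-2t}$. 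The Tonelli computation
$$\int_{T_{0}+1}^{\infty}u(t)e^{-t}\,dt\ \ge\ (1-e^{-1})\int_{T_{0}+1}^{\infty}a(s)\,ds=+\infty$$
is exactly right. The only wrinkle is the handling of $T$: you first declare $T:=T_{0}+1$ and then, correctly, note that $T$ must be enlarged so that $\tilde b>0$ on $(T,+\infty)$. This does hold eventually, because $b$ must be unbounded (otherwise $a(t)\le Ce^{-t}$ would be integrable near $+\infty$), hence $u(t)\ge b(t-1)\to+\infty$ while $c\,e^{-t}\to 0$. Simply fix a single $T$ large enough that both $[t-1,t]\subset(T_{0},+\infty)$ and $\tilde b(t)>0$ hold for all $t>T$, and then define $\tilde a(t):=\tilde b(t)e^{-t}$ for $t>T$, $\tilde a:=a$ on $(-\infty,T]$; properties $(1)$--$(3)$ of the lemma all follow as you argue.
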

\begin{Lemma}
	[see \cite{GZ-soc17}]\label{l:m} For any two measurable spaces $(X_i,\mu_i)$ and two measurable functions $g_i$ on $X_i$ respectively ($i\in\{1,2\}$), if $\mu_1(\{g_1\geq r^{-1}\})\geq\mu_2(\{g_2\geq r^{-1}\})$ for any $r\in(0,r_0]$, then $\int_{\{g_1\geq r_0^{-1}\}}g_1d\mu_1\geq\int_{\{g_2\geq r_0^{-1}\}}g_2d\mu_2$.
\end{Lemma}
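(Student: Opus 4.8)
The plan is to reduce the claimed integral inequality to a pointwise comparison of distribution functions by means of the layer-cake (Cavalieri) formula. First I would rephrase the hypothesis in terms of the level $s:=r^{-1}$: it says exactly that
\[
\mu_1(\{g_1\ge s\})\ge\mu_2(\{g_2\ge s\})\qquad\text{for every }s\ge r_0^{-1}.
\]
Since both sides of the conclusion involve only the sets $\{g_i\ge r_0^{-1}\}$, on which $g_i\ge r_0^{-1}>0$, I may replace $g_i$ by the nonnegative function $h_i:=g_i\,\mathbb{I}_{\{g_i\ge r_0^{-1}\}}$ without changing either side, and then integrate over the whole of $X_i$.

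Next I would apply the layer-cake formula $\int_{X_i}h_i\,d\mu_i=\int_0^{+\infty}\mu_i(\{h_i>s\})\,ds$ (valid for any nonnegative measurable function, by Tonelli). A direct inspection of superlevel sets gives $\{h_i>s\}=\{g_i\ge r_0^{-1}\}$ for $0\le s<r_0^{-1}$ and $\{h_i>s\}=\{g_i>s\}$ for $s\ge r_0^{-1}$, so
\[
\int_{\{g_i\ge r_0^{-1}\}}g_i\,d\mu_i=r_0^{-1}\,\mu_i(\{g_i\ge r_0^{-1}\})+\int_{r_0^{-1}}^{+\infty}\mu_i(\{g_i>s\})\,ds .
\]
It then remains to compare the two terms for $i=1$ against those for $i=2$. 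The first term for $i=1$ is at least the first term for $i=2$ by the hypothesis at $s=r_0^{-1}$ (that is, $r=r_0$). For the second term, I would write $\{g_i>s\}=\bigcup_{n\ge1}\{g_i\ge s+\tfrac1n\}$ as an increasing union and use continuity of $\mu_i$ from below to get $\mu_i(\{g_i>s\})=\lim_{n\to\infty}\mu_i(\{g_i\ge s+\tfrac1n\})$; for $s\ge r_0^{-1}$ every level $s+\tfrac1n$ exceeds $r_0^{-1}$, so the hypothesis applies at each of them, and passing to the limit yields $\mu_1(\{g_1>s\})\ge\mu_2(\{g_2>s\})$ for all $s\ge r_0^{-1}$. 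Monotonicity of the Lebesgue integral over $(r_0^{-1},+\infty)$ then gives $\int_{r_0^{-1}}^{+\infty}\mu_1(\{g_1>s\})\,ds\ge\int_{r_0^{-1}}^{+\infty}\mu_2(\{g_2>s\})\,ds$, and adding the two estimates yields the desired inequality.

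I do not expect any real obstacle here: the argument is entirely measure-theoretic and elementary, the only mildly delicate points being the precise description of the superlevel sets of the truncated function $h_i$ and the passage from ``$\ge s$'' to ``$>s$'' at the single endpoint $s=r_0^{-1}$, both of which are routine. One may bypass the truncation altogether by using the identity $\int_A g\,d\mu=\int_0^{+\infty}\mu(A\cap\{g>s\})\,ds$ with $A=\{g\ge r_0^{-1}\}$, after which the same computation applies verbatim.
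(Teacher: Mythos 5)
Your proof is correct: the reduction to the truncations $h_i=g_i\,\mathbb{I}_{\{g_i\ge r_0^{-1}\}}$, the layer-cake identity, the split of the $s$-integral at $r_0^{-1}$, and the upgrade of the hypothesis from the closed levels $\{g_i\ge s\}$ to the open levels $\{g_i>s\}$ via continuity of the measures from below are all sound, and the argument works even when the integrals are infinite since everything involved is nonnegative. Note that the paper itself does not prove Lemma \ref{l:m} but only quotes it from \cite{GZ-soc17}, so there is no in-text proof to compare with; your distribution-function (Cavalieri) argument is the standard, self-contained way to establish it and needs no further justification.
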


\begin{proof}[Proof of Theorem \ref{p:soc-twist}]
	We prove Theorem \ref{p:soc-twist} in two cases, that $a(t)$ satisfies condition $(1)$ or condition $(2)$.

\

\emph{Case $(1)$. $a(t)$ is decreasing near $+\infty$.}

\

Firstly, we prove $(B)\Rightarrow(A)$.
Consider $F\equiv f\equiv1$, $\varphi\equiv0$ and $\psi=\log|z_1|$ on the unit polydisc $\Delta^n\subset\mathbb{C}^n$. Note that $a_{o}^1(\log|z_1|;0)=1$ and
\begin{displaymath}
	\begin{split}
		\int_{\Delta_{r_0}^n}a(-2\log|z_1|)\frac{1}{|z_1|^2}=&(\pi r_0^2)^{n-1}\int_{\Delta_{r_0}}a(-2\log|z_1|)\frac{1}{|z_1|^2}\\
		=&(\pi r_0^2)^{n-1}2\pi\int_0^{r_0}a(-2\log r)r^{-1}dr\\
		=&(\pi r_0^2)^{n-1}\pi\int_{-2\log{r_0}}^{+\infty}a(t)dt,
	\end{split}
\end{displaymath}
hence we obtain $(B)\Rightarrow (A)$.

Then, we prove $(A)\Rightarrow(B)$.
Theorem \ref{thm:soc} shows that  $f_o\not\in I(2a_o^f(\Psi;\varphi)\Psi+\varphi)_o$ and $a_o^f(\Psi;\varphi)>0$.
Now we assume that there exist $t_0>0$ and a pseudoconvex domain $D_0\subset D$ containing $o$ such that  $\int_{\{\Psi<-t_0\}\cap D_0}|f|^2e^{-2a_o^f(\Psi;\varphi)\Psi-\varphi}a(-2a_o^f(\Psi;\varphi)\Psi)<+\infty$  to get a contradiction. As $f_o\in I(\varphi)_o$, there exist $t_1>t_0$ and
a pseudoconvex domain $D_1\subset D_0$ containing $o$ such that
$\int_{D_1\cap\{\Psi<-t_1\}}|f|^2e^{-\varphi}<+\infty$.
Set $c(t)=a(t)e^{t}+1$, then we have
\begin{equation}
	\label{eq:0305a}
	\int_{D_1\cap\{\Psi<-t_1\}}|f|^2e^{-\varphi}c(-2a_o^f(\Psi;\varphi)\Psi)<+\infty.
\end{equation}
Without loss of generality, assume that $a(t)$ is decreasing on $(2a_o^f(\Psi;\varphi)t_1,+\infty)$. Note that
 $c(t)e^{-t}=a(t)+e^{-t}$ is decreasing on $(2a_o^f(\Psi;\varphi)t_1,+\infty)$ and $\liminf_{t\rightarrow+\infty}c(t)>0$. As $a(t)$ is not integrable near $+\infty$, so is $c(t)e^{-t}$. Note that there exist a plurisubharmonic function $\psi_1$ and a holomorphic function $F_1$ on $D_1$ such that
 $$\psi_1-2\log|F_1|= 2a_o^f(\Psi;\varphi)(\psi-2\log|F|)$$
 on $D_1$. Denote that $\Psi_1:=\min\{\psi_1-2\log|F_1|,-2a_o^f(\Psi;\varphi)t_1\}$ on $D_1$.
  Using Remark \ref{infty2} (replacing $M$, $\Psi$ and $T$ by $D_1$, $\Psi_1$ and $2a_o^f(\Psi;\varphi)t_1$ respectively), as $f_o\not\in I(2a_o^f(\Psi;\varphi)\Psi+\varphi)_o=I(\Psi_1+\varphi)_o$, then we have $G(2a_o^f(\Psi;\varphi)t_1;c,\Psi_1,\varphi,I(\Psi_1+\varphi)_o,f)=+\infty$,
which contradicts to inequality \eqref{eq:0305a}. Thus, we obtain $(A)\Rightarrow (B)$.

\

\emph{Case $(2)$. $a(t)e^{t}$ is increasing near $+\infty$.}

\

In this case, the proof of $(B)\Rightarrow(A)$ is the same as the case $(1)$, therefore it suffices to prove $(A)\Rightarrow(B)$.

Assume that statement $(A)$ holds.
 It follows from Lemma \ref{l:2} that there exists a positive function $\tilde a(t)$ on $(-\infty,+\infty)$ satisfying that: $\tilde a(t)\leq a(t)$ near $+\infty$; $\tilde a(t)e^{t}$ is strictly increasing and continuous near $+\infty$; $\tilde a(t)$ is not integrable near $+\infty$. Thus, it suffices to prove that for any $\Psi$, $\varphi$ and $f_o\in I(\varphi)_o$ satisfying $a_{o}^{f}(\Psi;\varphi)<+\infty$, $\tilde a(-2a_{o}^{f}(\Psi;\varphi)\Psi)\exp(-2a_{o}^{f}(\Psi;\varphi)\Psi-\varphi+2\log|f|)\not\in L^1(U\cap\{\Psi<-t\})$, where $U$ is any neighborhood of $o$ and $t>0$.

Take any  $t_0\gg0$ and any small pseudoconvex domain $D_0\subset D$ containing the origin $o$ such that $f\in\mathcal{O}(D_0\cap\{\Psi<-t_0\})$.
 Let $\mu_1(X)=\int_{X}|f|^2e^{-\varphi}$, where $X$ is a Lebesgue measurable subset of $D_0\cap\{\Psi<-t_0\}$,  and let $\mu_2$ be the Lebeague measure on $(0,1]$. Denote that $Y_r=\{-2a_{o}^f(\Psi;\varphi)\Psi\geq -\log r\}$. Proposition \ref{p:DK} shows that there exists a positive constant $C$ such that $\mu_1(Y_r)\geq Cr$ holds for any $r\in(0,e^{-2a_o^f(\Psi;\varphi)t_0}]$.

Let $g_1=\tilde a(-2a_o^{f}(\Psi;\varphi)\Psi)\exp(-2a_o^{f}(\Psi;\varphi)\Psi)$ and $g_2(x)=\tilde a(-\log x+\log C)Cx^{-1}$. As $\tilde a(t)e^{-t}$ is strictly increasing near $+\infty$, then $g_1\geq \tilde a(-\log r)r^{-1}$ on $Y_r$ implies that
\begin{equation}
	\label{eq:210820h}
	\mu_1(\{g_1\geq \tilde a(-\log r)r^{-1}\})\geq\mu_1(Y_r)\geq Cr
\end{equation}
holds for any  $r>0$ small enough.
As $\tilde a(t)e^{t}$ is strictly increasing near $+\infty$, then there exists $r_0\in(0,e^{-2a_o^f(\Psi)t_0})$ such that
\begin{equation}
	\label{eq:210820i}
	\mu_2(\{x\in(0,r_0]:g_2(x)\geq \tilde a(-\log r)r^{-1}\})=\mu_2(\{0<x\leq Cr\})=C r
\end{equation}
for any $r\in(0,r_0]$.
As  $\tilde a(-\log r)r^{-1}$ converges to $+\infty$ (when $r\rightarrow0+0$) and $\tilde a(t)$ is continuous near $+\infty$, we obtain that
$$\mu_1(\{g_1\geq r^{-1}\})\geq\mu_2(\{x\in(0,r_0]:g_2(x)\geq r^{-1}\})$$
holds for any $r>0$ small enough. Following from Lemma \ref{l:m} and $\tilde a(t)$ is not integrable near $+\infty$, we obtain $\tilde a(-2a_{o}^{f}(\Psi;\varphi)\Psi)\exp(-2a_{o}^{f}(\Psi;\varphi)\Psi-\varphi+2\log|f|)\not\in L^1(D_0\cap\{\Psi<-t_0\})$.

Thus, Theorem \ref{p:soc-twist} holds.
\end{proof}

The following lemma will be used in the proof of Remark \ref{r:tsoc}.
\begin{Lemma}
	\label{l:rtsoc} Let $h$ be a holomorphic function on a pseudoconvex domain $D\subset\mathbb{C}^n$ containing the origin $o$, and let $\psi$ be a plurisubharmonic function on $D$ such that $c_{o,p}^h(\psi)<+\infty$, where $p>0$. Then we have $a_o^1(\Psi;0)=\frac{1}{2}$, where  $\Psi=\min\{2c_{o,p}^h(\psi)\psi-p\log|h|,0\}$.
	\end{Lemma}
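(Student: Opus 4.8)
The plan is to compute $a_o^1(\Psi;0)$ directly from the definition $a_o^f(\Psi;\varphi):=\sup\{a\ge 0:f_o\in I(2a\Psi+\varphi)_o\}$ with $f\equiv 1$ and $\varphi\equiv 0$, unwinding what $I(2a\Psi)_o$ means for the particular function $\Psi=\min\{2c_{o,p}^h(\psi)\psi-p\log|h|,0\}$. Write $c:=c_{o,p}^h(\psi)$ for brevity. On a neighborhood of $o$ where $\psi<0$ we have (after possibly shrinking) $\Psi=2c\psi-p\log|h|$, so for $a\ge 0$ the condition $1_o\in I(2a\Psi)_o$ reads
\begin{displaymath}
\int_{\{\Psi<-t\}\cap V}e^{-2a\Psi}<+\infty\quad\text{for some }t\gg 0,\ V\ni o,
\end{displaymath}
and $e^{-2a\Psi}=e^{-4ac\psi}|h|^{2ap}$ there. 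Thus $1_o\in I(2a\Psi)_o$ is essentially the statement that $|h|^{2ap}e^{-4ac\psi}$ is locally integrable near $o$ (the restriction to $\{\Psi<-t\}$ only removes a relatively compact piece of $\{\psi\ge -\epsilon\}$ on which the integrand is bounded, so it does not affect integrability — this is the one routine point to check carefully, using that $h$ is bounded near $o$ and $\psi$ is bounded above).

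Next I would identify the integrability threshold. By the very definition of $c=c_{o,p}^h(\psi)=\sup\{c'\ge 0:|h|^pe^{-2c'\psi}\in L^1_{loc}(o)\}$, rewriting $|h|^{2ap}e^{-4ac\psi}=\bigl(|h|^p e^{-2\cdot(2ac)\cdot\psi}\bigr)^{2a}$ is not quite the right manipulation; instead note $|h|^{2ap}e^{-4ac\psi}=|h|^{p\cdot(2a)}e^{-2\cdot(2ac)\psi}$, so with $p':=2ap$ and $c':=2ac$ we have $c'/p'=c/p$, i.e. we are asking whether $|h|^{p'}e^{-2c'\psi}\in L^1_{loc}(o)$ with $c'=(c/p)p'$. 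This has a cleaner formulation via the strong openness property in $L^p$ (Forn\ae ss, quoted in the excerpt): $|h|^{p}e^{-2c_{o,p}^h(\psi)\psi}$ is \emph{not} integrable near $o$, while for any $c'<c$, $|h|^pe^{-2c'\psi}$ \emph{is} integrable near $o$. Applying this with exponent $2ap$ in place of $p$: the jumping number $c_{o,2ap}^h(\psi)$ is, by a simple scaling of the defining sup, equal to... here one must be slightly careful, since $c_{o,p}^h$ is \emph{not} homogeneous in $p$ in general. The correct route is to observe $|h|^{2ap}e^{-4ac\psi}$ is integrable near $o$ iff $|h|^{2ap}e^{-2(2ac)\psi}$ is, and $c_{o,2ap}^h(\psi)=\sup\{c'':|h|^{2ap}e^{-2c''\psi}\in L^1_{loc}\}$. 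Thus $1_o\in I(2a\Psi)_o$ iff $2ac<c_{o,2ap}^h(\psi)$ (with integrability failing at equality, by Forn\ae ss's theorem applied at exponent $2ap$), i.e. $a<\frac{c_{o,2ap}^h(\psi)}{2c}$.

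I then need the scaling identity $c_{o,2ap}^h(\psi)=2a\,c_{o,p}^h(\psi)$ — but as noted this is false for general $h$. The resolution: take $a=\tfrac12$. Then $2ap=p$, $2ac=c$, and the condition $1_o\in I(\Psi)_o$ (the case $a=\tfrac12$, $2a\Psi=\Psi$) becomes exactly ``$|h|^pe^{-2c\psi}\in L^1_{loc}(o)$'', which \emph{fails} by Forn\ae ss's theorem since $c=c_{o,p}^h(\psi)$. For $a<\tfrac12$ we have $2ac<c$; writing the integrand as $|h|^{2ap}e^{-4ac\psi}$ and using H\"older's inequality with exponents $\tfrac1{2a}$ and $\tfrac1{1-2a}$ to interpolate between $|h|^pe^{-2c''\psi}\in L^1_{loc}$ (valid for $c''$ slightly below $c$, hence for $c''=c$ averaged suitably) and the bounded function $1$, one gets $|h|^{2ap}e^{-4ac\psi}\in L^1_{loc}(o)$, so $1_o\in I(2a\Psi)_o$ for all $a<\tfrac12$. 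Combined with the failure at $a=\tfrac12$, this gives $a_o^1(\Psi;0)=\sup\{a:1_o\in I(2a\Psi)_o\}=\tfrac12$, as claimed. The main obstacle is precisely this last interpolation step: one must show integrability for \emph{all} $a<\tfrac12$ (not merely $a$ close to $0$), which requires the H\"older argument bootstrapping off the definition of $c_{o,p}^h(\psi)$ as a supremum (so $|h|^pe^{-2c''\psi}\in L^1_{loc}$ for every $c''<c$), together with the observation that $2ac<c$ strictly when $a<\tfrac12$; the remaining bookkeeping — shrinking neighborhoods, discarding the relatively compact region $\{\psi\ge-\epsilon\}$, and checking $\Psi$ agrees with $2c\psi-p\log|h|$ where relevant — is routine.
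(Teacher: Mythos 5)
Your overall direction is right---compute $a_o^1(\Psi;0)$ directly from the definition, unwind $I(2a\Psi)_o$ on the region where $\Psi=2c\psi-p\log|h|$ (writing $c:=c_{o,p}^h(\psi)$), and show local integrability of $e^{-2a\Psi}$ for $a<\tfrac12$ plus non-integrability at $a=\tfrac12$---and invoking Forn\ae ss's $L^p$ strong openness for the upper bound is a legitimate shortcut. (The paper instead gets the upper bound by contradiction via H\"older in the opposite direction: if $\int|h|^{2sp}e^{-4sc\psi}<+\infty$ for some $s>\tfrac12$, one deduces $|h|^pe^{-2s'c\psi}\in L^1_{\mathrm{loc}}$ for some $s'>1$, contradicting the definition of $c$.) However, the lower-bound interpolation as you wrote it has a gap. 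You propose to realize $|h|^{2ap}e^{-4ac\psi}$ as a H\"older interpolation ``between $|h|^pe^{-2c''\psi}\in L^1_{\mathrm{loc}}$ and the bounded function $1$.'' If the second factor is literally $1$, H\"older with exponents $\tfrac1{2a}$ and $\tfrac1{1-2a}$ controls $(|h|^pe^{-2c''\psi})^{2a}=|h|^{2ap}e^{-4ac''\psi}$ for $c''<c$, which is strictly weaker than $|h|^{2ap}e^{-4ac\psi}$. And the parenthetical ``hence for $c''=c$ averaged suitably'' does not repair this, since at $c''=c$ the first factor $\int|h|^pe^{-2c\psi}$ is already infinite and H\"older gives no information.

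The correct second H\"older factor is not the constant $1$ but $e^{-\alpha\psi}$ for a small $\alpha>0$. Concretely one writes, for $c''<c$,
\begin{equation*}
|h|^{2ap}e^{-4ac\psi}=\bigl(|h|^pe^{-2c''\psi}\bigr)^{2a}\cdot\Bigl(e^{-\frac{4a(c-c'')}{1-2a}\psi}\Bigr)^{1-2a},
\end{equation*}
applies H\"older with exponents $\tfrac1{2a}$ and $\tfrac1{1-2a}$, controls the first factor because $c''<c_{o,p}^h(\psi)$, and controls the second by choosing $c''$ close enough to $c$ that $\tfrac{4a(c-c'')}{1-2a}<\delta$, where $\delta>0$ is such that $e^{-\delta\psi}\in L^1_{\mathrm{loc}}$ near $o$ (such a $\delta$ exists since $\psi$ is plurisubharmonic). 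This is precisely what lets you reach every $a<\tfrac12$: as $a\uparrow\tfrac12$ you must take $c''$ closer to $c$, and the local integrability of $e^{-\delta\psi}$ is the ingredient that makes the second factor finite. Your plan omits this function entirely, which is the crux of the step you yourself flag as ``the main obstacle.''
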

	\begin{proof}
		As $\psi$ is a plurisubharmonic function, there exist $\delta>0$ and a neighborhood $U$ of $o$ such that $\int_{U}e^{-\delta\psi}<+\infty$.
		For any $r\in(0,1)$, there exist $r'\in(0,r)$ and a neighborhood $U'\Subset U$ of $o$ such that $2\frac{r-r'}{1-r}c_{o,p}^h(\psi)<\delta$ and $\int_{U'}|h|^{p}e^{-2\frac{r'}{r}c_{o,p}^h(\psi)\psi}<+\infty$.  Following from H\"older Inequality, we get that
		\begin{displaymath}
\begin{split}
				\int_{U'}|h|^{pr}e^{-2rc_{o,p}^h(\psi)\psi}&=\int_{U'}|h|^{pr}e^{-2r'c_{o,p}^h(\psi)\psi}e^{-2(r-r')c_{o,p}^h(\psi)\psi}\\
				&\le\left(\int_{U'}|h|^{p}e^{-2\frac{r'}{r}c_{o,p}^h(\psi)\psi} \right)^{r}\times\left(\int_{U'}e^{-2\frac{r-r'}{1-r}c_{o,p}^h(\psi)\psi} \right)^{1-r}\\
				&<+\infty,
\end{split}		\end{displaymath}
which implies that $a_o^1(\Psi;0)\ge\frac{1}{2}$. We prove $a_o^1(\Psi;0)=\frac{1}{2}$ by contradiction: if not, there exist $s>\frac{1}{2}$, $t>0$ and a neighborhood $V\Subset U$ of $o$ such that
\begin{equation}
	\label{eq:0307a}\int_{\{\Psi<-t\}\cap V}|h|^{2sp}e^{-4sc_{o,p}^h(\psi)\psi}=\int_{\{\Psi<-t\}\cap V}e^{-2s\Psi}<+\infty.
\end{equation}
There exists $s'>1$ such that $\frac{4s(s'-1)}{2s-1}c_{o,p}^h(\psi)<\delta$. Following from inequality \eqref{eq:0307a} and H\"older Inequality, we get that
\begin{displaymath}
	\begin{split}
		&\int_{\{\Psi<-t\}\cap V}|h|^pe^{-2s'c_{o,p}^h(\psi)\psi}\\=&\int_{\{\Psi<-t\}\cap V}|h|^pe^{-2c_{o,p}^h(\psi)\psi}e^{-2(s'-1)c_{o,p}^h(\psi)\psi}\\
		\le&\left(\int_{\{\Psi<-t\}\cap V}|h|^{2sp}e^{-4sc_{o,p}^h(\psi)\psi} \right)^{\frac{1}{2s}}\times\left(\int_{\{\Psi<-t\}\cap V}e^{-\frac{4s(s'-1)}{2s-1}c_{o,p}^h(\psi)\psi} \right)^{1-\frac{1}{2s}}\\
		<&+\infty,
	\end{split}
\end{displaymath}
which contradicts to the definition of $c_{o,p}^h(\psi)$. Thus, we obtain $a_o^1(\Psi;0)=\frac{1}{2}.$
	\end{proof}

\begin{proof}
	[Proof of Remark \ref{r:tsoc}]We give a proof of Theorem \ref{thm:tsoc} by using Theorem \ref{p:soc-twist}. The proofs of $(B')\Rightarrow(A')$ and $(C')\Rightarrow(A')$ are as the same as the proof of $(B)\Rightarrow (A)$ in Theorem \ref{p:soc-twist}. It suffices to prove $(A')\Rightarrow (B')$ and $(A')\Rightarrow (C')$.
	
	Assume that statement $(A')$ holds, then we have the statement $(B)$ in Theorem \ref{p:soc-twist} holds.
	
	For any $h$ and $\psi$ satisfying $c_{o,p}^h(\psi)<+\infty$, let $F\equiv1$, $f=h^{\lceil\frac{p}{2}\rceil}$ and $\varphi=(2\lceil\frac{p}{2}\rceil-p)\log|h|$, where $\lceil m\rceil:=\min\{n\in \mathbb{Z}:n\geq m\}$. Note that $|h|^p=|f|^2e^{-\varphi}$, $\psi(o)=-\infty$ and $a_o^f(\Psi;\varphi)=c_{o,p}^h(\psi)$. The holding of statement $(B)$ in Theorem \ref{p:soc-twist} implies that
	$a(-2c_{o,p}^h(\psi)\psi)\exp(-2c_{o,p}^h(\psi)\psi+p\log|h|)$ is not integrable near $o$.
	
	For any $h$ and $\psi$ satisfying $c_{o,p}^h(\psi)<+\infty$, let $\Psi=\min\{2c_{o,p}^h(\psi)\psi-p\log|h|,0\}$, $\varphi\equiv0$ and $f\equiv1$. Lemma \ref{l:rtsoc} shows that $a_o^f(\Psi;\varphi)=\frac{1}{2}$. The holding of statement $(B)$ in Theorem \ref{p:soc-twist} implies that
	$a(-\Psi)\exp(-\Psi)\not\in L^1(U\cap\{\Psi<0\})$ for any neighborhood $U$ of $o$, which implies that $a(-2c_{o,p}^h(\psi)\psi+p\log|h|)\exp(-2c_{o,p}^h(\psi)\psi+p\log|h|)$ is not integrable near $o$.
\end{proof}

\section{Proof of Theorem \ref{thm:effe}}
In this section, we prove Theorem \ref{thm:effe} by using Theorem \ref{main theorem}.
\begin{proof}
	[Proof of Theorem \ref{thm:effe}]
	Following from Fubini's Theorem, we have
	\begin{equation}
		\label{eq:0307b}\begin{split}
			&\int_{\{\Psi<0\}}|f|^2e^{-\varphi-\Psi}\\
			=&\int_{\{\Psi<0\}}\left(|f|^2e^{-\varphi-\Psi+a\Psi}\int_0^{e^{-a\Psi}}ds\right)\\
			=&\int_0^{+\infty}\left(\int_{\{\Psi<0\}\cap\{s<e^{-a\Psi}\}}|f|^2e^{-\varphi-\Psi+a\Psi} \right)ds\\
			=&\int_{-\infty}^{+\infty}\left(\int_{\{\Psi<-\frac{l}{a}\}\cap\{\Psi<0\}}|f|^2e^{-\varphi-\Psi+a\Psi} \right)e^{l}dl\\
			=&\int_{\{\Psi<0\}}|f|^2e^{-\varphi-\Psi+a\Psi} +\int_{0}^{+\infty}\left(\int_{\{\Psi<-\frac{l}{a}\}}|f|^2e^{-\varphi-\Psi+a\Psi} \right)e^{l}dl.
		\end{split}
	\end{equation}
	Denote \begin{displaymath}
		\begin{split}
			\inf\bigg\{\int_{\{q'\Psi<-t\}}|\tilde f|^2e^{-\varphi}c(-q'\Psi):(\tilde f-f)_o\in I(q'\Psi+\varphi)_o\,\&\,\tilde f\in\mathcal{O}(\{q'\Psi<-t\}) \bigg\}
		\end{split}
	\end{displaymath}
	by $G_{c,q'}(t)$, where $c$ is a Lebesgue measurable function on $(0,+\infty)$ and $q'> 2a_o^f(\Psi;\varphi)\ge1$.
		
	Next we prove  inequality
	\begin{equation}
		\label{eq:0307c}\int_{\{\Psi<-\frac{l}{a}\}}|f|^2e^{-\varphi-\Psi+a\Psi}\ge e^{-\frac{a-1+q'}{a}l}\frac{1}{K_{\Psi,f,a}(o)}
	\end{equation}
	holds for $l\ge0$, $a>0$ and $q'> 2a_o^f(\Psi;\varphi)$.
	
	We prove inequality \eqref{eq:0307c} for the case $a\in(0,1]$. Let $c_1(t)=e^{\frac{1-a}{q'}t}$ on $(0,+\infty)$. Note that $c_1(t)e^{-t}$ is decreasing on $(0,+\infty)$ and $e^{-\varphi}c_1(-q'\Psi)=e^{-\varphi-(1-a)\Psi}$ has a positive lower bound on any compact subset of $D$. Theorem \ref{main theorem} shows that $G_{c_1,q'}(h^{-1}(r))$ is concave with respect to $r$, where $h(t)=\int_t^{+\infty}c_1(s)e^{-s}ds$. Note that $G_{c_1,q'}(0)\ge\frac{1}{K_{\Psi,f,a}(o)}$ for any $q'> 2a_o^f(\Psi;\varphi)$. Hence we have
	\begin{equation*}
	\begin{split}
			\int_{\{\Psi<-\frac{l}{a}\}}|f|^2e^{-\varphi-(1-a)\Psi}\ge& G_{c_1,q'}\left(\frac{q'l}{a}\right)\\
			\ge&\frac{\int_{\frac{q'l}{a}}^{+\infty}c_1(s)e^{-s}ds}{\int_0^{+\infty}c_1(s)e^{-s}ds}G_{c_1,q'}(0)\\
			\ge&e^{-\frac{a-1+q'}{a}l}\frac{1}{K_{\Psi,f,a}(o)}.		\end{split}
	\end{equation*}
	
	We prove inequality \eqref{eq:0307c} for the case $a>1$. Let $\tilde c_{m}(t)$ be a continuous function on $(0,+\infty)$ such that $\tilde c_m(t)=e^{\frac{1-a}{q'}t}$ on $(0,m)$ and $\tilde c_m(t)=e^{\frac{1-a}{q'}m}$ on $(m,+\infty)$ for any positive integer $m$.   Note that $\tilde c_m(t)e^{-t}$ is decreasing on $(0,+\infty)$ and $e^{-\varphi}\tilde c_m(-q'\Psi)$ has a positive lower bound on any compact subset of $D$. Theorem \ref{main theorem} shows that $G_{\tilde c_m,q'}(h_m^{-1}(r))$ is concave with respect to $r$, where $h_m(t)=\int_t^{+\infty}\tilde c_m(s)e^{-s}ds$. Note that $G_{\tilde c_m,q'}(0)\ge\frac{1}{K_{\Psi,f,a}(o)}$ for any $q'> 2a_o^f(\Psi;\varphi)$. Hence we have
	\begin{equation}\label{eq:0307d}
	\begin{split}
			\int_{\{\Psi<-\frac{l}{a}\}}|f|^2e^{-\varphi}\tilde c_m(-q'\Psi)\ge& G_{\tilde c_m,q'}\left(\frac{q'l}{a}\right)\\
			\ge&\frac{\int_{\frac{q'l}{a}}^{+\infty}\tilde c_m(s)e^{-s}ds}{\int_0^{+\infty}\tilde c_m(s)e^{-s}ds}G_{\tilde c_m,q'}(0)\\
			\ge&\frac{\int_{\frac{q'l}{a}}^{+\infty}\tilde c_m(s)e^{-s}ds}{\int_0^{+\infty}\tilde c_m(s)e^{-s}ds}\frac{1}{K_{\Psi,f,a}(o)}.		\end{split}
	\end{equation}
As $\int_{\{\Psi<0\}}|f|^2e^{-\varphi-\Psi}\le C_1<+\infty$, it follows from $\tilde c_m(-q'\Psi)\le e^{-\Psi}$, the dominated convergence theorem and inequality \eqref{eq:0307d} that
\begin{displaymath}
	\begin{split}
		\int_{\{\Psi<-\frac{l}{a}\}}|f|^2e^{-\varphi-(1-a)\Psi}
		=&\lim_{m\rightarrow+\infty}\int_{\{\Psi<-\frac{l}{a}\}}|f|^2e^{-\varphi}\tilde c_m(-q'\Psi)\\
		\ge&\lim_{m\rightarrow+\infty}\frac{\int_{\frac{q'l}{a}}^{+\infty}\tilde c_m(s)e^{-s}ds}{\int_0^{+\infty}\tilde c_m(s)e^{-s}ds}\frac{1}{K_{\Psi,f,a}(o)}\\
		=&e^{-\frac{a-1+q'}{a}l}\frac{1}{K_{\Psi,f,a}(o)}.
	\end{split}
\end{displaymath}
	
	Combining equality \eqref{eq:0307b}, inequality \eqref{eq:0307c} and the definition of $K_{\Psi,f,a}(o)$, we obtain that
	\begin{equation}
		\label{eq:0307e}\begin{split}
			&\int_{\{\Psi<0\}}|f|^2e^{-\varphi-\Psi}\\
			=&\int_{\{\Psi<0\}}|f|^2e^{-\varphi-\Psi+a\Psi} +\int_{0}^{+\infty}\left(\int_{\{\Psi<-\frac{l}{a}\}}|f|^2e^{-\varphi-\Psi+a\Psi} \right)e^{l}dl\\
			\ge&\left(1+\int_0^{+\infty}e^{-\frac{-1+q'}{a}l}\right)\frac{1}{K_{\Psi,f,a}(o)}\\
			=&\frac{a+q'-1}{q'-1}\cdot\frac{1}{K_{\Psi,f,a}(o)}
		\end{split}
	\end{equation}
	for any $q'>2a_o^f(\Psi;\varphi)$.
	Let $q'\rightarrow2a_o^f(\Psi;\varphi)$, we get that inequality \eqref{eq:0307e} also holds when $q'=2a_o^f(\Psi;\varphi)$. Thus, if $q>1$ satisfies
	$$\frac{q+a-1}{q-1}>\frac{C_1}{C_2}\geq K_{\Psi,f,a}(o)\int_{\{\Psi<0\}}|f|^2e^{-\varphi-\Psi},$$
	we have $p<2a_o^f(\Psi;\varphi)$, i.e. $f_o\in I(p\Psi+\varphi)_o$.	
\end{proof}

\section{Appendix: Proof of Lemma \ref{L2 method}}
In this section, we prove Lemma \ref{L2 method}.
\subsection{Some results used in the proof of Lemma \ref{L2 method}}
\begin{Lemma}[see \cite{Demailly00}]
\label{BKN Identity}
Let Q be a Hermitian vector bundle on a K\"ahler manifold M of dimension $n$ with a
K\"ahler metric $\omega$. Assume that $\eta , g >0$ are smooth functions on M. Then
for every form $v\in D(M,\wedge^{n,q}T^*M \otimes Q)$ with compact support we have
\begin{equation}
\begin{split}
&\int_M (\eta+g^{-1})|D^{''*}v|^2_QdV_M+\int_M \eta|D^{''}v|^2_QdV_M \\
\ge  &\int_M \langle[\eta \sqrt{-1}\Theta_Q-\sqrt{-1}\partial \bar{\partial}
\eta-\sqrt{-1}g
\partial\eta \wedge\bar{\partial}\eta, \Lambda_{\omega}]v,v\rangle_QdV_M.
\end{split}
\end{equation}
\end{Lemma}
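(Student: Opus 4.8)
The statement is the Bochner--Kodaira--Nakano inequality in the twisted form used by Demailly; as the excerpt already attributes it to \cite{Demailly00}, one option is simply to cite that reference, but here is the scheme I would follow to prove it directly. The plan is to start from the (untwisted) Bochner--Kodaira--Nakano identity and then obtain the $\eta,g$--version by a completion--of--squares argument, exactly as in Demailly's treatment of the Ohsawa--Takegoshi extension theorem. First I would record the Bochner--Kodaira--Nakano identity on the K\"ahler manifold $(M,\omega)$ for the Chern connection $D=D'+D^{''}$ of the Hermitian bundle $Q$: for every compactly supported smooth $w\in D(M,\wedge^{n,q}T^*M\otimes Q)$,
\[
\|D^{''*}w\|^2+\|D^{''}w\|^2=\|D^{'*}w\|^2+\|D'w\|^2+\int_M\langle[\sqrt{-1}\Theta_Q,\Lambda_\omega]w,w\rangle_Q\,dV_M ,
\]
all adjoints being taken with respect to the $L^2$ inner product determined by $\omega$ and the metric of $Q$. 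Since $\|D^{'*}w\|^2+\|D'w\|^2\ge 0$, this yields the basic a priori estimate $\|D^{''*}w\|^2+\|D^{''}w\|^2\ge\int_M\langle[\sqrt{-1}\Theta_Q,\Lambda_\omega]w,w\rangle_Q\,dV_M$ for all such $w$.

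Next I would apply this basic estimate to $w=\eta^{1/2}v$ (equivalently, work with the inner product weighted by $\eta$ and the corresponding formal adjoint of $D^{''}$). Expanding $D^{''}(\eta^{1/2}v)=\eta^{1/2}D^{''}v+\bar\partial(\eta^{1/2})\wedge v$ and $D^{''*}(\eta^{1/2}v)=\eta^{1/2}D^{''*}v-\sigma(v)$, where $\sigma(v)$ is the zeroth--order term coming from the formal adjoint of wedging by $\bar\partial(\eta^{1/2})$, and squaring the norms, the curvature contribution turns into $\int_M\langle[\eta\sqrt{-1}\Theta_Q,\Lambda_\omega]v,v\rangle_Q\,dV_M$. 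Integrating by parts the cross term that pairs $D^{''}v$ against $\bar\partial\eta\wedge v$, and using the K\"ahler commutation relations on $(n,q)$--forms, produces precisely the term $-\int_M\langle[\sqrt{-1}\partial\bar\partial\eta,\Lambda_\omega]v,v\rangle_Q\,dV_M$, together with one surviving cross term of the shape $2\,\mathrm{Re}\langle D^{''*}v,\tau(v)\rangle$, where $\tau(v)$ is a first--order expression in $\partial\eta$ applied to $v$.

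Finally I would dispose of this last cross term by the Cauchy--Schwarz inequality with the free positive weight $g$: $2\,\mathrm{Re}\langle D^{''*}v,\tau(v)\rangle$ is bounded above by $g^{-1}|D^{''*}v|^2+g\,|\tau(v)|^2$, and a standard pointwise K\"ahler linear--algebra identity identifies $g\,|\tau(v)|^2$ on $(n,q)$--forms with $\langle[\sqrt{-1}g\,\partial\eta\wedge\bar\partial\eta,\Lambda_\omega]v,v\rangle_Q$; transposing this term back to the left side produces the coefficient $\eta+g^{-1}$ in front of $\|D^{''*}v\|^2$ and $\eta$ in front of $\|D^{''}v\|^2$, which is exactly the asserted inequality. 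The only delicate point, and the one I would expect to demand care, is the accurate bookkeeping of signs in the two cross terms together with the pointwise identification of $|\tau(v)|^2$ with the curvature--type quantity $\langle[\sqrt{-1}\partial\eta\wedge\bar\partial\eta,\Lambda_\omega]v,v\rangle_Q$; everything else is formal manipulation of the Bochner--Kodaira--Nakano identity.
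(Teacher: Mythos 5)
The paper states this lemma by citing \cite{Demailly00} and gives no proof of its own, so there is no in-paper argument to compare against. Your sketch is the standard derivation of the twisted Bochner--Kodaira--Nakano inequality and is correct in outline: start from the untwisted a priori estimate on $(n,q)$-forms, apply it to $w=\eta^{1/2}v$, integrate by parts the cross term to produce the $-\sqrt{-1}\partial\bar\partial\eta$ contribution, and absorb the surviving first-order cross term with the free-weight Cauchy--Schwarz $2\,\mathrm{Re}\langle D^{''*}v,\tau(v)\rangle\ge -g^{-1}|D^{''*}v|^2-g|\tau(v)|^2$, which after transposition yields the coefficients $\eta+g^{-1}$ and $\eta$. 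Two remarks. First, the pointwise identity you invoke at the end, identifying $g\,|\tau(v)|^2$ with $\langle[\sqrt{-1}\,g\,\partial\eta\wedge\bar\partial\eta,\Lambda_\omega]v,v\rangle_Q$, is precisely the content of Lemma \ref{sempositive lemma} in this paper (taken from \cite{guan-zhou13ap}), so you can quote it rather than rederive it. Second, your outline silently drops the purely quadratic terms $\int_M|\bar\partial(\eta^{1/2})\wedge v|^2_Q\,dV_M$ and the corresponding zeroth-order piece of $D^{''*}(\eta^{1/2}v)$ that arise when you expand $\|D^{''}(\eta^{1/2}v)\|^2$ and $\|D^{''*}(\eta^{1/2}v)\|^2$; these do not vanish on their own and must be tracked together with the integrated-by-parts cross term and the $\partial\bar\partial\eta$ contribution before the Cauchy--Schwarz step. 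That accounting is exactly where the sign bookkeeping you flag as delicate lives; once it is carried out, the argument is the one in Demailly's cited note.
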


\begin{Lemma}[Lemma 4.2 in \cite{guan-zhou13ap}]
Let M and Q be as in the above lemma and $\theta$ be a continuous (1,0) form on M.
Then we have
\begin{equation}
[\sqrt{-1}\theta \wedge
\bar{\theta},\Lambda_\omega]\alpha=\bar{\theta}\wedge(\alpha\llcorner(\bar{\theta})^\sharp),
\end{equation}
for any (n,1) form $\alpha$ with value in Q. Moreover, for any positive (1,1) form
$\beta$, we have $[\beta,\Lambda_\omega]$ is semipositive.
\label{sempositive lemma}
\end{Lemma}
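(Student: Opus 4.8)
The plan is to reduce both assertions to a pointwise linear-algebra identity in a conveniently chosen frame. Both statements are local and $C^\infty(M)$-linear in $\alpha$ (resp.\ in the test form), and the bundle $Q$ only supplies frame-independent coefficient vectors, so I would fix a point $p\in M$, trivialize $Q$, and work with scalar-valued forms. Choose a local coframe $(\zeta_1,\dots,\zeta_n)$ of $(1,0)$-forms that is $\omega$-orthonormal at $p$, so that $\omega=\sqrt{-1}\sum_j\zeta_j\wedge\bar\zeta_j$ at $p$; write $\theta=\sum_j\theta_j\zeta_j$, let $(e_1,\dots,e_n)$ be the $(1,0)$-frame dual to $(\zeta_1,\dots,\zeta_n)$, and note that with the standard conventions $(\bar\theta)^\sharp=\sum_j\theta_j\,\overline{e_j}$ is a $(0,1)$-vector, so that $\alpha\llcorner(\bar\theta)^\sharp$ has type $(n,0)$ and $\bar\theta\wedge\bigl(\alpha\llcorner(\bar\theta)^\sharp\bigr)$ has type $(n,1)$, matching the left-hand side.

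The first key observation is that, since $\alpha$ has holomorphic degree $n=\dim_{\mathbb C}M$, one has $u\wedge\alpha=0$ for every $(1,1)$-form $u$ (it would have holomorphic degree $n+1$); in particular $\sqrt{-1}\theta\wedge\bar\theta\wedge\alpha=0$, so the commutator collapses to $[\sqrt{-1}\theta\wedge\bar\theta,\Lambda_\omega]\alpha=\sqrt{-1}\,\theta\wedge\bar\theta\wedge\Lambda_\omega\alpha$. Next I would write $\alpha=\sum_k a_{\bar k}\,\Omega\wedge\bar\zeta_k$ with $\Omega=\zeta_1\wedge\cdots\wedge\zeta_n$, and use the explicit action of $\Lambda_\omega$ on $(n,1)$-forms at $p$, which pairs $\bar\zeta_k$ against $\zeta_k$ and produces a universal constant times $\sum_k a_{\bar k}\,\iota_{e_k}\Omega$. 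Wedging with $\sqrt{-1}\theta\wedge\bar\theta$ and using $\zeta_j\wedge\iota_{e_k}\Omega=\delta_{jk}\Omega$ collapses the double sum to a constant multiple of $\sum_{k,l}\theta_k\bar\theta_l\,a_{\bar k}\,\Omega\wedge\bar\zeta_l$; an identical expansion of $\bar\theta\wedge(\alpha\llcorner(\bar\theta)^\sharp)$ yields the same bilinear expression in $(\theta,\bar\theta)$. Finally I would pin down the overall constant to $1$ by the Lefschetz consistency check $[\omega,\Lambda_\omega]\alpha=\alpha$ on $(n,1)$-forms, obtained by summing the identity over $\theta=\zeta_j$ (since $\omega=\sum_j\sqrt{-1}\zeta_j\wedge\bar\zeta_j$ at $p$); equivalently, testing on the single monomial $\alpha=\Omega\wedge\bar\zeta_1$ with $\theta=\zeta_1$ fixes the constant directly.

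For the semipositivity I would use the adjointness $\langle\bar\theta\wedge w,v\rangle=\langle w,\,v\llcorner(\bar\theta)^\sharp\rangle$ (interior product against the metric dual is the Hermitian adjoint of exterior multiplication), which combined with the first part gives $\langle[\sqrt{-1}\theta\wedge\bar\theta,\Lambda_\omega]\alpha,\alpha\rangle=\langle\bar\theta\wedge(\alpha\llcorner(\bar\theta)^\sharp),\alpha\rangle=\bigl|\alpha\llcorner(\bar\theta)^\sharp\bigr|^2\ge0$. Then, given a semipositive $(1,1)$-form $\beta$, I would diagonalize $\beta$ with respect to $\omega$ at $p$ to write $\beta=\sum_j\sqrt{-1}\,\theta_j\wedge\bar\theta_j$ with $(1,0)$-forms $\theta_j$; since $[\,\cdot\,,\Lambda_\omega]$ is linear in the $(1,1)$-form slot, summing gives $\langle[\beta,\Lambda_\omega]v,v\rangle=\sum_j\bigl|v\llcorner(\bar\theta_j)^\sharp\bigr|^2\ge0$ for every $(n,1)$-form $v$, which is the claimed semipositivity.

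The only genuine obstacle is bookkeeping: tracking the signs, the powers of $\sqrt{-1}$, and the precise musical-isomorphism and interior-product conventions so that the universal constant in the identity is exactly $1$ and the types on the two sides really agree. The structural content—the vanishing $u\wedge\alpha=0$, the resulting collapse of the commutator, and the adjointness used for semipositivity—is straightforward once the orthonormal frame is fixed.
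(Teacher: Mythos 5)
The paper does not prove this lemma at all: it is quoted verbatim as Lemma 4.2 of Guan--Zhou \cite{guan-zhou13ap}, so there is no in-paper argument to compare against. Your proposal is the standard and correct proof of that cited result. The two structural points are right: since $\alpha$ has holomorphic degree $n$, the term $\Lambda_\omega(\sqrt{-1}\theta\wedge\bar\theta\wedge\alpha)$ vanishes and the commutator reduces to $\sqrt{-1}\theta\wedge\bar\theta\wedge\Lambda_\omega\alpha$, which a frame computation (normalized by $[\omega,\Lambda_\omega]=(p+q-n)\,\mathrm{Id}$, equal to the identity on $(n,1)$-forms) identifies with $\bar\theta\wedge(\alpha\llcorner(\bar\theta)^\sharp)$; and the semipositivity follows from the adjointness of $\bar\theta\wedge\cdot$ and $\cdot\llcorner(\bar\theta)^\sharp$ together with the pointwise diagonalization $\beta=\sum_j\sqrt{-1}\,\theta_j\wedge\bar\theta_j$ of a positive $(1,1)$-form relative to $\omega$. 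The caveats you flag (sign and $\sqrt{-1}$ conventions, conjugate-linearity of $\sharp$ so that contraction is genuinely the Hermitian adjoint of exterior multiplication) are exactly the only places where care is needed, and they do not affect the validity of the argument; note also that your reduction works verbatim for $(n,q)$-forms with $q\ge1$, which is the generality in which the semipositivity statement is actually used.
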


\begin{Lemma}
[Remark 3.2 in \cite{Demailly00}]
\label{d-bar equation with error term}
Let ($M,\omega$) be a complete K\"ahler manifold equipped with a (non-necessarily
complete) K\"ahler metric $\omega$, and let $Q$ be a Hermitian vector bundle over $M$.
Assume that $\eta$ and $g$ are smooth bounded positive functions on $M$ and let
$B:=[\eta \sqrt{-1}\Theta_Q-\sqrt{-1}\partial \bar{\partial} \eta-\sqrt{-1}g
\partial\eta \wedge\bar{\partial}\eta, \Lambda_{\omega}]$. Assume that $\delta \ge
0$ is a number such that $B+\delta I$ is semi-positive definite everywhere on
$\wedge^{n,q}T^*M \otimes Q$ for some $q \ge 1$. Then given a form $v \in
L^2(M,\wedge^{n,q}T^*M \otimes Q)$ such that $D^{''}v=0$ and $\int_M \langle
(B+\delta I)^{-1}v,v\rangle_Q dV_M < +\infty$, there exists an approximate solution
$u \in L^2(M,\wedge^{n,q-1}T^*M \otimes Q)$ and a correcting term $h\in
L^2(M,\wedge^{n,q}T^*M \otimes Q)$ such that $D^{''}u+\sqrt{\delta}h=v$ and
\begin{equation}
\int_M(\eta+g^{-1})^{-1}|u|^2_QdV_M+\int_M|h|^2_QdV_M \le \int_M \langle (B+\delta
I)^{-1}v,v\rangle_Q dV_M.
\end{equation}
\end{Lemma}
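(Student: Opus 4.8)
The plan is to derive Lemma~\ref{d-bar equation with error term} by the classical H\"ormander--Andreotti--Vesentini duality method, feeding in the twisted Bochner--Kodaira--Nakano estimate of Lemma~\ref{BKN Identity} as the sole analytic input. Here ``complete K\"ahler manifold'' is understood in the usual sense that $M$ carries some complete K\"ahler metric $\hat\omega$, possibly different from $\omega$. I would first treat the case in which $\omega$ itself is complete, and remove that restriction at the end by the standard regularization $\omega\rightsquigarrow\omega_\epsilon:=\omega+\epsilon\hat\omega$, letting $\epsilon\to0$. So assume $\omega$ complete and set $C_0:=\int_M\langle(B+\delta I)^{-1}v,v\rangle_Q\,dV_M$, which is finite by hypothesis; since $B+\delta I$ is Hermitian and semipositive, this finiteness forces $v$ to lie, almost everywhere, in the range of $(B+\delta I)^{1/2}$. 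The first step is the usual reformulation: equipping the space of $(n,q-1)$-forms with the weight $(\eta+g^{-1})^{-1}$ and the space of $(n,q)$-forms with the trivial weight, the existence of a pair $(u,h)$ with $D^{''}u+\sqrt\delta\,h=v$ and $\int_M(\eta+g^{-1})^{-1}|u|^2_Q\,dV_M+\int_M|h|^2_Q\,dV_M\le C_0$ is, by the standard functional-analytic solvability criterion (the right-hand side below being $C_0$ times the squared norm of the image of $\alpha$ under the adjoint of $(u,h)\mapsto D^{''}u+\sqrt\delta\,h$ for these weights), equivalent to the a priori estimate
\begin{equation}\nonumber
|\langle v,\alpha\rangle|^2\le C_0\Big(\int_M(\eta+g^{-1})\,|D^{''*}\alpha|^2_Q\,dV_M+\delta\int_M|\alpha|^2_Q\,dV_M\Big)
\end{equation}
for every $(n,q)$-form $\alpha\in\mathrm{Dom}(D^{''*})$.

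To prove this estimate I would use the weak Hodge decomposition of the closed operator $D^{''}$: write $\alpha=\alpha'+\alpha''$ with $\alpha'$ the orthogonal projection of $\alpha$ onto $\ker D^{''}$ and $\alpha''\in(\ker D^{''})^\perp=\overline{\mathrm{Im}\,D^{''*}}$. Since $D^{''}v=0$, i.e. $v\in\ker D^{''}=(\overline{\mathrm{Im}\,D^{''*}})^\perp$, one gets $\langle v,\alpha\rangle=\langle v,\alpha'\rangle$; and since $\alpha''\perp\mathrm{Im}\,D^{''}$, the functional $\gamma\mapsto\langle\alpha'',D^{''}\gamma\rangle$ is zero, so $\alpha''\in\mathrm{Dom}(D^{''*})$ with $D^{''*}\alpha''=0$, whence $\alpha'\in\mathrm{Dom}(D^{''})\cap\mathrm{Dom}(D^{''*})$ with $D^{''*}\alpha'=D^{''*}\alpha$ and $D^{''}\alpha'=0$. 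Completeness of $\omega$ lets Lemma~\ref{BKN Identity}, a priori only for compactly supported smooth forms, extend by the standard cut-off density argument to all of $\mathrm{Dom}(D^{''})\cap\mathrm{Dom}(D^{''*})$; applying it to $\alpha'$ and using $D^{''}\alpha'=0$ to kill the $\int_M\eta|D^{''}\alpha'|^2_Q$ term yields $\int_M\langle(B+\delta I)\alpha',\alpha'\rangle_Q\,dV_M\le\int_M(\eta+g^{-1})|D^{''*}\alpha|^2_Q\,dV_M+\delta\int_M|\alpha|^2_Q\,dV_M$. Combining this with the pointwise Cauchy--Schwarz inequality $|\langle v,\alpha'\rangle_Q|^2\le\langle(B+\delta I)^{-1}v,v\rangle_Q\,\langle(B+\delta I)\alpha',\alpha'\rangle_Q$ (legitimate a.e. because $v\in\mathrm{Im}\,(B+\delta I)^{1/2}$ a.e.) and with Cauchy--Schwarz in $L^2$ gives the displayed estimate. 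The point is that the error term $h$ is exactly what permits us to discard the otherwise troublesome $\int_M\eta|D^{''}\alpha|^2_Q$ contribution; when $\delta=0$ both the $h$-term and the $\delta\int|\alpha|^2_Q$ term vanish and one recovers the plain $\bar\partial$-solution, so no separate case is needed.

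Given the a priori estimate, the solution is produced in the usual way: the assignment $\big((\eta+g^{-1})^{1/2}D^{''*}\alpha,\ \sqrt\delta\,\alpha\big)\mapsto\langle v,\alpha\rangle$ is a well-defined bounded linear functional of norm $\le C_0^{1/2}$ on a subspace of the $L^2$ direct sum of $(n,q-1)$- and $(n,q)$-forms; I would extend it by Hahn--Banach, represent it by a pair $(u_0,h_0)$ with $\int_M|u_0|^2_Q\,dV_M+\int_M|h_0|^2_Q\,dV_M\le C_0$, and set $u:=(\eta+g^{-1})^{1/2}u_0$, $h:=h_0$. Unwinding the weights gives $\langle D^{''*}\alpha,u\rangle_Q+\sqrt\delta\,\langle\alpha,h\rangle_Q=\langle v,\alpha\rangle$ for all $\alpha\in\mathrm{Dom}(D^{''*})$, i.e. $u\in\mathrm{Dom}(D^{''})$ and $D^{''}u+\sqrt\delta\,h=v$, with the bound $\int_M(\eta+g^{-1})^{-1}|u|^2_Q\,dV_M+\int_M|h|^2_Q\,dV_M\le C_0$. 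This settles the case of complete $\omega$.

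For a possibly incomplete $\omega$ I would run everything above with $\omega_\epsilon=\omega+\epsilon\hat\omega$, which is complete K\"ahler for $\epsilon>0$, obtaining solutions $(u_\epsilon,h_\epsilon)$ of $D^{''}u_\epsilon+\sqrt\delta\,h_\epsilon=v$ with the analogous bound involving $B_\epsilon$ (that is, $B$ with $\Lambda_\omega$ replaced by $\Lambda_{\omega_\epsilon}$) and $dV_{\omega_\epsilon}$, and then extract weak limits $u_\epsilon\rightharpoonup u$, $h_\epsilon\rightharpoonup h$ and pass to the limit in the equation and the estimate. The step I expect to be the main obstacle is precisely this passage to the limit: the pointwise norms $|\cdot|^2_{\omega_\epsilon}$, the volume forms $dV_{\omega_\epsilon}$ and the curvature operator $B_\epsilon$ all depend on the base metric, so one must invoke the standard monotonicity and continuity properties, as $\epsilon\to0$, of the quantities $|\alpha|^2_{\omega_\epsilon}\,dV_{\omega_\epsilon}$ and $\langle[\theta,\Lambda_{\omega_\epsilon}]\alpha,\alpha\rangle_{\omega_\epsilon}\,dV_{\omega_\epsilon}$ for $(n,q)$-forms $\alpha$ and positive $(1,1)$-forms $\theta$ (the same mechanism underlying Lemma~\ref{sempositive lemma}), so as to conclude that $\int_M\langle(B_\epsilon+\delta I)^{-1}v,v\rangle_Q\,dV_{\omega_\epsilon}\to C_0$ and that the solution bounds survive in the limit. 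Everything else is the classical H\"ormander scheme.
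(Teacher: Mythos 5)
This lemma is not proved in the paper at all: it is imported verbatim as Remark 3.2 of \cite{Demailly00}, so there is no in-paper argument to compare yours against; the relevant comparison is with Demailly's original proof, and your proposal reproduces it essentially faithfully. The core of your argument is correct: the reduction of solvability-with-error-term to the a priori estimate $|\langle v,\alpha\rangle|^2\le C_0(\int_M(\eta+g^{-1})|D^{''*}\alpha|^2_Q\,dV_M+\delta\|\alpha\|^2)$ via the adjoint of $(u,h)\mapsto D^{''}u+\sqrt{\delta}h$ in the indicated weighted spaces, the weak Hodge splitting $\alpha=\alpha'+\alpha''$ with $D^{''}\alpha'=0$ and $D^{''*}\alpha'=D^{''*}\alpha$, the use of Lemma \ref{BKN Identity} (extended to $\mathrm{Dom}(D^{''})\cap\mathrm{Dom}(D^{''*})$ by the cut-off density argument, which needs completeness) with the $\eta|D^{''}\alpha'|^2$ term dropped, the pointwise Cauchy--Schwarz against $(B+\delta I)^{1/2}$, and the Hahn--Banach/Riesz representation are all exactly the standard mechanism and are carried out correctly.

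The only part that remains schematic is the removal of the completeness assumption on $\omega$ via $\omega_\epsilon=\omega+\epsilon\hat\omega$. You correctly identify that this hinges on the monotonicity, for $(n,q)$-forms, of $|\alpha|^2_{\omega_\epsilon}dV_{\omega_\epsilon}$ and of $\langle[\theta,\Lambda_{\omega_\epsilon}]^{-1}\alpha,\alpha\rangle_{\omega_\epsilon}dV_{\omega_\epsilon}$ as $\epsilon$ decreases (Demailly's Lemma 3.2 in \cite{Demailly82}), but two points deserve to be made explicit rather than waved at: first, one must verify that the semipositivity hypothesis $B+\delta I\ge0$, assumed only for $\omega$, yields a usable semipositivity for $B_\epsilon+\delta I$ (in Demailly's treatment this is handled because the curvature term is controlled by a positive $(1,1)$-form, for which $[\theta,\Lambda_{\omega_\epsilon}]$ stays semipositive — the mechanism of Lemma \ref{sempositive lemma}); second, the weak limits $u_\epsilon\rightharpoonup u$, $h_\epsilon\rightharpoonup h$ must be taken in spaces whose norms vary with $\epsilon$, which is usually done by fixing a reference metric and using the monotonicity again. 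Neither issue is an obstruction — this is precisely how the cited source proceeds — but as written your last paragraph is a plan rather than a proof.
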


\begin{Lemma}
[Theorem 6.1 in \cite{DemaillyReg}, see also Theorem 2.2 in \cite{ZZ2019}]
\label{regularization on cpx mfld}
Let ($M,\omega$) be a complex manifold equipped with a Hermitian metric
$\omega$, and $\Omega \subset \subset M $ be an open set. Assume that
$T=\widetilde{T}+\frac{\sqrt{-1}}{\pi}\partial\bar{\partial}\varphi$ is a closed
(1,1)-current on $M$, where $\widetilde{T}$ is a smooth real (1,1)-form and
$\varphi$ is a quasi-plurisubharmonic function. Let $\gamma$ be a continuous real
(1,1)-form such that $T \ge \gamma$. Suppose that the Chern curvature tensor of
$TM$ satisfies
\begin{equation}
\begin{split}
(\sqrt{-1}&\Theta_{TM}+\varpi \otimes Id_{TM})(\kappa_1 \otimes \kappa_2,\kappa_1
\otimes \kappa_2)\ge 0 \\
&\forall \kappa_1,\kappa_2 \in TM \quad with \quad \langle \kappa_1,\kappa_2
\rangle=0
\end{split}
\end{equation}
for some continuous nonnegative (1,1)-form $\varpi$ on M. Then there is a family
of closed (1,1)-current
$T_{\zeta,\rho}=\widetilde{T}+\frac{\sqrt{-1}}{\pi}\partial\bar{\partial}
\varphi_{\zeta,\rho}$ on M ($\zeta \in (0,+\infty)$ and $\rho \in (0,\rho_1)$ for
some positive number $\rho_1$) independent of $\gamma$, such that
\par
$(i)\ \varphi_{\zeta,\rho}$ is quasi-plurisubharmonic on a neighborhood of
$\bar{\Omega}$, smooth on $M\backslash E_{\zeta}(T)$,
\\
increasing with respect to
$\zeta$ and $\rho$ on $\Omega $ and converges to $\varphi$ on $\Omega$ as $\rho
\to 0$.
\par
$(ii)\ T_{\zeta,\rho}\ge\gamma-\zeta\varpi-\delta_{\rho}\omega$ on $\Omega$.
\par
where $E_{\zeta}(T):=\{x\in M:v(T,x)\ge \zeta\}$ ($\zeta>0$) is the $\zeta$-upper level set of
Lelong numbers and $\{\delta_{\rho}\}$ is an increasing family of positive numbers
such that $\lim\limits_{\rho \to 0}\delta_{\rho}=0$.
\end{Lemma}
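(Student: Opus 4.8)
\emph{Proof proposal.} This is Demailly's regularization theorem (\cite{DemaillyReg}), and the plan is to reproduce the core of his argument: build a global smoothing operator out of a Hermitian connection on $TM$, combine it with Kiselman's attenuation of Lelong numbers, and patch the local pieces with a regularized maximum. First I would fix a Hermitian metric $h$ on $TM$, let $D$ be its Chern connection, and introduce the associated exponential map $\exp=\exp^D$, so that for each $z\in M$ the map $\exp_z\colon T_zM\to M$ sends $0$ to $z$, has differential the identity there, and in any holomorphic chart satisfies $\exp_z(\zeta)=z+O(|\zeta|^2)$ with the quadratic term governed by the Christoffel symbols of $D$. For a quasi-psh function $u$ and small $\rho>0$ I would set
\begin{equation*}
u_\rho(z)=\int_{\zeta\in T_zM}u\big(\exp_z(\rho\zeta)\big)\,\chi\big(|\zeta|_h^2\big)\,d\lambda_h(\zeta)+K\rho^2,
\end{equation*}
with $\chi\ge0$ smooth, supported in the unit ball, $\int\chi\,d\lambda_h=1$, and $K$ a local constant; since $u\in L^1_{loc}$, $u_\rho$ is smooth, and because $\exp_z(\rho\zeta)=z+O(\rho^2)$ and $u+K|\cdot|^2$ is locally psh, one checks that $u_\rho$ is increasing in $\rho$ on relatively compact sets and $u_\rho\downarrow u$ as $\rho\to0$.

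The heart of the matter is estimating $\tfrac{\sqrt{-1}}{\pi}\partial\bar\partial u_\rho$ from $\tfrac{\sqrt{-1}}{\pi}\partial\bar\partial u\ge\gamma$. Differentiating under the integral sign and expanding $z\mapsto\exp_z(\rho\zeta)$ to second order, the pullback of $\partial\bar\partial u$ produces the expected main term $\ge\gamma+O(\rho^2)\omega$ together with error terms that are bilinear in the curvature $\Theta_{TM}$ and torsion of $D$, evaluated on a pair consisting of a tangent direction and the fibre variable $\zeta$. After averaging in $\zeta$ the contracted tangent direction may be taken orthogonal to $\zeta$, so the hypothesis
\begin{equation*}
(\sqrt{-1}\Theta_{TM}+\varpi\otimes\mathrm{Id}_{TM})(\kappa_1\otimes\kappa_2,\kappa_1\otimes\kappa_2)\ge0\quad\text{for }\langle\kappa_1,\kappa_2\rangle=0
\end{equation*}
is exactly what guarantees that the genuinely negative part of the error is bounded below by $-c\,\varpi-\delta_\rho\omega$, where $c>0$ is the averaging constant and $\delta_\rho\to0$ with $\rho$, depending only on $h$, $\omega$, and sup-bounds of $\Theta_{TM}$ near $\bar\Omega$, not on $\gamma$ or $u$. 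I expect this to be the main obstacle: running the $\partial\bar\partial$-computation cleanly enough to isolate which curvature and torsion contributions are negative, and to see that the orthogonality condition removes precisely the dangerous ones, so that the error takes the stated form with $\delta_\rho$ independent of $\gamma$.

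To obtain smoothness off the Lelong level sets $E_\zeta(T)$ I would first attenuate: for each $\zeta>0$ replace $\varphi$, locally, by a Kiselman-type partial maximum $\psi_\zeta$ that subtracts $\zeta$ from every Lelong number, so that $\psi_\zeta$ has analytic singularities with $\nu(\psi_\zeta,x)=(\nu(\varphi,x)-\zeta)_+$ (hence, by Siu's theorem, $\psi_\zeta$ is locally bounded, and after the smoothing above real-analytic, on $M\setminus E_\zeta(T)$), $\psi_\zeta\downarrow\varphi$ as $\zeta\downarrow0$, and $\tfrac{\sqrt{-1}}{\pi}\partial\bar\partial\psi_\zeta\ge\gamma-\zeta\varpi$ up to an arbitrarily small $\omega$-error; then I would set $\varphi_{\zeta,\rho}:=(\psi_\zeta)_\rho$. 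Finally, over a locally finite cover $\{U_j\}$ of a neighborhood of $\bar\Omega$, the local functions $(\psi_\zeta|_{U_j})_\rho$, suitably shifted by constants, are patched by the regularized maximum, which preserves quasi-plurisubharmonicity, preserves the Hessian lower bound $\gamma-\zeta\varpi-\delta_\rho\omega$ (after renormalizing the two $\zeta\varpi$-losses, coming from curvature and from attenuation, into one), is monotone in $\zeta$ and $\rho$, and converges to $\varphi$ on $\Omega$ as $\rho\to0$. This yields $\varphi_{\zeta,\rho}$ quasi-psh near $\bar\Omega$, smooth on $M\setminus E_\zeta(T)$, increasing in $\zeta$ and $\rho$, with $T_{\zeta,\rho}=\widetilde T+\tfrac{\sqrt{-1}}{\pi}\partial\bar\partial\varphi_{\zeta,\rho}\ge\gamma-\zeta\varpi-\delta_\rho\omega$ on $\Omega$, as required. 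A secondary technical point to watch is ensuring the Kiselman attenuation is compatible, first locally and then through the regularized maximum, with the exponential-map smoothing.
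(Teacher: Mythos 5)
First, a point of calibration: the paper does not prove this lemma at all --- it is quoted as Theorem 6.1 of Demailly's regularization paper \cite{DemaillyReg}, with only the subsequent remark that the compactness hypothesis there can be relaxed since the estimates are claimed only on the relatively compact set $\Omega$. So there is no in-paper proof to compare against; what you have written is a sketch of Demailly's own argument, and it does identify the three main ingredients: smoothing by averaging over the exponential map of the Chern connection, the curvature hypothesis on $TM$ controlling the negative error terms by $-\zeta\varpi-\delta_\rho\omega$ independently of $\gamma$ and $\varphi$, and an attenuation of Lelong numbers to obtain smoothness off $E_\zeta(T)$.

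There is, however, a genuine structural gap in how you combine the last two steps. You propose to first attenuate $\varphi$ to $\psi_\zeta$ (killing Lelong numbers below $\zeta$) and then set $\varphi_{\zeta,\rho}:=(\psi_\zeta)_\rho$. For fixed $\zeta>0$ this family converges, as $\rho\to0$, to $\psi_\zeta$ --- not to $\varphi$ --- so conclusion $(i)$ fails for your construction. In Demailly's proof the two parameters are coupled through the Kiselman--Legendre transform $\varphi_{c,\rho}(z)=\inf_{t\in(0,\rho]}\bigl(\Phi(z,t)+Kt^2-K\rho^2-c\log(t/\rho)\bigr)$, where $\Phi(z,t)$ is the exponential-map average at radius $t$: the penalty $-c\log(t/\rho)$ vanishes at $t=\rho$, which pins $\varphi\le\varphi_{c,\rho}\le\Phi(\cdot,\rho)\to\varphi$ as $\rho\to0$, while as $t\to0$ it subtracts exactly $c$ from each Lelong number; the smoothness on $M\setminus E_c(T)$ then comes from the Legendre-transform structure (the infimum is attained away from $t=0$ there), not from the $\rho$-mollification alone. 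Relatedly, your claim that the attenuated function ``has analytic singularities'' is neither correct nor needed: Kiselman attenuation only gives $\nu(\psi_\zeta,x)=(\nu(\varphi,x)-\zeta)_+$ and local boundedness off $E_\zeta$, and Siu's theorem enters only to say that $E_\zeta(T)$ itself is analytic. Finally, the curvature computation in your second paragraph has the right shape but is stated as what must be checked rather than actually verified; that computation is the technical core of \cite{DemaillyReg} and cannot be waved through.
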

\begin{Remark}[see Remark 2.1 in \cite{ZZ2019}]
Lemma \ref{regularization on cpx mfld} is stated in \cite{DemaillyReg} in the case $M$ is a compact complex manifold. The similar proof as in \cite{DemaillyReg} shows that Lemma \ref{regularization on cpx mfld} on noncompact complex manifold still holds where the uniform estimate $(i)$ and $(ii)$ are obtained only on a relatively compact subset $\Omega$.
\end{Remark}

\begin{Lemma}
[Theorem 1.5 in \cite{Demailly82}]
\label{completeness}
Let M be a K\"ahler manifold, and Z be an analytic subset of M. Assume that
$\Omega$ is a relatively compact open subset of M possessing a complete K\"ahler
metric. Then $\Omega\backslash Z $ carries a complete K\"ahler metric.

\end{Lemma}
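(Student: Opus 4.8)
The plan is to keep the given complete K\"ahler metric on $\Omega$ away from $Z$ and to add to it a smooth, globally defined, semipositive closed $(1,1)$-form, concentrated near $Z$, that makes $Z$ sit at infinite distance; since this modification is local along $Z$ I would first treat the local model. Fix $z_{0}\in Z\cap\bar\Omega$ and a coordinate ball $U\ni z_{0}$ on which $Z\cap U=\{g_{1}=\cdots=g_{N}=0\}$ with $g_{j}$ holomorphic; after shrinking $U$ I may assume $\rho:=\sum_{j}|g_{j}|^{2}<e^{-1}$ there, so that $\Phi_{0}:=-\log(-\log\rho)$ is smooth and negative on $U\setminus Z$ with $\Phi_{0}\to-\infty$ along $Z$.

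The key computation is that, writing $t:=-\log\rho>1$,
\[
\sqrt{-1}\,\partial\bar\partial\Phi_{0}=\frac{\sqrt{-1}\,\partial\bar\partial\rho}{\rho t}+\Big(\tfrac1t-1\Big)\frac{\sqrt{-1}\,\partial\rho\wedge\bar\partial\rho}{\rho^{2}t},\qquad
\sqrt{-1}\,\partial\Phi_{0}\wedge\bar\partial\Phi_{0}=\frac{\sqrt{-1}\,\partial\rho\wedge\bar\partial\rho}{\rho^{2}t^{2}},
\]
and, using the pointwise inequality $\sqrt{-1}\,\partial\rho\wedge\bar\partial\rho\le\rho\,\sqrt{-1}\,\partial\bar\partial\rho$ (Cauchy--Schwarz, valid because $\rho$ is a sum of squared moduli of holomorphic functions) together with $\tfrac1t-1<0$, one obtains
\[
\sqrt{-1}\,\partial\bar\partial\Phi_{0}\ \ge\ \frac{\sqrt{-1}\,\partial\bar\partial\rho}{\rho t^{2}}\ \ge\ \sqrt{-1}\,\partial\Phi_{0}\wedge\bar\partial\Phi_{0}\ \ge\ 0 .
\]
Hence $\Phi_{0}$ is plurisubharmonic on $U\setminus Z$; if $\omega$ is any smooth positive definite K\"ahler metric then $\omega':=\omega+\sqrt{-1}\,\partial\bar\partial\Phi_{0}$ is K\"ahler with $\sqrt{-1}\,\partial\Phi_{0}\wedge\bar\partial\Phi_{0}\le\sqrt{-1}\,\partial\bar\partial\Phi_{0}\le\omega'$. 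Therefore the exhaustion $\Lambda:=-\Phi_{0}=\log(-\log\rho)$, which tends to $+\infty$ along $Z$, satisfies $|d\Lambda|^{2}_{\omega'}=2|\bar\partial\Lambda|^{2}_{\omega'}\le2$ on $U\setminus Z$, so any path in $U\setminus Z$ converging to $Z$ has infinite $\omega'$-length: $\omega'$ is complete transverse to $Z$.

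To globalize I would use that $\bar\Omega$ is compact: cover $Z\cap\bar\Omega$ by finitely many balls $U_{1},\dots,U_{m}$ as above, with $\rho_{\nu}<e^{-1}$ on $U_{\nu}$ and local potentials $\Phi_{0,\nu}=-\log(-\log\rho_{\nu})$, take the given complete K\"ahler metric $\hat\omega$ on $\Omega$, and patch the $\Phi_{0,\nu}$ --- via cut-off functions and Demailly's regularized maximum of the truncations $\max(\Phi_{0,\nu},-1)$, extended by the constant $-1$ outside a fixed neighborhood of $Z\cap\bar\Omega$ --- into one smooth plurisubharmonic function $\Phi$ on $\Omega\setminus Z$ with $\Phi\to-\infty$ along $Z\cap\bar\Omega$ and $\Phi\equiv-1$ away from that neighborhood. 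Then $\omega_{Z}:=\hat\omega+\sqrt{-1}\,\partial\bar\partial\Phi$ is a genuine K\"ahler metric on $\Omega\setminus Z$ with $\omega_{Z}\ge\hat\omega$. For completeness: if $\gamma:[0,1)\to\Omega\setminus Z$ has finite $\omega_{Z}$-length, then it has finite $\hat\omega$-length, so by completeness of $\hat\omega$ on $\Omega$ its image lies in a compact $K\subset\Omega$ and cannot accumulate on $\partial\Omega$; near $Z$ the local estimate above (on each chart one of the $\Lambda_{\nu}=\log(-\log\rho_{\nu})$ dominates $-\Phi$, with bounded $\omega_{Z}$-gradient) forbids accumulation on $Z$; hence $\gamma$ remains compact in $\Omega\setminus Z$, and $\omega_{Z}$ is the desired complete K\"ahler metric.

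The hard part will be the patching step: one must simultaneously keep $\Phi$ plurisubharmonic, make it tend to $-\infty$ along \emph{all} of $Z\cap\bar\Omega$, and control $|d(-\Phi)|_{\omega_{Z}}$ so that the transverse-completeness estimate survives the gluing, even though the cut-off errors $\partial\theta_{\nu}\wedge\bar\partial\Phi_{0,\nu}$ and $\Phi_{0,\nu}\,\partial\bar\partial\theta_{\nu}$ are a priori unbounded near $Z$. The standard resolution --- which is precisely the content of Theorem~1.5 in \cite{Demailly82} --- exploits that on overlaps the local potentials $\Phi_{0,\nu}$ differ by functions comparable up to additive constants, a consequence of the \L ojasiewicz inequality for $\rho_{\nu}$ and $\rho_{\mu}$, each vanishing exactly on $Z$; this comparability is what allows the regularized-maximum construction to glue while preserving the gradient bound.
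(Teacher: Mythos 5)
First, a point of reference: the paper does not prove this lemma at all --- it is quoted as Theorem 1.5 of \cite{Demailly82} and used as a black box --- so your attempt has to be measured against Demailly's own argument. Your local analysis is correct and is exactly the standard mechanism: for $\rho=\sum_j|g_j|^2<e^{-1}$ your identities for $\sqrt{-1}\partial\bar\partial\Phi_0$ together with the Cauchy--Schwarz bound $\sqrt{-1}\partial\rho\wedge\bar\partial\rho\le\rho\,\sqrt{-1}\partial\bar\partial\rho$ do yield $\sqrt{-1}\partial\bar\partial\Phi_0\ge\sqrt{-1}\partial\Phi_0\wedge\bar\partial\Phi_0\ge0$, hence a bounded-gradient exhaustion $\Lambda=-\Phi_0$ and infinite distance to $Z$ in the modified metric. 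That is the heart of the matter and you have it.

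The genuine gap is the globalization, which you flag but do not carry out, and the recipe you do write down fails in two concrete ways. (1) The truncation $\max(\Phi_{0,\nu},-1)$ is bounded below by $-1$, so it equals $-1$ identically on a neighborhood of $Z$ (where $\Phi_{0,\nu}\to-\infty$); gluing these produces a bounded potential with no pole along $Z$, and the resulting metric is not complete towards $Z$ --- the construction defeats its own purpose. (2) Even with the truncation oriented correctly, the regularized maximum cannot glue the local potentials across chart boundaries near $Z$: property $(3)$ of Lemma \ref{regular of max} discards an argument only when it is smaller than another by more than the smoothing parameters, whereas on overlaps the $\Phi_{0,\nu}$ differ by $O(1)$ and all tend to $-\infty$ together (the $\rho_\nu$ being mutually comparable), so no argument is ever discarded where you need it, and you are left with exactly the unbounded cut-off errors $\partial\theta_\nu\wedge\bar\partial\Phi_{0,\nu}$ that you yourself identify. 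Your closing sentence then defers this step to ``precisely the content of Theorem 1.5 in \cite{Demailly82}'', i.e.\ to the statement being proved, which makes the argument circular. Demailly's proof sidesteps the difficulty by not gluing the $\log(-\log\rho_\nu)$ potentials at all: he first assembles a single quasi-plurisubharmonic function $v=\sum_\nu\theta_\nu\log\rho_\nu$ with logarithmic poles along $Z$ on a neighborhood of the compact set $Z\cap\overline{\Omega}$, absorbing the cross terms $\partial\theta_\nu\wedge\bar\partial\log\rho_\nu$ by Cauchy--Schwarz against $\sqrt{-1}\partial v\wedge\bar\partial v$ and a large multiple of $\omega$, and only then applies $-\log(-\,\cdot\,)$ once, globally, adding a sufficiently large multiple of the given complete metric $\hat\omega$; the gradient estimate is then proved for that single potential and the completeness argument you give at the end goes through. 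To complete your proof you would need to carry out that (or an equivalent) absorption argument explicitly.
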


\begin{Lemma}
[Lemma 6.9 in \cite{Demailly82}]
\label{extension of equality}
Let $\Omega$ be an open subset of $\mathbb{C}^n$ and Z be a complex analytic subset of
$\Omega$. Assume that $v$ is a (p,q-1)-form with $L^2_{loc}$ coefficients and h is
a (p,q)-form with $L^1_{loc}$ coefficients such that $\bar{\partial}v=h$ on
$\Omega\backslash Z$ (in the sense of distribution theory). Then
$\bar{\partial}v=h$ on $\Omega$.
\end{Lemma}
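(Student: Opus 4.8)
The plan is to reduce to a local statement and then remove the singularity by a standard cut-off argument, the quantitative heart of which is that a complex analytic subset is negligible for the Dirichlet ($W^{1,2}$) energy. Since the conclusion $\bar\partial v=h$ is a statement about currents, it is local, so it suffices to prove it in a neighbourhood of an arbitrary point $x_0\in\Omega$; if $x_0\notin Z$ there is nothing to do, so assume $x_0\in Z$ and fix a polydisc $\Omega'\ni x_0$ with $\Omega'\Subset\Omega$ together with a holomorphic function $g\in\mathcal O(\Omega')$, $g\not\equiv0$, such that $Z\cap\Omega'\subset\{g=0\}$ (possible since, near $x_0$, $Z$ is cut out by finitely many holomorphic functions, at least one of which is not identically zero; the case where $Z$ contains an open set being vacuous). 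I would then fix a smooth function $\chi\colon\mathbb R\to[0,1]$ with $\chi\equiv0$ on $(-\infty,-1]$ and $\chi\equiv1$ on $[0,+\infty)$, and set $\theta_\varepsilon:=\chi\bigl(\log|g|/|\log\varepsilon|\bigr)$ for $\varepsilon\in(0,1)$. Then $\theta_\varepsilon\in C^\infty(\Omega')$ (it is identically $0$ in a neighbourhood of $\{g=0\}$), $0\le\theta_\varepsilon\le1$, $\theta_\varepsilon\equiv1$ on $\{|g|\ge1\}$, $\theta_\varepsilon\to1$ pointwise on $\Omega'\setminus\{g=0\}$, and $\bar\partial\theta_\varepsilon$ is supported in the shell $\{\varepsilon\le|g|\le1\}$ with $|\bar\partial\theta_\varepsilon|\le C|\log\varepsilon|^{-1}\,|\partial g/g|$ there.

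The key step is the energy decay
$$\int_{K}|\bar\partial\theta_\varepsilon|^2\,dV\ \longrightarrow\ 0\qquad(\varepsilon\to0)\quad\text{for every compact }K\subset\Omega'.$$
To prove it I would introduce the bounded plurisubharmonic function $u_\varepsilon:=\log\max(|g|,\varepsilon)$, which satisfies $\|u_\varepsilon\|_{L^\infty(K)}=O(|\log\varepsilon|)$, has $\bar\partial u_\varepsilon=\mathbb I_{\{|g|>\varepsilon\}}\bar\partial\log|g|$, and — by the Lelong–Poincaré identity $\tfrac{i}{\pi}\partial\bar\partial\log|g|=[\operatorname{div} g]$ — has $i\partial\bar\partial u_\varepsilon$ of mass on $K$ bounded uniformly in $\varepsilon$. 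Integrating $\partial u_\varepsilon\wedge\bar\partial u_\varepsilon=\partial(u_\varepsilon\,\bar\partial u_\varepsilon)-u_\varepsilon\,\partial\bar\partial u_\varepsilon$ against a fixed compactly supported cut-off then yields $\int_K|\bar\partial u_\varepsilon|^2\,dV=O(|\log\varepsilon|)$, and since $|\bar\partial\theta_\varepsilon|\le C|\log\varepsilon|^{-1}|\bar\partial u_\varepsilon|$ on $\operatorname{supp}\bar\partial\theta_\varepsilon$ the claim follows, with decay rate $O(|\log\varepsilon|^{-1})$.

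With this in hand I would conclude by the usual density argument. Let $\phi$ be a smooth test form of complementary bidegree with $\operatorname{supp}\phi\Subset\Omega'$. Then $\theta_\varepsilon\phi$ is smooth with support a compact subset of $\Omega'\setminus\{g=0\}\subset\Omega\setminus Z$, so the hypothesis $\bar\partial v=h$ on $\Omega\setminus Z$ gives
$$\int_{\Omega'}v\wedge\bar\partial(\theta_\varepsilon\phi)=(-1)^{p+q}\int_{\Omega'}h\wedge\theta_\varepsilon\phi .$$
Writing $\bar\partial(\theta_\varepsilon\phi)=\bar\partial\theta_\varepsilon\wedge\phi+\theta_\varepsilon\,\bar\partial\phi$ and letting $\varepsilon\to0$: by dominated convergence ($h\in L^1_{\mathrm{loc}}$, $v\in L^2_{\mathrm{loc}}\subset L^1_{\mathrm{loc}}$, $\theta_\varepsilon\to1$ a.e., $0\le\theta_\varepsilon\le1$) the integrals $\int h\wedge\theta_\varepsilon\phi$ and $\int v\wedge\theta_\varepsilon\bar\partial\phi$ tend to $\int h\wedge\phi$ and $\int v\wedge\bar\partial\phi$, while Cauchy–Schwarz gives $\bigl|\int v\wedge\bar\partial\theta_\varepsilon\wedge\phi\bigr|\le\|\phi\|_{L^\infty}\|v\|_{L^2(K)}\|\bar\partial\theta_\varepsilon\|_{L^2(K)}\to0$ with $K=\operatorname{supp}\phi$. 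Hence $\int v\wedge\bar\partial\phi=(-1)^{p+q}\int h\wedge\phi$ for every such $\phi$, i.e. $\bar\partial v=h$ near $x_0$; since $x_0\in\Omega$ was arbitrary, $\bar\partial v=h$ on $\Omega$.

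I expect the only genuinely non-formal point to be the energy bound $\int_K|\bar\partial\theta_\varepsilon|^2\to0$, which is exactly where the hypothesis that $Z$ is \emph{complex} analytic (real codimension $\ge2$, hence $W^{1,2}$-negligible) is used; everything else is bookkeeping — tracking bidegrees and signs in the distributional pairing, the pointwise and $L^2$ behaviour of the cut-offs, the elementary dominated-convergence limits, and the routine check that $\theta_\varepsilon$ is globally smooth on $\Omega'$ (so that $\theta_\varepsilon\phi$ is an honest test form), which holds because $\chi$ is smooth and flat at $-\infty$.
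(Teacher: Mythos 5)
The paper does not prove this lemma --- it is quoted verbatim from Demailly's 1982 paper --- so there is no internal proof to compare against; your strategy (localize, dominate $Z$ by a hypersurface $\{g=0\}$, and remove it with cut-offs $\theta_\varepsilon=\chi(\log|g|/|\log\varepsilon|)$ whose $\bar\partial$ has small $L^2$ norm) is exactly the classical one, and the bookkeeping parts (smoothness of $\theta_\varepsilon$, admissibility of $\theta_\varepsilon\phi$ as a test form on $\Omega\setminus Z$, the dominated-convergence limits, Cauchy--Schwarz against $v\in L^2_{loc}$) are all fine. The genuine gap is in what you yourself identify as the key step. The integration by parts you invoke, $\int\rho\,i\partial u_\varepsilon\wedge\bar\partial u_\varepsilon\wedge\beta=-\int u_\varepsilon\,i\partial\rho\wedge\bar\partial u_\varepsilon\wedge\beta-\int\rho\,u_\varepsilon\,i\partial\bar\partial u_\varepsilon\wedge\beta$, controls the second term by $\|u_\varepsilon\|_{L^\infty}\cdot\mathrm{mass}(i\partial\bar\partial u_\varepsilon)=O(|\log\varepsilon|)$ as you say, but the first term is not negligible: estimating it by Cauchy--Schwarz reintroduces $\bigl(\int|\bar\partial u_\varepsilon|^2\bigr)^{1/2}$ with a factor $\|u_\varepsilon\|_{L^\infty}=O(|\log\varepsilon|)$, so the self-improving inequality $E\le C|\log\varepsilon|\,E^{1/2}+C|\log\varepsilon|$ only yields $E=\int_K|\bar\partial u_\varepsilon|^2=O(|\log\varepsilon|^2)$. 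That gives $\int_K|\bar\partial\theta_\varepsilon|^2=O(1)$ rather than the decay $O(|\log\varepsilon|^{-1})$ you claim, and $O(1)$ does not by itself force $\int v\wedge\bar\partial\theta_\varepsilon\wedge\phi\to0$.

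The bound $\int_{K\cap\{\varepsilon<|g|<1\}}|\partial g/g|^2=O(|\log\varepsilon|)$ is in fact true, but it needs a sharper input than Chern--Levine--Nirenberg-type estimates: e.g.\ a dyadic decomposition of the shell into $\approx|\log\varepsilon|$ annuli $\{2^{-k-1}<|g|\le 2^{-k}\}$, each contributing $O(1)$ by the coarea formula together with the uniform bound on the $(2n-2)$-volume of the fibres $g^{-1}(w)\cap K$; equivalently, the slicing identity expressing $\int_{\{s<\log|g|<t\}}d\log|g|\wedge d^c\log|g|\wedge\beta$ as $\int_s^t\bigl(\int dd^c\max(\log|g|,\tau)\wedge\beta\bigr)d\tau$ with the inner mass bounded uniformly in $\tau$ by the mass of the divisor of $g$. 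Alternatively, your argument closes without the sharp rate: $\bar\partial\theta_\varepsilon\to0$ pointwise on $\{g\ne0\}$ (since $\chi'(\log|g|/|\log\varepsilon|)\to\chi'(0^-)=0$ and the prefactor is $|\log\varepsilon|^{-1}$), and a sequence that is bounded in $L^2(K)$ and tends to $0$ a.e.\ converges weakly to $0$ in $L^2(K)$, so $\int v\wedge\bar\partial\theta_\varepsilon\wedge\phi\to0$ already follows from the $O(1)$ bound that your integration by parts actually delivers. Either repair should be made explicit; as written, the decisive estimate is asserted but not proved. (A cosmetic point: the lemma implicitly assumes $Z$ is nowhere dense --- if $Z$ contained a neighborhood of $x_0$ the conclusion would be false there, not vacuous --- but this is the standard reading of ``analytic subset'' here.)
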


Let $M$ be a complex manifold. Let $\omega$ be a continuous Hermitian metric on $M$. Let $dV_M$ be a continuous volume form on $M$. We denote by $L^2_{p,q}(M,\omega,dV_M)$ the spaces of $L^2$ integrable $(p,q)$ forms over $M$ with respect to $\omega$ and $dV_M$. It is known that $L^2_{p,q}(M,\omega,dV_M)$ is a Hilbert space.
\begin{Lemma}
\label{weakly convergence}
Let $\{u_n\}_{n=1}^{+\infty}$ be a sequence of $(p,q)$ forms in $L^2_{p,q}(M,\omega,dV_M)$ which is weakly convergent to $u$. Let $\{v_n\}_{n=1}^{+\infty}$ be a sequence of Lebesgue measurable real functions on $M$ which converges pointwise to $v$. We assume that there exists a constant $C>0$ such that $|v_n|\le C$ for any $n$. Then $\{v_nu_n\}_{n=1}^{+\infty}$ weakly converges to $vu$ in $L^2_{p,q}(M,\omega,dV_M)$.
\end{Lemma}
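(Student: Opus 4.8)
The plan is to deduce the statement directly from the definition of weak convergence, using only two standard facts: that a weakly convergent sequence in a Hilbert space is norm-bounded (the uniform boundedness principle), and the dominated convergence theorem.

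First I would fix an arbitrary $g\in L^2_{p,q}(M,\omega,dV_M)$; it suffices to prove that $\langle v_nu_n,g\rangle\to\langle vu,g\rangle$, where $\langle\cdot,\cdot\rangle$ denotes the inner product of $L^2_{p,q}(M,\omega,dV_M)$. Since each $v_n$ is a bounded real-valued function, multiplication by $v_n$ is a bounded self-adjoint operator on $L^2_{p,q}(M,\omega,dV_M)$, so $\langle v_nu_n,g\rangle=\langle u_n,v_ng\rangle$. I would also record at this stage that $v_n\to v$ pointwise together with $|v_n|\le C$ forces $|v|\le C$ almost everywhere, so that $vg$ and all the $v_ng$ indeed lie in $L^2_{p,q}(M,\omega,dV_M)$.

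Next I would use the telescoping decomposition
\[
\langle v_nu_n,g\rangle-\langle vu,g\rangle=\langle u_n,(v_n-v)g\rangle+\bigl(\langle u_n,vg\rangle-\langle u,vg\rangle\bigr)
\]
and treat the two pieces separately. The second piece tends to $0$ as $n\to+\infty$ because $u_n$ converges weakly to $u$ and $vg$ is a fixed element of $L^2_{p,q}(M,\omega,dV_M)$. For the first piece, the uniform boundedness principle gives $A:=\sup_n\|u_n\|<+\infty$, so by the Cauchy--Schwarz inequality
\[
\bigl|\langle u_n,(v_n-v)g\rangle\bigr|\le A\,\|(v_n-v)g\|=A\Bigl(\int_M|v_n-v|^2\,|g|^2\,dV_M\Bigr)^{1/2},
\]
where $|g|^2$ denotes the pointwise norm of $g$ with respect to $\omega$. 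Since $|v_n-v|^2\,|g|^2\le 4C^2|g|^2\in L^1(M,dV_M)$ and $|v_n-v|^2\,|g|^2\to 0$ pointwise almost everywhere, the dominated convergence theorem forces the integral to $0$. Combining the two estimates yields $\langle v_nu_n,g\rangle\to\langle vu,g\rangle$ for every $g$, which is exactly the assertion that $v_nu_n$ converges weakly to $vu$.

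There is no genuine obstacle here: the only ingredient beyond routine manipulation is the uniform bound $\sup_n\|u_n\|<+\infty$, which is what makes the Cauchy--Schwarz estimate uniform in $n$; everything else is a single application of the dominated convergence theorem to the scalar integrands $|v_n-v|^2\,|g|^2$. If one prefers to avoid invoking the uniform boundedness principle explicitly, one can instead assume (as is automatic in all applications of this lemma within the paper) that $\sup_n\|u_n\|<+\infty$ is already known, and the argument goes through verbatim.
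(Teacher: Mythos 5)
Your proof is correct and follows essentially the same approach as the paper's: both split $\langle v_nu_n,g\rangle-\langle vu,g\rangle$ into a term handled by Cauchy--Schwarz together with the uniform bound on $\|u_n\|$ and the dominated convergence theorem, and a term handled directly by the weak convergence of $u_n$ to $u$ against the fixed element $vg$.
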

\begin{proof}Let $g\in L^2_{p,q}(M,\omega,dV_M)$. Consider
\begin{equation*}
  \begin{split}
     I & =|\langle v_nu_n,g\rangle-\langle vu,g\rangle| \\
       &=|\int_{M}(v_nu_n,g)_{\omega}dV_M-\int_{M}(vu,g)_{\omega}dV_M|\\
       &\le|\int_{M}(v_nu_n-vu_n,g)_{\omega}dV_M|+|\int_{M}(vu_n-vu,g)_{\omega}dV_M|\\
       &=|\int_{M}(u_n,v_ng-vg)_{\omega}dV_M|+|\int_{M}(u_n-u,vg)_{\omega}dV_M|\\
       &\le||u_n||\cdot||v_ng-vg||+|\int_{M}(u_n-u,vg)_{\omega}dV_M|.
  \end{split}
\end{equation*}
Denote $I_1:=||u_n||\cdot||v_ng-vg||$ and $I_2:=|\int_{M}(u_n-u,vg)_{\omega}dV_M|$. It follows from $\{u_n\}_{n=1}^{+\infty}$ weakly converges to $u$ that $||u_n||$ is uniformly bounded with respect to $n$. Note that $|v_n|$ is uniformly bounded with respect to $n$. We know $|v|<C$ and then $vg\in L^2_{p,q}(M,\omega,dV_M)$. Hence we have $I_2 \to 0$ as $n\to+\infty$. It follows from Lebesgue dominated convergence theorem that we have $\lim_{n\to+\infty}I_1 = 0$.

Hence $\lim_{n\to+\infty}I = 0$ and we know $\{v_nu_n\}_{n=1}^{+\infty}$ weakly converges to $vu$ in $L^2_{p,q}(M,\omega,dV_M)$.
\end{proof}

The following notations can be referred to \cite{Boucksom note}.

Let $X$ be a complex manifold. An upper semi-continuous function $u:X\to [-\infty,+\infty)$ is quasi-plurisubharmonic if it is locally of the form $u=\varphi+f$ where $\varphi$ is plurisubharmonic and $f$ is smooth.
Let $\theta$ be a closed, real $(1,1)$ form on $X$. By Poincar\'{e} lemma, $\theta$ is locally of the form $\theta=dd^c f$ for a smooth real-valued function $f$ which is called a local potential of $\theta$. We call a quasi-plurisubharmonic function $u$ is $\theta$-plurisubharmonic if $\theta+dd^c u\ge 0$ in the sense of currents.
\begin{Lemma}[see \cite{Demaillybook}, see also \cite{Boucksom note}]
\label{regular of max} For arbitrary $\eta=(\eta_1,\ldots,\eta_p)\in (0,+\infty)^p$, the function
$$M_{\eta}(t_1,\ldots,t_p)=\int_{\mathbb{R}^p}\max\{t_1+h_1,\ldots,t_p+h_p\}\prod\limits_{1\le j\le p}\theta(\frac{h_j}{\eta_j})dh_1\ldots dh_p$$
possesses the following properties:\par
(1) $M_{\eta}(t_1,\ldots,t_p)$ is non decreasing in all variables, smooth and convex on $\mathbb{R}^p$;\par
(2) $\max\{t_1,\ldots,t_p\}\le M_{\eta}(t_1,\ldots,t_p)\le \max\{t_1+\eta_1,\ldots,t_p+\eta_p\}$;\par
(3) $M_{\eta}(t_1,\ldots,t_p)=M_{\eta_1,\ldots,\hat{\eta}_j,\ldots,\eta_p}(t_1,\ldots,\hat{t}_j,\ldots,t_p)$ if $t_j+\eta_j\le \max\limits_{k\neq j}\{t_k-\eta_k\}$;\par
(4) $M_{\eta}(t_1+a,\ldots,t_p+a)=M_{\eta}(t_1,\ldots,t_p)+a$ for any $a\in\mathbb{R}$;\par
(5) if $u_1,\ldots,u_p$ are plurisubharmonic functions, then $u=M_{\eta}(u_1,\ldots,u_p)$ is plurisubharmonic;\par
(6) if $u_1,\ldots,u_p$ are $\theta$-plurisubharmonic functions, then $u=M_{\eta}(u_1,\ldots,u_p)$ is $\theta$-plurisubharmonic function.
\end{Lemma}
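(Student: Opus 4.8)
\medskip
\noindent\emph{Proof proposal.}
The plan is to realize $M_\eta$ as a mollification of the convex function $(t_1,\dots,t_p)\mapsto\max\{t_1,\dots,t_p\}$ and to read off the six properties from elementary features of convolution. After the standard normalization — $\theta\ge0$ smooth, even, supported in $[-1,1]$, with $\int_{\mathbb R}\theta(h)\,dh=1$, and the factors $\eta_j^{-1}$ absorbed into the measure — I would write
\[
M_\eta(t_1,\dots,t_p)=\int_{\mathbb R^p}\max\{t_1+h_1,\dots,t_p+h_p\}\,d\nu_\eta(h),\qquad d\nu_\eta(h)=\prod_{j=1}^{p}\tfrac1{\eta_j}\theta\!\left(\tfrac{h_j}{\eta_j}\right)dh_j,
\]
so that $\nu_\eta$ is a probability measure supported in $\prod_j[-\eta_j,\eta_j]$ with zero mean in each coordinate. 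For (1), changing variables $h_j\mapsto h_j-t_j$ moves all the $t$-dependence into the $C_c^\infty$ kernel, so differentiation under the integral sign gives $M_\eta\in C^\infty$; monotonicity is clear since $\max$ is non-decreasing in each slot and $\nu_\eta\ge0$; and convexity holds because an integral of the convex functions $t\mapsto\max_j\{t_j+h_j\}$ against a probability measure is convex. For (2), the upper bound is the pointwise estimate $\max_j\{t_j+h_j\}\le\max_j\{t_j+\eta_j\}$ on $\operatorname{supp}\nu_\eta$, and the lower bound is Jensen's inequality together with the vanishing of the coordinate means of $\nu_\eta$ (this is where evenness of $\theta$ enters). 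Property (4) is the identity $\max_j\{t_j+a+h_j\}=\max_j\{t_j+h_j\}+a$ integrated against $\nu_\eta$, and for (3) the hypothesis $t_j+\eta_j\le\max_{k\neq j}\{t_k-\eta_k\}$ forces $t_j+h_j\le\max_{k\neq j}\{t_k+h_k\}$ on $\operatorname{supp}\nu_\eta$, so the $j$-th entry never attains the maximum, the $h_j$-integral factors out and equals $1$, and $M_\eta$ collapses to $M_{\eta_1,\dots,\hat\eta_j,\dots,\eta_p}$.

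The substantive content is (5) and (6), and I would first treat the case of smooth arguments $u_1,\dots,u_p$ via the chain rule
\[
dd^c\big(M_\eta(u_1,\dots,u_p)\big)=\sum_{j=1}^{p}\frac{\partial M_\eta}{\partial t_j}(u)\,dd^cu_j+\sum_{j,k=1}^{p}\frac{\partial^2M_\eta}{\partial t_j\partial t_k}(u)\,du_j\wedge d^cu_k.
\]
The three inputs I would extract from (1) and (4) are: the Hessian of $M_\eta$ is positive semidefinite (convexity), so the second sum is a nonnegative $(1,1)$-form; the first partials $\partial M_\eta/\partial t_j$ are nonnegative (monotonicity); and differentiating $M_\eta(t+a\mathbf1)=M_\eta(t)+a$ at $a=0$ gives $\sum_j\partial M_\eta/\partial t_j\equiv1$. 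Hence $dd^cu_j\ge0$ for all $j$ yields $dd^cM_\eta(u)\ge0$, and $dd^cu_j\ge-\theta$ for all $j$ yields $dd^cM_\eta(u)\ge-\big(\sum_j\partial M_\eta/\partial t_j\big)\theta=-\theta$, which are (5) and (6) in the smooth case.

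To reach the general case I would localize: on a coordinate ball write $\theta=dd^cf$ for a smooth local potential, so that each $u_j+f$ is plurisubharmonic, take decreasing mollifications $u_j^{(\varepsilon)}\downarrow u_j$, and apply the smooth case to $u_j^{(\varepsilon)}-f$. Because $M_\eta$ is non-decreasing in each variable, the functions $M_\eta(u_1^{(\varepsilon)},\dots,u_p^{(\varepsilon)})$ decrease as $\varepsilon\downarrow0$, and because $M_\eta$ is (Lipschitz) continuous they converge pointwise to $M_\eta(u_1,\dots,u_p)$, which is bounded below by $\max_j u_j$ and hence $\not\equiv-\infty$; a decreasing limit of $\theta$-plurisubharmonic functions that is not identically $-\infty$ is again $\theta$-plurisubharmonic, so (5) and (6) follow. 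The one point I expect to require care is exactly this limiting step for $\theta$-plurisubharmonic arguments on a manifold — commuting $dd^c$ with the mollification — which is why I would reduce to coordinate charts and local potentials from the outset; everything else is bookkeeping. For the form of the statement actually needed in this paper one may also simply cite \cite{Demaillybook}.
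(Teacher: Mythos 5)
Your proposal is correct, but for part (6) it takes a genuinely different (and longer) route than the paper. The paper cites Demailly's book for (1)--(5) without comment, and for (6) gives a one-line deduction that avoids any computation: choosing a local potential $f$ with $\theta=dd^cf$, each $u_j+f$ is plurisubharmonic, and property (4) gives $M_\eta(u_1+f,\dots,u_p+f)=M_\eta(u_1,\dots,u_p)+f$; by (5) the left-hand side is plurisubharmonic, hence $M_\eta(u_1,\dots,u_p)+f$ is plurisubharmonic, which is exactly the statement that $M_\eta(u_1,\dots,u_p)$ is $\theta$-plurisubharmonic. You instead prove the smooth case of (5) and (6) simultaneously via the chain rule, reading off nonnegativity of the Hessian and gradient and the crucial identity $\sum_j\partial M_\eta/\partial t_j\equiv1$ from (1) and (4), and then mollify in a chart. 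Both are valid: the paper's argument is shorter and cleanly separates (6) as a formal consequence of (4)+(5), while your computation makes explicit the role of the gradient normalization, which is genuinely instructive. One remark on economy: once you have (5) for general (not just smooth) plurisubharmonic arguments, the mollification step in your treatment of (6) is unnecessary — the paper's potential-shift argument lets you bypass it entirely, and that is worth keeping in mind since commuting $dd^c$ with regularization is exactly the point you flag as delicate.
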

\begin{proof}The proof of (1)-(5) can be referred to \cite{Demaillybook} and the proof of (6) can be referred to \cite{Boucksom note}. For the convenience of the readers, we recall the proof of (6).

 Let $f$ be a local potential of $\theta$. We know $f+u_i$ is plurisubharmonic function. It follows from (4) and (5) that $M_{\eta}(u_1+f,\ldots,u_p+f)=M_{\eta}(u_1,\ldots,u_p)+f$ is plurisubharmonic. Hence $u=M_{\eta}(u_1,\ldots,u_p)$ is $\theta$-plurisubharmonic function.
\end{proof}

\subsection{Proof of Lemma \ref{L2 method}}
Now we begin to prove Lemma \ref{L2 method}.

Note that $M\backslash \{F=0\}$ is a weakly pseudoconvex K{\"a}hler manifold. The following remark shows that we can assume that $F$ has no zero points on $M$.
\begin{Remark}Assume that there exists a holomorphic $(n,0)$ form $\hat{F}$ on $M\backslash \{F=0\}$ such that
\begin{equation*}
  \begin{split}
      & \int_{M\backslash \{F=0\}}|\hat{F}-(1-b_{t_0,B}(\Psi))fF^{1+\delta}|^2e^{-\varphi+v_{t_0,B}(\Psi)-\Psi}c(-v_{t_0,B}(\Psi)) \\
      \le & \left(\frac{1}{\delta}c(T)e^{-T}+\int_{T}^{t_0+B}c(s)e^{-s}ds\right)
       \int_{M\backslash \{F=0\}}\frac{1}{B}\mathbb{I}_{\{-t_0-B<\Psi<-t_0\}}|f|^2e^{-\varphi_{\alpha}-\Psi}.
  \end{split}
\end{equation*}

As $v_{t_0,B}(\Psi)\ge\Psi$ and $c(t)e^{-t}$ is decreasing with respect to $t$, we have
$$e^{-\varphi+v_{t_0,B}(\Psi)-\Psi}c(-v_{t_0,B}(\Psi))\ge e^{-\varphi}c(-\Psi)=e^{-\varphi_\alpha}c(-\Psi)e^{-(1+\delta)\max\{\psi+T,2\log|F|\}}.$$
Let $K$ be any compact subset of $M$. Note that $\varphi+\Psi$ is plurisubharmonic function on $M$ ,$v_{t_0,B}(t)\ge -t_0-B$ and $c(t)e^{-t}$ is decreasing with respect to $t$.
 Then we have

\begin{equation*}
  \begin{split}
     &\int_{(M\backslash \{F=0\})\cap K}|\hat{F}|^2 \\
       \le&2\int_{(M\backslash \{F=0\})\cap K}|(1-b_{t_0,B}(\Psi))fF^{1+\delta}|^2
       +
       2\int_{(M\backslash \{F=0\})\cap K}|\hat{F}-(1-b_{t_0,B}(\Psi))fF^{1+\delta}|^2\\
       \le&2\left(\sup_K|F^{1+\delta}|^2\right)\int_{\{\Psi<-t_0\}\cap K}|f|^2\\
       +&
       \frac{2}{M_K}\int_{(M\backslash \{F=0\})\cap K}|\hat{F}-(1-b_{t_0,B}(\Psi))fF^{1+\delta}|^2e^{-\varphi+v_{t_0,B}(\Psi)-\Psi}c(-v_{t_0,B}(\Psi))\\
       <&+\infty,
  \end{split}
\end{equation*}
where $M_K$ is a positive number.
As $K$ is arbitrarily chosen, we know that there exists a holomorphic $(n,0)$ form $\tilde{F}$ on $M$ such that $\tilde{F}=\hat{F}$ on $M\backslash \{F=0\}$. And we have
\begin{equation*}
  \begin{split}
      & \int_{M}|\tilde{F}-(1-b_{t_0,B}(\Psi))fF^{1+\delta}|^2e^{-\varphi+v_{t_0,B}(\Psi)-\Psi}c(-v_{t_0,B}(\Psi)) \\
      \le & \left(\frac{1}{\delta}c(T)e^{-T}+\int_{T}^{t_0+B}c(s)e^{-s}ds\right)
       \int_{M}\frac{1}{B}\mathbb{I}_{\{-t_0-B<\Psi<-t_0\}}|f|^2e^{-\varphi_{\alpha}-\Psi}.
  \end{split}
\end{equation*}
\end{Remark}
The following remark shows that we can assume that $c(t)$ is a smooth function.
\begin{Remark} We firstly introduce the regularization process of $c(t)$.

Let $f(x)=2\mathbb{I}_{(-\frac{1}{2},\frac{1}{2})}\ast\rho(x)$ be a smooth function on $\mathbb{R}$, where $\rho$ is the kernel of convolution satisfying $\text{supp}(\rho)\subset (-\frac{1}{3},\frac{1}{3})$ and $\rho>0$.

Let $g_i(x)=\left\{ \begin{array}{rcl}
if(ix) & \mbox{if}
&x\le 0 \\ if(i^2 x) & \mbox{if} & x>0
\end{array}\right.$, then $\{g_i\}_{i\in \mathbb{N}^+}$ is a family of smooth functions on $\mathbb{R}$ satisfying:

(1) $\text{supp}(g)\subset [-\frac{1}{i},\frac{1}{i}]$, $g_i(x)\ge 0$ for any $x\in\mathbb{R}$,

(2) $\int_{-\frac{1}{i}}^0 g_i(x)dx=1$, $\int^{\frac{1}{i}}_0 g_i(x)dx\le\frac{1}{i}$ for any $i \in \mathbb{N}^+$.

Let $\tilde{h}(t)$ be an extension of the function $c(t)e^{-t}$ from $[T,+\infty)$ to $\mathbb{R}$ such that

(1) $\tilde{h}(t)=h(t):=c(t)e^{-t}$ on $[T,+\infty)$;

(2) $\tilde{h}(t)$ is decreasing with respect to $t$;

(3) $\lim_{t\to T-0}\tilde{h}(t)=c(T)e^{-T}$.

Denote $c_{i}(t):=e^t\int_{\mathbb{R}}\tilde{h}(t+y)g_{i}(y)dy$. By the construction of convolution, we know $c_{i}(t)\in C^{\infty}(-\infty,+\infty)$.  For any $t\ge T$, we have
$$c_i(t)-c(t)\ge e^t\left(\int_{-\frac{1}{i}}^0(\tilde{h}(t+y)-\tilde{h}(t))g_i(y)dy\right)\ge 0.$$

As $\tilde{h}(t)$ is decreasing with respect to $t$, we know that $c_i(t)e^{-t}$ is also decreasing with respect to $t$. Hence $c_i(t)e^{-t}$ is locally $L^1$ integrable on $\mathbb{R}$.

As $\tilde{h}(t)$ is decreasing with respect to $t$, then set $\tilde{h}^{-}(t)=\lim\limits_{s\to t-0}\tilde{h}(s)\ge h(t)$ for any $t\in\mathbb{R}$. Note that $c^{-}(t):=\lim\limits_{s\to t-0}\tilde{h}(s)e^t\ge c(t)$ for any $t\ge T$.

Now we prove $\lim\limits_{i\to +\infty}c_i(t)e^{-t}=\tilde{h}^{-}(t)$. In fact, we have
 \begin{equation}
\begin{split}
|c_i(t)e^{-t}-\tilde{h}^{-}(t)|&\le \int_{-\frac{1}{i}}^0|\tilde{h}(t+y)-h^{-}(t)|g_i(y)dy\\
&+\int_{0}^{\frac{1}{i}}\tilde{h}(t+y)g_i(y)dy.
\label{clt}
\end{split}
\end{equation}
For any $\epsilon>0 $, there exists $\delta>0$ such that $|h(t-\delta)-h^{-}(t)|<\epsilon$. Then $\exists N>0$, such that for any $n>N$, $t\ge t+y>t-\delta$ for all $y \in [-\frac{1}{i},0)$ and $\frac{1}{i}<\epsilon$. It follows from \eqref{clt} that
$$|c_i(t)e^{-t}-\tilde{h}^{-}(t)|\le \epsilon +\epsilon \tilde{h}(t),$$
hence $\lim\limits_{i\to +\infty}c_i(t)e^{-t}=\tilde{h}^{-}(t)$ for any $t\in\mathbb{R}$. Especially, we have $\lim\limits_{i\to +\infty}c_i(T)e^{-T}=\tilde{h}^{-}(T)=c(T)e^{-T}$.

Assume that for each $i$, we have a holomorphic $(n,0)$ form $\tilde{F}_i$ on $M$ such that
\begin{equation}\label{estimate for F_n}
  \begin{split}
      & \int_{M}|\tilde{F}_i-(1-b_{t_0,B}(\Psi))fF^{1+\delta}|^2e^{-\varphi+v_{t_0,B}(\Psi)-\Psi}c_i(-v_{t_0,B}(\Psi)) \\
      \le & \left(\frac{1}{\delta}c_i(T)e^{-T}+\int_{T}^{t_0+B}c_i(s)e^{-s}ds\right)
       \int_{M}\frac{1}{B}\mathbb{I}_{\{-t_0-B<\Psi<-t_0\}}|f|^2e^{-\varphi_{\alpha}-\Psi}.
  \end{split}
\end{equation}

By construction of $c_i(t)$, we have
\begin{equation}
\begin{split}
&\int_T^{t_0+B}c_i(t_1)e^{-t_1}dt_1\\
=&\int_T^{t_0+B}\int_{\mathbb{R}}\tilde{h}(t_1+y)g_i(y)dydt_1\\
=&\int_{\mathbb{R}}g_i(y)\left(\int_{T}^{t_0+B}\tilde{h}(t_1+y)dt_1\right)dy\\
=&\int_{\mathbb{R}}g_i(y)\left(\int_{T+y}^{t_0+B+y}\tilde{h}(s)ds\right)dy\\
=&\int_{\mathbb{R}}g_i(y)\left(\int_{T}^{t_0+B}\tilde{h}(s)ds
+\int_{t_0+B}^{t_0+B+y}\tilde{h}(s)ds-\int_{T}^{T+y}\tilde{h}(s)ds\right)dy,
\label{integral of cn}
\end{split}
\end{equation}
then it follows from the construction of $g_i(t)$, $\tilde{h}(t)$ is decreasing with respect to $t$, inequality \eqref{integral of cn} and $\tilde{h}(t)=c(t)e^{-t}$ on $[T,+\infty)$ that we have
\begin{equation}
\begin{split}
\lim_{i\to+\infty}\int_T^{t_0+B}c_i(t_1)e^{-t_1}dt_1=\int_{T}^{t_0+B}c(t_1)e^{-t_1}dt_1.
\label{limitof integral of cn}
\end{split}
\end{equation}
For any compact subset $K$ of $M$, we have $\inf\limits_i\inf\limits_K e^{v_{t_0,B}(\Psi)-\varphi-\Psi}c_i(-v_{t_0,B}(\Psi))\ge\inf\limits_K e^{v_{t_0,B}(\Psi)-\varphi-\Psi}c(-v_{t_0,B}(\Psi))$, then
\begin{equation}\nonumber
\begin{split}
\sup\limits_i\int_K|\tilde{F}_i-(1-b_{t_0,B}(\Psi))fF^{1+\delta}|^2<+\infty.
\end{split}
\end{equation}
Note that
\begin{equation}\nonumber
\begin{split}
\int_K|(1-b_{t_0,B}(\Psi))fF^{1+\delta}|^2\le(\sup_K|F^{1+\delta}|^2)\int_{K\cap \{\psi<-t_0\}} |f|^2<+\infty,
\end{split}
\end{equation}
then $\sup\limits_i\int_K|\tilde{F}_i|^2<+\infty$, which implies that there exists a subsequence of $\{\tilde{F}_i\}$ (also denoted by $\{\tilde{F}_i\}$), which is compactly convergent to a holomorphic $(n,0)$ form $\tilde{F}$ on $M$. Then it follows from inequality \eqref{estimate for F_n} and Fatou's Lemma that
\begin{equation}\nonumber
  \begin{split}
  &\int_{M}|\tilde{F}-(1-b_{t_0,B}(\Psi))fF^{1+\delta}|^2e^{-\varphi+v_{t_0,B}(\Psi)-\Psi}c(-v_{t_0,B}(\Psi)) \\
 \le &\int_{M}|\tilde{F}-(1-b_{t_0,B}(\Psi))fF^{1+\delta}|^2e^{-\varphi+v_{t_0,B}(\Psi)-\Psi}c^-(-v_{t_0,B}(\Psi)) \\
      \le&\liminf_{i\to+\infty}\int_{M}|\tilde{F}_i-(1-b_{t_0,B}(\Psi))fF^{1+\delta}|^2e^{-\varphi+v_{t_0,B}(\Psi)-\Psi}c_i(-v_{t_0,B}(\Psi)) \\
      \le & \liminf_{i\to+\infty}\left(\frac{1}{\delta}c_i(T)e^{-T}+\int_{T}^{t_0+B}c_i(s)e^{-s}ds\right)
       \int_{M}\frac{1}{B}\mathbb{I}_{\{-t_0-B<\Psi<-t_0\}}|f|^2e^{-\varphi_{\alpha}-\Psi}\\
       = & \left(\frac{1}{\delta}c(T)e^{-T}+\int_{T}^{t_0+B}c(s)e^{-s}ds\right)
       \int_{M}\frac{1}{B}\mathbb{I}_{\{-t_0-B<\Psi<-t_0\}}|f|^2e^{-\varphi_{\alpha}-\Psi}.
  \end{split}
\end{equation}

\end{Remark}

In the following discussion, we assume that $F$ has no zero points on $M$ and $c(t)$ is smooth.

As $M$ is weakly pseudoconvex, there exists a smooth plurisubharmonic
exhaustion function $P$ on $M$. Let $M_j:=\{P<j\}$ $(k=1,2,...,) $. We choose $P$ such that
$M_1\ne \emptyset$.\par
Then $M_j$ satisfies $M_1 \Subset  M_2\Subset  ...\Subset
M_j\Subset  M_{j+1}\Subset  ...$ and $\cup_{j=1}^n M_j=M$. Each $M_j$ is weakly
pseudoconvex K\"ahler manifold with exhaustion plurisubharmonic function
$P_j=1/(j-P)$.
\par
\emph{We will fix $j$ during our discussion until step 8.}

\

\emph{Step 1: Regularization of $\Psi$ and $\varphi_\alpha+\psi$. }

We note that there must exists a continuous nonnegative $(1,1)$-form $\varpi$ on $M_{j+1}$ satisfying
\begin{equation}\nonumber
(\sqrt{-1}\Theta_{TM}+\varpi \otimes Id_{TM})(\kappa_1 \otimes \kappa_2,\kappa_1
\otimes \kappa_2)\ge 0,
\end{equation}
for $\forall \kappa_1,\kappa_2 \in TM$ on $M_{j+1}$.

\par
Let $M=M_{j+1}$, $\Omega=M_{j}$, $T=\frac{\sqrt{-1}}{\pi}\partial\bar{\partial}\psi$
, $\gamma =0$ in Lemma \ref{regularization on cpx mfld}, then there exists a family of functions $\psi_{\zeta,\rho}$ ($\zeta\in(0,+\infty)$ and $\rho\in(0,\rho_1)$ for some positive $\rho_1$)
on $M_{j+1}$ such that\\
(1) $\psi_{\zeta,\rho}$ is a quasi-plurisubharmonic function on a neighborhood of $\overline{M_j}$, smooth on $M_{j+1}\backslash E_{\zeta}(\psi)$, increasing with respect to $\zeta$ and $\rho$ on $M_j$ and converges to $\psi$ on $M_j$ as $\rho \to 0$,\\
(2) $\frac{\sqrt{-1}}{\pi}\partial\bar{\partial}\psi_{\zeta,\rho}\ge-\zeta\varpi-\delta\omega$ on $M_j$,\\
where $E_{\zeta}(\psi):=\{x\in M: v(\psi,x)\ge \zeta\}$ is the upper-level set of Lelong number and $\{\delta_{\rho}\}$ is an increasing family of positive numbers such that $\lim_{\rho\to 0}\delta_{\rho}=0$.

Let $\rho=\frac{1}{m}$. Let $\tilde{\delta}_m:=\delta_{\frac{1}{m}}$ and $\zeta=\tilde{\delta}_m$. Denote $\psi_m:=\psi_{\tilde{\delta}_m,\frac{1}{m}}$. Then we have a sequence of functions $\{\psi_m\}$ satisfying\\
(1') $\psi_m$ is quasi-plurisubharmonic function on $\overline{M_j}$, smooth on $M_{j+1}\backslash E_{m}(\psi)$, decreasing with respect to $m$ and converges to $\psi$ on $M_j$ as $m\to+\infty$,\\
(2') $\frac{\sqrt{-1}}{\pi}\partial\bar{\partial}\psi_{m}\ge-\tilde{\delta}_m\varpi-\tilde{\delta}_m\omega$ on $M_j$,\\
where $E_{m}(\psi)=\{x\in X:v(\psi,x)\ge\frac{1}{m}\}$ is the upper level set of Lelong number and $\{\tilde{\delta}_m\}$ is an decreasing family of positive numbers such that $\lim_{m\to+\infty}\tilde{\delta}_m=0$.

As $M_j$ is relatively compact in $M$, there exists a positive number $b\ge 1$ such that $b\omega\ge\varpi$ on $M_j$. Then condition (2') becomes\\
(2'') $\frac{\sqrt{-1}}{\pi}\partial\bar{\partial}\psi_{m}\ge-\tilde{\delta}_m\varpi-\tilde{\delta}_m\omega
\ge-2b\tilde{\delta}_m\omega$ on $M_j$.

Denote $h:=\varphi_{\alpha}+\psi$. Note that $h$ is a plurisubharmonic function on $M$. Denote $h_l:=\max\{h,-l\}$, where $l\in\mathbb{Z}^+$. Note that $h_l$ is a plurisubharmonic function.  As $h_l\ge -l$, we know $v(h_l,z)=0$ for all $z\in M$. By using the similarly discussion as above, we have a sequence of functions $\{h_{m',l}\}$ on $M_{j+1}$ such that\\
(i) $h_{m',l}$ is quasi-plurisubharmonic function on $\overline{M_j}$, smooth on $M_{j+1}$, decreasing with respect to $m'$ and converges to $h_l$ on $M_j$ as $m'\to+\infty$,\\
(ii) $\frac{\sqrt{-1}}{\pi}\partial\bar{\partial}h_{m',l}
\ge-2b\delta_{m',l}\omega$ on $M_j$,\\
where  $\{\delta_{m',l}\}$ is an decreasing family of positive numbers such that $\lim_{m'\to+\infty}\delta_{m',l}=0$.

\emph{From now on, we will fix the positive integer $l$ during our discussion until step 7.}

For fixed $l$, we can assume that $\tilde{\delta}_n$ and $\delta_{n,l}$ are the same sequence of variable $n\in \mathbb{Z}^+$, since we can replace them by the term $\max\{\tilde{\delta}_n,\delta_{n,l}\}$. We denote both $\tilde{\delta}_n$ and $\delta_{n,l}$ by $\tilde{\delta}_n$ for simplicity.

Let $\eta_m=\{\frac{t_0-T}{3m},\frac{t_0-T}{3m}\}$ and we have the function $M_{\eta_m}(\psi_m+T,2\log|F|)$. Denote $M_{\eta_m}:=M_{\eta_m}(\psi_m+T,2\log|F|)$ for simplicity.  Note that $\psi_m+T$ is a $2b\tilde{\delta}_m\omega$-plurisubharmonic function. As $F$ is a holomorphic function, $\omega$ is a K\"ahler form and $b\tilde{\delta}_m>0$, we know that $2\log|F|$ is a $2b\tilde{\delta}_m\omega$-plurisubharmonic function. It follows from Lemma \ref{regular of max} that $M_{\eta_m}$ is a $2b\tilde{\delta}_m\omega$-plurisubharmonic function, i.e.,
$$\frac{\sqrt{-1}}{\pi}\partial\bar{\partial}M_{\eta_m}\ge -2\pi b\tilde{\delta}_m\omega.$$

Denote $\Psi_m:=\psi_m-M_{\eta_m}(\psi_m+T,2\log|F|)$. Then $\Psi_m$ is smooth on $M_j\backslash E_m$. It is easy to verify that when $m\to+\infty$, $\Psi_m\to \Psi$. It follows from Lemma \ref{regular of max} that we know\par
(1) if $\psi_m+T\le 2\log|F|-\frac{2(t_0-T)}{3m}$ holds, we have $\Psi_m=\psi_m-2\log|F|$;\par
(2) if $\psi_m+T\ge 2\log|F|+\frac{2(t_0-T)}{3m}$ holds, we have $\Psi_m=-T$;\par
(3) if $2\log|F|-\frac{2(t_0-T)}{3m}<\psi_m+T< 2\log|F|+\frac{2(t_0-T)}{3m}$ holds, we have $\max\{\psi_m+T,2\log|F|\}\le M_{\eta_m}\le (\psi_m+T+\frac{t_0-T}{m})$ and hence $-T-\frac{t_0-T}{m}\le\Psi_m\le -T$.

Thus we have $\{\Psi_m<-t_0\}= \{\psi_m-2\log|F|<-t_0\}\subset \{\psi-2\log|F|<-t_0\}=\{\Psi<-t_0\}$. We also note that $\Psi_m\le-T$ on $M_{j+1}$.

\

\emph{Step 2: Recall some constructions. }

To simplify our notations, we denote $b_{t_0,B}(t)$ by $b(t)$ and $v_{t_0,B}(t)$ by $v(t)$.

Let $\epsilon \in (0,\frac{1}{8}B)$. Let $\{v_\epsilon\}_{\epsilon \in
(0,\frac{1}{8}B)}$ be a family of smooth increasing convex functions on $\mathbb{R}$, such
that:
\par
(1) $v_{\epsilon}(t)=t$ for $t\ge-t_0-\epsilon$, $v_{\epsilon}(t)=constant$ for
$t<-t_0-B+\epsilon$;\par
(2) $v_{\epsilon}{''}(t)$ are convergence pointwise
to $\frac{1}{B}\mathbb{I}_{(-t_0-B,-t_0)}$,when $\epsilon \to 0$, and $0\le
v_{\epsilon}{''}(t) \le \frac{2}{B}\mathbb{I}_{(-t_0-B+\epsilon,-t_0-\epsilon)}$
for ant $t \in \mathbb{R}$;\par
(3) $v_{\epsilon}{'}(t)$ are convergence pointwise to $b(t)$ which is a continuous
function on R when $\epsilon \to 0$ and $0 \le v_{\epsilon}{'}(t) \le 1$ for any
$t\in \mathbb{R}$.\par
One can construct the family $\{v_\epsilon\}_{\epsilon \in (0,\frac{1}{8}B)}$  by
 setting
\begin{equation}\nonumber
\begin{split}
v_\epsilon(t):=&\int_{-\infty}^{t}(\int_{-\infty}^{t_1}(\frac{1}{B-4\epsilon}
\mathbb{I}_{(-t_0-B+2\epsilon,-t_0-2\epsilon)}*\rho_{\frac{1}{4}\epsilon})(s)ds)dt_1\\
&-\int_{-\infty}^{-t_0}(\int_{-\infty}^{t_1}(\frac{1}{B-4\epsilon}
\mathbb{I}_{(-t_0-B+2\epsilon,-t_0-2\epsilon)}*\rho_{\frac{1}{4}\epsilon})(s)ds)dt_1-t_0,
\end{split}
\end{equation}
where $\rho_{\frac{1}{4}\epsilon}$ is the kernel of convolution satisfying
$\text{supp}(\rho_{\frac{1}{4}\epsilon})\subset
(-\frac{1}{4}\epsilon,{\frac{1}{4}\epsilon})$.
Then it follows that
\begin{equation}\nonumber
v_\epsilon{''}(t)=\frac{1}{B-4\epsilon}
\mathbb{I}_{(-t_0-B+2\epsilon,-t_0-2\epsilon)}*\rho_{\frac{1}{4}\epsilon}(t),
\end{equation}
and
\begin{equation}\nonumber
v_\epsilon{'}(t)=\int_{-\infty}^{t}(\frac{1}{B-4\epsilon}
\mathbb{I}_{(-t_0-B+2\epsilon,-t_0-2\epsilon)}*\rho_{\frac{1}{4}\epsilon})(s)ds.
\end{equation}

Let $\eta=s(-v_\epsilon(\Psi_m))$ and $\phi=u(-v_\epsilon(\Psi_m))$, where $s \in
C^{\infty}([T,+\infty))$ satisfies $s\ge \frac{1}{\delta}$ and $u\in C^{\infty}([T,+\infty))$, such that $s'(t)\neq 0$ for any $t$, $u''s-s''>0$
and $s'-u's=1$.
Let $\Phi=\phi+h_{m',l}+\delta M_{\eta_m}$. Denote $\tilde{h}:=e^{-\Phi}$.

\

\emph{Step 3: Solving $\bar{\partial}$-equation with error term. }

Set $B=[\eta \sqrt{-1}\Theta_{\widetilde{h}}-\sqrt{-1}\partial \bar{\partial}
\eta-\sqrt{-1}g\partial\eta \wedge\bar{\partial}\eta, \Lambda_{\omega}]$, where
$g$
is a positive function. We will determine $g$ by calculations. On $M_j\backslash E_m$, direct calculation shows that
\begin{equation}\nonumber
\begin{split}
\partial\bar{\partial}\eta=&
-s'(-v_{\epsilon}(\Psi_m))\partial\bar{\partial}(v_{\epsilon}(\Psi_m))
+s''(-v_{\epsilon}(\Psi_m))\partial(v_{\epsilon}(\Psi_m))\wedge
\bar{\partial}(v_{\epsilon}(\Psi_m)),\\
\eta\Theta_{\widetilde{h}}=&\eta\partial\bar{\partial}\phi+\eta\partial\bar{\partial}h_{m',l}+
\eta\partial\bar{\partial}(\delta M_{\eta_m})\\
=&su''(-v_{\epsilon}(\Psi_m))\partial(v_{\epsilon}(\Psi_m))\wedge
\bar{\partial}(v_{\epsilon}(\Psi_m))
-su'(-v_{\epsilon}(\Psi_m))\partial\bar{\partial}(v_{\epsilon}(\Psi_m))\\
+&s\partial\bar{\partial}h_{m',l}+
s\partial\bar{\partial}(\delta M_{\eta_m}).
\end{split}
\end{equation}
\par
Hence
\begin{equation}\nonumber
\begin{split}
&\eta \sqrt{-1}\Theta_{\widetilde{h}}-\sqrt{-1}\partial \bar{\partial}
\eta-\sqrt{-1}g\partial\eta \wedge\bar{\partial}\eta\\
=&s\sqrt{-1}\partial\bar{\partial}h_{m',l}+
s\sqrt{-1}\partial\bar{\partial}(\delta M_{\eta_m})\\
+&(s'-su')(v'_{\epsilon}(\Psi_m)\sqrt{-1}\partial\bar{\partial}(\Psi_m)+
v''_\epsilon(\psi_m)\sqrt{-1}\partial(\Psi_m)\wedge\bar{\partial}(\Psi_m))\\
+&[(u''s-s'')-gs'^2]\sqrt{-1}\partial(v_\epsilon(\Psi_m))\wedge\bar{\partial}(v_\epsilon(\Psi_m)),
\end{split}
\end{equation}
where we omit the term $-v_{\epsilon}(\Psi_m)$ in $(s'-su')(-v_{\epsilon}(\Psi_m))$ and $[(u''s-s'')-gs'^2](-v_{\epsilon}(\Psi_m))$ for simplicity.
\par
Let $g=\frac{u''s-s''}{s'^2}(-v_\epsilon(\Psi_m))$ and note that $s'-su'=1$,
$0\le v'_\epsilon(\Psi_m)\le0$. Then
\begin{equation}\label{calculation of curvature 1}
\begin{split}
&\eta \sqrt{-1}\Theta_{\widetilde{h}}-\sqrt{-1}\partial \bar{\partial}
\eta-\sqrt{-1}g\partial\eta \wedge\bar{\partial}\eta\\
=&s\sqrt{-1}\partial\bar{\partial}h_{m',l}+
s\sqrt{-1}\partial\bar{\partial}(\delta M_{\eta_m})
+v'_{\epsilon}(\Psi_m)\sqrt{-1}\partial\bar{\partial}(\Psi_m)+
v''_\epsilon(\psi_m)\sqrt{-1}\partial(\Psi_m)\wedge\bar{\partial}(\Psi_m)\\
=&v''_\epsilon(\psi_m)\sqrt{-1}\partial(\Psi_m)\wedge\bar{\partial}(\Psi_m)+
v'_{\epsilon}(\Psi_m)\sqrt{-1}\partial\bar{\partial}(\Psi_m)\\
&+s(\sqrt{-1}\partial\bar{\partial}h_{m',l}+2\pi b\tilde{\delta}_{m'}\omega)-2\pi b s \tilde{\delta}_{m'}\omega
+s(\sqrt{-1}\partial\bar{\partial}(\delta M_{\eta_m})+2\pi b\delta\tilde{\delta}_{m}\omega)-2\pi b s \delta\tilde{\delta}_{m}\omega\\
\ge&v''_\epsilon(\psi_m)\sqrt{-1}\partial(\Psi_m)\wedge\bar{\partial}(\Psi_m)+
v'_{\epsilon}(\Psi_m)\sqrt{-1}\partial\bar{\partial}(\Psi_m)\\
&+\frac{1}{\delta}(\sqrt{-1}\partial\bar{\partial}h_{m',l}+2\pi b\tilde{\delta}_{m'}\omega)
+\frac{1}{\delta}(\sqrt{-1}\partial\bar{\partial}(\delta M_{\eta_m})+2\pi b\delta\tilde{\delta}_{m}\omega)-2\pi b s(\tilde{\delta}_{m'}+ \delta\tilde{\delta}_{m})\omega\\
\end{split}
\end{equation}
Note that
\begin{equation}\label{calculation of curvature 2}
\begin{split}
&\delta v'_{\epsilon}(\Psi_m)\sqrt{-1}\partial\bar{\partial}(\Psi_m)
+(\sqrt{-1}\partial\bar{\partial}h_{m',l}+2\pi b\tilde{\delta}_{m'}\omega)
+(\sqrt{-1}\partial\bar{\partial}(\delta M_{\eta_m})+2\pi b\delta\tilde{\delta}_{m}\omega)\\
=&(1-v'_{\epsilon}(\Psi_m))(\sqrt{-1}\partial\bar{\partial}h_{m',l}+2\pi b\tilde{\delta}_{m'}\omega+\sqrt{-1}\partial\bar{\partial}(\delta M_{\eta_m})+2\pi b\delta\tilde{\delta}_{m}\omega)\\
&+v'_{\epsilon}(\Psi_m)(\sqrt{-1}\partial\bar{\partial}h_{m',l}+2\pi b\tilde{\delta}_{m'}\omega+\sqrt{-1}\partial\bar{\partial}(\delta M_{\eta_m})+2\pi b\delta\tilde{\delta}_{m}\omega)\\
&+v'_{\epsilon}(\Psi_m)(\partial\bar{\partial}(\delta\psi_m)-\partial\bar{\partial}(\delta M_{\eta_m}))\\
=&(1-v'_{\epsilon}(\Psi_m))(\sqrt{-1}\partial\bar{\partial}h_{m',l}+2\pi b\tilde{\delta}_{m'}\omega+\sqrt{-1}\partial\bar{\partial}(\delta M_{\eta_m})+2\pi b\delta\tilde{\delta}_{m}\omega)\\
&+v'_{\epsilon}(\Psi_m)(\sqrt{-1}\partial\bar{\partial}h_{m',l}+2\pi b\tilde{\delta}_{m'}\omega+\sqrt{-1}\partial\bar{\partial}(\delta \psi_m)+2\pi b\delta\tilde{\delta}_{m}\omega)\\
\ge& 0.
\end{split}
\end{equation}
It follows from inequality \eqref{calculation of curvature 1} and inequality \eqref{calculation of curvature 2} that
\begin{equation}\nonumber
\begin{split}
&\eta \sqrt{-1}\Theta_{\widetilde{h}}-\sqrt{-1}\partial \bar{\partial}
\eta-\sqrt{-1}g\partial\eta \wedge\bar{\partial}\eta\\
\ge&v''_\epsilon(\Psi_m)\sqrt{-1}\partial(\Psi_m)\wedge\bar{\partial}(\Psi_m)
-2\pi b s(\tilde{\delta}_{m'}+ \delta\tilde{\delta}_{m})\omega.
\end{split}
\end{equation}

By the constructions of $s(t)$, $v_{\epsilon}(t)$ and $\sup_m\sup_{M_j}\Psi_m\le-T$, we have $s(-v_{\epsilon}(\Psi_m))$ is uniformly bounded on $M_j$ with respect to $\epsilon$ and $m$. Let $\tilde{M}$ be the uniformly upper bound of $s(-v_{\epsilon}(\Psi_m))$ on $M_j$. Then on $M_j\backslash E_m$, we have

\begin{equation}\nonumber
\begin{split}
&\eta \sqrt{-1}\Theta_{\widetilde{h}}-\sqrt{-1}\partial \bar{\partial}
\eta-\sqrt{-1}g\partial\eta \wedge\bar{\partial}\eta\\
\ge&v''_\epsilon(\Psi_m)\sqrt{-1}\partial(\Psi_m)\wedge\bar{\partial}(\Psi_m)
-2\pi b \tilde{M}(\tilde{\delta}_{m'}+ \delta\tilde{\delta}_{m})\omega.
\end{split}
\end{equation}

Hence, for any $(n,1)$ form $\alpha$, we have
\begin{equation}
\label{curvature inequality}
\begin{split}
&\langle(B+2\pi b\tilde{M}(\tilde{\delta}_{m'}+ \delta\tilde{\delta}_{m})I)\alpha,\alpha\rangle_{\widetilde h}\\
\ge&\langle[v''_\epsilon(\Psi_m)\partial(\Psi_m)\wedge\bar{\partial}(\Psi_m),
\Lambda_{\omega}]\alpha,\alpha\rangle_{\widetilde h}\\
=&\langle(v''_\epsilon(\Psi_m)\bar{\partial}(\Psi_m)
\wedge(\alpha\llcorner(\bar{\partial}\Psi_m)^{\sharp})),\alpha\rangle_{\widetilde
h}.
\end{split}
\end{equation}
It follows from Lemma \ref{sempositive lemma} that $B+2\pi b\tilde{M}(\tilde{\delta}_{m'}+ \delta\tilde{\delta}_{m})I$ is semi-positive. Using the definition of contraction, Cauchy-Schwarz inequality
and inequality \eqref{curvature inequality}, we have
\begin{equation}
\label{cs inequality}
\begin{split}
|\langle
v''_\epsilon(\Psi_m)\bar{\partial}\Psi_m\wedge\gamma,\widetilde{\alpha}\rangle_
{\widetilde h}|^2=
&|\langle
v''_\epsilon(\Psi_m)\gamma,\widetilde{\alpha}\llcorner(\bar{\partial}\Psi_m)^{\sharp}
\rangle_{\widetilde h}|^2\\
\le&\langle
(v''_\epsilon(\Psi_m)\gamma,\gamma)
\rangle_{\widetilde h}
(v''_\epsilon(\Psi_m))|\widetilde{\alpha}\llcorner(\bar{\partial}\Psi_m)^{\sharp}|^2_{\widetilde
h}\\
=&\langle
(v''_\epsilon(\Psi_m)\gamma,\gamma)
\rangle_{\widetilde h}
\langle
(v''_\epsilon(\Psi_m))\bar{\partial}\Psi_m\wedge
(\widetilde{\alpha}\llcorner(\bar{\partial}\Psi_m)^{\sharp}),\widetilde{\alpha}
\rangle_{\widetilde h}\\
\le&\langle
(v''_\epsilon(\Psi_m)\gamma,\gamma)
\rangle_{\widetilde h}
\langle
(B+2\pi b\tilde{M}(\tilde\delta_{m'}+\delta\tilde\delta_m)I)\widetilde{\alpha},\widetilde{\alpha})
\rangle_{\widetilde h}
\end{split}
\end{equation}
for any $(n,0)$ form $\gamma$ and $(n,1)$ form $\widetilde{\alpha}$.

As $fF^{1+\delta}$ is holomorphic on $\{\Psi<-t_0\}$ and $\{\Psi_m<-t_0-\epsilon\}\subset \{\Psi_m<-t_0\}\subset\{\Psi<-t_0\}$, then $\lambda:=\bar{\partial}\big((1-v'_{\epsilon}(\Psi_m))fF^{1+\delta}\big)$ is well defined and smooth on $M_j\backslash E_m$.

Taking $\gamma=fF^{1+\delta}$, $\tilde{\alpha}=(B+2\pi b\tilde{M}(\tilde\delta_{m'}+\delta\tilde\delta_m)I)^{-1}(\bar{\partial}v'_{\epsilon}(\Psi_m))\wedge fF^{1+\delta}$. Then it follows from inequality \eqref{cs inequality} that
$$\langle
(B+2\pi b\tilde{M}(\tilde\delta_{m'}+\delta\tilde\delta_m)I)^{-1}\lambda,\lambda\rangle_
{\widetilde h}\le v''_\epsilon(\Psi_m)|fF^{1+\delta}|^2e^{-\Phi}.$$

Thus we have
$$\int_{M_j\backslash E_m}\langle
(B+2\pi b\tilde{M}(\tilde\delta_{m'}+\delta\tilde\delta_m)I)^{-1}\lambda,\lambda\rangle_
{\widetilde h}\le \int_{M_j\backslash E_m}v''_\epsilon(\Psi_m)|fF^{1+\delta}|^2e^{-\Phi}.$$

Recall that $e^{-\Phi}=e^{-\phi-h_{m',l}-\delta M_{\eta_m}}$. Note that $h_{m',l}\ge -l$ and $\delta M_{\eta_m}\ge \delta 2\log|F|$. By the construction of $\phi$, we know that $\sup\limits_{\overline{M_j}}e^{-\phi}<+\infty$.
Then $$\int_{M_j\backslash E_m}v''_\epsilon(\Psi_m)|fF^{1+\delta}|^2e^{-\Phi}
\le
\sup\limits_{\overline{M_j}}\left(e^{-\phi+l}|F|^2\right)\int_{\overline{M_j}}\frac{2}{B}
\mathbb{I}_{\{\Psi\le -t_0\}}|f|^2<+\infty.$$

By Lemma \ref{completeness}, $M_j\backslash E_m$ carries a complete K\"ahler metric. Then it follows from Lemma \ref{d-bar equation with error term} that there exists
$$u_{m,m',l,\epsilon,j}\in L^2(M_j\backslash E_m, K_M),$$
$$h_{m,m',l,\epsilon,j}\in L^2(M_j\backslash E_m, \wedge^{n,1}T^*M)$$
such that $\bar\partial u_{m,m',l,\epsilon,j}+\sqrt{2\pi b\tilde{M}(\tilde\delta_{m'}+\delta\tilde\delta_m)}h_{m,m',l,\epsilon,j}=\lambda$ holds on $M_j\backslash E_m$, and

\begin{equation}\nonumber
\begin{split}
&\int_{M_j \backslash E_m}
\frac{1}{\eta+g^{-1}}|u_{m,m',l,\epsilon,j}|^2 e^{-\Phi}+
\int_{M_j \backslash E_m}
|h_{m,m',l,\epsilon,j}|^2 e^{-\Phi}\\
\le&
\int_{M_j \backslash E_m}
\langle
(B+2\pi b\tilde{M}(\tilde \delta_{m'}+\delta\tilde\delta_m)I)^{-1}\lambda,\lambda
\rangle_{\widetilde h}\\
\le&
\int_{M_j \backslash E_m}
v''_{\epsilon}(\Psi_m)|fF^{1+\delta}|^2 e^{-\Phi}
<+\infty.
\end{split}
\end{equation}

Assume that we can choose
$\eta$ and $\phi$ such that
$(\eta+g^{-1})^{-1}=e^{v_\epsilon(\Psi_m)}e^{\phi}c(-v_\epsilon(\Psi_m))$. Then we have

\begin{equation}
\label{estimate 1}
\begin{split}
&\int_{M_j \backslash E_m}
|u_{m,m',l,\epsilon,j}|^2 e^{v_\epsilon(\Psi_m)-h_{m',l}-\delta M_{\eta_m}}c(-v_\epsilon(\Psi_m))+
\int_{M_j \backslash E_m}
|h_{m,m',l,\epsilon,j}|^2 e^{-\phi-h_{m',l}-\delta M_{\eta_m}}\\
\le&
\int_{M_j \backslash E_m}
v''_{\epsilon}(\Psi_m)|fF^{1+\delta}|^2 e^{-\phi-h_{m',l}-\delta M_{\eta_m}}
<+\infty.
\end{split}
\end{equation}

By the construction of $v_{\epsilon}(t)$ and $c(t)e^{-t}$ is decreasing with respect to $t$, we know $c(-v_{\epsilon}(\Psi_m))e^{v_{\epsilon}(\Psi_m)}$ has a positive lower bound on $M_j\Subset M$. By the constructions of $v_{\epsilon}(t)$ and $u$, we know $e^{-\phi}=e^{-u(-v_{\epsilon}(\Psi_m))}$ has a positive lower bound on $M_j\Subset M$. By the upper semi-continuity of $M_{\eta_m}$, we know $e^{-\delta M_{\eta_m}}$ has a positive lower bound on $M_j\Subset M$. Note that $h_{m',l}$ is smooth on $M_j\Subset M$. Hence it follows from inequality \eqref{estimate 1} that

$$u_{m,m',l,\epsilon,j}\in L^2(M_j, K_M),$$
$$h_{m,m',l,\epsilon,j}\in L^2(M_j, \wedge^{n,1}T^*M).$$
It follows from Lemma \ref{extension of equality} that we know
\begin{equation}
\label{d-bar realation u,h,lamda 1}
\bar\partial u_{m,m',l,\epsilon,j}+\sqrt{2\pi b\tilde{M}(\tilde\delta_{m'}+\delta\tilde\delta_m)}h_{m,m',l,\epsilon,j}=\lambda
\end{equation}
holds on $M_j$. And we have
\begin{equation}
\label{estimate 2}
\begin{split}
&\int_{M_j}
|u_{m,m',l,\epsilon,j}|^2 e^{v_\epsilon(\Psi_m)-h_{m',l}-\delta M_{\eta_m}}c(-v_\epsilon(\Psi_m))+
\int_{M_j}
|h_{m,m',l,\epsilon,j}|^2 e^{-\phi-h_{m',l}-\delta M_{\eta_m}}\\
\le&
\int_{M_j}
v''_{\epsilon}(\Psi_m)|fF^{1+\delta}|^2 e^{-\phi-h_{m',l}-\delta M_{\eta_m}}
<+\infty.
\end{split}
\end{equation}

\

\emph{Step 4: Letting $m\to+\infty$. }

Note that $\sup_m\sup_{M_j}e^{-\phi}=\sup_m\sup_{M_j}e^{-u(-v_{\epsilon}(\Psi_m))}<+\infty$, $e^{-h_{m',l}}\le e^{l}$ and $e^{-\delta M_{\eta_m}}\le e^{-\delta 2\log|F|}$. As $\{\Psi_m<-t_0-\epsilon\}\subset \{\Psi_m<-t_0\}\subset \{\Psi<-t_0\}$, we have
$$v''_{\epsilon}(\Psi_m)|fF^{1+\delta}|^2 e^{-\phi-h_{m',l}-\delta M_{\eta_m}}\le \frac{2}{B}\left(\sup_m\sup_{M_j}e^{-\phi}\right)e^{l}\left(\sup_{M_j}|F|^2\right)\mathbb{I}_{\{\Psi<-t_0\}}|f|^2$$
holds on $M_j$. It follows from $\int_{\{\Psi<-t_0\}\cap \overline{M_j}}|f|^2<+\infty$ and dominated convergence theorem that
\begin{equation}\nonumber
\begin{split}
&\lim_{m\to+\infty}\int_{M_j}
v''_{\epsilon}(\Psi_m)|fF^{1+\delta}|^2 e^{-\phi-h_{m',l}-\delta M_{\eta_m}}\\
=&\int_{M_j}v''_{\epsilon}(\Psi)|fF^{1+\delta}|^2 e^{-u(-v_{\epsilon}(\Psi))-h_{m',l}-\delta \max{\{\psi+T,2\log|F|\}}}.
\end{split}
\end{equation}
Note that
$$v''_{\epsilon}(\Psi)|fF^{1+\delta}|^2 e^{-u(-v_{\epsilon}(\Psi))-h_{m',l}-\delta \max{\{\psi+T,2\log|F|\}}}\le \frac{2}{B}\left(\sup_{M_j}e^{-u(-v_{\epsilon}(\Psi))}|F|^2\right)e^{l}\mathbb{I}_{\{\Psi<-t_0\}}|f|^2$$
holds on $M_j$. We have
$$\int_{M_j}v''_{\epsilon}(\Psi)|fF^{1+\delta}|^2 e^{-u(-v_{\epsilon}(\Psi))-h_{m',l}-\delta \max{\{\psi+T,2\log|F|\}}}<+\infty.$$

Note that $\inf_{m}\inf_{M_j}c(-v_{\epsilon}(\Psi_m))e^{-v_{\epsilon}(\Psi_m)-h_{m',l}}>0$.
It follows from Lemma \ref{regular of max} that $M_{\eta_m}\le \max{\{\psi_m+T,2\log|F|\}}+\frac{t_0-T}{3m}\le \max{\{\psi_m+T,2\log|F|\}}+t_0-T\le \max{\{\psi_1+T,2\log|F|\}}+t_0-T$. As $\psi_1$ is a quasi-plurisubharmonic function on $\overline{M_j}$, we know $\max{\{\psi_1+T,2\log|F|\}}$ is upper semi-continuous function on $M_j$. Hence
\begin{equation}
\label{estimate for M eta}
\inf_{m}\inf_{M_j}e^{-M_{\eta_m}}\ge \inf_{M_j}e^{-\max{\{\psi_1+T,2\log|F|\}}-t_0}>0.
\end{equation}
 Then it follows from inequality \eqref{estimate 2} that
$$\sup_{m}\int_{M_j}|u_{m,m',l,\epsilon,j}|^2<+\infty.$$

Therefore the solutions $u_{m,m',l,\epsilon,j}$ are uniformly bounded in $L^2$ norm
with respect to $m$ on $M_j$. Since the closed unit ball of the Hilbert space is
weakly compact, we can extract a subsequence $u_{m_1,m',l,\epsilon,j}$ weakly
convergent to $u_{m',l,\epsilon,j}$ in $L^2(M_j,K_M)$ as $m_1\to+\infty$.

Note that $\sup_m\sup_{M_j}e^{v_{\epsilon}(\Psi_m)}c(-v_{\epsilon}(\Psi_m))e^{-h_{m',l}}< +\infty$. As $M_{\eta_m}\ge \max\{\psi_m+T,2\log|F|\}\ge 2\log|F|$ and $F$ has no zero points on $M$, we have $\sup_m\sup_{M_j}e^{-M_{\eta_m}}\le \sup_{M_j}\frac{1}{|F|^2}<+\infty$. Hence we know
$$\sup_m\sup_{M_j}e^{v_{\epsilon}(\Psi_m)}c(-v_{\epsilon}(\Psi_m))e^{-h_{m',l}-\delta M_{\eta_m}}<+\infty.$$
It follows from Lemma \ref{weakly convergence} that we know $u_{m_1,m',l,\epsilon,j}\sqrt{e^{v_{\epsilon}(\Psi_{m_1})}c(-v_{\epsilon}(\Psi_{m_1}))e^{-h_{m',l}-\delta M_{\eta_{m_1}}}}$ weakly convergent to $u_{m',l,\epsilon,j}\sqrt{e^{v_{\epsilon}(\Psi)}c(-v_{\epsilon}(\Psi))e^{-h_{m',l}-\delta \max\{\psi+T,2\log|F|\}}}$. Hence we have

\begin{equation}
\label{estimate for m'}
\begin{split}
&\int_{M_j}
|u_{m',l,\epsilon,j}|^2 e^{v_\epsilon(\Psi)-h_{m',l}-\delta \max\{\psi+T,2\log|F|\}}c(-v_\epsilon(\Psi))\\
\le&\liminf_{m_1\to+\infty}\int_{M_j}
|u_{m_1,m',l,\epsilon,j}|^2 e^{v_\epsilon(\Psi_{m_1})-h_{m',l}-\delta M_{\eta_{m_1}}}c(-v_\epsilon(\Psi_{m_1}))\\
\le&\liminf_{m_1\to+\infty}
\int_{M_j}
v''_{\epsilon}(\Psi_{m_1})|fF^{1+\delta}|^2 e^{-u(-v_{\epsilon}(\Psi_{m_1}))-h_{m',l}-\delta M_{\eta_{m_1}}}\\
\le & \int_{M_j}v''_{\epsilon}(\Psi)|fF^{1+\delta}|^2 e^{-u(-v_{\epsilon}(\Psi))-h_{m',l}-\delta \max{\{\psi+T,2\log|F|\}}}
<+\infty.
\end{split}
\end{equation}

Note that $\inf_{m_1}\inf_{M_j}e^{-u(-v_{\epsilon}(\Psi_{m_1}))-h_{m',l}}>0$. Combining inequality \eqref{estimate 2} and inequality \eqref{estimate for M eta} we know
$$\sup_{m_1}\int_{M_j}
|h_{m_1,m',l,\epsilon,j}|^2 <+\infty.$$

Since the closed unit ball of the Hilbert space is
weakly compact, we can extract a subsequence of $\{h_{m_1,m',l,\epsilon,j}\}$ (also denote by $h_{m_1,m',l,\epsilon,j}$) weakly
convergent to $h_{m',l,\epsilon,j}$ in $L^2(M_j,\wedge^{n,1}T^*M)$ as $m_1\to+\infty$.

Note that $\sup_{m_1}\sup_{M_j}e^{-u(-v_{\epsilon}(\Psi_{m_1}))-h_{m',l}}<+\infty$ and $\sup_{m_1}\sup_{M_j}e^{-M_{\eta_{m_1}}}\le \sup_{M_j}\frac{1}{|F|^2}<+\infty$. We know $$\sup_{m_1}\sup_{M_j}e^{-u(-v_{\epsilon}(\Psi_{m_1}))-h_{m',l}-\delta M_{\eta_{m_1}}}<+\infty.$$
It follows from Lemma \ref{weakly convergence} that we have
$h_{m_1,m',l,\epsilon,j}\sqrt{e^{-u(-v_{\epsilon}(\Psi_{m_1}))-h_{m',l}-\delta M_{\eta_{m_1}}}}$ is weakly convergent to
$h_{m',l,\epsilon,j}\sqrt{e^{-u(-v_{\epsilon}(\Psi))-h_{m',l}-\delta \max{\{\psi+T,2\log|F|\}}}}$. Hence we have

\begin{equation}
\label{estimate for h m'}
\begin{split}
&\int_{M_j}
|h_{m',l,\epsilon,j}|^2e^{-u(-v_{\epsilon}(\Psi))-h_{m',l}-\delta \max{\{\psi+T,2\log|F|\}}}\\
\le&\liminf_{m_1\to+\infty}\int_{M_j}
|h_{m_1,m',l,\epsilon,j}|^2 e^{-u(-v_{\epsilon}(\Psi_{m_1}))-h_{m',l}-\delta M_{\eta_{m_1}}}\\
\le&\liminf_{m_1\to+\infty}
\int_{M_j}
v''_{\epsilon}(\Psi_{m_1})|fF^{1+\delta}|^2 e^{-u(-v_{\epsilon}(\Psi_{m_1}))-h_{m',l}-\delta M_{\eta_{m_1}}}\\
\le & \int_{M_j}v''_{\epsilon}(\Psi)|fF^{1+\delta}|^2 e^{-u(-v_{\epsilon}(\Psi))-h_{m',l}-\delta \max{\{\psi+T,2\log|F|\}}}
<+\infty.
\end{split}
\end{equation}

Replace $m$ by $m_1$ in \eqref{d-bar realation u,h,lamda 1} and let $m_1\to +\infty$, we have

\begin{equation}
\label{d-bar realation u,h,lamda 2}
\bar\partial u_{m',l,\epsilon,j}+\sqrt{2\pi b\tilde{M}\tilde\delta_{m'}}h_{m',l,\epsilon,j}
=\bar\partial\left((1-v'_{\epsilon}(\Psi))fF^{1+\delta}\right).
\end{equation}

\

\emph{Step 5: Letting $m'\to+\infty$. }

When $\Psi<-t_0-\epsilon<-t_0$, we have $\psi-2\log|F|<-T$ and then $\max{\{\psi+T,2\log|F|\}}=2\log|F|$. Hence
$$\int_{M_j}v''_{\epsilon}(\Psi)|fF^{1+\delta}|^2 e^{-u(-v_{\epsilon}(\Psi))-h_{m',l}-\delta \max{\{\psi+T,2\log|F|\}}}=\int_{M_j}v''_{\epsilon}(\Psi)|fF|^2 e^{-u(-v_{\epsilon}(\Psi))-h_{m',l}}.$$

Note that
 $$v''_{\epsilon}(\Psi)|fF|^2 e^{-u(-v_{\epsilon}(\Psi))-h_{m',l}}\le \frac{2}{B}\left(\sup_{M_j}e^{-u(-v_{\epsilon}(\Psi))+l}|F|^2\right)\mathbb{I}_{\{\Psi<-t_0\}}|f|^2.$$

It follows from $\int_{\{\Psi<-t_0\}\cap \overline{M_j}}|f|<+\infty$ and dominated convergence theorem that
\begin{equation}\nonumber
\begin{split}
&\lim_{m'\to+\infty}\int_{M_j}
v''_{\epsilon}(\Psi)|fF|^2 e^{-u(-v_{\epsilon}(\Psi))-h_{m',l}}\\
=&\int_{M_j}v''_{\epsilon}(\Psi)|fF|^2 e^{-u(-v_{\epsilon}(\Psi))-h_{l}}<+\infty.
\end{split}
\end{equation}

Note that $h_{m',l}\le h_{1,l}$ for any $m'$ and $h_{1,l}$ is quasi-plurisubharmonic function on $\overline{M_{j}}$. Then $$\inf_{m'}\inf_{M_j}e^{v_\epsilon(\Psi)-h_{m',l}-\delta \max\{\psi+T,2\log|F|\}}c(-v_\epsilon(\Psi))\ge C_j \inf_{m'}\inf_{M_j}e^{-h_{1,l}}>0,$$
where $C_j:=\inf_{M_j}e^{v_\epsilon(\Psi)-\delta \max\{\psi+T,2\log|F|\}}c(-v_\epsilon(\Psi))$ is a positive number.

It follows from inequality \eqref{estimate for m'} that
$$\sup_{m'}\int_{M_j}|u_{m',l,\epsilon,j}|^2<+\infty.$$

 Since the closed unit ball of the Hilbert space is
weakly compact, we can extract a subsequence $u_{m'',l,\epsilon,j}$ weakly
convergent to $u_{l,\epsilon,j}$ in $L^2(M_j,K_M)$ as $m''\to+\infty$.

Note that
$$\sup_{m''}\sup_{M_j}e^{v_\epsilon(\Psi)-h_{m'',l}-\delta \max\{\psi+T,2\log|F|\}}c(-v_\epsilon(\Psi))
\le \left(\sup_{M_j}\frac{e^{v_\epsilon(\Psi)}c(-v_\epsilon(\Psi))}{|F|^{2\delta}}\right)e^l.$$
It follows from Lemma \ref{weakly convergence} that we have $u_{m'',l,\epsilon,j}\sqrt{e^{v_\epsilon(\Psi)-h_{m'',l}-\delta \max\{\psi+T,2\log|F|\}}c(-v_\epsilon(\Psi))}$ weakly convergent to $u_{l,\epsilon,j}\sqrt{e^{v_\epsilon(\Psi)-h_{l}-\delta \max\{\psi+T,2\log|F|\}}c(-v_\epsilon(\Psi))}$.

It follows from inequality \eqref{estimate for m'} that

\begin{equation}
\label{estimate for m''}
\begin{split}
&\int_{M_j}
|u_{l,\epsilon,j}|^2 e^{v_\epsilon(\Psi)-h_{l}-\delta \max\{\psi+T,2\log|F|\}}c(-v_\epsilon(\Psi))\\
\le&\liminf_{m_1\to+\infty}\int_{M_j}
|u_{m'',l,\epsilon,j}|^2 e^{v_\epsilon(\Psi)-h_{m'',l}-\delta \max\{\psi+T,2\log|F|\}}c(-v_\epsilon(\Psi))\\
\le&\liminf_{m_1\to+\infty}
\int_{M_j}
v''_{\epsilon}(\Psi)|fF|^2 e^{-u(-v_{\epsilon}(\Psi))-h_{m'',l}}\\
=&\int_{M_j}v''_{\epsilon}(\Psi)|fF|^2 e^{-u(-v_{\epsilon}(\Psi))-h_{l}}<+\infty.
\end{split}
\end{equation}

As $\inf_{m''}\inf_{M_j}e^{-u(-v_{\epsilon}(\Psi))-h_{m'',l}-\delta\max\{\psi+T,2\log|F|\}}\ge \tilde{C}_j\inf_{M_j}e^{-h_{1,l}}>0$, where $\tilde{C}_j=\inf_{M_j}e^{-u(-v_{\epsilon}(\Psi))-\delta\max\{\psi+T,2\log|F|\}}$ is a positive number. Then it follows from inequality \eqref{estimate for h m'} that we know
$$\sup_{m''}\int_{M_j}|h_{m'',l,\epsilon,j}|^2<+\infty.$$

Since the closed unit ball of the Hilbert space is
weakly compact, we can extract a subsequence of $\{h_{m'',l,\epsilon,j}\}$ (also denote by $h_{m'',l,\epsilon,j}$) weakly
convergent to $h_{l,\epsilon,j}$ in $L^2(M_j,\wedge^{n,1}T^*M)$ as $m''\to+\infty$. Then it follows from Lemma \ref{weakly convergence} and $\lim_{m''\to +\infty}\delta_{m''}=0$ that $\sqrt{2\pi b\tilde{M}\tilde\delta_{m''}}h_{m'',l,\epsilon,j}$ is weakly convergent to 0.

Replace $m'$ by $m''$ in \eqref{d-bar realation u,h,lamda 2} and let $m''$ goes to $+\infty$, we have
\begin{equation}
\label{d-bar realation u,h,lamda 3}
\bar\partial u_{l,\epsilon,j}
=\bar\partial\left((1-v'_{\epsilon}(\Psi))fF^{1+\delta}\right).
\end{equation}

Denote $F_{l,\epsilon,j}:=-u_{l,\epsilon,j}+(1-v'_{\epsilon}(\Psi))fF^{1+\delta}$. It follows from \eqref{d-bar realation u,h,lamda 3} and inequality \eqref{estimate for m''} that we know $F_{l,\epsilon,j}$ is a holomorphic $(n,0)$ form on $M_j$ and

\begin{equation}
\label{estimate for F lej}
\begin{split}
&\int_{M_j}
|F_{l,\epsilon,j}-(1-v'_{\epsilon}(\Psi))fF^{1+\delta}|^2 e^{v_\epsilon(\Psi)-h_{l}-\delta \max\{\psi+T,2\log|F|\}}c(-v_\epsilon(\Psi))\\
\le&\int_{M_j}v''_{\epsilon}(\Psi)|fF|^2 e^{-u(-v_{\epsilon}(\Psi))-h_{l}}<+\infty.
\end{split}
\end{equation}

\

\emph{Step 6: Letting $\epsilon\to 0$. }

Note that
$$v''_{\epsilon}(\Psi)|fF|^2e^{-u(-v_{\epsilon})(\Psi)-h_l}\le \frac{2}{B}\sup_{\epsilon}\sup_{M_j}\left(e^{-u(-v_{\epsilon}(\Psi))+l}|F|^2\right)\mathbb{I}_{\{\Psi<-t_0\}}|f|^2.$$
It follows from $\int_{\overline{M_j}\cap \{\Psi<-t_0\}}|f|^2<+\infty$ and dominated convergence theorem that
\begin{equation}\label{estiamte for RHS 1}
\begin{split}
&\lim_{\epsilon\to 0}\int_{M_j}
v''_{\epsilon}(\Psi)|fF|^2 e^{-u(-v_{\epsilon}(\Psi))-h_{l}}\\
=&\int_{M_j}\frac{1}{B}\mathbb{I}_{\{-t_0-B<\Psi<-t_0\}}|fF|^2 e^{-u(-v(\Psi))-h_{l}}\\
\le & \left(\sup_{M_j}e^{-u(-v(\Psi))}\right)\int_{M_j}\frac{1}{B}\mathbb{I}_{\{-t_0-B<\Psi<-t_0\}}|fF|^2 e^{-h_{l}}
\end{split}
\end{equation}

When $\Psi<-t_0$, we know that $ \max\{\psi+T,2\log|F|\}= 2\log|F|$. Note that $h+\delta \max\{\psi+T,2\log|F|\}=\varphi+\Psi=\varphi_{\alpha}+(1+\delta)\max\{\psi+T,2\log|F|\}+\Psi$. Hence
\begin{equation}\label{estiamte for RHS 2}
\begin{split}
&\int_{M_j}\frac{1}{B}\mathbb{I}_{\{-t_0-B<\Psi<-t_0\}}|fF|^2 e^{-h_{l}}\\
=&\int_{M_j}\frac{1}{B}\mathbb{I}_{\{-t_0-B<\Psi<-t_0\}}|fF^{1+\delta}|^2 e^{-h_{l}-\delta 2\log|F|}\\
\le &\int_{M_j}\frac{1}{B}\mathbb{I}_{\{-t_0-B<\Psi<-t_0\}}|fF^{1+\delta}|^2 e^{-h-\delta \max\{\psi+T,2\log|F|\}}\\
=&\int_{M_j}\frac{1}{B}\mathbb{I}_{\{-t_0-B<\Psi<-t_0\}}|f|^2 e^{-\varphi_{\alpha}-\Psi}<+\infty.
\end{split}
\end{equation}

Note that $$\inf_{\epsilon}\inf_{M_j}e^{v_\epsilon(\Psi)-h_{l}-\delta \max\{\psi+T,2\log|F|\}}c(-v_\epsilon(\Psi))>0.$$
We have
$$\sup_{\epsilon}\int_{M_j}|F_{l,\epsilon,j}-(1-v'_{\epsilon}(\Psi))fF^{1+\delta}|^2<+\infty.$$

We also note that
$$\sup_{\epsilon}\int_{M_j}|(1-v'_{\epsilon}(\Psi))fF^{1+\delta}|^2\le (\sup_{M_j}|F|^{2(1+\delta)})\int_{\overline{M_j}\cap \{\Psi<-t_0\}}|f|^2<+\infty.$$
Then we know that
$$\sup_{\epsilon}\int_{M_j}|F_{l,\epsilon,j}|^2<+\infty,$$
and there exists a subsequence of $\{F_{l,\epsilon,j}\}$ (also denoted by $\{F_{l,\epsilon,j}\}$) compactly convergent to a holomorphic $(n,0)$ form $F_{l,j}$ on $M_j$. It follows from Fatou's Lemma and inequalities \eqref{estimate for F lej}, \eqref{estiamte for RHS 1}, \eqref{estiamte for RHS 2} that

\begin{equation}
\label{estimate for F lj}
\begin{split}
&\int_{M_j}
|F_{l,j}-(1-b(\Psi))fF^{1+\delta}|^2 e^{v(\Psi)-h_{l}-\delta \max\{\psi+T,2\log|F|\}}c(-v(\Psi))\\
\le &\liminf_{\epsilon\to 0}\int_{M_j}
|F_{l,\epsilon,j}-(1-v'_{\epsilon}(\Psi))fF^{1+\delta}|^2 e^{v_\epsilon(\Psi)-h_{l}-\delta \max\{\psi+T,2\log|F|\}}c(-v_\epsilon(\Psi))\\
\le& \liminf_{\epsilon\to 0}\int_{M_j}v''_{\epsilon}(\Psi)|fF|^2 e^{-u(-v_{\epsilon}(\Psi))-h_{l}}\\
\le &\left(\sup_{M_j}e^{-u(-v(\Psi))}\right)\int_{M_j}\frac{1}{B}\mathbb{I}_{\{-t_0-B<\Psi<-t_0\}}|fF|^2 e^{-h_{l}}\\
\le& \left(\sup_{M_j}e^{-u(-v(\Psi))}\right)
\int_{M_j}\frac{1}{B}\mathbb{I}_{\{-t_0-B<\Psi<-t_0\}}|f|^2 e^{-\varphi_{\alpha}-\Psi}<+\infty.
\end{split}
\end{equation}

\

\emph{Step 7: Letting $l\to+\infty$. }

It follows from $h_{l}\le h_1$ for any $l\in \mathbb{Z}^+$ and $h_1$ is a plurisubharmonic function on $M$ that
$$\inf\inf_{M_j} e^{v(\Psi)-h_{l}-\delta \max\{\psi+T,2\log|F|\}}c(-v(\Psi))\ge \hat{C}_j\inf_{M_j}e^{-h_1}>0,$$
where $\hat{C}_j=\inf_{M_j} e^{v(\Psi)-\delta \max\{\psi+T,2\log|F|\}}c(-v(\Psi))$ is a positive number. By inequality \eqref{estimate for F lj}, we have
$$\sup_l \int_{M_j}
|F_{l,j}-(1-b(\Psi))fF^{1+\delta}|^2 <+\infty.$$
Note that
$$\sup_{l}\int_{M_j}
|(1-b(\Psi))fF^{1+\delta}|^2\le\left(\sup_{M_j}|F|^{2(1+\delta)}\right)\int_{\overline{M_j}\cap \{\Psi<-t_0\}}|f|^2<+\infty. $$
Hence we know that
$$\sup_l \int_{M_j}
|F_{l,j}|^2 <+\infty,$$
and there exists a subsequence of $\{F_{l,j}\}$ (also denoted by $\{F_{l,j}\}$) compactly convergent to a holomorphic $(n,0)$ form $F_{j}$ on $M_j$. It follows from Fatou's Lemma and inequality \eqref{estimate for F lj} that

\begin{equation}
\label{estimate for F j}
\begin{split}
&\int_{M_j}
|F_{j}-(1-b(\Psi))fF^{1+\delta}|^2 e^{v(\Psi)-\varphi-\Psi}c(-v(\Psi))\\
=&\int_{M_j}
|F_{j}-(1-b(\Psi))fF^{1+\delta}|^2 e^{v(\Psi)-h-\delta \max\{\psi+T,2\log|F|\}}c(-v(\Psi))\\
\le&\liminf_{l\to+\infty}\int_{M_j}
|F_{l,j}-(1-b(\Psi))fF^{1+\delta}|^2 e^{v(\Psi)-h_{l}-\delta \max\{\psi+T,2\log|F|\}}c(-v(\Psi))\\
\le& \left(\sup_{M_j}e^{-u(-v(\Psi))}\right)
\int_{M_j}\frac{1}{B}\mathbb{I}_{\{-t_0-B<\Psi<-t_0\}}|f|^2 e^{-\varphi_{\alpha}-\Psi}<+\infty.
\end{split}
\end{equation}

\

\emph{Step 8: Letting $j\to+\infty$. }

It is easy to see that
\begin{equation}
\label{estimate for RHS last}
\begin{split}
 &\left(\sup_{M_j}e^{-u(-v(\Psi))}\right)
\int_{M_j}\frac{1}{B}\mathbb{I}_{\{-t_0-B<\Psi<-t_0\}}|f|^2 e^{-\varphi_{\alpha}-\Psi}\\
\le&
\left(\sup_{M}e^{-u(-v(\Psi))}\right)
\int_{M}\frac{1}{B}\mathbb{I}_{\{-t_0-B<\Psi<-t_0\}}|f|^2 e^{-\varphi_{\alpha}-\Psi}<+\infty.
\end{split}
\end{equation}

For fixed $j$, as $e^{v(\Psi)-\varphi-\Psi}c(-v(\Psi))$ has a positive lower bound on any $\overline{M_j}$, we have for $j_1>j$,
$$\sup_{j_1}\int_{M_j}
|F_{j_1}-(1-b(\Psi))fF^{1+\delta}|^2<+\infty.$$

Combining with
$$\int_{M_j}|(1-b(\Psi))fF^{1+\delta}|^2\le \left(\sup_{M_j}|F^{1+\delta}|^2\right)\int_{M_j\cap \{\Psi<-t_0\}}|f|^2<+\infty,$$
we know that for $j_1>j,$
$\int_{M_j}
|F_{j_1}|^2$ is uniformly bounded with respect to $j_1$.

By diagonal method, there exists a subsequence of ${\{F_j\}}$ (also denoted by ${\{F_j\}}$) compactly convergent to a holomorphic $(n,0)$ form $\tilde{F}$ on $M$. Then it follows from Fatou's Lemma, inequality \eqref{estimate for F j} and inequality \eqref{estimate for RHS last} that

\begin{equation}
\label{estimate for F}
\begin{split}
&\int_{M}
|\tilde{F}-(1-b(\Psi))fF^{1+\delta}|^2 e^{v(\Psi)-\varphi-\Psi}c(-v(\Psi))\\
\le&\liminf_{j\to +\infty}
\int_{M_j}
|F_{j}-(1-b(\Psi))fF^{1+\delta}|^2 e^{v(\Psi)-\varphi-\Psi}c(-v(\Psi))\\
\le& \liminf_{j\to +\infty}\left(\sup_{M_j}e^{-u(-v(\Psi))}\right)
\int_{M_j}\frac{1}{B}\mathbb{I}_{\{-t_0-B<\Psi<-t_0\}}|f|^2 e^{-\varphi_{\alpha}-\Psi}\\
\le & \left(\sup_{M}e^{-u(-v(\Psi))}\right)
\int_{M}\frac{1}{B}\mathbb{I}_{\{-t_0-B<\Psi<-t_0\}}|f|^2 e^{-\varphi_{\alpha}-\Psi}<+\infty.
\end{split}
\end{equation}

\

\emph{Step 9: ODE System.}

Now we want to find $\eta$ and $\phi$ such that
$(\eta+g^{-1})=e^{-v_\epsilon(\Psi_m)}e^{-\phi}\frac{1}{c(-v_{\epsilon}(\Psi_m))}$.
As $\eta=s(-v_{\epsilon}(\Psi_m))$ and $\phi=u(-v_{\epsilon}(\Psi_m))$, we have
$(\eta+g^{-1})e^{v_\epsilon(\Psi_m)}e^{\Phi}=\left((s+\frac{s'^2}{u''s-s''})e^{-t}e^u\right)\circ(-v_\epsilon(\Psi_m))$.\\

Summarizing the above discussion about $s$ and $u$, we are naturally led to a system of
ODEs:
\begin{equation}
\begin{split}
&1)(s+\frac{s'^2}{u''s-s''})e^{u-t}=\frac{1}{c(t)},\\
&2)s'-su'=1,
\end{split}
\label{ODE SYSTEM}
\end{equation}
when $t\in(T,+\infty)$.

We  solve the ODE system \eqref{ODE SYSTEM} and get
$u(t)=-\log(\frac{1}{\delta}c(T)e^{-T}+\int^t_T c(t_1)e^{-t_1}dt_1)$ and
$s(t)=\frac{\int^t_T(\frac{1}{\delta}c(T)e^{-T}+\int^{t_2}_T c(t_1)e^{-t_1}dt_1)dt_2+\frac{1}{\delta^2}c(T)e^{-T}}{\frac{1}{\delta}c(T)e^{-T}+\int^t_T
c(t_1)e^{-t_1}dt_1}$.

It follows that $s\in C^{\infty}([T,+\infty))$ satisfies
$s \ge \frac{1}{\delta}$ and $u\in C^{\infty}([T,+\infty))$ satisfies
$u''s-s''>0$.

As $u(t)=-\log(\frac{1}{\delta}c(T)e^{-T}+\int^t_T c(t_1)e^{-t_1}dt_1)$ is decreasing with respect to t, then
it follows from $-T \ge v(t) \ge \max\{t,-t_0-B_0\} \ge -t_0-B_0$, for any $t \le 0$
that
\begin{equation}
\begin{split}
\sup\limits_{M}e^{-u(-v(\psi))} \le
\sup\limits_{t\in[T,t_0+B]}e^{-u(t)}=\frac{1}{\delta}c(T)e^{-T}+\int^{t_0+B}_T c(t_1)e^{-t_1}dt_1.
\end{split}
\end{equation}

Combining with inequality \eqref{estimate for F}, we have

\begin{equation}\nonumber
\begin{split}
&\int_{M}
|\tilde{F}-(1-b(\Psi))fF^{1+\delta}|^2 e^{v(\Psi)-\varphi-\Psi}c(-v(\Psi))\\
\le & \left(\frac{1}{\delta}c(T)e^{-T}+\int^{t_0+B}_T c(t_1)e^{-t_1}dt_1\right)
\int_{M}\frac{1}{B}\mathbb{I}_{\{-t_0-B<\Psi<-t_0\}}|f|^2 e^{-\varphi_{\alpha}-\Psi}<+\infty.
\end{split}
\end{equation}

Lemma \ref{L2 method} is proved.

%%%------------------------------------------------------------------------

\vspace{.1in} {\em Acknowledgements}.
The first author and the second author were supported by National Key R\&D Program of China 2021YFA1003103. The first author was supported by NSFC-11825101, NSFC-11522101 and NSFC-11431013.

\bibliographystyle{references}
\bibliography{xbib}

\end{document}